\theoremstyle{plain}
\newtheorem{theorem}[subsubsection]{Theorem}
\newtheorem*{theorem*}{Theorem}
\newtheorem{proposition}[subsubsection]{Proposition}
\newtheorem*{proposition*}{Proposition}
\newtheorem{lemma}[subsubsection]{Lemma}
\newtheorem*{lemma*}{Lemma}
\newtheorem{corollary}[subsubsection]{Corollary}
\newtheorem*{corollary*}{Corollary}
\theoremstyle{definition}
\newtheorem*{definition}{Definition}
\theoremstyle{remark}
\newtheorem*{remark}{Remark}
\renewcommand{\comment}[1] {  }
\DeclareFontFamily{OT1}{rsfs}{}
\DeclareFontShape{OT1}{rsfs}{n}{it}{<-> rsfs10}{}
\DeclareMathAlphabet{\mathscr}{OT1}{rsfs}{n}{it}
\newcommand{\Ad}{\mathrm{Ad}}
\newcommand{\Res}{\mathrm{Res}}
\newcommand{\Z}{\mathbb{Z}}
\newcommand{\adele}{\mathbb{A}_k}
\newcommand{\CC}{\mathbb{C}}
\newcommand{\FF}{\mathbb{F}}
\newcommand{\PP}{\mathbb{P}}
\newcommand{\RR}{\mathbb{R}}
\newcommand{\QQ}{\mathbb{Q}}
\newcommand{\ab}{{\operatorname{ab}}}
\newcommand{\aff}{{\operatorname{aff}}}
\newcommand{\Hom}{\operatorname{Hom}}
\newcommand{\Aut}{{\operatorname{Aut}}}
\newcommand{\varchi}{\mathcal{X}}
\newcommand{\Gm}{\mathbb{G}_m}
\newcommand{\Ga}{\mathbb{G}_a}
\newcommand{\GL}{\operatorname{GL}}
\newcommand{\PGL}{\operatorname{PGL}}
\newcommand{\SL}{\operatorname{SL}}
\newcommand{\SO}{{\operatorname{SO}}}
\newcommand{\tr}{\operatorname{tr}}
\newcommand{\spec}{\operatorname{spec}}
\newcommand{\Vol}{\operatorname{Vol}}
\newcommand{\diag}{{\operatorname{diag}}}
\newcommand{\disc}{{\operatorname{disc}}}
\newcommand{\reg}{{\operatorname{reg}}}
\newcommand{\Id}{\operatorname{Id}}
\newcommand{\val}{{\operatorname{val}}}
\newcommand{\aut}{{\operatorname{aut}}}
\newcommand{\Eis}{{\operatorname{Eis}}}
\newcommand{\Ram}{{\operatorname{Ram}}}
\newcommand{\cusp}{{\operatorname{cusp}}}
\newcommand{\temp}{{\operatorname{temp}}}
\newcommand{\RTF}{{\operatorname{RTF}}}
\newcommand{\KTF}{{\operatorname{KTF}}}
\newcommand{\rest}{{\operatorname{rest}}}
\newcommand{\Kl}{{\mathcal{K}}}
\newcommand{\pinfty}{{\infty +}}
\newcommand{\AvgVol}{{\operatorname{AvgVol}}}
\begin{document}
\numberwithin{equation}{section}
\setcounter{tocdepth}{1}
\title[Beyond Endoscopy for the relative trace formula II]{Beyond Endoscopy for the relative trace formula II: global theory}
\author{Yiannis Sakellaridis}
\email{sakellar@rutgers.edu}

\address{Department of Mathematics and Computer Science, Rutgers University at Newark, 101 Warren Street, Smith Hall 216, Newark, NJ 07102, USA.}

\subjclass[2010]{11F70}
\keywords{relative trace formula, beyond endoscopy, periods, L-functions, Waldspurger's formula}

\begin{abstract}
For the group $G=\PGL_2$ we perform a comparison between two relative trace formulas: on one hand, the relative trace formula of Jacquet for the quotient $T\backslash G/T$, where $T$ is a non-trivial torus, and on the other the Kuznetsov trace formula (involving Whittaker periods), applied to \emph{non-standard} test functions. This gives a new proof of the celebrated result of Waldspurger on toric periods, and suggests a new way of comparing trace formulas, with some analogies to Langlands' ``Beyond Endoscopy'' program. \end{abstract}

\maketitle

\tableofcontents

\section{Introduction.} 

\subsection{The result of Waldspurger} The celebrated result of Waldspurger \cite{Waldspurger-torus}, relating periods of cusp forms on $\GL_2$ over a nonsplit torus (against a character of the torus, but here we will restrict ourselves to the trivial character) with the central special value of the corresponding quadratic base change $L$-function, was reproven by Jacquet \cite{JW1} using the relative trace formula. Both of the proofs, however, rely on coincidences that are particular to this case, and do not generalize to most higher-rank cases of the Gross--Prasad conjectures \cite{II} and their generalizations \cite{GGP, SV}. In the case of Waldspurger, the coincidence is the appearance of this period in the setting of the theta correspondence; in the case of Jacquet, it is the appearance of the same $L$-value in periods over a split torus, previously studied by Hecke. None of these coincidences exist in arbitrary higher rank.

The purpose of the present article is to provide yet another proof of the result of Waldspurger, developing a new method, also based on the relative trace formula, which might admit generalization. Although one would need many more examples of this method in order to talk of any serious evidence of possible generalization, some of its features are pleasantly aligned with the formulation of the general conjectures of \cite{SV}. The present paper is based on the local results proven in \cite{SaBE1}, and as was the case there, I feel free to use the work of Jacquet (and hence the aforementioned coincidences) in order to shorten proofs of a few local statements that could also be proven ``by hand''. However, the global argument is completely independent.

The method relies on a ``non-standard'' comparison of relative trace formulas equipped with ``non-standard'' test functions and, hence, has similarities to the ``Beyond Endoscopy'' project of Langlands \cite{Langlands-BE}. Among others, our methods give an independent, trace formula-theoretic proof of the meromorphic continuation of quadratic base change $L$-functions. It should be kept in mind, though, that our comparisons, spectrally, correspond to some version of \emph{relative functoriality \cite{SV}} for the \emph{identity} map of dual groups, as opposed to the much more ambitious goal of Langlands involving arbitrary maps of $L$-groups.

\subsection{The relative trace formula and its conjectural spectrum} The relative trace formula (RTF) of Jacquet should be seen as a potential generalization of the invariant trace formula of Arthur and Selberg, as well as of the twisted trace formula. In the most general setting,\footnote{In fact, this is not quite the most general setting, since we can also introduce line bundles defined by complex adele class characters of unipotent groups -- as in the Kuznetsov trace formula used here.} one starts with two (homogeneous, quasi-affine) spherical varieties $X_1, X_2$ for a given (connected) reductive group $G$ over a global field $k$, and constructs a distribution on the adelic points of $X_1\times X_2$, invariant under the diagonal action of $G(\adele)$, which is naively (i.e., ignoring 
analytic 
issues) defined as
$$\Phi_1 \otimes \Phi_2 \mapsto \int_{[G]} \Sigma \Phi_1(g) \cdot \Sigma\Phi_2(g) dg,$$
where $\Sigma$ denotes the morphism
$$\mathcal S(X_i (\adele))\ni \Phi\mapsto \sum_{\gamma \in X_i(k)}\Phi(\gamma g) \in C^\infty([G]),$$ $[G]$ denotes the automorphic quotient $G(k)\backslash G(\adele)$, and $\mathcal S$ denotes the space of Schwartz functions (we will call them ``standard test functions'').

The presentation here is, actually, oversimplifying: instead of considering a $G^\diag(\adele)$-invariant distribution on the adelic points of $X_1\times X_2$, one should talk about distributions on the adelic points of the quotient stack $(X_1\times X_2)/G^\diag$. This is necessary, even in the simple cases that we are considering here, in order to include ``pure inner forms'' of the spaces under consideration into the picture and get a complete comparison between relative trace formulas. The appropriate notions for harmonic analysis on stacks were developed in \cite{SaStacks}; however in this paper we will only use the notion of stacks symbolically, and explicitly define the spaces of test functions that we need, without making use of that theory.

In the special case $X_1=X_2=$a group $H$, under the action of $G=H\times H$ by left and right multiplication, the relative trace formula specializes to the Arthur-Selberg trace formula, while if we twist the action of $\{1\}\times H\subset G$ on the second copy by an automorphism of $H$ we get the twisted trace formula. Notice the stack-theoretic isomorphism of $(H\times H)/G$ (diagonal action of $G$) with the quotient of $H$ by itself via conjugation.

Let us concentrate on the case $X_1=X_2=:X$. The relative trace formula admits a geometric and a spectral expansion. The conjectures proposed in \cite{SV} imply that the \emph{most tempered} part of the spectral expansion is supported on the set of automorphic representations with Arthur parameter (assuming the existence of the hypothetical Langlands group $\mathcal L_k$) of the form
$$ \mathcal L_k \times \SL_2 \xrightarrow{\varphi\times \Id} {^LG_X} \times\SL_2\to {^LG},$$
where $^LG_X$ is the ``$L$-group of $X$''. (This was defined in \cite{SV}, based on the work of Gaitsgory and Nadler \cite{GN}, only when the group is split; the general case of $L$-groups for spherical varieties has not been developed yet, although there are many examples where the answer is clear.) The map ${^LG_X} \times\SL_2\to {^LG}$ is a canonical one (up to conjugacy by the canonical Cartan subgroup of ${^LG}$). In particular, when $X, Y$ are spherical varieties for groups $G, H$, respectively, and $r:{^LG_X}\to {^LH_Y}$ is an $L$-homomorphism between their dual groups, by an extension of Langlands' ``Beyond Endoscopy'' philosophy this should induce comparisons between their (stable) relative trace formulas. The problem is highly non-trivial already when the dual groups are isomorphic, which is the case at hand here.

The spectral side involves periods of automorphic forms over stabilizers of points on $X$, and the values of periods are expected in a wide variety of cases to be related to special values of $L$-functions. More precisely, according to the conjectures of \cite[Chapter 17]{SV}, under some assumptions on $X$ the contribution of such a parameter $\varphi$ to the spectral expansion will involve a certain quotient $L_X(\varphi)$ of special values of automorphic $L$-functions associated to  the $L$-group ${^LG_X}$, whose Euler factor at an unramified place $v$ is related to the Plancherel measure for $L^2(X_v)$ (normalized in some canonical way that will not be explained here). In the group case one has $L_X=1$, which is why one does not see $L$-functions on the spectral side. Comparisons between relative trace formulas give rise to relations between periods and the associated ``relative characters'', in exactly the same way that character relations arise in the endoscopic comparison of trace formulas.

\subsection{The limitations of standard comparisons} The geometric side of the relative trace formula is, roughly (i.e., ignoring ``pure inner forms'') a sum of orbital integrals over $G(k)$-orbits on $(X_1\times X_2)(k)$; these orbits correspond, at least generically and in the stable case, to $k$-points on the ``base'' $\mathcal B = (X_1\times X_2)\sslash G:= \spec k[X_1\times X_2]^G$. The latter is very often an affine space, so summation over $G(k)$-orbits becomes some kind of ``Poisson sum''.

Experience suggests that when the dual groups and the related $L$-values for two relative trace formulas match,\footnote{We are really referring to stable or ``quasi-stable'' trace formulas here, e.g., in the case of the Arthur-Selberg trace formula the individual summands of the invariant trace formula which are matched with stable trace formulas of endoscopic groups; these summands can be considered as ``quasi-stable'' trace formulas with their own $L$-group, namely the corresponding endoscopic $L$-group.} then usually there is a natural matching of the geometric sides: a map between stable, closed rational orbits, and an identification of $G(\adele)$-coinvariants of the spaces of standard test functions on $(X_1\times X_2)(\adele)$ with the corresponding space of the second RTF which preserves (up to scalar ``transfer factors'') the corresponding orbital integrals. ``Standard test functions'', by definition, are generated by functions of the form $\prod_v \Phi_v$, where $\Phi_v$ is the characteristic 
function of $X_i(\mathfrak o_v)$ for almost every place $v$ (where $\mathfrak o_v$ denotes the integers of $k_v$), and an arbitrary Schwartz function at remaining places.

While there are many known cases of such comparisons, it is clear from the multitude of different $L$-functions attached to spherical varieties (s.\ the table at the end of \cite{SaSph}) that one cannot hope that every RTF will have one or more ``matching'' ones, in the above sense. For example, no relative trace formula comparison has been proposed for attacking the Gross--Prasad conjecture for the space $X=\SO_n^\diag\backslash(\SO_n\times \SO_{n+1})$. On the other hand, it is clear that the relative trace formula provides the correct setting for understanding the -- still far from being understood --  fine points behind the general period conjecture of \cite{SV}, such as how many elements inside of an $L$-packet contribute to the spectral expansion. 
For example, in the case of Gross--Prasad or Whittaker periods there is, locally, only one distinguished representation inside of each (Vogan) $L$-packet, which corresponds to the fact that the corresponding relative trace formulas are stable (i.e., there is, at least generically, no 
distinction between orbits and stable orbits). The appearance of pure inner forms in the conjectures can also be understood in terms of the relative trace formula, and more precisely in terms of the quotient stack $(X\times X)/G$, cf.\ \cite[\S 16.5]{SV}, \cite{SaStacks}.

This poses the dilemma: How can the relative trace formula be on one hand fundamental for the correct statement of the conjectures but on the other hand insufficient for their proof? My hope is that the answer will be given through a ``non-standard'' comparison of relative trace formulas, in the setting where dual groups match but the associated $L$-values are unequal. In order to set up an equality between two such trace formulas one has to replace the characteristic function of $X_i(\mathfrak o_v)$ by a non-standard unramified function which (as in Langlands' ``Beyond Endoscopy'' program) will force a correction by suitable $L$-values on the spectral side. Again as in the ``Beyond Endoscopy'' program, one should not expect an orbit-by-orbit comparison of the geometric sides in this case. 
Instead, the transfer factors will be integral operators between the two spaces of orbits; let us call them ``transfer operators''. The biggest conceptual difficulty here, as I see it, is to show that these operators are automorphic, i.e., that they preserve ``Poisson sums''.

This is not the first time that such a ``non-standard'' comparison has been performed. Rudnick's thesis \cite{Rudnick}, predating the ``Beyond Endoscopy'' program by more than a decade, can be seen as such a comparison between the Petersson--Kuznetsov formula and the Selberg trace formula for $\GL_2$ (restricted to holomorphic cusp forms). The success and similarity of these different cases provides grounds for optimism.

\subsection{A non-standard comparison}\label{ssnonstandardintro} In this article I employ the ideas above in the simplest possible case, namely the comparison of the Kuznetsov trace formula (i.e., the one associated to the Whittaker period) with the relative trace formula of Jacquet \cite{JW1} for the case $X=T\backslash G$. In both cases, $G$ is the group $\PGL_2$, and $T$ denotes a non-trivial torus in it, split or nonsplit. In the split case the corresponding relative trace formula has never appeared in print to the best of my knowledge, and is quite interesting analytically.

The local comparisons of trace formulas needed here were performed in the article \cite{SaBE1}, including a local matching theorem \cite[Theorem 5.1]{SaBE1} and a fundamental lemma \cite[Theorem 5.4]{SaBE1}. Here I show that they induce a global equality of relative trace formulas, and perform the spectral analysis. The end result is a new proof (see Theorem \ref{thmWaldspurger}) of the celebrated result of Waldspurger on toric periods with trivial character on the torus:

\begin{theorem}[\cite{Waldspurger-torus}] \label{Waldspurgertheorem}
Let $\pi \hookrightarrow L^2([G'])$ be a cuspidal automorphic representation of $G'$, an inner form of $G$, and write it as a restricted tensor product $\pi = \otimes^\prime_v \pi_v$ of unitary representations of $G'(k_v)$. Let $T$ be a nonsplit torus in $G'$, splitting over a quadratic extension with associated idele class character $\eta$. We endow the groups with the Tamagawa measures, and we endow $\pi$ with the norm induced from $L^2([G'])$. Then, for $\phi = \otimes_v \phi_v \in \pi$,
\begin{equation}
 \left|\int_{[T]} \phi(t) dt \right|^2 = \frac{1}{2}\prod_v' \int_{T(k_v)} \left<\pi_v(t)\varphi_v, \varphi_v\right> dt,
\end{equation}
where the Euler product, outside of a finite set of places, should be understood as a partial $L$-value; the corresponding Euler factors are
\begin{equation}\label{unr} \frac{\zeta_v(2)L(\pi_v,\frac{1}{2}) L(\pi_v\otimes \eta_v,\frac{1}{2})}{L(\eta_v,1)^2 L(\pi_v,\Ad,1)}.
\end{equation}
\end{theorem}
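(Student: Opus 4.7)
The plan is to derive Waldspurger's identity by comparing two relative trace formulas on $G=\PGL_2$: Jacquet's RTF for the symmetric space $X=T\backslash G$ with standard test functions, and the Kuznetsov trace formula (KTF), attached to the Whittaker period, equipped with the non-standard test functions introduced in \cite{SaBE1}. The non-standard modification of the KTF is designed so that its unramified spectral density reproduces precisely the $L$-value ratio \eqref{unr}, including the crucial factor $L(\pi\otimes\eta,\tfrac{1}{2})$ coming from quadratic base change, while Jacquet's RTF produces squares of toric periods spectrally. Once one establishes that the two global geometric distributions agree under the local transfer furnished by \cite[Theorems 5.1, 5.4]{SaBE1}, matching cuspidal contributions on the spectral side yields Theorem \ref{Waldspurgertheorem}.

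First I would set up both trace formulas with full attention to their analytic subtleties. In the split case Jacquet's RTF has not appeared in print and requires new regularization: the $T$-period of an Eisenstein series is essentially a Tate integral with poles, and the geometric expansion contains a divergent orbit at the identity of $T\backslash G/T$ that must be subtracted. The non-standard KTF requires analogous care, since the transfer operator enlarges the support of test functions, so one must verify that the resulting Poincar\'e series converge and satisfy the usual truncation bounds.

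The main step is the proof of global matching: if $(f^{\text{KTF}}_v, f^{\text{RTF}}_v)$ are transferred at every place, then the full geometric distributions coincide. This is the conceptual heart of the argument because the transfer is not an orbit-by-orbit matching but an integral operator between the spaces of orbital integrals. Its compatibility with the summation over $k$-rational orbits must therefore be established as a Poisson summation identity on the invariant-theoretic base, which for $\PGL_2$ is essentially an affine line. Since the transfer kernel was computed explicitly in \cite{SaBE1}, I expect the argument to reduce, morally, to a global Tate/Weil-type identity for this kernel on $\mathbb{A}^1(\adele)$. This is the step I expect to be most technically demanding: it is the non-standard analogue of matching the geometric sides in standard comparisons, and the place where the analytic number-theoretic content of the ``Beyond Endoscopy'' philosophy enters. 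A byproduct should be a trace-formula-theoretic proof of meromorphic continuation of quadratic base change $L$-functions, as advertised in the introduction.

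Finally, with the global identity in hand, the proof is completed by spectral bookkeeping. Expanding both sides, one isolates the cuspidal contributions: on the Jacquet side a sum of $|\int_{[T]}\phi\,dt|^2$ over an orthonormal basis of each cuspidal $\pi$, and on the Kuznetsov side the sum of Whittaker Plancherel densities weighted, place by place, by the factors \eqref{unr}. Equating the contributions of an individual $\pi$ produces the stated formula, and the constant $\tfrac{1}{2}$ appears naturally from the size of the Weyl group (equivalently, from counting $T$-orbits on $T\backslash G / T$); the appearance of the inner form $G'$ is absorbed by the usual ``pure inner form'' packaging of the RTF. Subsidiary obstacles to clean up are the separation of cuspidal from Eisenstein and residual contributions, particularly in the split case where continuous spectrum plays a non-trivial role, and the passage from the test-function identity to the representation-theoretic identity via density of matrix coefficients; both should be routine once the global geometric matching is secured.
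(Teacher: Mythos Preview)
Your outline follows the paper's strategy, but it misidentifies which steps are hard and which are routine, and it omits one essential ingredient.

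\textbf{Spectral separation is not routine.} You write that ``equating the contributions of an individual $\pi$'' and ``separation of cuspidal from Eisenstein and residual contributions'' should be routine once global geometric matching is in hand. The paper stresses precisely the opposite. Because the non-standard KTF at $s=0$ is defined only by analytic continuation from $\Re(s)\gg 0$ (the inserted $L$-value is not a convergent Euler product), its spectral expansion is not a priori an absolutely convergent sum, and hence not a priori a measure on the space of Satake parameters. To separate representations the paper invokes a \emph{second miracle}: an involution $\mathcal T:\mathcal S(\mathcal W^{-s})\to\mathcal S(\mathcal W^{s})$ at the level of orbital integrals reflecting the functional equation of $L(\pi,\tfrac12+s)L(\pi\otimes\eta,\tfrac12+s)$ (Theorem~\ref{2ndmiracle}). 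This gives control on both edges of a vertical strip, and Phragm\'en--Lindel\"of then forces the functional $h\mapsto \KTF(h\star f_s)$ to be bounded by $\|\hat h\|_{L^\infty(U^S)}$, hence a measure; Stone--Weierstrass on $U^S$ then isolates each $\varphi$ (Theorem~\ref{spectral-KTF}, Theorem~\ref{comparison-spectral}). Your proposal has no mechanism for this step.

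\textbf{Matching of local relative characters is missing.} The global comparison gives $\mathcal J_\varphi(f)=\mathcal I_\varphi(|\bullet|\mathcal Gf)$, which factorizes $\mathcal J_\varphi$ as a product of pullbacks $(|\bullet|\mathcal G)^*I_{\varphi_v}$. To obtain Waldspurger's formula you must identify these pullbacks intrinsically as the Plancherel-normalized torus relative characters $J_{\pi_v^\alpha}$ (for the correct inner form $\alpha$). The paper does this in Theorem~\ref{charmatching}: the key input is that the transfer operator preserves $L^2$ inner products, $\langle |\bullet|\mathcal Gf\rangle=\gamma_v^*(\eta_v,0,\psi_v)\langle f\rangle$, so uniqueness of Plancherel decomposition forces $(|\bullet|\mathcal G)^*I_{\pi_v}$ to equal $J_{\pi_v^\alpha}$ up to an explicit scalar. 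Without this step you have an Euler product, but not one whose local factors are the integrals $\int_{T_v}\langle\pi_v(t)\phi_v,\phi_v\rangle\,dt$ appearing in the theorem.

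Two smaller corrections: the factor $\tfrac12$ does not come from the Weyl group or from $T$-orbits; it is inherited from the Euler factorization of the \emph{Whittaker} period (the formula $\mathcal I_\varphi=\tfrac12\prod'_v I_{\varphi_v}$, i.e.\ the Lapid--Mao/\cite{SV} input), reflecting the Tamagawa volume of $[\PGL_2]$. And the ``pure inner form'' packaging is not merely cosmetic: one must show (Proposition~\ref{factorsthrough}) that for each $\pi_v$ there is a unique inner form $G_v^\alpha$ whose relative character supports the pullback, and that the collection $(\alpha_v)_v$ is globally rational so that the signs $(-1)^{\alpha_v}$ cancel.
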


The determination, in terms of epsilon factors, of whether $\pi_v$ is $G_v$-distinguished, which is a result of Tunnell and Saito, can also be obtained from this relative trace formula-based approach, see Theorem \ref{thmepsilon}, although we need to rely on the proof of Jacquet, combined with Jacquet--Langlands \cite{JL}, in order to verify that the root numbers are the correct ones. The proof includes an independent proof of the meromorphic continuation of the base change $L$-function $L(\pi, \frac{1}{2}+s) L(\pi\otimes \eta,\frac{1}{2}+s)$, see Corollary \ref{analyticcont-cuspidal}, in the spirit of ``Beyond Endoscopy''.

The case of a split torus amounts to a classical result of Hecke, and will also be included in the results of this paper; in that case, the ``Tamagawa measure'' on $[T]$ (induced, formally, from a $k$-rational volume form) has to be multiplied by a factor of $\zeta_v(1)$ at almost every place, and, correspondingly, for those places the factor $L(\eta_v,1)^2 = \zeta_v(1)^2$ will disappear from the denominator of \eqref{unr}, ensuring that the result is formally the same. Notice that the factor $\frac{1}{2}$ appears because we normalize the inner product on $\pi$ according to the inner product on $L^2([\PGL_2])$; if, instead, we were using the norm of the space $L^2(\GL_2(k)\backslash \GL_2(\adele)^1)$, where $\GL_2(\adele)^1$ denotes the elements whose determinant has adelic absolute value $1$, then this factor would not appear.

The input for proving this formula is the analogous formula for Whittaker periods, cf.\ \cite[Theorem 4.1]{LM-Whittaker} or \cite[Theorem 18.3.1]{SV}, where the local $L$-factors $L(\pi_v,\frac{1}{2})L(\pi_v\otimes\eta_v,\frac{1}{2})$ from the numerator of \eqref{unr} are missing. Whittaker periods appear on the spectral side of the Kuznetsov trace formula, and the missing $L$-factors will be ``added'' to the Kuznetsov formula. Classically, this corresponds to a series of Kuznetsov formulas according to the Dirichlet series of this product of $L$-functions, as, for example, in the thesis of Rudnick \cite{Rudnick}. Adelically, it corresponds to using a space of ``non-standard'' Whittaker functions, strictly larger than the usual Schwartz space, that was explained in \cite[\S 4.5]{SaBE1}. At almost every place, this space contains a distinguished unramified ``basic vector'', corresponding to generating function of the unramified Euler factor of the above $L$-function.

To go from the formula for Whittaker periods to the formula for torus periods, we need to compare two ``trace formulas'': the relative trace formula for $T\backslash G/T$, and the Kuznetsov trace formula for $G$, corresponding to the two-sided quotient of $G$ by a unipotent subgroup $N$ equipped with a non-trivial character $\psi$. Symbolically, we write $\mathcal S(\mathcal Z(\adele))$ for the space of orbital integrals of test functions for $T\backslash G/T$  (thinking of $T\backslash G/T$ as an algebraic stack $\mathcal Z$), and similarly $\mathcal S(\mathcal W(\adele))$ for the space of orbital integrals of (our non-standard) test functions for the Kuznetsov formula (with the symbol $\mathcal W$ coming from ``Whittaker''). The invariant-theoretic quotients $T\backslash G\sslash T$ and $N\backslash G\sslash N$ are both affine lines, to be denoted by $\mathcal B$ (for ``base''). Choosing appropriate coordinates, and a trivialization of the line bundle corresponding to the character $\psi$ in the case of the Kuznetsov formula, we can think of the spaces $\mathcal S(\mathcal Z(\adele))$ and $\mathcal S(\mathcal W(\adele))$ of orbital integrals as functions on the adelic points of a Zariski dense open subset of ``regular'' points on the affine line. (This subset is different for $\mathcal Z$ and $\mathcal W$.)

The non-standard matching theorem and a fundamental lemma between those spaces of test functions were proved in \cite{SaBE1}, using a certain ``transfer operator'', that is, an explicit linear isomorphism
\begin{equation}\label{G-intro}|\bullet|\mathcal G: \mathcal S(\mathcal Z(\adele)) \xrightarrow\sim  \mathcal S(\mathcal W(\adele))
\end{equation}
obtained from local isomorphisms under which (at non-Archimedean places) the elements corresponding to orbital integrals of the ``basic vectors'' correspond, as well as the orbital integrals of their convolutions by the same element of the unramified Hecke algebra of $\PGL_2$.

In this article, I compare the global ``trace formulas''. I will consider the relative trace formula for the quotient $\mathcal Z=T\backslash G/T$ as a functional on the space $ \mathcal S(\mathcal Z(\adele))$, denoted by $\RTF$, and similarly the Kuznetsov formula as a functional on $\mathcal S(\mathcal W(\adele))$, denoted by $\KTF$. I take the geometric sides of the trace formulas as the definitions of these functionals; these are, essentially, the Poisson sums of the above functions of orbital integrals over the ``base'' $\mathcal B$:
\begin{equation}\label{naivesum}\RTF(f)\mbox{ or }\KTF(f):= \sum_{\xi\in \mathcal B(k) = k} f(\xi).
\end{equation}
The above expression has to be taken with a grain of salt, as the evaluations of $f$ at singular points of the base have to be interpreted appropriately. The precise definitions are given in \eqref{RTFdef}, \eqref{KTFdef}.

As mentioned previously, now that we have a ``transfer operator'' $\mathcal G$ instead of scalar transfer factors, the comparison of trace formulas cannot simply be obtained by the triviality of some scalars when evaluated on rational elements. Rather, to show that the transfer operator $\mathcal G$ preserves the sums \eqref{naivesum} amounts to a Poisson summation formula. What makes the argument work in this case is that the transfer operator is explicitly described as a consecutive application of Fourier transforms and birational maps on the base, thus ``in principle'' allowing for an application of the (usual) Poisson summation formula for Fourier transform. This is \emph{the first ``miracle''} on which the global method is based, and it eventually leads to a proof of equality between the two trace formulas. The word ``eventually'' conceals a lot of analytic deviousness: since elements of $\mathcal S(\mathcal Z(\adele)), \mathcal S(\mathcal W(\adele))$ are not defined as functions but in a meager set of the adeles (namely, the adelic points of a Zariski open subset), the Poisson summation formula for Fourier transform cannot be applied directly. I give an overview of the technicalities involved in the next subsection.

Relative trace formulas are averages over automorphic representations, and it is quite straightforward in the case of standard comparisons to isolate those (if one has the fundamental lemma for Hecke algebras) in order to obtain a representation-by-representation (or at least packet-by-packet) comparison. However, isolating the representations is much deeper in the case of our non-standard comparison, as the Kuznetsov formula for non-standard test functions is not given by a convergent sum, but by analytic continuation. This is to be expected, of course, since the $L$-value that we inserted is not given by a convergent Euler product. Separating representations amounts to showing that this analytically continued formula is still a measure on the space of Satake parameters, which could be established by appealing to analytic estimates for the $L$-function $L(\pi,\frac{1}{2}+ s)L(\pi\otimes\eta,\frac{1}{2}+ s)$. However, such an approach would beat the purpose, in view of possible applications of this method in higher rank. Indeed, one would like to extract properties of the pertinent $L$-functions from the relative trace formula, and not vice versa. Thus, I follow a different approach that is made possible by appealing to \emph{a second ``miracle''}; the existence of another explicit transform (also satisfying some form of Poisson summation) between certain spaces of orbital integrals; this transform is essentially a reflection of the functional equation of this $L$-function at the level of orbital integrals.

While this article introduces several analytic methods which might work quite generally, the most important question that needs to be resolved, in my opinion, is of algebraic nature: What makes the aforementioned two miracles possible, that is: \emph{why are the comparison of trace formulas and the functional equation represented by Fourier transforms and birational maps at the level of orbital integrals?} Can these miracles be generalized to higher rank? Clearly, we will not know the answer to these questions before more examples of non-standard RTF comparisons are examined.

\subsection{Poisson summation formula} Making the principle ``Fourier transforms and birational maps (i.e., the constituents of the transfer operator $\mathcal G$) preserve Poisson summation'' into a proof, namely, proving the formula
\begin{equation}
 \RTF(f) = \KTF(|\bullet|\mathcal G f),\,\,\mbox{ for all }f\in \mathcal S(\mathcal Z(\adele))
\end{equation}
involves a good deal of adelic analysis, which occupies the first part of this article. The problem is that the elements of the spaces $\mathcal S(\mathcal Z(\adele))$ and $\mathcal S(\mathcal W(\adele))$ of orbital integrals  are only defined on a Zariski dense open subset $\mathcal B^\reg$ of the base $\mathcal B$ (in our case, the affine line). (The Zariski open subset $\mathcal B^\reg$ is different for each of the two RTFs that we are considering.) Locally, the subset $\mathcal B^\reg(k_v)$ of $\mathcal B(k_v)$ is dense; globally, its adelic points, however, are of measure zero in the adelic points of $\mathcal B$. 

A first approach to the Poisson summation formula would be to replace the local factors outside of a finite set of places $S$ by standard Schwartz functions on the affine space $\mathcal B(k_v)$; this will make them honest functions on a subset of $\mathcal B(\adele)$ of full measure, and one could hope to take a limit with $S$.

However, this still does not work, because it leads to logarithmically divergent terms. The solution lies in ``deforming'' the spaces $\mathcal S(\mathcal Z(\adele))$, $\mathcal S(\mathcal W(\adele))$ of orbital integrals. In our case, the key fact is that the germs of orbital integrals (locally) at the ``irregular'' points of $\mathcal B(k_v)$ have singularities which behave like generalized eigenfunctions for the multiplicative group of a local parameter. For instance, they may be of the form
$$C_1(\xi)\log|\xi|_v+C_2(\xi),$$ where $C_1, C_2$ are smooth functions, and $\xi$ is a coordinate for $\mathcal B(k_v)$; this is a generalized eigenfunction for the multiplicative group $k_v^\times$, of degree two. We may continuously deform its eigencharacter so that our function obtains the form $D_1(t)+D_2(t)|\xi|_v^t$, where $t\in \CC$ is a parameter; this eliminates some logarithmic divergence and opens the way for an application of the Poisson summation formula (when $\Re(t)\gg 0$). (The precise forms of germs and their deformations are described in \S \ref{ssaxiomslocal}.)

To demonstrate this argument, and establish several useful facts that we need, we first prove a Poisson summation formula for a ``baby case'' in sections \ref{sec:Poisson-baby} and \ref{sec:direct-baby}, where the relative trace formulas get replaced by the quotient of a quadratic extension by the group of its elements of norm one. This is a good ``infinitesimal'' model for our theory, and proving some theorems in this setting makes it easier to understand the argument, and saves us from a lot of heavy notation. However, for the comparison of relative trace formulas that we are interested in, things are more complicated because the $L$-values inserted in the Kuznetsov trace formula are not represented by convergent Euler products. Analytically, this is reflected by the fact that the orbital integrals $\mathcal S(\mathcal W(\adele))$ of our non-standard test functions for the Kuznetsov formula are not of sufficiently fast decay at infinity, thus the sum \eqref{naivesum} does not converge. Therefore, in section \ref{sec:Poisson-RTF} we vary the $L$-values that we insert in the Kuznetsov formula with a second parameter $s$, constructing a family of spaces of non-standard orbital integrals $\mathcal S(\mathcal W^s(\adele))$, which for $s\in \mathbb C$ is tailored to produce the $L$-value $L(\pi_v,\frac{1}{2}+ s)L(\pi_v\otimes\eta_v,\frac{1}{2}+ s)$ (at least at unramified places). This is to be compared with a similar deformed space $\mathcal S(\mathcal Z^s(\adele))$ for the torus RTF, which is not interpreted as a space of orbital integrals, but just as a space of functions on $\mathcal B^\reg(\adele)$, specializing to our space of orbital integrals when $s=0$. For $\Re(s)\gg 0$, the technique described above proves a Poisson summation formula:
\begin{equation}
 \label{R=K}
 \RTF(f) = \KTF(|\bullet|^{s+1} \mathcal Gf),
\end{equation}
s.\ Theorem \ref{PSF}.

On the side of $\mathcal S(\mathcal Z^s(\adele))$, now, our test functions are of rapid decay, and the Poisson sum \eqref{naivesum} converges for arbitrary $s$. This proves the analytic continuation of the Kuznetsov formula for non-standard test functions given by an arbitrary value of $s$. The next goal is to isolate the contributions of individual $L$-packets to \eqref{R=K}. The notion of (global) $L$-packets is used in analogy to the local $L$-packets of Vogan \cite{Vogan}, and is due to the fact that the ``stack'' $\mathcal Z = T\backslash G/T$ includes contributions from inner forms of $G$. Conveniently, global packets in our case are determined by strong multiplicity one: automorphic representations for $\PGL_2$ and its inner forms belong to the same packet if and only if they are locally equivalent almost everywhere.

\subsection{Spectral decomposition}

The fact that the Kuznetsov trace formula for $s=0$ is only described as the analytic continuation of some expression which converges for large values of $\Re(s)$ makes the spectral analysis much more complicated than in usual trace formula comparisons.  However, it is important to stress that going beyond the domain of convergence does not rely on hard analytic number theory: for the sum over automorphic representations that constitutes the Kuznetsov formula, it is a direct outcome of the above argument, where the Kuznetsov sum is equated to the analogous Poisson sum for the deformation $\mathcal S(\mathcal Z^s(\adele))$ of the space of orbital integrals of the torus trace formula. 

Nonetheless, this just proves the analytic continuation of a weighted average of $L$-functions over all automorphic representations, and is not enough to separate the equality of trace formulas representation-by-representation. The problem here is that while in usual RTFs the spectral expansion (or the interesting part of it) is absolutely convergent and hence, under the action of the Hecke algebra on test functions, a measure on the set of Satake parameters, here it is not a priori so. To exhibit it as a measure on the set of Satake parameters (s.\ the theorems stated in \S \ref{ssspectral-results}), and to obtain period relations for each individual packet out of the equality \eqref{R=K} between the two trace formulas, we use what was before called the ``second miracle'', which is a reflection of the functional equation of $L(\pi,\frac{1}{2}+ s)L(\pi\otimes\eta,\frac{1}{2}+s)$ at the level of orbital integrals. This is an explicit linear isomorphism
\begin{equation}\label{T-intro} \mathcal T: \mathcal S(\mathcal W^{-s}(\adele)) \xrightarrow\sim \mathcal S(\mathcal W^s(\adele))
\end{equation}
which again satisfies the ``fundamental lemma for all elements of the unramified Hecke algebra'' and preserves Poisson sums, s.\ Theorem \ref{2ndmiracle}. An application of the Phragm\'en-Lindel\"of principle now allows us to bound the Kuznetsov formula as a functional on the Hecke algebra inside of the critical strip for the $L$-function, and to isolate terms with different Satake parameters in the equality \eqref{R=K}. This spectral analysis is performed in sections \ref{sec:spectral-main} and \ref{sec:spectral-proofs}.

\subsection{The result of Waldspurger} 

Finally, in section \ref{sec:Waldspurger} I use this packet-by-packet comparison that is obtained from the previous section to deduce the result of Hecke and Waldspurger, Theorem \ref{Waldspurgertheorem}, on toric periods with trivial character.

The starting point is the packet-by-packet identity obtained from \eqref{R=K}, which has the form
$$ \mathcal J_\varphi (f)= \mathcal I_\varphi (|\bullet|\mathcal G f).$$
Here $\varphi$ denotes a collection of Hecke eigenvalues outside of a finite set of places, corresponding to a generic automorphic representation. The distributions $\mathcal J_\varphi$ and $\mathcal I_\varphi$ are ``relative characters'' (also called spherical characters or Bessel distributions) obtained from the period functionals
$$ F \mapsto \int_{[T]} F(t) dt$$
and
$$ F\mapsto \int_{[N]} F(n) \psi^{-1}(n) dn$$
on automorphic forms. The operator $|\bullet|\mathcal G$ is the transfer operator, as before.

On the Kuznetsov side, one has a well-known Euler factorization into local functionals, that I alluded to in \S \ref{ssnonstandardintro}, which has the form
$$ \mathcal I_\varphi = \frac{1}{2}\prod^\prime_v I_{\varphi_v},$$
where the Euler product is not literally convergent (which is why it is denoted by $\prod^\prime$), but can be interpreted using partial $L$-functions.

This gives an Euler factorization of the relative character $\mathcal J_\varphi$ for the torus period, but it is quite indirect: the local factors are described as pull-backs, via the transfer operators, of the local functionals $I_{\varphi_v}$ on the Kuznetsov side.
To obtain Waldspurger's result, one needs to describe them intrinsically as $T_v$-biinvariant distributions on $G_v$ (or an inner form). This is a usual problem with the relative trace formula: what is the transfer of relative characters? Obtaining the answer is often quite involved (see, for example, \cite{Wei1}).

It is very encouraging that this method seems tailored to give the correct local factors without any complicated arguments. Namely, both $I_{\varphi_v}$ and the desired factors $J_{\varphi_v}$ can be characterized in terms of the Plancherel formula for the pertinent homogeneous spaces, and their matching via the transfer operators is an immediate consequence of the fact that transfer operators preserve $L^2$-inner products! Thus, the present method is directly fitted to the framework of the general period conjecture \cite[Conjecture 17.4.1]{SV}.

\subsection{Relation to other methods}

The two ``miracles'' that make the method of this paper possible, both of local nature, are reflections at the level of orbital integrals of methods that have been used before to prove the same final result. I explained this briefly in \cite[section 5]{SaBE1}, and explain it again in the proof of the ``second miracle'' in \S \ref{ssfe}; this fact is used to avoid local calculations -- calculations, to be sure, that can in a straightforward albeit tedious way be performed directly. In a nutshell, orbital integrals for the relative trace formula for $T\backslash G/T$ 
are equal to orbital integrals for the relative trace formula $A\backslash G/(A,\eta)$ by the work of Jacquet \cite{JW1}, where $A$ is a split torus and $\eta$ is the quadratic character associated to the splitting field of $T$, and the ``unfolding'' method of Hecke provides a passage from $C_c^\infty(A\backslash G)$ (or $C_c^\infty(A\backslash G, \eta)$) to Whittaker functions (non-compactly supported). This passage descends, roughly, to our ``transfer operator'' $|\bullet|\mathcal G$ at the level of orbital integrals. 

Because of this fact, I do not know if this method can be generalized to higher rank -- where the methods of Jacquet and Hecke certainly do not generalize. I certainly hope so, and if the adequacy of the relative trace formula for expressing the fine details of distinction, or the direct relevance of this method to the general Plancherel-theoretic setting are any indication, one has reasons to be optimistic. After the first version of this paper was written and submitted, I noticed that this method can be extended to prove the full result of Waldspurger, with a character on the torus, while Jacquet's comparison of \cite{JW1} cannot. (Jacquet eventually used an entirely different comparison in \cite{JW2} to address the general case.) In any case, one needs to examine many more examples of non-standard comparisons, which is something I plan to do in the near future.

In any case, it is important to remark that we make absolutely no use of the methods of Jacquet and Hecke in global arguments. The equality between the two relative trace formulas is obtained by completely independent means, namely the Poisson summation formula that we described above. Given that the local calculations can also be done ``by hand'', our method is completely self-contained. We also don't make use of any hard facts about $L$-functions, except for the meromorphic continuation and polynomial growth in bounded vertical strips of partial abelian $L$-functions.   

\subsection{Relation to ``Beyond Endoscopy''}

The ``Beyond Endoscopy'' pro\-ject of R.\ Langlands \cite{Langlands-BE} is a very ambitious project aiming at proving functoriality to its full extent. The vision, very simplistically, is to compare, for any embedding ${^LG}_1\to {^LG}_2$ of $L$-groups, the stable trace formula for $G_1$ with that part of the stable trace formula of $G_2$ which corresponds, spectrally, to the expected lift of representations. To isolate the latter, one uses non-standard test functions\footnote{In classical language, these non-standard test functions correspond to
``series of trace formulas''; for example, some version of the Kuznetsov trace formula can isolate the $n$-th Fourier coeffient of automorphic forms on $\GL_2$, and one takes a weighted sum over $n$ corresponding to the Dirichlet series expressing the desired $L$-function in terms of Fourier coefficients.} in the trace formula of $G_2$ to introduce suitable $L$-functions on the spectral side; and one hopes to be able to calculate residues that will ``capture'' functorial lifts.

I repeat that the above is a very simplistic presentation of the proposed project. However, even the smallest steps give rise to tremendous difficulties. In particular, a lot of effort has been focused on obtaining analytic continuation of the expressions obtained when introducing $L$-functions. Some of the papers doing this for various $L$-functions include \cite{V,Herman, White}; in particular, \cite{White} treats the same $L$-functions that we treat in this paper, showing analytic continuation in a strip beyond the domain of convergence of the Euler product. 
In applications to functoriality, one usually faces the problem of isolating individual representations in a trace formula comparison, which has not been successfully resolved: a common recourse is estimates for $L$-functions obtained by other methods \cite{V}, something which clearly should be avoided for completion of the project. An exception is \cite{Herman-FE}, where the analytic continuation of the standard $L$-function for $\GL_2$ is obtained by an analog of the functional equation at the level of orbital integrals; this is similar to the method we use here.

Comparing the present paper to the above methods, several similarities and differences should be observed:
\begin{itemize}
 \item I also use non-standard test functions to introduce $L$-functions into the trace formulas. 
 \item However, I obtain the \emph{full} analytic continuation of these $L$-functions without any hard analysis; rather, the method is more conceptual, and relies on being able to compare the Kuznetsov formula with non-standard test functions depending on a parameter $s$ (and convergent for $\Re(s)\gg 0$) with a \emph{deformation} of another relative trace formula with \emph{standard} test functions, which therefore has meromorphic continuation for \emph{every $s$}.
 \item Here, as in Beyond Endoscopy, the comparisons are not via scalar transfer factors, but by a series of Fourier transforms and other operations, i.e., by \emph{Poisson summation}. Poisson summation has been used in \cite{FLN, Altug1} to isolate the contribution of the trivial representation, and in various other references to obtain estimates for the ``series of trace formulas'' that allow continuation beyond the domain of convergence. However, to the best of my knowledge, a full comparison of trace formulas using Poisson summation formulas has not appeared before, with the notable exception of Rudnick's thesis \cite{Rudnick}.
 \item Despite these similarities in method, it should be emphasized that the type of ``relative functoriality'' that I prove here is closer in spirit to endoscopy rather than ``Beyond Endoscopy''. Indeed, if one admits the point of view that I alluded to above, namely: each invariant trace formula is a sum of its ``quasi-stable'' parts, and each has its own $L$-group, then endoscopy is a matter of comparing quasi-stable trace formulas with the \emph{same} $L$-group. Here, too, I am comparing the relative trace formula for $T\backslash G/T$ with the Kuznetsov trace formula for $G(=\PGL_2)$; both are stable, and their $L$-group is $\SL_2$. In contrast, the goal of Langlands is to ``extract'' from a given stable trace formula the contribution of a smaller $L$-group. While this is far from my scope, the non-standard comparisons that I introduce here, and hope to study in the future, may give some indication of more conceptual ways to proceed with the desired comparisons of ``Beyond Endoscopy''.
\end{itemize}

\subsection{Acknowledgements} I would like to thank Akshay Venkatesh for pointing my attention to his thesis \cite{V} as a possible source of ideas for attacking the period conjectures of \cite{SV}. I would also like to thank Joseph Bernstein, who taught me the correct way to think about several aspects of the relative trace formula. Finally, I am very grateful to the anonymous referee for a careful reading and numerous small corrections and suggestions. This work was supported by NSF grants DMS-1101471 and DMS-1502270. 

\subsection{Notation}\label{ssnotation} Some of the notation is local, redefined in every section; for example, $X$ and $Y$ are reserved for varieties which change throughout the text. Here we give a summary of the symbols that are used globally; more notation, used in the second part of the paper, is introduced and summarized in \S \ref{notation-spectral}.

\begin{itemize}
 \item $k$ is a global field with ring of adeles $\adele$, $E$ a quadratic etale extension, hence either a quadratic field extension of $k$ or the ring $k\oplus k$. The ring of integers of $k$ at a non-Archimedean place $v$ will be denoted by $\mathfrak o_v$, its residue field degree by $q_v$ and a uniformizing element by $\varpi_v$. We will denote $E_v = E\otimes_k k_v$, and at non-Archimedean places $\mathfrak o_{E_v}$ will denote the ring of integers of $E_v$. The quadratic idele class character associated to $E$ is denoted by $\eta$. If it is clear from the context that $\xi$ is an element of $k_v^\times$, We feel free to write $\eta(\xi)$ for the evaluation of $\eta$ via the embedding $k_v^\times\hookrightarrow\adele^\times$; but sometimes, for emphasis, we write $\eta_v$ instead. The same comment holds for absolute values, as well as zeta- and $L$-functions: we write $L(\eta_v,s)$ or $L_v(\eta_v,s)$ when we want to emphasize that we are referring to the local factors, etc. The usual, unnormalized, absolute values which satisfy the product formula are being used on the completions $k_v$; thus, for non-Archimedean places the absolute value is $q_v$ raised to the opposite of the valuation, while for complex places the absolute value is the square of the usual one. For a variety $X$ over $k$, we denote $X_v:= X(k_v)$.
 \item We fix throughout a complex character $\psi$ of $\adele/k$ and a factorization $\psi= \prod_v \psi_v$, such that outside of a finite set of places the conductor of $\psi_v$ is the ring of integers of $k_v$.
 \item We fix the standard Tamagawa measure $dx$ on $\adele$, together with the factorization: $dx=\prod_v dx_v$ into self-dual measures with respect to the characters $\psi_v$. For non-Archimedean places unramified over $\QQ_p$ or $\FF_p((t))$, when the conductor of $\psi_v$ is $\mathfrak o_v$ this measure is such that $dx_v(\mathfrak o_v)=1$.
 \item Fourier transform on $k_v$ is defined as:
 \begin{equation}\label{Fourier-intro} \mathcal F(\Phi) (x)= \hat \Phi (x):= \int_{k_v} \Phi(y) \psi_v^{-1}(xy) dy,
 \end{equation}
 where $dy$ and $\psi_v$ are the aforementioned measure and character.
 \item The ``transfer operator''\footnote{This is the transfer operator for the ``baby case''. For the comparison between the two relative trace formulas, the transfer operator is $|\bullet| \mathcal G$, as in \eqref{G-intro}.} $\mathcal G$ is defined between certain spaces of densely defined functions on $k_v$, considered as tempered distributions (sometimes by analytic continuation). It is given by: 
$$ \mathcal G = \mathcal F \circ \iota\circ \mathcal F,$$
where $\mathcal F$ is the Fourier transform \eqref{Fourier-intro}, and $\iota$ is the transformation:
$$ \iota f(x) = \frac{\eta_v(x)}{|x|_v} f\left(\frac{1}{x}\right).$$ 
 \item The action of the multiplicative group $k_v^\times$ on functions on $k_v$ is normalized in \eqref{unitaryaction} in order to be unitary on $L^2$; this normalization makes Fourier transform on $k_v$ anti-equivariant with respect to the action of $k_v^\times$.
 \item A finite set $S$ of places of $k$ will always, implicitly, include all Archime\-dean places, all places which are ramified over $\QQ_p$ or $\FF_p((t))$, and all finite places where $\psi_v$ does not have the ring of integers as its conductor, together with any other places specified in the text. Statements about ``almost all'' places will, implicitly, exclude such a set $S$. The $S$-integers of $k$ will be denoted by $k_S$.
 \item For a finite set $S$ of places, expressions of the form $\zeta^S(s)$, $L^S(\eta,s)$, $L^S(\pi,s)$ will denote the partial $L$-functions indicated (i.e., the $L$-functions with the Euler factors at places of $S$ omitted). We will use $(\zeta^S(s))^*$, $(L^S(\eta,s))^*$, etc.\ to denote the leading term in the Laurent expansion of this partial $L$-function at $s$.
 \item For $p$-adic groups, the usual notion of ``smooth'' vectors and representations typically gives rise to LF vector spaces, i.e., strict inductive limits of Fr\'echet spaces. To achieve uniformity with the Archimedean case, I described in \cite[Appendix A]{SaBE1} a notion of ``almost smooth'' vectors which gives rise to Fr\'echet space representations. For uniformity of presentation, we work with such Fr\'echet spaces both in the Archimedean and non-Archimedean cases, calling these vectors (by abuse of language) ``smooth''; however, the reader can ignore this and focus on smooth vectors in the traditional sense, replacing the Fr\'echet spaces that we consider with the corresponding LF spaces of their smooth vectors. 
 \item $\mathcal S$ generally denotes spaces of test functions or their orbital integrals. Unless otherwise stated, for a smooth variety $Y$ we denote by $\mathcal S(Y_v)$ the space of Schwartz functions on $Y_v=Y(k_v)$, namely the space of rapidly decaying, smooth functions on $Y_v$. Here, again, at non-Archimedean places one can consider the usual LF space of locally constant, compactly supported functions, or the Fr\'echet space of ``almost smooth'', rapidly decaying functions, defined in \cite[Appendix A]{SaBE1}. The reader can choose to consider either of the two, but for uniformity of language we will be talking about Fr\'echet spaces. 

The spaces denoted by $\mathcal S$ are always sections of ``Schwartz cosheaves'' in the language of \cite[Appendix B]{SaBE1}. Thus, for example, $\mathcal S(k_v^\times)$ denotes smooth functions that vanish faster than positive and negative powers of $x$ both close to $0$ and close to $\infty$.
 \item The notion of ``stalk'' for Schwartz cosheaves over a closed (semialgebraic) subset $Z\subset Y_v$ is again that of \cite[Appendix B]{SaBE1}: by definition, the stalk is the quotient of sections over $Y_v$ by sections over $Y_v\smallsetminus Z$. For example, two Schwartz functions have the same germ over $Z$ if they differ by a Schwartz function on the complement of $Z$. 
\item For certain families of Fr\'echet spaces parametrized by a complex parameter we introduce in Appendix \ref{sec:families} notions of ``polynomial families of seminorms'' and of ``sections of polynomial growth/rapid decay''. These notions always refer to their behavior as the parameter varies in \emph{bounded vertical strips}.
 \item The action of an element $h$ in the Hecke algebra of smooth, compactly supported measures on a $p$-adic or real group $G$ on a vector $v$ in a smooth representation is denoted by $h\star v$; we denote by $h^\vee$ the linear dual of $h$, i.e., $h^\vee(g) = h(g^{-1})$. (No topology on the full Hecke algebra will be used in this paper, and we will only need locally constant, compactly supported measures at non-Archimedean places, but again one can instead consider the Fr\'echet Hecke algebra of ``almost smooth'' Schwartz measures, both at Archimedean and non-Archimedean places.)
 \item $G$ denotes the group $\PGL_2$ over $k$. We let $[G]=G(k)\backslash G(\adele)$ (and similarly for other groups), and $[G]_\emptyset =  [G]_\emptyset = A(k)N(\adele)\backslash G(\adele)$, where $B=AN$ is a Borel subgroup of $G$ with a Levi decomposition, with $N$ the unipotent radical of $B$.
\item The (smooth) principal series representation of $G_v$ unitarily induced from a character $\chi_v$ of a Borel subgroup $B(k_v)$ is denoted by $I(\chi_v)$; that is, $I(\chi_v)$ is the space of smooth function on $G_v$ satisfying: $f(bg) = \chi_v\delta^\frac{1}{2}(b) f(g)$ for every $b\in B_v$ and $G\in G_v$, where $\delta$ denotes the modular character of the Borel (i.e., the quotient of right by left Haar measure).
\item $T$ denotes a $k$-torus in $G$, associated to the quadratic extension $E$ that we fixed before; that is, $T$ is split if $E=k\oplus k$, and it splits over $E$, if $E$ is a field. It can be identified with the quotient $\Res_{E/k}\Gm/\Gm$, or with the group of norm one elements of $\Res_{E/k}\Gm$. For an isomorphism class $\beta$ of $T$-torsors (over $k$ or a completion $k_v$), we let $T^\beta$, $G^\beta$ denote the isomorphism classes of the groups $\Aut_T(R^\beta)$ and $\Aut_G(G\times^T R^\beta)$, where $R^\beta$ is a representative of $\beta$; here and throughout, $\times^T$ denotes the quotient of the product of the two varieties by the (free) diagonal action of $T$.
 Of course, $T^\beta$ is isomorphic to $T$, while $G^\beta$ is an inner form of $G$, and such a realization (fixing $R^\beta$) gives rise to an embedding: $T^\beta\hookrightarrow G^\beta$.
\item $\mathcal Z$ is used as a symbol for the quotient stack $T\backslash G/T$. The stack-theoretic point of view is not necessary for reading this paper; what matters is the space $\mathcal S(\mathcal Z_v)$ of orbital integrals for the $T\backslash G/T$-relative trace formula at a place $v$ of $k$. This space was introduced in \cite{SaBE1}, s.\ references in \S \ref{globalSchwartz}. I note that it encodes orbital integrals not only for a single pair $(G,T)$, but of a class of such pairs parametrized by the first Galois cohomology group of $T$ (i.e., by the set of isomorphism classes of $T$-torsors).
\item Similarly, $\mathcal W$ is a symbol for the stack $N\backslash G/N$, but equipped with a line bundle that is determined by the non-degenerate character $\psi$ of $N(\adele)\simeq \adele$, where $N$ is identified with $\Ga$ over $k$. Again, the stack-theoretic point of view is not necessary, and instead what matters is a space $\mathcal S(\mathcal W_v)$ of orbital integrals (s.\ again \ref{globalSchwartz}). These are orbital integrals of a space of \emph{non-standard Whittaker functions} tailored to produce the $L$-value $L(\pi_v,\frac{1}{2})L(\pi_v\otimes\eta_v,\frac{1}{2})$. This space will also be denoted by $\mathcal S(\mathcal W_v^0)$, where $\mathcal S(\mathcal W_v^s)$ is, more generally, a space of orbital integrals of Whittaker functions tailored to produce the $L$-value $L(\pi_v,\frac{1}{2}+ s)L(\pi_v\otimes\eta_v,\frac{1}{2}+ s)$.
\item Similarly, $\mathcal X$ is a symbol for the stack $\Res_{E/k}\Ga/T$, where $T, E$ are as above, with $T$ now identified with the group of elements of $E^\times$ of norm one. This is the ``baby case'' -- an infinitesimal version of $T\backslash G/T$ -- discussed in sections \ref{sec:Poisson-baby} and \ref{sec:direct-baby}, and again it is not the stack-theoretic point of view that matters, but the associated space $\mathcal S(\mathcal X_v)$ of orbital integrals.
\item $\mathcal B$ denotes one-dimensional affine space, the ``base'' of our quotient stacks $\mathcal X, \mathcal Z,\mathcal W$. In each of these cases, $\mathcal B$ is identified with the associated invariant theoretic quotient, except that in the Kuznetsov case ($\mathcal W$) we also invert the variable. (So, the invariant-theoretic quotient $\spec k[N\backslash G]^N$ is isomorphic to the affine line, but identified with $\Gm \cup \{\infty\}$, with $\Gm\subset \mathcal B$, while in the other cases $\mathcal B \simeq \spec k[T\backslash G]^T \simeq \spec k[\Res_{E/k} \Ga]^T$ through specific isomorphisms that we fix.)

In each of these cases, there is an open dense subset $\mathcal B^\reg$ of $\mathcal B$ (different in each case), which is identified with the ``regular'' set of the corresponding quotient. We will be using this notation when it is clear which quotient space we are referring to, and the notation $\mathcal B^\reg_{\mathcal Z},\mathcal B^\reg_{\mathcal W}$, etc.\ when we want to indicate the quotient space.

The Schwartz spaces of orbital integrals $\mathcal S(\mathcal X_v), \mathcal S(\mathcal Z_v), \mathcal S(\mathcal W_v)$ are identified with functions on the pertinent open subset $\mathcal B^\reg_v = \mathcal B^\reg(k_v)$ of regular points on the base.

\end{itemize}

\part{Poisson summation}

\section{Generalities and the baby case.} \label{sec:Poisson-baby}

\subsection{Global Schwartz spaces} \label{globalSchwartz}

If $v$ is any place of $k$, I introduced in \cite{SaBE1} certain local ``Schwartz spaces'' of measures and functions on a dense open subset $\mathcal B^\reg$ (depending on the case considered) of $\mathcal B(k_v) = k_v$, denoted (here with the appropriate subscript $v$) by
$$ \mathcal M(\mathcal X_v), \,\, \mathcal M(\mathcal Z_v), \,\, \mathcal M(\mathcal W^s_v),$$
for the measures, resp.
$$ \mathcal S(\mathcal X_v), \,\, \mathcal S(\mathcal Z_v), \,\, \mathcal S(\mathcal W^s_v)$$
for the functions. The reader may restrict their attention to the quotient $\mathcal X$ and then read the treatment of the ``baby case'' in this and the next section, before returning here for the general definitions.

All of these spaces are obtained as push-forwards of measures, resp.\ regular orbital integrals, for certain quotients: the first is for the quotient (symbolically written $\mathcal X$) of a two-dimensional quadratic space by $T = \SO_2$ (or, equivalently, a two-dimensional etale algebra over our base field, divided by the kernel of the norm map) \cite[\S 2.4, 2.5, 2.9]{SaBE1}, the second (denoted $\mathcal Z$) for the quotient $(T\backslash G \times T\backslash G)/G = T\backslash G/T$, where $G = \PGL_2$ \cite[\S 3.5]{SaBE1}, and the third for the quotient associated to the Kuznetsov trace formula for $\PGL_2$, but equipped with non-standard test functions depending on a parameter $s$, as explained in \cite[\S 4.3, 4.5, 4.6, 6.1]{SaBE1}. When $s=0$, we will also be using the notation $\mathcal S(\mathcal W_v)$, without the $s$-exponent. All of these spaces are Fr\'echet and, more precisely, sections of ``Schwartz cosheaves'' over $\PP^1(k_v)$, in the language of \cite[Appendix B]{SaBE1}.

Let us for now use the symbol $\mathcal Y$ to stand for either of $\mathcal X, \mathcal Z$ or $\mathcal W^s$. The spaces of measures are more canonical (since they are obtained as push-forwards of measures ``upstairs'', although there are some choices involved in the case of $\mathcal W^s$ in order to trivialize the bundles associated to a character of a unipotent subgroup), but in any case we have defined, by choosing suitable measures on $\mathcal B(k_v)$, linear isomorphisms
\begin{equation}\label{MtoS}
 \mathcal M(\mathcal Y_v) \xrightarrow{\sim} \mathcal S(\mathcal Y_v),
\end{equation}
These isomorphisms are not important for the present article, since we will not be working with measures.

Now assume that the above quotients are defined over a global field $k$. That is, there is a quadratic etale algebra $E/k$ and, letting $T = $ the kernel of the norm map from $E$ to $k$, we have $\mathcal X = \Res_{E/k}\Ga/T$, and $\mathcal Z = T\backslash \PGL_2/T$, where $T$ has been embedded in $\PGL_2$. For the Kuznetsov quotient $\mathcal W^s$, we need to identify a unipotent subgroup $N$ of $\PGL_2$ with $\Ga$ (over $k$), and then use the additive adele class character $\psi$ and its factorization that we fixed in the notation section \ref{ssnotation}.

We will now define global Schwartz spaces as restricted tensor products of these local Schwartz spaces. For this, we will need to choose a ``basic vector'' at almost every place. It would seem more natural at first, for $\mathcal Y = \mathcal X $ or $\mathcal Z$, to choose the characteristic measure of the integral points of the space ``upstairs'', and take its image in $\mathcal S(\mathcal Y_v)$ to be the basic vector. For example, for $\mathcal Y = \mathcal X = \Res_{E/k} \Ga$, this vector at a non-Archimedean place $v$ would be the image in $\mathcal M(\mathcal X_v)$ of the probability Haar measure on the ring of integers $\mathfrak o_{E_v}$. Let us temporarily call this the ``characteristic measure of $\mathcal Y(\mathfrak o_v)$''. (The case $\mathcal Y = \mathcal W^s$ should be treated separately.)

One can see that the restricted tensor product $\otimes^\prime_v\mathcal M(\mathcal Y_v)$ with respect to this ``basic vector'' is a vector space of well-defined measures on $\mathcal B(\adele)$. However, these push-forwards are not absolutely continuous with respect to Haar measure on $\mathcal B(\adele) = \adele$, and hence do not naturally give rise to functions on that space. Therefore, this basic vector is not well-suited for evaluation at rational points. 

Instead, we will be working with the spaces $\mathcal S(\mathcal Y_v)$ of Schwartz functions and define, in all three cases, a ``global Schwartz space of functions''
\begin{equation}
 \mathcal S(\mathcal Y(\adele)) := \widehat\otimes^\prime_v \mathcal S(\mathcal Y_v),
\end{equation}
defined with respect to some ``basic vectors/functions'' $f_{\mathcal X_v}^0$, $f_{\mathcal Z_v}^0$, $f_{\mathcal W_v^s}^0$.

For $\mathcal Y = \mathcal X$ or $\mathcal Z$, the basic vector is \emph{not} the function corresponding under (\ref{MtoS}) to the characteristic measure of $\mathcal Y(\mathfrak o_v)$; instead it will be taken \emph{with respect to the multiple thereof},  \emph{characterized by the the property}:
\begin{equation}\label{basicvectornormalization}
 f_{\mathcal Y_v}^0|_{\mathcal B^\reg(\mathfrak o_v)} \equiv 1.
\end{equation}
This way, elements of the restricted tensor product can be considered as functions on $\mathcal B^\reg(\adele)$. Explicitly, the basic vector for $\mathcal Y_v = \mathcal X_v$ and $\mathcal Z_v$ is obtained from the $T(k_v)$-orbital integrals of the characteristic function of $\mathfrak o_{E_v}$, resp.\ $T\backslash \PGL_2(\mathfrak o_v)$, with the measure on $T(\mathfrak o_v)$ normalized to be equal to $1$. In the case of $\mathcal Z_v$, if we think of it as the quotient of $(T\backslash G)^2$ by the diagonal action of $G$, then the basic vector $f_{\mathcal Z_v}^0$ is obtained from the $G(k_v)$-orbital integrals of the characteristic function of $(T\backslash G)^2(\mathfrak o_v)$ with the measure on $G(\mathfrak o_v)$ equal to $1$.

I remind that coordinates have been chosen in \cite{SaBE1} so that the regular sets are $\mathcal B_{\mathcal X}^\reg = \mathcal B\smallsetminus\{0\}$ and $\mathcal B_{\mathcal Z}^\reg= \mathcal B\smallsetminus\{0,-1\}$. Notice that $\mathcal B^\reg(\adele)$ is a set of additive measure zero in $\mathcal B(\adele)$, and that elements of these restricted tensor products \emph{do not} make sense as functions on $\mathcal B(\adele)$. 

For $\mathcal S(\mathcal W^0_v)$ the basic vector $f_{\mathcal W_v^s}^0$ will be a \emph{multiple} of what was called ``basic vector'' and denoted by $f_s^0$ in \cite[\S 6.2, (6.3)]{SaBE1}. I postpone the precise definition of this scalar (which does not depend on $s$), and point the reader to \S \ref{spacesRTF}. The value of $f_{\mathcal W_v^s}^0$ on  $\mathcal B_{\mathcal W}^\reg(\mathfrak o_v) = \mathfrak o_v^\times$ is that of the function $f_{4,v}^{0,0}$ in table \eqref{table}. The Euler product of these regular values does not make sense unless $\Re(s)\gg 0$, which is enough for us because it is only for such values of $s$ that the Kuznetsov trace formula will be expressible as a convergent sum. For $s=0$, the basic vector $f_{\mathcal W_v^s}^0$ is the image of the basic vector $f_{\mathcal Z_v}^0$ under the transfer operator 
$$|\bullet|\mathcal G: \mathcal S(\mathcal Z_v) \xrightarrow\sim \mathcal S(\mathcal W^s_v)$$
that we will recall later.

Notice that for a finite number of factors, there is a canonical notion of completed tensor product, since the spaces are nuclear. The infinite, restricted tensor product over all places is therefore an LF topological vector space.

\subsection{Relative trace formulas and their comparison}

The relative trace formula for each of the cases $\mathcal Y = \mathcal X, \mathcal Z, \mathcal W^s$ will be a functional
$$ \mathcal S(\mathcal Y(\adele)) \to \CC$$
defined, roughly, as ``summation over $k$-points of the base $\mathcal B$''. We are not ready to give a self-contained definition, but we will define it with references to the following sections. 

In the baby case $\mathcal Y=\mathcal X$, where $\mathcal B_{\mathcal X}^\reg= \mathcal B\smallsetminus\{0\}$, the functional is defined as
\begin{equation} f\mapsto \sum_{\xi\ne 0} f(\xi) + \tilde O_0(f),\end{equation}
where $\tilde O_0$ denotes an ``irregular orbital integral'' to be defined in \S \ref{ssPoissonbaby}. 

In the torus case, $\mathcal Y=\mathcal Z$, where $\mathcal B^\reg_{\mathcal Z}= \mathcal B\smallsetminus\{0,-1\}$, the functional will be denoted by $\RTF$ and is defined as
\begin{equation}\label{RTFdef}
 \RTF(f) = \sum_{\xi\ne 0, -1} f(\xi) + \tilde O_0(f) + \tilde O_{-1}(f).
\end{equation}
Because the local behavior of orbital integrals around $\xi = 0, -1$ is the same as in the baby case around zero (for $\xi = -1$, just change the variable $\xi$ to $\xi+1$), the definition of $\tilde O_0$ and $\tilde O_{-1}$ is completely analogous to that of $\tilde O_0$ in the baby case and will not be given separately. 

Finally, in the Kuznetsov case $\mathcal Y=\mathcal W^s$ the functional will be denoted by $\KTF$. The definition here cannot be given by an explicit sum except when $\Re(s)\gg 0$. For other values of $s$, it will be obtained by analytic continuation in \S \ref{sscontinuation}. For $\Re(s)\gg 0$ we define:
\begin{equation}\label{KTFdef}
 \KTF(f) = \sum_{\xi \in k^\times} f(\xi) + \tilde O_0(f) + \tilde O_\infty(f).
\end{equation}
The irregular orbital integral at $0$ is defined in the same way as for the previous cases, and the one at $\infty$ is defined in \S \ref{ssirregular}.

The goal of the first part of the paper, except for proving the analytic continuation of $\KTF$ to $s=0$, is to show that our ``transfer operators'' preserve these functionals. More precisely, in the baby case we have the transfer operator (which will be recalled in \S \ref{ssPoissonbaby}):
$$ \mathcal G : \mathcal S(\mathcal X(\adele)) \xrightarrow{\sim} \mathcal S(\mathcal X(\adele)).$$

Between the Schwartz spaces of $\mathcal Z$ and $\mathcal W$, we have according to \cite{SaBE1} the ``transfer operator'':
$$ |\bullet|\cdot \mathcal G: \mathcal S(\mathcal Z(\adele))\xrightarrow\sim \mathcal S(\mathcal W(\adele)).$$

In both cases, our main goal is to show that \emph{these transfer operators preserve the corresponding relative trace formulas}; this amounts to a ``Poisson summation formula'' for the transfer operators.

\subsection{Preliminaries on tori}\label{sstoriprelim}

We recall a few facts about Galois cohomology: By Kottwitz's 
interpretation of Tate-Nakayama duality \cite{Kottwitz}, for every torus $T$ over $k$ and any place $v$ of $k$ we have isomorphisms:
\begin{equation}
 H^1(k_v,T) = \pi_0\left(\check T^{\Gamma_v}\right)^*,
\end{equation}
where $\check T$ denotes the connected dual torus of $T$, $\Gamma_v$ is the decomposition group at $v$ of the absolute Galois group $\Gamma$ of $k$, $\pi_0$ denotes the connected components and star denotes the Pontryagin dual. Moreover, the restriction maps
$$\pi_0\left(\check T^{\Gamma_v}\right)^* \to \left(\check T^{\Gamma}\right)^*$$
give rise to an exact sequence
\begin{equation}\label{Hasseprinciple}
 1\to \ker^1(k,T)\to  H^1(k,T) \to \prod_v' H^1(k_v,T) \to \left(\check T^{\Gamma}\right)^* ,
\end{equation}
where $\prod^\prime$ indicates that almost all factors should be equal to $1$ and $\ker^1(k,T)$ is defined by this sequence. (In our case, both split and nonsplit, $\ker^1(k,T)=1$ and the last map is surjective, but with a view towards possible generalizations we won't use that.)

Now we introduce a notion of ``average volume'' for $T(k_v)$ or $[T]$. Most of it was introduced in \cite[\S 2.5]{SaBE1}  already, but it was not called so.

If $F$ is a local field and $T$ is a torus over $F$, we have a canonical ``absolute value'' map
$$ T(F) \xrightarrow{\operatorname{abs}} V_T:=\Hom(\varchi^*(T)_F,\RR_+^\times)$$
given by the pairing: $(t,\chi)\mapsto |\chi(t)|$ ($t\in T(F),\chi\in \varchi^*(T)_F$), where $\varchi^*(T)_F$ denotes the group of $F$-rational characters of $T$. This character group being free of some rank $r$, if we choose generators we have an identification of $V_T$ with $(\RR^\times_+)^r$, so we can endow it with the Haar measure corresponding to the standard Haar measure $d^\times x=\frac{dx}{x}$ on $\RR^\times_+$; this measure on $V_T$ does not depend on the choice of generators.

Now, given a Haar measure $dt$ on $T$ we define:
$$\AvgVol(T(F)) = \lim_{N\to\infty} \frac{dt(\operatorname{abs}^{-1}\left([\frac{1}{N},N]^r\right))}{d^\times x\left([\frac{1}{N},N]^r\right)},$$
where we have used the identification of $V_T$ with $(\RR^\times_+)^r$. (This is the same number as what was denoted by $\Vol(T(F)_0)$ in \cite[\S 2.5]{SaBE1}.)

Globally, now, if $T$ is defined over a global field $k$, we similarly have a canonical map
$$ T(\adele) \to [T] \xrightarrow{\operatorname{abs}} V_T:=\Hom(\varchi^*(T)_k,\RR_+^\times),$$
where as usual: $[T]=T(\adele)/T(k)$. Thus, given a Haar measure $dt$ on $[T]$, by choosing a similar isomorphism of the right hand side with $(\RR^\times_+)^r$, we define:
$$\AvgVol([T]) = \lim_{N\to\infty} \frac{dt(\operatorname{abs}^{-1}\left([\frac{1}{N},N]^r\right))}{d^\times x\left([\frac{1}{N},N]^r\right)}.$$

Of course, the ``average volume'' is just the volume if $T$ is anisotropic (both locally and globally).

\subsection{Global irregular distributions, global transfer operators, and statement of Poisson summation in the baby case}\label{ssPoissonbaby}

Let us now focus our attention to the ``baby case'' of \cite[section 2]{SaBE1}, namely $E/k$ a quadratic etale extension (with associated idele class character $\eta$) and $\mathcal X:= X/ T$, where $X=\Res_{E/k} \Ga $ and $T = \ker N_k^E$ (kernel of the norm map from $E^\times$ to $k^\times$). In this case, the Schwartz space $\mathcal S(\mathcal X_v)$ constists of sections of the Schwartz cosheaf over $\mathcal B(k_v)=k_v$ consisting of functions on $\mathcal B^\reg(k_v) = k_v^\times$ with the following description:
\begin{itemize}
 \item over $k_v^\times$, the cosheaf coincides with the usual cosheaf of Schwartz functions;
 \item in a neighborhood of zero, the functions are of the form
$$ f(\xi) = \begin{cases} -C_1(\xi) \log|\xi|_v + C_2(\xi) ,&\mbox{ if }\eta_v=1\\
C_1(\xi)+C_2(\xi)\eta_v(\xi),&\mbox{ if } \eta_v \ne 1 \end{cases}$$
where the $C_i$'s are smooth functions.
\end{itemize}

The map
 $$\mathcal S(k_v) \oplus \mathcal S(k_v) \to \mathcal S(\mathcal X_v)$$
given by
 $$(C_1,C_2)\mapsto \mbox{ the function $f$ as above}$$
identifies $\mathcal S(\mathcal X_v)$ with a quotient of the Fr\'echet space $\mathcal S(k_v) \oplus \mathcal S(k_v)$. The induced quotient topology coincides with the topology obtained by thinking of $\mathcal S(\mathcal X_v)$ as a coinvariant space.

Recall that the ``transfer operator'' $\mathcal G$ is an automorphism of the local Schwartz space $\mathcal S(\mathcal X_v)$; it is defined as the composition
\begin{equation}\label{G} \mathcal G = \mathcal F \circ \iota \circ \mathcal F,\end{equation}
where $\mathcal F$ is usual Fourier transform with respect to a fixed character as described in the notation section of the introduction and $\iota$ is the operator
$$ \iota(f)(\bullet) = \frac{\eta_v(\bullet)}{|\bullet|}f\left(\frac{1}{\bullet}\right).$$
 Moreover, by \cite{SaBE1} Propositions 2.10 and  2.16, it relates the orbital integrals of a function ``upstairs'' with those of its Fourier transform. (Here ``upstairs'' does not refer only to $E_v$ but also to a second copy of it in the nonsplit case -- we will recall how to lift elements of $\mathcal S(\mathcal X_v)$ later in \S \ref{indirect-nonsplit}.) In particular, since the characteristic function of $\mathfrak o_{E_v}$ is stable under Fourier transform (at almost every non-Archimedean place $v$), the transform $\mathcal G$ preserves the ``basic vectors'', and hence gives rise to an automorphism of the global Schwartz space:
\begin{equation}
 \mathcal G: \mathcal S(\mathcal X(\adele)) \xrightarrow{\sim} \mathcal S(\mathcal X(\adele)).
\end{equation}

In \cite{SaBE1} Propositions 2.5 and 2.14 I defined local ``irregular distributions'' $\tilde O_{0_v}, \tilde O_{u_v}$ (when $E_v$ is split) and $\tilde O_{0_v,+}$, $\tilde O_{0_v,-}$ (when $E_v$ is inert) on $\mathcal S(\mathcal X_v)$; we have added here an index $v$ to clarify that we are talking about $0\in \mathcal B_v$. We recall the definitions: 

In the split case, 
\begin{equation}\label{Oasympsplit}f(\xi)= - C_1(\xi)\cdot \log |\xi|_v + C_2(\xi),
\end{equation}
we set $\tilde O_{0_v}(f) = C_1(0)$ and $\tilde O_{u_v}(f) = C_2(0)$.

In the nonsplit case, 
\begin{equation}\label{Oasympnonsplit} f(\xi) =C_1(\xi)+C_2(\xi)\eta_v(\xi),
\end{equation}
we set $\tilde O_{0_v,+}(f) = C_1(0)$ and $\tilde O_{0_v,-}(f)=C_2(0)$.

All the distributions with index $0_v$ can be unified by the notation $\tilde O_{0_v,\kappa}$, where $\kappa\in H^1(k_v,T)^*$ (and $~^*$ denotes the Pontryagin dual); in the split case this cohomology group is trivial, and in the nonsplit case we let the index ``$-$'' correspond to the nontrivial element of $H^1(k_v,T)^*$. Moreover, for every $v$ we have a natural map
$$\check T^{\Gamma}\to \pi_0\left(\check T^{\Gamma_v}\right) =  H^1(k_v,T)^*, $$
so for every $\kappa \in \check T^{\Gamma}$ and every $v$ we define $\tilde O_{0_v,\kappa}$ via the image of this map.

In the \emph{nonsplit case} we define globally, for $\kappa \in \check T^{\Gamma}$:
\begin{equation}
 \tilde O_{0,\kappa} = \prod_v' \tilde O_{0_v,\kappa}.
\end{equation}
The Euler product on the right hand side is not absolutely convergent (which is why we denote it by $\prod^\prime$), and outside of a finite number of places $S$ it should be interpreted as a special value of a Dirichlet $L$-function. More precisely, recall that the basic vector is that multiple of the orbital integrals of $1_{\mathfrak o_{E_v}}$ under the norm map $E_v\to k_v=\mathcal B(k_v)$ which satisfies the normalization condition \eqref{basicvectornormalization}. It is easy to compute it: 

\begin{lemma}
 The basic vector $f_{\mathcal X_v}^0\in \mathcal S(\mathcal X_v)$ (defined for almost every finite place $v$, with residual degree $q_v$) is supported on the set of $\xi\in \mathcal B^\reg_{\mathcal X}(k_v) = k_v^\times$ with $|\xi|_v\le 1$, and its value on those points is equal to
$$ \begin{cases}
              1-\log_{q_v}|\xi|_v & \mbox{ if $E_v/k_v$ is split,}\\
              \frac{1+\eta_v(\xi)}{2} & \mbox{ if $E_v/k_v$ is non-split.}
             \end{cases}$$
Therefore, in the split case we have:
$$ \tilde O_{0_v}(f_{\mathcal X_v}^0) = (\log q_v)^{-1}, \,\, \tilde O_{u_v} (f_{\mathcal X_v}^0) = 1,$$
while in the nonsplit case: 
$$ \tilde O_{0_v,+}(f_{\mathcal X_v}^0) = \frac{1}{2} = \tilde O_{0_v,-}(f_{\mathcal X_v}^0).$$
\end{lemma}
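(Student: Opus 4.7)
The plan is to compute the orbital integral of $\mathbf{1}_{\mathfrak o_{E_v}}$ under the norm map $N : E_v \to k_v$ directly in each of the split and nonsplit unramified cases, then fix the scalar multiple so that the normalization \eqref{basicvectornormalization} holds, and finally read off the coefficients $C_1, C_2$ from the resulting asymptotic form near $\xi=0$.

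In the split case, $E_v=k_v\oplus k_v$, $\mathfrak o_{E_v}=\mathfrak o_v^2$, and $T(k_v)\simeq k_v^\times$ acts by $t\cdot(a,b)=(ta,t^{-1}b)$. Taking the representative $(\xi,1)$ in the fiber of $N$ over $\xi\in k_v^\times$, the condition $(t\xi,t^{-1})\in\mathfrak o_{E_v}$ reads $1\le|t|_v\le|\xi|_v^{-1}$, which is empty when $|\xi|_v>1$. Endowing $T(k_v)$ with the Haar measure for which $\mathfrak o_v^\times$ has measure $1$ (which makes the value at $|\xi|_v=1$ equal to $1$ and therefore realizes \eqref{basicvectornormalization}), the orbital integral equals $1+\val(\xi)=1-\log_{q_v}|\xi|_v$ for $|\xi|_v\le 1$.

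In the nonsplit unramified case, $T(k_v)$ is compact and contained in $\mathfrak o_{E_v}^\times$, so the calculation becomes a plain volume computation. I would pick $x_\xi\in E_v$ with $N(x_\xi)=\xi$ and $|x_\xi|_{E_v}^2=|\xi|_v$; such a representative exists in $\mathfrak o_{E_v}$ exactly when $\val(\xi)$ is even, equivalently $\eta_v(\xi)=1$, because the norm from $\mathfrak o_{E_v}^\times$ is surjective onto $\mathfrak o_v^\times$. In that case every $T$-translate of $x_\xi$ still lies in $\mathfrak o_{E_v}$, so the orbital integral equals $\Vol(T(k_v))$. Forcing the value at $|\xi|_v=1$ to be $1$ pins down $\Vol(T(k_v))=1$, and the answer packages as $\tfrac{1+\eta_v(\xi)}{2}$ on $|\xi|_v\le 1$.

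Finally, matching these formulas against the asymptotic forms \eqref{Oasympsplit} and \eqref{Oasympnonsplit} one reads off the constants: in the split case $C_1\equiv(\ln q_v)^{-1}$ and $C_2\equiv 1$ near $0$, yielding $\tilde O_{0_v}(f_v^0)=(\ln q_v)^{-1}$ and $\tilde O_{u_v}(f_v^0)=1$; in the nonsplit case $C_1\equiv C_2\equiv\tfrac12$, yielding $\tilde O_{0_v,\pm}(f_v^0)=\tfrac12$. The computation is essentially bookkeeping; the only nontrivial issue will be to verify that the Haar measure normalizations on $T(k_v)$ forced by \eqref{basicvectornormalization} coincide with those used in \cite{SaBE1} to construct the transfer operator $\mathcal G$, which is what ensures that $\mathcal G$ preserves basic vectors at almost every place.
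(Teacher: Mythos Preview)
Your computation is correct and is exactly the straightforward calculation the paper alludes to but omits (the paper simply says ``It is easy to compute it'' before stating the lemma, with no further argument).

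One small comment: your final remark about the Haar measure on $T(k_v)$ needing to match the one in \cite{SaBE1} for $\mathcal G$ to preserve basic vectors is extraneous to the lemma and slightly off. The lemma is self-contained: $f_v^0$ is \emph{defined} by the normalization \eqref{basicvectornormalization}, so any Haar measure on $T(k_v)$ yielding that normalization will do. Moreover, $\mathcal G$ is defined intrinsically on $\mathcal S(\mathcal X_v)$ as $\mathcal F\circ\iota\circ\mathcal F$ and is linear; the fact that it fixes the basic vector comes from Fourier transform on $E_v$ fixing $1_{\mathfrak o_{E_v}}$ (self-dual measure), which descends to the level of orbital integrals independently of how $T(k_v)$-measure is scaled. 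So there is no compatibility to check here.
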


Notice that in each case the values of $\tilde O_{0_v,\kappa}(f_{\mathcal X_v}^0)$ are the leading coefficients of the local $L$-function $L_v^S(\eta_v,s)$ at $s=0$, and that the corresponding global $L$-function is, in the nonsplit case, regular at $s=0$. We now interpret the partial Euler product, in the nonsplit case:
$$\prod_{v\notin S}' \tilde O_{0_v,\kappa}(f_{\mathcal X_v}^0)$$ \emph{as the leading term of the Taylor series of the partial $L$-function}:
\begin{equation}
 L^S(\eta,t)
\end{equation}
at $t = 0$. It is clear that the full ``Euler product'' $\prod_v' \tilde O_{0_v,\kappa}$, defined as the product of the above with the factors corresponding to the remaining places, is independent of the set $S$ chosen to define it.

Finally, we are ready to define the contribution of $0$ to the global Poisson sum in the nonsplit case. The notation $\tilde O_0$ that follows should not be confused with the local notation of \cite{SaBE1}, which we designate here by the additional index $v$:

\begin{definition}
 In the nonsplit case we define the distribution $\tilde O_0$ on the global Schwartz space $\mathcal S(\mathcal X(\adele))$ as
\begin{equation}\label{defzerononsplit} \tilde O_0 := \sum_{\kappa\in \check T^{\Gamma}} \tilde O_{0,\kappa}.
\end{equation}
\end{definition}
The sum on the right consists, of course, of only two terms.

For the split case, we notice that at almost every place we have $\tilde O_{0_v}(f_{\mathcal X_v}^0)=(\log q_v)^{-1} = $ the leading term of the local Dedekind zeta function $\zeta_v(t)$ at $t=0$, while $\tilde O_{u_v}(f_{\mathcal X_v}^0) = 1 = $ twice the constant coefficient in the Laurent expansion of the local zeta function at $s=0$. We will define a global irregular distribution $f\mapsto \tilde O_0(f)$ which, formally, is the limit with $t\to 0$ of a sum of Euler products with opposite residues, the first of which is outside of a finite number of places equal to the partial Dedekind zeta function $\zeta^S(t)$ and the second of which is equal to $\zeta^S(-t)$.

More precisely, take $S$ a sufficiently large set of places, and let $a_{-1}^S, a_0^S$ be the leading, resp.\ the second, coefficient in the Laurent expansion of the partial zeta function $\zeta^S(t)$ around $t=0$; notice that the order of zero of this partial zeta function is $|S|-1$. Then we have the following:

\begin{definition}
In the split case we define the distribution $\tilde O_0$ as
\begin{equation}\label{defzerosplit} \tilde O_0(\prod_v f_v) = 2a_0^S \prod_{v\in S} \tilde O_{0_v}(f_v) + a_{-1}^S \prod_{v\in S} \tilde O_{0_v} (f_v) \cdot \left(\sum_{v\in S} \frac{\tilde O_{u_v}(f_v)}{\tilde O_{0_v} (f_v)}\right).
\end{equation}
\end{definition}

We are now ready to state the Poisson summation formula in the baby case:

\begin{theorem}\label{Poissonbaby}
 For $f\in \mathcal S(\mathcal X(\adele))$, in either the split or the nonsplit case, we have
\begin{equation}\label{eqPoissonbaby}
 \sum_{\xi\ne 0} f(\xi) + \tilde O_0(f) 
 = \sum_{\xi\ne 0} \mathcal Gf(\xi) + \tilde O_0(\mathcal G f).
\end{equation}
\end{theorem}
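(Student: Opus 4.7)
I would begin by lifting $f \in \mathcal{S}(\mathcal{X}(\adele))$ to a concrete adelic Schwartz-theoretic object. Using the presentation of $\mathcal{S}(\mathcal{X}_v)$ as a quotient of $\mathcal{S}(k_v) \oplus \mathcal{S}(k_v)$ recalled just above, a decomposable $f$ is represented, in the nonsplit case, by a pair $(C_1, C_2) \in \mathcal{S}(\adele) \widehat\otimes \mathcal{S}(\adele)$ with regular value $f(\xi) = C_1(\xi) + C_2(\xi)\eta(\xi)$ on $\mathcal{B}^{\reg}(\adele) = \adele^\times$; the split case is analogous but with a logarithmic term. I would carry out the nonsplit case in detail and indicate how the split case is parallel, with $L(\eta,s)$ replaced by the Dedekind zeta function and its pole absorbed into the definition of $\tilde O_0$.

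At the formal level the identity is essentially tautological. Writing $\mathcal{G} = \mathcal{F}\iota\mathcal{F}$, the two Fourier transforms can be attacked by ordinary adelic Poisson summation on $\adele$, while the twisted inversion $\iota h(x) = \eta(x)|x|_\adele^{-1} h(x^{-1})$ preserves sums over $k^\times$ on the nose: $\eta$ is an idele class character (hence trivial on $k^\times$), the product formula gives $|\xi|_\adele = 1$ there, and $\xi \mapsto \xi^{-1}$ is a bijection of $k^\times$. What obstructs a literal formal argument is the boundary behavior at $\xi = 0$ (and at $\infty$, which feeds back to $0$ through $\iota$): elements of our Schwartz cosheaves are locally of type $C_1 + C_2\eta$ or $-C_1 \ln|\cdot| + C_2$, their Fourier transforms are not ordinary Schwartz, and the naive Poisson summation fails by boundary and residue-type contributions that the irregular distributions $\tilde O_0$ are designed to absorb.

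To make this rigorous I would use the deformation idea sketched in the introduction. For a complex parameter $s$, I would consider the family $f^s$ whose regular value is $C_1(\xi) + C_2(\xi)\eta(\xi)|\xi|^s$ (and an analogous family in the split case), thus converting the logarithmic or pure-character singularity at $\xi=0$ into a generalized eigenfunction $|\xi|^s$ of the local $k_v^\times$-action. For $\Re(s) \gg 0$, each $f^s_v$ is a bona fide Schwartz function on $k_v$ and the sum $\sum_{\xi \in k^\times} f^s(\xi)$ is absolutely convergent. In this range, writing out $\mathcal{G}f^s$ using the local formulas of \cite{SaBE1} shows that the ``missing'' contribution at $\xi = 0$ on each side assembles adelically into the partial $L$-factor $L^S(\eta, s)$ (respectively $L^S(\eta, -s)$) times compactly-supported local data at the finite set $S$. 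The identity in this convergent range then follows from ordinary adelic Poisson summation for the two Fourier transforms, combined with the bijection of $k^\times$-sums induced by $\iota$ explained in the previous paragraph.

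The main obstacle, and where most of the technical work lies, is the analytic continuation of the resulting identity to $s = 0$. The individual terms on both sides acquire poles from $L^S(\eta, \pm s)$ (a double pole coming from $\zeta^S(\pm s)$ in the split case, values of an entire $L$-function in the nonsplit case), but the two sides match pole-by-pole by the \emph{global functional equation} of $L(\eta, s)$. This is, in fact, what the transfer operator $\mathcal G$ \emph{is}: an incarnation of that functional equation at the level of orbital integrals. Taking the constant term at $s = 0$ and identifying it with $\tilde O_0(f)$ respectively $\tilde O_0(\mathcal{G}f)$, as they are packaged in \eqref{defzerononsplit} and \eqref{defzerosplit}, then yields the claimed Poisson formula \eqref{eqPoissonbaby}. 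The hardest step, conceptually, is checking that the boundary/residue terms produced by the deformed Poisson summation have exactly the local shape $L_v(\eta_v, s)$ times the value of the basic vector, so that their Euler products recombine with the factors at $S$ to give the definitions of $\tilde O_{0,\kappa}$.
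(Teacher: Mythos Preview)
Your deformation-and-continue strategy is exactly the paper's \emph{direct} proof, carried out in Section~\ref{sec:direct-baby} (Proposition~\ref{babyPSF} and Corollary~\ref{babyPSF-special}): one introduces families $\mathcal S^t_1=\mathcal S^t_4$ with germs $C_1+C_2\,\eta|\bullet|^t$ at $0$, proves $PS_1=PS_2\circ\mathcal F$ for $\Re(t)\gg 0$ (your $k^\times$-bijection for $\iota_t$ is Lemma~\ref{iotainvariant}), and then continues to $t=0$ using that $PS_1,PS_4$ are analytic for all $t$ (Corollary~\ref{analytic}). Two technical points you gloss over: (i) even for large $\Re(t)$ the local $f_v^t$ are \emph{not} Schwartz on $k_v$ (the factor $|\xi|^t$ is not smooth at $0$ archimedean\-ly), and globally $f^t$ lives only on the measure-zero set $\adele^\times\subset\adele$; the paper handles this by replacing the basic vectors with $1_{\mathfrak o_v}$ outside a growing finite set, applying classical Poisson summation to that honest Schwartz function, and passing to the limit via a dominated-convergence bound (Lemma~\ref{babyapproximation}, Proposition~\ref{babyconvergence}). (ii) Your phrase ``match pole-by-pole via the functional equation of $L(\eta,s)$'' conflates two separate steps: the identity between the two \emph{sides} uses the global triviality $\prod_v\gamma_v(\eta,-t,\psi_v)=1$ to equate the $C_2(0)$-term with the asymptotic constant $D(0)$ at infinity on the Fourier side (Lemma~\ref{lemmad}); the analytic continuation of $\tilde O_0(f^t)$ on a \emph{single} side to $t=0$ is a separate cancellation of poles inside the expression $L^S(\eta,t)\prod C_{1,v}(0)+L^S(\eta,-t)\prod C_{2,v}(0)$ (Proposition~\ref{irregOI}).

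The paper in fact gives a shorter \emph{indirect} proof first (\S\S\ref{indirect-nonsplit}--2.6), working upstairs rather than on the base: lift $f$ to $\Phi\in\bigotimes'_v\bigoplus_\alpha\mathcal S(X^\alpha_v)$ on the two-dimensional space $X=\Res_{E/k}\Ga$ (and its inner twists), apply ordinary Poisson summation on $\mathbb A_E$, and integrate the resulting identity over $[T]$. The irregular contribution emerges as $\Vol([T])\sum_\alpha\Phi(0_\alpha)$, which is matched to \eqref{defzerononsplit}, \eqref{defzerosplit} via the Hasse sequence \eqref{Hasseprinciple} and the Tate-type volume identity \eqref{torusvolume}; in the split case one instead obtains limits of Tate zeta integrals $\zeta(\Phi|_x,t)+\zeta(\Phi|_y,-t)$, identified with $\tilde O_0(f)$ in Lemma~\ref{irregularsplit}. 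This indirect argument is cleaner but depends on the upstairs Fourier structure; the direct route you outline is the one that the paper then generalizes to the RTF comparison in Section~\ref{sec:Poisson-RTF}.
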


\begin{remark}
Although the Poisson summation formula for $X=\Res_{E/k}\Ga$ can be stated, equivalently, at the finite level (that is, for functions on $\prod_{v\in S} E_v$, for some finite set $S$ of places), the Poisson summation formula of Theorem \ref{Poissonbaby} for $\mathcal X = X/T$ is genuinely adelic, in the following sense: The sums over $\xi \in \mathcal B^\reg(k)$ can be restricted to $\xi\in \mathcal B^\reg(k)\cap \mathcal B(k_S)$, for a sufficiently large set of places $S$, since the support of the basic function is $\mathcal B(\mathfrak o_v)$; however, for a point $\xi$ which is not in $\mathcal B^\reg(k_S)$ the functions appearing in the sum depend on the coordinates of $\xi$ outside of $S$, since the basic function is not identically equal to $1$ on $\mathcal B(\mathfrak o_v)$. If $f$ is compactly supported then we can enlarge $S$ so that all $\xi \in \mathcal B^\reg(k)$ in its support belong to $\mathcal B^\reg(k_S)$; however, this is not possible simultaneously for both sides of the 
formula. This comment is valid for all Poisson 
summation formulas that we will encounter in this paper; in fact, for one of them (the Kuznetsov trace formula with non-standard test functions) we will not even be able to restrict to $\mathcal B^\reg(k)\cap \mathcal B(k_S)$.
\end{remark}

\subsection{Indirect proof of Poisson summation in the nonsplit case.}\label{indirect-nonsplit}

We will first deduce the Poisson summation of Theorem \ref{Poissonbaby} from the Poisson summation formula for Fourier transform on the adeles of $E$. In the next section, we will prove it directly using the explicit expression \eqref{G} for $\mathcal G$, building the tools that we will need for the comparison of RTFs in later sections.

 First of all, recall from \cite[\S 2.10]{SaBE1} that, locally, elements of $\mathcal S(\mathcal X_v)$ are obtained as orbital integrals of elements in $\oplus_{\alpha} \mathcal S(X^\alpha)$; here $X=\Res_{E/k}\Ga$; $\alpha$ ranges over isomorphism classes of $T$-torsors over $k_v$ (parametrized by $H^1(k_v,T)$), and for such a torsor $R^{\alpha}$ we set $X^{\alpha}=X\times^T R^{\alpha}$. Since $\Aut_T(R^{\alpha})\simeq T$, the space $X^{\alpha}$ carries a $T$-action; it is non-canonically isomorphic to $X$.
Given an element $f\in \mathcal S(\mathcal X_v)$ we will call ``lift'' of $f$ an element  $\Phi\in \bigoplus_\alpha \mathcal S(X_v^\alpha)$, together with a Haar measure $dt_v$ on $T_v$ such that the orbital integrals of $\Phi$ give $f$, see \cite[2.10]{SaBE1}. 

The sums over $\xi$ in \eqref{eqPoissonbaby} are, in fact, over $\xi\in k_S\smallsetminus\{0\}$ for some finite number of places $S$, since $f$ will be equal to the basic vector outside of $S$, which is supported only on the image of $X(\mathfrak o_v)$. Thus, the sums will be only over those $\xi$ whose preimage under $X^\reg\to \mathcal B^\reg$ is a $T$-torsor (over $k$) that is trivial outside of $S$. Since there is only a finite number of such torsors, let us fix, for now, such an isomorphism class of $T$-torsors over $k$, denoted by $\beta$,\footnote{We will consistently be using $\beta$ for torsors defined over $k$ and $\beta_v$ for their localizations, while the symbols $\alpha$, $\alpha_v$ will be reserved for torsors defined locally, not necessarily as restrictions of global torsors.} and let us assume that
$$ f= \prod_{v\notin S} f_{\mathcal X_v}^0 \cdot \prod_{v\in S} f_v$$
(since by continuity it is enough to prove the Poisson summation for a dense subspace), where $f_v$, for all $v$, $f_v$ is in the image of $\mathcal S(X^\beta_v)$. Fix lifts $(\Phi_v, dt_v)$ so that $\prod_v dt_v$ is a factorization of the Tamagawa measure on $T(\adele)$, and $\Phi_v$ is the characteristic function of $\mathfrak o_{E_v}$ for $v\notin S$. (Hence, $dt_v(T(o_v))=1$ for $v\notin S$.)

In \cite[(2.25)]{SaBE1}, I defined an extension of Fourier transform to the space $X^\beta$; I claim that $\Phi:=\prod_{v\in S} \Phi_v$ satisfies the usual Poisson summation formula with respect to this Fourier transform, that is:
\begin{equation}\label{grr}
 \sum_{\xi\in X^\beta(k_S)} \Phi(a\xi) = \frac{1}{|a|} \sum_{\xi\in X^\beta(k_S)} \hat \Phi\left(\frac{\xi}{a}\right).
\end{equation}

Indeed, we can fix, over $k_S$, an isomorphism $\iota: X\to X^\beta$ mapping $1$ to some element $e\in X^\beta(k_S)$, and let $\Phi^1_v= \iota^* \Phi_v$, a function on $E_v$. Recall the formula \cite[(2.27)]{SaBE1}:
$$\iota^* \widehat{\Phi_v}(y)  = |a|_v \widehat{\Phi^1_v} (ay),$$
where $a = N_k^E(e) \in k_S$. Taking into account that $\prod_{v\in S} |a|_v =1$, and that $\iota$ induces a bijection between $X(k_S)$ and $X^\beta(k_S)$, the claim follows from Poisson summation on $\mathbb A_E$.

Now denote by $0_\beta\in X^\beta (k)$ the ``zero'' point in $X^\beta$. For an arbitrary $f = \otimes_v f_v$ (not necessarily supported on the $X^\beta$ corresponding to a single torsor), with a lift $\Phi \in \otimes^\prime_v \oplus_{\alpha\in H^1(k_v,T)} \mathcal S(X^\alpha_v)$ (with respect to the Tamagawa measure on $T(\adele)$), integrating \eqref{grr} over $[T]$ we get:
\begin{equation}\label{comp1}
\sum_{\xi\ne 0} f(\xi) +\Vol([T]) \sum_{\beta\in H^1(k, T)} \Phi(0_\beta) = \sum_{\xi\ne 0} \mathcal Gf(\xi) + \Vol([T]) \sum_{\beta\in H^1(k, T)} \hat \Phi(0_\beta).
\end{equation}

By the short exact sequence (\ref{Hasseprinciple}) and abelian Fourier analysis we have
$$\Vol([T])\sum_{\beta\in H^1(k,T)} \Phi(0_\beta)= $$ 
$$ =\frac{\left|\ker^1(k,T)\right|}{\left|\check T^\Gamma\right|} \Vol([T])\sum_{\kappa\in \check T^\Gamma} \prod_v \sum_{\alpha_v\in H^1(k_v,T)} \left<\alpha_v,\kappa\right> \Phi_v(0_{\alpha_v}).$$

In the notation that we introduced previously, we have, by definition,
$$\tilde O_{0_v,\kappa}(f_v) =  \frac{\AvgVol(T_v)}{|H^1(k_v,T)|} \sum_{\alpha_v\in H^1(k_v,T)} \left<\alpha_v,\kappa\right> \Phi_v(0_{\alpha_v}),$$
and therefore there remains to show:
\begin{equation}\label{torusvolume}
\frac{\left|\ker^1(k,T)\right|}{\left|\check T^\Gamma\right|} \Vol([T]) \prod_v' \frac{|H^1(k_v,T)|}{\AvgVol(T_v)} = 1, 
\end{equation}
where the Euler product should be interpreted in an analogous way as in the definition of the global distribution $\tilde O_0$, and is therefore denoted by $\prod^\prime$. More precisely, if we fix a factorization of the Tamagawa measure on $T(\adele)$ as $\prod_v dt_v$ with $dt_v(T(\mathfrak o_v))=c_v$ for $v\notin S$ so that the Euler product $\prod_{v\notin S} c_v$ is convergent (for example, $c_v=1$ as before), then for $v\notin S$ we have
$$ \frac{|H^1(k_v,T)|}{\AvgVol(T_v)} =\frac{1}{c_v\cdot  (L_v(\eta_v,0))^*},$$
where, I remind, the exponent $^*$ denotes the leading term in the Laurent expansion. Thus, to interpret the formal Euler product $\prod^\prime$, we set
$$ \prod^\prime_v \frac{|H^1(k_v,T)|}{\AvgVol(T_v)} = \frac{1}{\left(L^S(\eta,0)\right)^*\prod_{v\notin S} c_v} \cdot \prod_{v\in S} \frac{|H^1(k_v,T)|}{\AvgVol(T_v)}.$$

We could deduce \eqref{torusvolume} directly from known formulas about Tamagawa measures of tori, i.e., essentially the Dirichlet class number formula, but since the formula does not really depend on choices of measures and explicit calculations, let us instead sketch its reduction to the ``basics'' (more precisely, to Tate's thesis) using only the minimum of necessary arguments:

For \emph{any} tori $T_i$ over $k$ with a short exact sequence
$$1\to T_1\to T_2\to T_3\to 1,$$
and any compatible choice of Haar measures on their adelic points (in the obvious sense, i.e., integration over $T_2$ should be equal to integration over $T_1$ followed by integration over $T_3$), it is known that the quantities
$$\mu(T_i):= \frac{\left|\ker^1(k,T_i)\right|}{\left|\check T_i^\Gamma\right|} \AvgVol([T_i])  $$
satisfy
$$ \mu(T_2)=\mu(T_1)\mu(T_3);$$
see \cite[Theorem 4.4.1]{Ono-Tamagawa}.

Similarly, for a similar sequence over a completion $k_v$ we have a long exact sequence
$$ 1\to T_1(k_v) \to T_2(k_v) \to T_3(k_v) \to H^1(k_v,T_1) \xrightarrow{\iota} H^1(k_v,T_2) \to \dots,$$
from which it follows that
\begin{equation}\label{green} \frac{\AvgVol(T_1(k_v))}{|\ker(\iota)|} = \frac{\AvgVol(T_2(k_v))}{\AvgVol(T_3(k_v))}.
\end{equation}
Notice that $|\ker(\iota)| = (k_v^\times: N_{k_v}^{E_v} E_v^\times)$.

Applying these considerations to our case:
\begin{equation}\label{torisequence} 1\to T \to \Res_{E/k} \Gm \to \Gm\to 1,
\end{equation}
the statement (\ref{torusvolume}) reduces to the statement
\begin{equation}
 \frac{\AvgVol([\Res_{E/k}\Gm])}{\AvgVol([\Gm])} = \frac{(\zeta^S_E(0))^*}{(\zeta^S_k(0))^*} \prod_{v\in S} \frac{\AvgVol(E_v^\times)}{\AvgVol(k_v^\times)}.
\end{equation}
The statement is now a corollary of Tate's thesis, more precisely of the fact that the residue at $s=0$ of a zeta integral
$$ \int_{\adele^\times} \Phi(x) |x|^s d^\times x$$
is equal to $\Phi(0) \AvgVol([\Gm])$ (and similarly for Tate integrals for $E$). 

\qed

\begin{remark}
 Tate's thesis shows that
 \begin{equation} \AvgVol{[\Gm]} = \mbox{``Res''} \prod_v \AvgVol(k_v^\times),
 \end{equation}
 where the expression on the right should not be taken literally but interpreted, again, as an expression of the form: $(\zeta_k^S(0))^*$ times a convergent product. Indeed, the residue of the zeta integral of a Schwartz function $\Phi$ at $s=0$ is on one hand equal to $\AvgVol{[\Gm]} \Phi(0)$, and on the other hand the ``Euler product'' of the right hand side times $\Phi(0)$.
 
 Further observing that, for a self-dual measure $dx_v$ with respect to an additive character $\psi_v^{-1}$ used to define Fourier transform, using the corresponding multiplicative measure 
\begin{equation}\label{multmeasure}
 d^\times x_v = |x_v|^{-1}dx_v
\end{equation}
 we have 
 $$\Phi_v(0) \AvgVol(k_v^\times) =  \Res_{s=0} \int_{k_v^\times} \Phi_v(x) |x|^s d^\times x = $$
 $$ = \Res_{s=0} \gamma_v(1, 1-s,\psi_v^{-1}) \int_{k_v^\times} \hat\Phi_v(x) |x|^{1-s} d^\times x = \Phi(0) \Res_{s=0} \gamma_v(1,1-s,\psi_v^{-1}),$$
where $\gamma_v (1,s,\psi_v^{-1})$ denotes the gamma factor for the trivial multiplicative character $\chi=1$, we get that for such a measure and character we have
 \begin{equation} \label{green2} \AvgVol(k_v^\times) = \Res_{s=0} \gamma_v(1, 1-s,\psi_v^{-1}) = \Res_{s=0} \gamma_v(1, s,\psi_v)^{-1}.
 \end{equation}

Globally, the triviality of gamma factors shows that $\AvgVol{[\Gm]}=1$ under the usual normalization for Tamagawa measure -- i.e.,, multiplication by convergence factors corresponding to $\zeta_{\QQ_p}(1)$ at the finite places over $\QQ_p$. This, of course, is just a reformulation of the usual derivation of the class number formula from Tate's thesis.

 Combining \eqref{green2} (applied to both $k_v^\times$ and $E_v^\times$) with \eqref{green}, we get
 \begin{equation}\label{AvgVol}
  \gamma_v^*(\eta_v, 0,\psi_v)\frac{\AvgVol(T(k_v))}{(k_v^\times: N_{k_v}^{E_v} E_v^\times)} =1.
 \end{equation}
Here, again, $^*$ denotes the dominant term in the Laurent expansion when $0$ is replaced by $s$. This formula is identical to \eqref{green2} when $E_v/k_v$ is split, and when $E_v$ is a quadratic field, the factor $\gamma_v^*(\eta_v, 0,\psi_v) = \gamma_v(\eta_v, 0,\psi_v)$ arises as the quotient 
$$ \frac{\Res_{s=0} \gamma_{k_v}(1, s,\psi_v)^{-1}}{\Res_{s=0} \gamma_{E_v}(1, s,\psi_v \circ \tr)^{-1}}$$
(where we now indicate with an index the algebra to which each gamma factor is attached). Here the measures on $k_v$ and $E_v$ have been taken to be self-dual with respect to the characters $\psi_v$ and $\psi_v\circ \tr$, respectively, the measures on $k_v^\times$ and $E_v^\times$ are given by \eqref{multmeasure}, and the measure on $T(k_v)$ is determined by compatibility with respect to the sequence \eqref{torisequence}. 

Global triviality of gamma factors, applied to \eqref{AvgVol}, shows that for Tamagawa measures
\begin{equation}\prod^\prime_v \frac{\AvgVol(T_v)}{|H^1(k_v,T)|} = 1.
\end{equation}

\end{remark}

\subsection{Indirect proof of Poisson summation in the split case}

Now we consider the case $E = k\oplus k$. Again we let $f=\prod_v f_v$ and fix preimages $\Phi_v\in \mathcal S(X_v)$ of $f_v$ (with compatible measures on the torus), almost always equal to the characteristic function of $\mathfrak o_v^2$. The left and right sides of \eqref{eqPoissonbaby} are continuous functionals on the LF-space $\mathcal S(\mathcal X(\adele))$, and the transfer operator $\mathcal G$ is continuous on it; therefore, we might prove equality just for a dense subspace. We will assume, therefore, that for all $v$ we have: $\Phi_v (x,y)= \Phi_{1,v}(x)\Phi_{2,v}(y)$ for some Schwartz functions $\Phi_{1,v}, \Phi_{2,v}$ in one variable (equal to the characteristic function of $\mathfrak o_v$ almost everywhere).

The function $\Phi:=\prod_v \Phi_v$ satisfies the usual Poisson summation formula with respect to Fourier transform, that is:
\begin{equation}
 \sum_{\xi\in X(k)} \Phi(a\xi) = \frac{1}{|a|}\sum_{\xi\in X(k)} \hat \Phi\left(\frac{\xi}{a}\right),
\end{equation}
where $|a| = |a_1\cdot a_2|$ for $a= (a_1,a_2)$.

Integrating over $[T]=\adele^\times/k^\times$ we get
$$\sum_{\xi\in k^\times} f(\xi) - \sum_{\xi\in k^\times} \mathcal Gf(\xi) = \int_{\adele^\times/k^\times} \left( \sum_{\gamma\in (k^\times)^2} \Phi(a\gamma) - \sum_{\gamma\in (k^\times)^2} \hat\Phi\left(\frac{\gamma}{a}\right) \right) = $$
\begin{eqnarray}=\int_{\adele^\times/k^\times} \left( - \sum_{\gamma_1\in k^\times} \Phi_1(a\gamma_1) \Phi_2(0) - \sum_{\gamma_2\in k} \Phi_1(0) \Phi_2(a^{-1}\gamma_2) +\right. \nonumber \\
\left. +\sum_{\gamma_1\in k} \hat\Phi_1(a^{-1}\gamma_1)\hat\Phi_2(0) + \sum_{\gamma_2\in k^\times} \hat\Phi_1(0) \hat\Phi_2(a\gamma_2)\right) da.
\end{eqnarray}
 Notice that the integration over $[T]$ giving rise to the left hand side is absolutely convergent, and hence so is the right.

By one-dimensional Poisson summation, the right hand side can be written
$$\int_{\adele^\times/k^\times} \left( - \sum_{\gamma_1\in k^\times} \Phi_1(a\gamma_1) \Phi_2(0) - \sum_{\gamma_2\in k} |a|\Phi_1(0) \hat\Phi_2(a\gamma_2) +\right.$$
$$\left. +\sum_{\gamma_1\in k} |a|\Phi_1(a\gamma_1)\hat\Phi_2(0) + \sum_{\gamma_2\in k^\times} \hat\Phi_1(0) \hat\Phi_2(a\gamma_2)\right) da.
$$

Notice that the term corresponding to $\gamma_2=0$ in the second sum cancels the term with $\gamma_1=0$ in the third. By interpreting the remaining integrals as Tate integrals, we get
$$ \lim_{s\to 0} \left( - \zeta(\Phi_1,s)\Phi_2(0) - \Phi_1(0) \zeta(\hat\Phi_2,s+1) + \hat\Phi_2(0) \zeta(\Phi_1,s+1) + \hat\Phi_1(0) \zeta(\hat\Phi_2,s)\right) = $$
\begin{equation} \label{difference}
 = \lim_{s\to 0} \left( - \zeta(\Phi|_x,s) - \zeta(\Phi|_y,-s) + \zeta(\hat\Phi|_x,-s) +  \zeta(\hat\Phi|_y,s)\right), 
\end{equation}
where $\Phi|_x$ and $\Phi|_y$ denote, respectively, the restrictions to $y=0$ and $x=0$, considered as functions of the variable $x$, resp.\ $y$. The last step is by the functional equation of Tate integrals.

 Then we claim:

\begin{lemma}\label{irregularsplit}
Let $L$ denote the functional on $\mathcal S(X(\adele))$:
$$L(\Phi)=  \lim_{t\to 0} \left( \zeta(\Phi|_x,t) + \zeta(\Phi|_y,-t)\right),$$
then
\begin{eqnarray}
L(\Phi) =  \tilde O_0(f),
\end{eqnarray}
the ``irregular'' distribution of \eqref{defzerosplit}.
\end{lemma}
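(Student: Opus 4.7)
The plan is to factor both sides, eliminate the $1/t$ singularities, and match the resulting Laurent coefficients of global zeta integrals with the definition of $\tilde O_0(f)$. By density we may assume $\Phi = \prod_v\Phi_v$ with $\Phi_v = \Phi_{1,v}\otimes\Phi_{2,v}$; writing $\tilde\Phi_i := \otimes_v\Phi_{i,v}$, we have $\Phi|_x(x) = \tilde\Phi_2(0)\tilde\Phi_1(x)$ and $\Phi|_y(y) = \tilde\Phi_1(0)\tilde\Phi_2(y)$, so
\begin{equation*}
L(\Phi) = \lim_{t\to 0}\bigl[\tilde\Phi_2(0)\,\zeta(\tilde\Phi_1, t) + \tilde\Phi_1(0)\,\zeta(\tilde\Phi_2, -t)\bigr].
\end{equation*}
By Tate's theorem each $\zeta(\tilde\Phi_i, s)$ has a simple pole at $s = 0$ with residue proportional to $\tilde\Phi_i(0)$; the $1/t$-contributions therefore cancel, leaving
\begin{equation*}
L(\Phi) = \tilde\Phi_2(0)\,c_0(\tilde\Phi_1) + \tilde\Phi_1(0)\,c_0(\tilde\Phi_2),
\end{equation*}
where $c_0(\tilde\Phi_i)$ is the constant term of the Laurent expansion at $s = 0$.

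For $S$ large enough that $\Phi_{i,v} = 1_{\mathfrak o_v}$ outside $S$, factor $\zeta(\tilde\Phi_i, s) = \zeta^S_k(s)\prod_{v\in S}\zeta_v(\Phi_{i,v}, s)$. Combining $\zeta^S_k(s) = a^S_{-1}s^{|S|-1} + a^S_0 s^{|S|} + \cdots$ with the local expansions $\zeta_v(\Phi_{i,v}, s) = r_v(\Phi_{i,v})/s + t_v(\Phi_{i,v}) + O(s)$ and collecting terms yields
\begin{equation*}
c_0(\tilde\Phi_i) = \prod_{v\in S}r_v(\Phi_{i,v})\cdot\Bigl(a^S_0 + a^S_{-1}\sum_{v\in S}\tfrac{t_v(\Phi_{i,v})}{r_v(\Phi_{i,v})}\Bigr).
\end{equation*}

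The pivotal step is the identification of these local Laurent data with $\tilde O_{0_v}(f_v)$ and $\tilde O_{u_v}(f_v)$. Since $f_v$ is the orbital integral of $\Phi_v$ under the torus action $(x,y)\mapsto(tx, t^{-1}y)$, the substitution $y = \xi/x$ factors its Mellin transform as $\int f_v(\xi)|\xi|^s\,d^\times\xi = \zeta_v(\Phi_{1,v}, s)\,\zeta_v(\Phi_{2,v}, s)$. On the other hand, the local asymptotic $f_v(\xi) = -\tilde O_{0_v}(f_v)\ln|\xi|_v + \tilde O_{u_v}(f_v) + O(|\xi|)$ near $\xi = 0$ has Mellin transform with singular part $\tfrac{\tilde O_{0_v}(f_v)/\ln q_v}{s^2} + \tfrac{\tilde O_{u_v}(f_v)/\ln q_v}{s}$ at $s = 0$, as a direct calculation shows. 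Matching the two Laurent expansions yields $\tilde O_{0_v}(f_v) = (\ln q_v)\,r_v(\Phi_{1,v})\,r_v(\Phi_{2,v})$ and $\tilde O_{u_v}(f_v) = (\ln q_v)\bigl[r_v(\Phi_{1,v})\,t_v(\Phi_{2,v}) + r_v(\Phi_{2,v})\,t_v(\Phi_{1,v})\bigr]$, with an analogous identity (using Tate-theoretic normalization constants) at archimedean places.

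To conclude, note that $\tilde\Phi_j(0) = \prod_{v\in S}\Phi_{j,v}(0)$ since the basic vector satisfies $\Phi^0_v(0) = 1$; together with $r_v(\Phi_{i,v}) = \Phi_{i,v}(0)/\ln q_v$ this gives $\tilde\Phi_2(0)\prod_{v\in S}r_v(\Phi_{1,v}) = \prod_{v\in S}\tilde O_{0_v}(f_v) = \tilde\Phi_1(0)\prod_{v\in S}r_v(\Phi_{2,v})$, while dividing the two identities of the previous paragraph yields $t_v(\Phi_{1,v})/r_v(\Phi_{1,v}) + t_v(\Phi_{2,v})/r_v(\Phi_{2,v}) = \tilde O_{u_v}(f_v)/\tilde O_{0_v}(f_v)$. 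Substituting into the expression for $L(\Phi)$ from the first paragraph reproduces exactly \eqref{defzerosplit}. The main obstacle is the Mellin matching in the third paragraph, which requires isolating the double- and simple-pole coefficients at $s = 0$ coming solely from the $\xi = 0$ asymptotic of $f_v$, and verifying that the smooth corrections $C_i(\xi) - C_i(0)$ contribute only to the $O(1)$ term.
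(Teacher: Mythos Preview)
Your proof is correct and follows essentially the same global strategy as the paper: expand the global zeta integrals $\zeta(\Phi|_x,t)$ and $\zeta(\Phi|_y,-t)$ as products of the partial zeta function $\zeta^S$ with local factors, match the Laurent coefficients, and combine to recover the definition \eqref{defzerosplit}. The paper's proof is shorter because it simply \emph{cites} \cite[Proposition 2.10]{SaBE1} for the key local identity $a_{0,v}+b_{0,v}=\tilde O_{u_v}(f_v)$ (and implicitly for the residue identity $\operatorname{Res}_{t=0}\zeta_v(\Phi_v|_x,t)=\tilde O_{0_v}(f_v)$), whereas you re-derive these from scratch via the Mellin factorization $\int f_v(\xi)|\xi|^s\,d^\times\xi = \zeta_v(\Phi_{1,v},s)\,\zeta_v(\Phi_{2,v},s)$.

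Two minor remarks. First, writing the remainder as ``$O(|\xi|)$'' is accurate at non-archimedean places (where $C_i$ is locally constant) but not literally at archimedean ones; what you actually need, and what you correctly note at the end, is only that the smooth corrections $C_i(\xi)-C_i(0)$ contribute $O(1)$ to the Mellin transform at $s=0$, which is immediate. Second, your archimedean treatment is left as ``analogous with Tate-theoretic normalization constants''; this is fine in spirit, but the $\ln q_v$ bookkeeping you do at finite places is replaced there by the gamma-function residues of the archimedean zeta integral, and it is worth saying explicitly that the identity $t_1/r_1 + t_2/r_2 = \tilde O_{u_v}/\tilde O_{0_v}$ is measure-independent (both sides are ratios), so the archimedean normalization drops out of the final formula.
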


\begin{proof} 
One just needs to check the definitions. Fixing a sufficiently large finite set of places $S$, we have

$$ \zeta(\Phi|_x,t) = \frac{a_{-1}}{t} + a_0 + \mbox{higher order terms},$$
with
$$a_{-1} = a_{-1}^S \cdot \prod_{v\in S} \tilde O_{0_v}(f_v)$$ and $$a_0 = \prod_{v\in S} \tilde O_{0_v} (f_v) \left( a_0^S+  a_{-1}^S \cdot \sum_{v\in S} \frac{a_{0,v}}{\tilde O_{0_v} (f_v)}\right),$$ where $a_{0,v} = $ the constant term in the Laurent expansion of the local zeta integral $\zeta(\Phi|_x,t)$ at $t=0$. Similarly for $\zeta(\Phi|_y,t)$, with $a_{0,v}$ replaced by the constant term $b_{0,v}$ of the Laurent expansion of the local zeta integral $\zeta(\Phi|_y,t)$ at $t=0$. From this it follows that
$$ \lim_{t\to 0} \left( \zeta(\Phi|_x,t) + \zeta(\Phi|_y,-t)\right)= \prod_{v\in S} \tilde O_{0_v} (f_v) \left( 2a_0^S+  a_{-1}^S \cdot \sum_{v\in S} \frac{a_{0,v}+b_{0,v}}{\tilde O_{0_v} (f_v)}\right),$$
and keeping in mind that $a_{0,v}+b_{0,v} = \tilde O_{u_v}(f_v)$ by \cite[2.10]{SaBE1}, this is equal to $\tilde O_0(f)$.
\end{proof}
 
Hence, (\ref{difference}) can be written as
$$ -\tilde O_0(f) + \tilde O_0(\mathcal Gf),$$
and this implies the Poisson summation formula (\ref{eqPoissonbaby}).

\qed

\section{Direct proof in the baby case} \label{sec:direct-baby}

\subsection{Motivation} The proof of the Poisson summation formula for $\mathcal X = \Res_{E/k} \Ga/T$ which was presented in the previous section is unsatisfactory for two reasons: first, it is not a direct proof on the Schwartz space $\mathcal S(\mathcal X(\adele))$, but it uses properties of the space ``upstairs'' $\mathcal S(\mathbb A_E)$. For the more complicated Poisson summation formulas that one will encounter, here and elsewhere, using properties of the space upstairs is precisely what one would like to avoid  -- in fact, we would like a direct proof at the level of the base $\mathcal B$, in order to deduce properties of the space upstairs. The fact that our transfer operator $\mathcal G$ is given in terms of Fourier transforms and birational maps suggests that such a direct proof should be possible, using at some point the classical Poisson summation formula for Fourier transform.

Hence, in the present section we will discuss a direct proof of the Poisson summation formula that was proven in the previous section, using only the given spaces of orbital integrals and not the fact that they arise as coinvariants of the usual Schwartz space on a vector space.

The Poisson summation formula will be proven by a method of analytic continuation as the given ``Schwartz spaces'' vary according to a complex parameter $t$. Instead of directly focusing on the specific case of interest, we present an axiomatic approach to these Schwartz spaces in order to single out the properties that we are using in the proof. With some modifications, this approach will allow us to prove the Poisson summation formula for the comparison of relative trace formulas in the next section.

\subsection{Local Schwartz spaces varying with a parameter} \label{ssbabySchwartz}

Throughout our discussion there will be a complex parameter $t$. We will say ``for large $t$'' (denoted: $\Re(t)\gg 0$) for statements that hold on a half-plane of the form $\Re(t)\ge \sigma$. For the discussion of non-Archimedean places with residual degree $q_v$, the parameter $t$ is considered to be varying in $\CC/\frac{2\pi i}{\log q_v} \Z$.

We will give axioms for four different ``Schwartz spaces'' $\mathcal S^t_1,\mathcal S^t_2,\mathcal S^t_3, \mathcal S^t_4$, depending on the parameter $t$, and will define certain integral transforms between them, again depending on $t$. In fact, for the baby case the first and fourth spaces, as well as the second and third spaces, will be identical, but in order to get used to the scheme, let us keep them in mind as different spaces. These spaces will be restricted tensor products, over all places, of local Fr\'echet spaces varying ``analytically'' as described in Appendix \ref{sec:families}. We will start by describing axioms for their local factors, and later (in \S \ref{basicbaby}) we will add some axioms on their basic vectors.

The (local) Schwartz spaces $\mathcal S^t_{1,v}$ and $\mathcal S^t_{4,v}$ are sections of a Schwartz cosheaf over $\mathcal B(k_v)=k_v$, which away from $0\in \mathcal B$ coincides with the cosheaf of usual Schwartz functions. (In particular, we have rapid decay at $\infty$.) We now describe the behavior close to $\xi=0\in \mathcal B(k_v)$.

As with the ``model $\mathcal A^t$'' of Appendix \ref{sec:families}, we will define the stalks of $\mathcal S^t_{1,v}$ and $\mathcal S^t_{4,v}$ over $0$ in such a way that the \emph{fibers} are annihilated by the operator
\begin{equation}
 (\Id - \eta_v(a) |a|_v^{-t-\frac{1}{2}} a\cdot  )(\Id - |a|_v^{-\frac{1}{2}} a\cdot ),
\end{equation} for every $a\in k_v^\times$, where $a\cdot$ we denote the normalized action of $k_v^\times$ on functions on $\mathcal B$
\begin{equation}\label{unitaryaction}
 (a\cdot f)(x) := |a|_v^\frac{1}{2} f(ax).
\end{equation}

The annihilator of the fibers does not, of course, provide a complete description in the Archimedean case. The precise definition, for generic $t$, is that the elements of $\mathcal S_{1,v}^t$ and $\mathcal S_{4,v}^t$ are, in a neighborhood of zero,  smooth functions on $k_v^\times$ which have the form
$$C_1(\xi) +C_2(\xi) \eta_v(\xi) |\xi|_v^t,$$
where $C_1$ and $C_2$ extend to smooth functions in a neighborhood of zero, except:
\begin{itemize}
 \item when $t = 0$ and $\eta_v=1$, in which case the functions have the form $$C_1(\xi)+C_2(\xi)\log|\xi|_v,$$ i.e., they specialize to elements of $\mathcal S(\mathcal X_v)$;
 \item when $t\in 2\Z$, $\eta_v=1$ and $k_v=\RR$, or $t\in (2\Z+1)$, $\eta_v\ne 1$ and $k_v  = \RR$, or $t\in \Z$ and $k_v  = \CC$ in which case the functions have the form
$$\begin{cases}
   C_1(\xi)+C_2(\xi)\eta_v(\xi)|\xi|_v^t\log|\xi|_v, & \mbox{ when } t\ge 0;\\
   C_1(\xi)\eta_v(\xi)|\xi|_v^t+C_2(\xi)\log|\xi|_v, & \mbox{ when } t<0.
\end{cases}$$
\end{itemize}

The (local) Schwartz spaces $\mathcal S^t_{2,v}$ and $\mathcal S^t_{3,v}$ consist of sections of the cosheaf over $\PP^1(k_v)$ of functions $\mathcal B = k_v$ which
away from $\infty$ coincides with the cosheaf of Schwartz functions, and 
in a neighborhood of infinity the functions have the form
$$C(\frac{1}{\xi})\cdot \eta_v(\xi) |\xi|_v^{-t-1},$$ for some smooth function $C$.

Clearly, these spaces are stable under the involution
\begin{equation}\label{defiotat}
\iota_t: f\mapsto \frac{\eta_v(\bullet)}{|\bullet|_v^{t+1}}f\left(\frac{1}{\bullet}\right). 
\end{equation}

Appendix \ref{sec:families} includes a long discussion of these spaces, including their topology, their analytic structure as $t$ varies, and a notion of ``polynomial families of seminorms'' (as $t$ varies in bounded vertical strips). We summarize the results that we need, noting first that there is some exceptional behavior at certain values of $t$ \eqref{exceptions}, which will not be included in the result that follows. These exceptions have to do with poles of local gamma factors, where Fourier transforms of characters on $k_v^\times$ are not characters on $k_v^\times$.

\begin{proposition}\label{variousproperties}
 For $t$ different than the values of \eqref{exceptions}, Fourier transform $\mathcal F$ induces an isomorphism between the Fr\'echet spaces $\mathcal S^t_{1,v}$ and $\mathcal S^t_{2,v}$ (or $\mathcal S^t_{3,v}$ and $\mathcal S^t_{4,v}$). 

The composition
\begin{equation}\label{defGt}\mathcal G_t: \mathcal F\circ \iota_t\circ\mathcal F: \mathcal S^t_1\to \mathcal S^t_4\end{equation}
makes sense, by analytic continuation, for \emph{every} $t$. 

Both $\mathcal F$ and $\mathcal G_t$ are bounded by polynomial families of seminorms on the corresponding spaces, as $t$ varies, and preserve analytic sections.

For $t$ outside of the values of \eqref{exceptions2}, \eqref{exceptions3}, analytic sections of $\mathcal S_{i,t}^t$ (where $i=1$ or $4$) are of the form:
$ \xi \mapsto C_1^t(\xi)+C_2^t(\xi) \eta_v(\xi) |\xi|_v^t$
where $t\mapsto C_1^t$, $t\mapsto C_2^t$ are strongly meromorphic sections into the Fr\'echet space $\mathcal S(k_v)$ of Schwartz functions on $k_v$. Such a section extends to $t=0$ iff $C_1^t$ and $C_2^t$ have simple poles with opposite residues at $t=0$, with the residue an element of $\mathcal S(k_v^\times)$ when $\eta_v\ne 1$, and in that case $C_1^t, C_2^t$ can be chosen to be holomorphic at $t=0$. \end{proposition}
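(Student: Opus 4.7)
The strategy is to reduce every claim to Tate's local theory of one-dimensional zeta integrals and the functional equation for the character $\eta_v$. From the axiomatic description of the spaces, every $f\in \mathcal{S}^t_{1,v}$ admits a decomposition (outside the exceptional values of $t$) of the form $f = f_0 + C_1 + C_2\,\eta_v(\cdot)|\cdot|_v^t$, with $f_0\in \mathcal{S}(k_v^\times)$ Schwartz away from $0$ and $C_1,C_2\in\mathcal{S}(k_v)$; dually, elements of $\mathcal{S}^t_{2,v}$ are characterized by a corresponding near-$\infty$ expansion of the shape $C(1/\xi)\eta_v(\xi)|\xi|_v^{-t-1}$. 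The key distributional input is the identity $\mathcal{F}\bigl(\eta_v(\xi)|\xi|_v^t\bigr) = \gamma_v(\eta_v,1-t,\psi_v)^{-1}\cdot \eta_v(\xi)|\xi|_v^{-t-1}$, valid in the tempered sense for generic $t$. Applying this termwise to the decomposition shows that $\mathcal{F}$ maps $\mathcal{S}^t_{1,v}$ into $\mathcal{S}^t_{2,v}$, and the same argument (via $\mathcal{F}^{-1}$) gives the reverse inclusion, yielding the isomorphism for all $t$ outside the zeros and poles of $\gamma_v(\eta_v,1-t,\psi_v)$, which are precisely the values in \eqref{exceptions}.

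For polynomial boundedness of seminorms and preservation of analytic sections, I would combine two ingredients. First, Fourier transform is bounded on the ``Schwartz interior'' by $t$-independent seminorm estimates. Second, the action of $\mathcal{F}$ on the homogeneous piece is controlled by the Tate gamma factor $\gamma_v(\eta_v,1-t,\psi_v)$, which at non-archimedean places is a rational function of $q_v^{-t}$ and at archimedean places is a ratio of $\Gamma$-functions; both grow polynomially in bounded vertical strips, the archimedean estimate coming from Stirling. Analyticity of $t\mapsto \mathcal{F}f^t$ for an analytic section $t\mapsto f^t$ then follows from the analyticity of the coefficient sections $C_i^t$ into the fixed Fr\'echet space $\mathcal{S}(k_v)$, together with the meromorphicity of $\gamma_v$ away from \eqref{exceptions}.

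The main obstacle, and the content of the second assertion, is that $\mathcal{G}_t = \mathcal{F}\circ \iota_t\circ \mathcal{F}$ must extend analytically to every $t$, including the values of \eqref{exceptions} where the two individual Fourier transforms degenerate. The mechanism is a cancellation of gamma factors produced by $\iota_t$: the operator $\iota_t$ sends the near-$\infty$ behavior $C(1/\xi)\eta_v(\xi)|\xi|_v^{-t-1}$ of $\mathcal{F}f$ to a \emph{Schwartz} behavior near $0$ (using $\eta_v^2 = 1$), so the second Fourier transform acts on this regularized piece with an independent gamma factor whose product with the reciprocal gamma factor from the first $\mathcal{F}$ is entire in $t$. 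I would encode this by writing $\mathcal{G}_t$ explicitly on each homogeneous component and verifying the cancellation directly; the polynomial-seminorm and analyticity statements then extend from the generic case to all $t$ by the analytic-continuation formalism of Appendix \ref{sec:families}.

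Finally, the structural description of analytic sections of $\mathcal{S}^t_{i,v}$ is a direct translation of the fiber description: the surjection $\mathcal{S}(k_v)\oplus\mathcal{S}(k_v)\twoheadrightarrow \mathcal{S}^t_{i,v}$, $(C_1,C_2)\mapsto C_1+C_2\,\eta_v(\cdot)|\cdot|^t$, identifies strongly meromorphic sections of $\mathcal{S}^t_{i,v}$ with pairs of strongly meromorphic sections into $\mathcal{S}(k_v)$. The extensibility criterion at $t=0$ reflects the fact that when $\eta_v = 1$ the two characters $1$ and $|\cdot|^t$ collide at $t=0$: the only way for $C_1^t + C_2^t |\cdot|^t$ to extend to the value $t=0$ (where the fiber admits the logarithmic model of $\mathcal{S}(\mathcal{X}_v)$) is for $C_1^t$ and $C_2^t$ to have simple poles with opposite residues, since then $C_2^t(\xi)(|\xi|^t-1) = C_2^t(\xi)(t\ln|\xi|+O(t^2))$ produces the $\ln|\xi|$-term in the limit; when $\eta_v \neq 1$ the two characters remain distinct at $t=0$ and so the residue is free, provided it vanishes at the origin, i.e.\ lies in $\mathcal{S}(k_v^\times)$.
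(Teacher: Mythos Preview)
Your approach via direct decomposition and Tate's functional equation is reasonable for the generic-$t$ isomorphism $\mathcal{F}:\mathcal{S}^t_{1,v}\to\mathcal{S}^t_{2,v}$, but the paper proceeds quite differently, and the difference matters for the two harder claims.

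The paper does not work termwise with homogeneous distributions. Instead (Appendix~\ref{sec:families}, Lemmas~\ref{lemmaFourier}--\ref{Gtdefined}), it realizes the model space $\mathcal{A}^t$ as the space of $(\eta_v|\bullet|_v^t)$-twisted coinvariants of $\mathcal{S}(k_v^2)$ under the hyperbolic $k_v^\times$-action, via the orbital integral \eqref{ttwisted}. Two-dimensional Fourier transform on $\mathcal{S}(k_v^2)$ is anti-equivariant for this action and hence descends to an isomorphism $\mathcal{A}^t\to\mathcal{A}^{-t}$, which is identified (for $t$ outside \eqref{exceptions}) with $|\bullet|^{-t}\mathcal{G}_t$. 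This immediately gives $\mathcal{G}_t$ for \emph{every} $t$: at exceptional $t$ one simply defines $\mathcal{G}_t$ as $|\bullet|^t(\mathcal{G}'_{-t})^{-1}|\bullet|^{-t}$, which is well-defined since $-t$ is not exceptional. No gamma factors are tracked.

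Your ``gamma factor cancellation'' argument has a genuine gap. After $\iota_t$, the piece carrying the first gamma factor $\gamma(\eta_v,-t,\psi_v)$ becomes Schwartz near $0$, so the second $\mathcal{F}$ acts on it \emph{without} producing a compensating gamma factor; the singular factor $\gamma(\eta_v,-t,\psi_v)$ persists into $\mathcal{G}_t f$. The reason $\mathcal{G}_t$ nonetheless extends is not a cancellation between two gamma factors acting on the same piece, but rather the structural fact (transparent from the $2$-dimensional lift, or equivalently from the inverse relation with $\mathcal{G}'_{-t}$) that the full operator is entire. You would need either to carry out the promised explicit computation on a generating set and verify entirety directly, or to supply the $t\leftrightarrow -t$ inverse argument; your sketch does neither.

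For the description of analytic sections, your last paragraph is essentially the \emph{statement} of Lemma~\ref{analyticsections} rather than its proof. The Fr\'echet bundle structure on $\mathcal{A}^t$ is \emph{defined} via Fourier transform to $\mathcal{B}^t$, so the claim that analytic sections are exactly those of the form $C_1^t + C_2^t\eta_v|\bullet|^t$ with $C_i^t$ holomorphic into $\mathcal{S}(k_v)$ requires comparing these two pictures. The paper does this by lifting both to analytic sections of $\mathcal{S}(k_v^2)$ via \eqref{ttwisted} and \eqref{StoB}, and showing each lift is surjective onto the relevant class. Your surjection $\mathcal{S}(k_v)\oplus\mathcal{S}(k_v)\twoheadrightarrow\mathcal{S}^t_{i,v}$ is the right object, but you have not shown it induces the same notion of analyticity as the one coming from $\mathcal{B}^t$.
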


Recall that a strongly meromorphic section into a Fr\'echet space is one which, in a neighborhood of any point $t_0$, becomes (weakly=strongly) holomorphic after multiplication by a power of $(t-t_0)$.

For the last statement of the proposition, notice that given an element of $\mathcal S_{i,t}^t$, $i=1,4$, the pair $(C_1^t, C_2^t)$ is only defined up to an element of $\mathcal S(k_v^\times)$, embedded as $f\mapsto (f, f \eta_v |\bullet|_v^{-t})$.  One can formulate similar statements about when a meromorphic section extends to a holomorphic one at the other exceptional values \eqref{exceptions2}, \eqref{exceptions3} which involve logarithms. I leave this description to the reader, as it will not be needed.

We can also relate the asymptotic constants of elements of these Schwartz spaces and their Fourier transforms:

\begin{lemma}\label{lemmad} 
Let $t\notin \Z$. If $f_1\in \mathcal S^t_{1,v}$ is equal to
$$ C_1(\xi)+C_2(\xi)\eta_v(\xi)|\xi|_v^t$$ in a neighborhood of $\xi=0$, with $C_1$ and $C_2$ smooth functions, and $\mathcal F f_1\in \mathcal S^t_{2,v}$ is of the form
$$D(\frac{1}{\xi})\cdot \eta_v(\xi) |\xi|_v^{-t-1}$$
in a neighborhood of infinity, then
\begin{equation}\label{dfromc}
 D(0)= \gamma(\eta_v, -t, \psi_v) \cdot C_2(0)
\end{equation}
(local abelian gamma-factors).

Similarly, if $f_2 \in \mathcal S^t_{2,v}$ and $\mathcal F\circ \iota_t (f_2) \in \mathcal S_{4,v}^t$ is of the form $E_1(\xi)+E_2(\xi)\eta_v(\xi)|\xi|_v^t$ in a neighborhood of $0$ then
\begin{equation}\label{dfrome}
 f_2(0) = \gamma(\eta_v, -t, \psi_v^{-1}) \cdot E_2(0).
\end{equation}
\end{lemma}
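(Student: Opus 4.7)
The plan is to derive both identities from Tate's local functional equation, with the coefficients $C_2(0)$, $D(0)$, $E_2(0)$, $f_2(0)$ appearing as residues of Mellin zeta integrals against the quadratic character $\eta_v$.

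Introduce the zeta integral
$$Z(f,\chi,s) := \int_{k_v^\times} f(x)\,\chi(x)\,|x|_v^s\, d^\times x.$$
For $f=f_1 \in \mathcal{S}_{1,v}^t$, the only non-Schwartz behavior is near $0$, of the form $C_1(\xi)+C_2(\xi)\eta_v(\xi)|\xi|_v^t$. Taking $\chi=\eta_v$ (noting $\eta_v^{-1}=\eta_v$ since $\eta_v$ is quadratic) and using $\eta_v^2=1$, a direct computation of $\int_{|\xi|_v<1}|\xi|_v^{s+t}\,d^\times \xi$ shows that $Z(f_1,\eta_v,s)$ admits meromorphic continuation with a simple pole at $s=-t$ whose residue is $\mathfrak{c}_v\cdot C_2(0)$, for an explicit nonzero place-dependent constant $\mathfrak{c}_v$. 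The hypothesis $t\notin\Z$ ensures that this pole is disjoint from the pole at $s=0$ potentially contributed by the $C_1$-term in the split case $\eta_v=1$, so that the residue cleanly reads off $C_2(0)$. Symmetrically, $\mathcal{F} f_1\in\mathcal{S}_{2,v}^t$ is Schwartz near $0$ with asymptotic $D(1/\xi)\eta_v(\xi)|\xi|_v^{-t-1}$ near $\infty$, and the zeta integral $Z(\mathcal{F} f_1,\eta_v,s)$ has a simple pole at $s=t+1$ whose residue (up to the same $\mathfrak{c}_v$ and a sign from the orientation at $\infty$) is proportional to $D(0)$.

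Tate's local functional equation in the paper's convention $\mathcal{F} f(x)=\int f(y)\psi_v^{-1}(xy)\,dy$ takes the form
$$ Z(\mathcal{F} f_1,\eta_v,s) = \gamma(\eta_v,s,\psi_v)\,Z(f_1,\eta_v,1-s), $$
which is a formal consequence of interchanging integrals and substituting $x\mapsto x/y$. Under $s\mapsto 1-s$, the pole of $Z(f_1,\eta_v,\cdot)$ at $-t$ corresponds to the pole of $Z(\mathcal{F} f_1,\eta_v,\cdot)$ at $t+1$, and equating residues -- with the factor $\mathfrak{c}_v$ cancelling once the sign from the change of variable is tracked -- yields the first identity $D(0)=\gamma(\eta_v,-t,\psi_v)\,C_2(0)$, after converting $\gamma(\eta_v,t+1,\psi_v)$ into $\gamma(\eta_v,-t,\psi_v)$ via the standard identity $\gamma(\chi,s,\psi)\gamma(\chi^{-1},1-s,\psi)=\chi(-1)$ applied to $\chi=\eta_v$.

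For the second identity, the same method is applied to $\iota_t f_2$: from the definition $\iota_t f(x)=\eta_v(x)|x|_v^{-t-1}f(1/x)$ one checks directly that $\iota_t f_2$ is Schwartz near $0$ and has leading term $f_2(0)\eta_v(\xi)|\xi|_v^{-t-1}$ near $\infty$, so $f_2(0)$ and $E_2(0)$ now play the roles of $D(0)$ and $C_2(0)$, but with the direction of Fourier transform effectively reversed, which (using $\psi_v^{-1}(x)=\psi_v(-x)$ together with $\gamma(\chi,s,\psi^{-1})=\chi(-1)\gamma(\chi,s,\psi)$) replaces $\psi_v$ by $\psi_v^{-1}$ in the resulting gamma factor. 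The main technical obstacle is precisely this book-keeping: keeping track of the convention $\psi_v^{-1}$ in $\mathcal{F}$, of the quadratic self-duality $\eta_v^{-1}=\eta_v$, and of the signs induced by $s\mapsto 1-s$, so that the gamma factor ultimately produced is the one indexed by $-t$ and $\psi_v$ (resp.\ $\psi_v^{-1}$) as stated.
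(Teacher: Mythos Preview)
Your residue approach via Tate's local functional equation is sound in principle and is morally the same mechanism the paper uses. However, your execution has a bookkeeping error that, as written, produces the wrong constant.

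With the paper's Fourier convention $\mathcal{F}f(x)=\int f(y)\psi_v^{-1}(xy)\,dy$, the functional equation (which the paper itself derives in the proof) reads
\[
Z(\mathcal{F}f_1,\eta_v,s)\;=\;\gamma(\eta_v,\,1-s,\,\psi_v)\,Z(f_1,\eta_v,1-s),
\]
with $1-s$ in the gamma factor, not $s$. Taking residues at $s=t+1$ then yields $\gamma(\eta_v,-t,\psi_v)$ \emph{directly}, and no conversion step is needed. Your ``conversion'' via $\gamma(\chi,s,\psi)\gamma(\chi^{-1},1-s,\psi)=\chi(-1)$ is in any case misapplied: with $\chi=\eta_v$ and $s=t+1$ it says $\gamma(\eta_v,t+1,\psi_v)=\eta_v(-1)\,\gamma(\eta_v,-t,\psi_v)^{-1}$, which is not $\gamma(\eta_v,-t,\psi_v)$ for generic $t$. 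So the identity you wrote down for the functional equation and the subsequent fix are both off, even though the method is right.

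For comparison, the paper avoids this bookkeeping entirely. It observes that the map $C_2(0)\mapsto D(0)$ is linear and depends only on the germ at $0$, so it suffices to verify the relation for a \emph{single} element with $C_2(0)\neq 0$. Taking a smooth cutoff of $\chi(\xi)=\eta_v(\xi)|\xi|_v^t$ and using the distributional identity $\widehat{\chi}=\gamma(\chi^{-1},0,\psi_v)\,\chi^{-1}(\bullet)\,|\bullet|_v^{-1}$ (which \emph{is} Tate's functional equation), one reads off $D(0)=\gamma(\eta_v,-t,\psi_v)$ immediately; the cutoff only perturbs $\widehat{\chi}$ by a Schwartz function, hence does not affect the asymptotic at infinity. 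The second identity follows by the same single-function check after applying $\iota_t$, and the switch $\psi_v\to\psi_v^{-1}$ is then transparent: one is effectively running the inverse Fourier transform. Your outline of the second identity is correct but the justification for the $\psi_v\to\psi_v^{-1}$ swap is too vague to stand as written.
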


\begin{proof}
Recall that for (almost) every character $\chi$ of $k_v^\times$, considered as a tempered distribution on $k_v$ by meromorphic continuation according to Tate's thesis, we have a relation
\begin{equation}
 \widehat{\chi(\bullet)}= \gamma(\chi^{-1},0,\psi) \cdot |\bullet|\cdot \chi^{-1}(\bullet).
\end{equation}
(We omit the index $v$ for this proof.)
Indeed, this is just a reformulation of the functional equation for zeta integrals; in what follows, we denote the obvious \emph{bilinear} (not hermitian) pairing by angular brackets, and use the exponent $\psi$ when Fourier transform is taken with respect to the character $\psi$, instead of $\psi^{-1}$ which is our standard convention, 
$$\left<\phi,\widehat{\chi}\right> = \left<{\widehat{\hat\phi}}^\psi,\widehat{\chi}\right> = \left<\hat\phi,\chi\right>= Z(\hat \phi,\chi, 1) = $$
$$ =\gamma(\chi^{-1},0,\psi) Z(\phi,\chi^{-1},0) = \gamma(\chi^{-1},0,\psi) \left<\phi,\chi^{-1}(\bullet)\cdot |\bullet|^{-1}\right>.$$

To prove the desired relations between the asymptotic coefficients, it suffices to relate them for one element in the Schwartz space for which they are non-zero. We obtain such an element by multiplying $\chi(\xi)=\eta(\xi)|\xi|^t$ by the characteristic function of a neighborhood of the identity, thus smoothening its Fourier transform -- but leaving it invariant in a neighborhood of infinity. The claim now follows.
\end{proof}

\subsection{Basic vectors and global Schwartz spaces}\label{basicbaby}

We now assume that our local spaces $\mathcal S_{i,v}^t$ are endowed, for almost every (non-Archimedean) $v$, with analytic sections of ``basic vectors'' $f_{i,v}^{t,0}$, $t\in \CC/ \frac{2\pi i}{\log q_v} \Z$, which satisfy the following axioms:
\begin{enumerate}
 \item The value of $f_{i,v}^{t,0}$ on $\mathcal B^\reg(\mathfrak o_v) = \mathfrak o_v^\times$ is a constant $c_{i,v}^t$ such that the partial Euler product: $\prod_{v\notin S} c_{i,v}^t$ converges for $t$ large, locally uniformly in $\Re(t)$, and for $i=1,4$ admits analytic continuation to all values of $t$;
 \item $\mathcal F \left( f_{1,v}^{t,0} \right) = f_{2,v}^{t,0}$,\,\, $\iota_t \left( f_{2,v}^{t,0}\right) = f_{3,v}^{t,0}$ and $\mathcal F \left( f_{3,v}^{t,0} \right) = f_{4,v}^{t,0}$. 
 \item For $i=1,4$ and \emph{every} $t$, the basic functions $f_{i,v}^{t,0}$ are supported on $\mathcal B(\mathfrak o_v)\cap \mathcal B^\reg(k_v)$. In a neighborhood of $\xi=0$, for $t\ne 0$, they are equal to
$$ f_{i,v}^{t,0}(\xi) = L_v(\eta_v,t) + L_v(\eta_v,-t) \eta(\xi)|\xi|^t, $$
where $L_v(\eta_v,t)$ is the local Dirichlet $L$-function; notice that this extends analytically to $t=0$ (cf.\ Proposition \ref{variousproperties}). Finally, there is a constant $r_i^t\ge 0$, independent of $v$ and uniformly bounded in bounded vertical strips, such that the function
$$|\xi|^{r_i^t} \frac{|f_{i,v}^{t,0}(\xi)|}{|f_{i,v}^{t,0}(\mathfrak o_v^\times)|}$$
is $\le 1$. 
 \item For $i=2,3$ and for $\Re(t)\gg 0$ there is a constant $r_i\ge 0$, independent of $v$ or $t$, such that the function
$$|\xi|^{r_i} \frac{|f_{i,v}^{t,0}(\xi)|}{|f_{i,v}^{t,0}(\mathfrak o_v^\times)|}$$
is:
\begin{itemize}
 \item $\le 1$ for $|\xi|\le 1$;
 \item $\le |\xi|_v^{-M}$ for $|\xi|_v>1$, where $M$ is a prescribed large integer (depending on the global field $k$, s.\ the proof of Proposition \ref{babyconvergence}).
\end{itemize}
\end{enumerate}

In our application, we will actually have $f_{1,v}^{t,0}=f_{4,v}^{t,0}$ and $f_{2,v}^{t,0}=f_{3,v}^{t,0}$, but this will not be the case for the relative trace formula and therefore we consider them as different vectors, to fix ideas.

The relations between the basic vectors and the asymptotic behavior of $f_{1,v}^{t,0}$ and $f_{4,v}^{t,0}$ around $\xi=0$ also determine the asymptotic behavior of $f_{2,v}^{t,0}$ and $f_{3,v}^{t,0}$ around $\xi=0$ and $\xi=\infty$ by Lemma \ref{lemmad}. The following table summarizes the regular values and asymptotic behavior of basic vectors:

\begin{equation}\label{tablebaby}
 \begin{array}{|c|c|c|c|c|}
 \hline  
 i & f_{i,v}^{t,0} \mbox{ around } \xi=0 & f_{i,v}^{t,0} \mbox{ around } \xi=\infty & f_{i,v}^{t,0} (\mathfrak o_v^\times) \\
\hline
 1 \mbox{ or }4 & L_v(\eta_v,t) + L_v(\eta_v,-t) \eta_v(\xi)|\xi|_v^t & 0 & c^t_{i,v} \\
\hline
 2 \mbox{ or }3 & L_v(\eta_v,t+1) & L_v(\eta_v,t+1)\cdot \eta_v(\xi) |\xi|_v^{-t-1} & c^t_{i,v} \\
\hline
\end{array}
\end{equation}

The axioms allow us to make sense of the restricted, completed tensor products of local Schwartz spaces with respect to the basic vectors as functions on:
$$\mathcal B^\reg(\adele) = \adele^\times,$$
for every $t$ when $i=1$ or $4$, and for large $t$ when $i=2$ or $3$.
We will denote these global Schwartz spaces by $\mathcal S_i^t$, i.e., dropping the index $v$ from the local notation. The parameter $t$ now varies in $\CC$, in the number field case, and in $\CC/\frac{2\pi i}{\log q}\Z$, in the function field case (with base field $\FF_q$). Moreover, the axioms allow us to interpret Fourier transforms and the operators $\iota_t$ as isomorphisms between the global spaces:
\begin{eqnarray*} \mathcal F: \mathcal S_1^t &\xrightarrow{\sim}& \mathcal S_2^t \\
 \iota_t: \mathcal S_2^t&\xrightarrow{\sim}& \mathcal S_3^t \\
 \mathcal F: \mathcal S_3^t&\xrightarrow{\sim}& \mathcal S_4^t \\
 \mathcal G_t: \mathcal S_1^t&\xrightarrow{\sim}& \mathcal S_4^t.
\end{eqnarray*}

Recall that Fourier transform makes sense when $t$ (resp.\ $-t$) does not belong to the values \eqref{exceptions}, while $\mathcal G_t$ makes sense for \emph{every} $t$.

\subsection{Irregular distributions} \label{ssbabyirregular}

We define the functional $\tilde O_0$ on the global Schwartz spaces $\mathcal S^t_1$ and $\mathcal S_4^t$ which, formally, for $t\notin \Z$ assigns to an element $f=\otimes_v f_v$ with asymptotics:
$f_v (\xi) = C_{1,v}^t(\xi)+C_{2,v}^t(\xi)\eta_v(\xi)|\xi|^t$
the value
$$ C^t_1 +C^t_2 = \prod_{v} C_{1,v}^t(0) + \prod_{v} C_{2,v}^t(0).$$

The rigorous definition is as follows: 
\begin{equation}\label{zerocontrib}
 \tilde O_0(f):= L^S(\eta,t) \prod_{v\in S} C_{1,v}^t(0) + L^S(\eta,-t) \prod_{v\in S} C_{2,v}^t(0),
\end{equation}
where $S$ is large enough so that outside of $S$ we have $f_v = f_{i,v}^0$. It extends continuously to all elements of $\mathcal S_i^t$.

We similarly define functionals $\tilde O_0$ and $\tilde O_\infty$ on $\mathcal S^t_2$ and $\mathcal S^t_3$ when $t$ is large. Since there is no term of the form $\eta_v(\xi)|\xi|_v^t$ in a neighborhood of zero here, for an element $f= \otimes_v f_v \in \mathcal S^t_2$ or $\mathcal S^t_3$, we have
\begin{equation} \tilde O_0(f):= L^S(\eta,t+1) \prod_{v\in S} f_v(0),\end{equation}
while if $f_v(\xi) = D_v^t(\frac{1}{\xi}) \eta_v(\xi) |\xi|_v^{-t-1}$ for $\xi$ in a neighborhood of $\infty$, we have
\begin{equation}
 \tilde O_\infty(f) = L^S(\eta,t+1)\prod_{v\in S} D_v^t(0).
\end{equation}
The factor $L^S(\eta,t+1)$ has to do with the asympotic behavior of the basic function, cf.\ Table \eqref{tablebaby}.

We will now verify that for an analytic section of $\mathcal S_1^t$ or $\mathcal S_4^t$ the functional $\tilde O_0$ extends at $t=0$ to the ``irregular orbital integral'' $\tilde O_0$ defined in \S \ref{ssPoissonbaby}. 

\begin{proposition}\label{irregOI} 
For $i=1$ or $4$, and an analytic section $t\mapsto f^t \in \mathcal S_i^t$, the function $t\mapsto \tilde O_0(f^t)$ extends holomorphically to all $t$, and its value is bounded by polynomial seminorms on any bounded vertical strip; in particular, its value at some $t_0$ depends only on $f^{t_0}$ and not on the section. At $t=0$ it coincides with the functional denoted by $\tilde O_0(f^0)$ in \S \ref{ssPoissonbaby}.
\end{proposition}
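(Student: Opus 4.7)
The strategy is to analyze the two terms in the definition \eqref{zerocontrib} separately and verify that any apparent singularities in $t$ cancel, using the structure of analytic sections from Proposition \ref{variousproperties}. First, note that away from the exceptional values \eqref{exceptions2}, \eqref{exceptions3}, an analytic section $t \mapsto f^t \in \mathcal S_i^t$ has local factors of the form $f_v^t(\xi) = C_{1,v}^t(\xi) + C_{2,v}^t(\xi) \eta_v(\xi) |\xi|_v^t$ where $t\mapsto C_{j,v}^t$ are strongly meromorphic into $\mathcal S(k_v)$, with simple poles at $t=0$ carrying opposite residues (in the split case) or no poles at all (in the nonsplit case, where the residues vanish at $\xi=0$ anyway). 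Since each piece $L^S(\eta,\pm t) \prod_{v\in S} C_{j,v}^t(0)$ is manifestly meromorphic, holomorphy away from $t\in\Z$ is immediate.

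For the nonsplit case, $L^S(\eta, t)$ is entire, the $C_{j,v}^t(0)$ are holomorphic at $t=0$, and holomorphy throughout the plane follows. Comparison with \eqref{defzerononsplit} is then a direct matching: at $t=0$ both $L^S(\eta,t)$ and $L^S(\eta,-t)$ specialize to $L^S(\eta,0)$, while $C_{1,v}^0(0)$ and $C_{2,v}^0(0)$ read off the coefficients $\tilde O_{0_v,+}(f_v^0)$ and $\tilde O_{0_v,-}(f_v^0)$ from \eqref{Oasympnonsplit}, so the two terms reproduce $\tilde O_{0,+} + \tilde O_{0,-}$ as in \eqref{defzerononsplit}.

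The split case is more delicate, and this is the step that requires actual computation. The key observation is that the partial zeta function $L^S(\eta,t)=\zeta^S(t)$ vanishes to order $|S|-1$ at $t=0$, while $\prod_{v\in S} C_{j,v}^t(0)$ acquires a pole of order $|S|$ coming from the simple poles of each $C_{j,v}^t(0)$; the residues $\pm\rho_v$ of $C_{1,v}^t(0)$ and $C_{2,v}^t(0)$ being opposite means the two products $\prod C_{1,v}^t(0)$ and $\prod C_{2,v}^t(0)$ differ by the sign $(-1)^{|S|}$ in their leading Laurent coefficients, and similarly $L^S(\eta,-t)=\zeta^S(-t)$ differs from $\zeta^S(t)$ by a factor $(-1)^{|S|-1}$ in its leading Taylor coefficient. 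These two sign flips combine so that the $t^{-1}$ terms in the Laurent expansions of the two summands cancel, and the $t^0$ coefficient of the sum evaluates to the right-hand side of \eqref{defzerosplit} after identifying $\rho_v=-\tilde O_{0_v}(f_v^0)$ and $(\mu_v+\nu_v)=\tilde O_{u_v}(f_v^0)$ via the expansion $\tfrac{|\xi|_v^t-1}{t}\to \ln|\xi|_v$ as $t\to 0$. This both furnishes the holomorphic extension and identifies it with the \S\ref{ssPoissonbaby} distribution. The same device treats the remaining exceptional values $t\in\Z$ in the archimedean case, where logarithmic factors appear just as at $t=0$ in the split case.

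The polynomial bound in bounded vertical strips combines two ingredients: the polynomial growth in bounded vertical strips of the abelian $L$-functions $L^S(\eta,\pm t)$ (which is all we assume about $L$-functions, per the introduction), and the fact that on an analytic section the constants $C_{j,v}^t(0)$ (or, at integer $t$, the corresponding regularized coefficients) are controlled by polynomial families of seminorms on $\mathcal S_i^t$ in the sense of appendix \ref{sec:families}. Finally, independence of the section follows since both the $L$-factors and the asymptotic constants $C_{j,v}^t(0)$ (respectively their regularized versions at exceptional $t$) depend only on $f^{t}$ pointwise; the section structure was only needed to propagate holomorphy across the exceptional locus, not to define the value. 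The main obstacle is purely the bookkeeping of the sign-cancellation in the split case at $t=0$ sketched above; nothing beyond Tate's thesis is used.
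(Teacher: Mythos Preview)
Your overall strategy matches the paper's: you split $\tilde O_0$ into its two ``Euler products'', use the pole structure of the local coefficients $C_{j,v}^t(0)$ from Proposition~\ref{variousproperties}, and let the zeros of the partial $L$-function compensate. The explicit limit calculation in the split case, identifying the $t^0$ coefficient with \eqref{defzerosplit}, is exactly what the paper does (your sign $\rho_v = -\tilde O_{0_v}$ is off; the residue of $C_{1,v}^t(0)$ is $+\tilde O_{0_v}(f_v^0)$, as your own expansion $\tfrac{|\xi|^t-1}{t}\to\ln|\xi|$ shows).

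There is one genuine gap. In the globally nonsplit case you assert that ``the $C_{j,v}^t(0)$ are holomorphic at $t=0$''. This is only true at places $v\in S$ where $\eta_v\ne 1$; at the (possibly many) places in $S$ where $\eta_v=1$, the local picture is the split one and $C_{j,v}^t(0)$ may have a simple pole. What saves you is precisely the mechanism you invoke in the split case: the partial $L$-function $L^S(\eta,t)$, though entire, acquires a zero of order equal to the number of such split places (because the completed $L(\eta,t)$ is regular and each local factor $L_v(\eta_v,t)=\zeta_v(t)$ has a pole). The paper phrases this as ``the Euler factors of our product have at most the order of pole of the local $L$-factors'', so the product $L^S(\eta,t)\prod_{v\in S}C_{1,v}^t(0)$ is holomorphic; its value at $0$ is then the product of \emph{leading} coefficients, which is how one recovers \eqref{defzerononsplit}. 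Your direct ``specialize to $t=0$'' argument fails at exactly those places.

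A second point worth tightening: you assert that the $C_{j,v}^t(0)$ are controlled by polynomial families of seminorms on $\mathcal S_{i,v}^t$. This is not immediate from the $\mathcal A^t$-side description, because the Fr\'echet structure there is \emph{defined} via Fourier transform. The paper makes this explicit: by Lemma~\ref{lemmad}, $C_{2,v}^t(0)=\gamma(\eta_v,-t,\psi_v)^{-1}D_v^t(0)$, and on the $\mathcal B^t$-side the constant $D_v^t(0)$ is a polynomial seminorm by construction; the gamma factor is then absorbed by the $L$-function and the power of $t$ clearing the zero of $L^S$. The bound for $C_{1,v}^t(0)$ follows by the symmetry $\mathcal S_{i,v}^t\simeq\mathcal S_{i,v}^{-t}$ under multiplication by $\eta_v|\bullet|^{-t}$.
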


\begin{proof}
 We start by proving the assertions for $t=0$. The issue is, of course, that as $t\to 0$ some of the local factors may blow up, according to Proposition \ref{variousproperties}.

 In the non-split case, the product $L^S(\eta,t) \prod_{v\in S} C_{1,v}^t(0)$ is holomorphic at $t=0$; indeed, this is the case for the full Dirichlet $L$-function $L(\eta,t)$, and the Euler factors of our product have at most the order of pole of the local $L$-factors, as follows from Proposition \ref{variousproperties}. Hence, the value of this expression at $t=0$ is equal to the product of the leading coefficients of its factors, which is precisely equal to
$$L^S(\eta,0)^* \cdot \prod_{v\in S} \tilde O_{0_v,+}(f^0_v),$$ where $L^S(\eta,0)^*$ denotes the leading term of $L^S(\eta,t)$ at $t=0$. Similarly for the other term of \eqref{zerocontrib}; the sum of the two terms coincides with the definition of $\tilde O_0$ in (\ref{defzerononsplit}). 

Regarding the bound by polynomial seminorms: If the order of zero of $L^S(\eta,t)$ at $t=0$ is $r$, then both the functions $L^S(\eta,-t)t^{-r}$ and $t^r\prod_{v\in S} C_{2,v}^t(0)$ are holomorphic in a vertical strip around zero; the first is of polynomial growth by standard properties of abelian $L$-functions, and the second is bounded by polynomial seminorms on $\otimes_{v\in S} \mathcal S_{i,v}^t$ by the definition of those in Appendix \ref{sec:families}: namely, if we take Fourier transforms of the local factors  $f^t_v$, then by Lemma \ref{lemmad} those will be of the form $D_v^t(\frac{1}{\xi}) \eta_v(\xi) |\xi|_v^{-t-1}$ in a neighborhood of infinity, with $D_v^t(0) = \gamma(\eta, -t, \psi) \cdot C_{2,v}^t(0)$. Recall that the factor $D_v^t(0)$ is \emph{by definition} bounded by polynomial seminorms, hence so is the product $t^r\prod_{v\in S} C_{2,v}^t(0)$. To prove that the product $t^r \prod_{v\in S}C_{1,v}^t(f^t_v)$ is bounded by polynomial seminorms, we recall, again from the appendix, that 
multiplication by $\eta_v(\bullet)|\bullet|_v^{-t}$ defines an isomorphism between the spaces $\mathcal S_{i,v}^t$ and $\mathcal S_{i,v}^{-t}$ which preserves the structures of polynomial seminorms, so this reduces the problem to the previous case.

 In the split case, $\zeta^S(t)$ has a zero of order $|S|-1$ at $t=0$, while the factors $C_{1,v}^t(0), \prod_{v\in S} C_{2,v}^t(0)$ each have a simple pole (at most) with opposite residues. Thus, the residue of
$$c_1(t):=\zeta^S(t)\prod_{v\in S} C_{1,v}^t(0)$$ is opposite to the residue of
$$c_2(t):=\zeta^S(-t)\prod_{v\in S} C_{2,v}^t(0),$$ and the sum of the two terms is regular at $t=0$. The proof of boundedness by polynomial seminorms is similar to the nonsplit case and is left to the reader. We now verify that the extension of the functional to $t=0$ coincides with that of \S \ref{ssPoissonbaby}. 

 In what follows, we set $A_v(t)=t C_{1,v}^t(0)$ and $B_v(t) = tC_{2,v}^t$; then $A_v(0)=-B_v(0) = \tilde O_0(f^0_v)$, and $A_v'(0)+B_v'(0)=\tilde O_u(f^0_v)$. We denote by $\zeta^S(0)^*$ the leading term of $\zeta^S(t)$ at $t=0$, and we write $-0$ instead of $0$ to signify that we are replacing $t$ by $-t$. 
$$\lim_{t\to 0}(c_1(t)+c_2(t))= \lim_{t\to 0} \left[(tc_1(t))\cdot \frac{1}{t}\left(1+\frac{c_1(t)}{c_2(t)}\right)\right]=$$
$$ =\zeta^S(0)^*\prod_{v\in S}\tilde O_0(f^0_v)\cdot \frac{d}{dt}\left.\frac{c_2(t)}{c_1(t)}\right|_{t=0}=$$
$$ =\zeta^S(0)^*\prod_{v\in S}\tilde O_0(f^0_v)\cdot \frac{c_2(0)}{c_1(0)} \cdot \left(\partial\log\frac{\zeta^S(-0)}{\zeta^S(0)}+\sum_{v\in S}\partial\log \frac{c_{2,v}(0)}{c_{1,v}(0)}\right) = $$
$$ = \zeta^S(0)^*\prod_{v\in S}\tilde O_0(f^0_v)\cdot \left(\partial\log\frac{\zeta^S(0)}{\zeta^S(-0)}+\sum_{v\in S} \left(\frac{A_v'(0)}{A_v(0)}-\frac{B_v'(0)}{B_v(0)}\right)\right) = $$
$$= \zeta^S(0)^* \prod_{v\in S} \tilde O_0(f^0_v) \cdot \left( \partial\log \frac{\zeta^S(0)}{\zeta^S(-0)}+ \sum_{v\in S} \frac{\tilde O_u(f^0_v)}{\tilde O_0(f^0_v)}\right).$$

This is precisely the term $\tilde O_0(f^0)$ of \eqref{defzerosplit}, which completes the proofs for $t=0$.

I leave the proof for other integer values of $t$ to the reader. I remark that, for example, when $k_v=\RR$ and $\eta_v=1$ the limit of $C_{2,v}^t(0)$ as $t$ approaches a positive even integer may be infinite, but this coincides with a trivial zero of the partial $L$-function $L^S(\eta,-t)$ representing the formal product: $\prod_{v\notin S} C_{2,v}^t(0)$.

\end{proof}

\subsection{The Poisson sum}

We define the following functionals on the global Schwartz spaces $\mathcal S^t_i$, all denoted by $PS$ for ``Poisson sum''. When $i=1$ or $4$:
\begin{equation}
 PS_i: \mathcal S^t_i\ni f \mapsto \tilde O_0(f) + \sum_{\xi\in k^\times} f(\xi),
\end{equation}

When $i=2$ or $3$ and $t$ is large:
\begin{equation}
 PS_i: \mathcal S^t_i\ni f \mapsto \tilde O_0(f) + \tilde O_\infty(f) + \sum_{\xi\in k^\times} f(\xi).
\end{equation}

The following is immediate (assuming convergence, which will be proved right afterwards):
\begin{lemma}\label{iotainvariant}
 Consider the map $\iota_t: \mathcal S^t_2 \xrightarrow{\sim} \mathcal S^t_3$ defined in \eqref{defiotat}. It preserves Poisson sums, i.e., the pull-back of $PS_3$ via this map is the functional $PS_2$.
\end{lemma}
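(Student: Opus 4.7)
The plan is to verify the identity $PS_3(\iota_t f) = PS_2(f)$ term-by-term, using two elementary inputs: the global product formula together with the fact that $\eta$ is an idele class character (so both $|\xi|$ and $\eta(\xi)$ are trivial on $k^\times$), and the elementary observation that $\iota_t$ exchanges the roles of $0$ and $\infty$ on the base $\mathcal B$.

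\emph{Step 1: the sum over $k^\times$.} Writing $\iota_t$ globally as $(\iota_t f)(\xi) = \frac{\eta(\xi)}{|\xi|^{t+1}} f(1/\xi)$ for $\xi \in \adele^\times$, one has for any $\xi \in k^\times$ that $|\xi| = 1$ by the product formula and $\eta(\xi) = 1$ because $\eta$ is trivial on $k^\times$. Hence $(\iota_t f)(\xi) = f(1/\xi)$ on $k^\times$, and the change of variables $\xi \mapsto 1/\xi$ (a bijection of $k^\times$) gives
\[
\sum_{\xi \in k^\times} (\iota_t f)(\xi) = \sum_{\xi \in k^\times} f(\xi).
\]

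\emph{Step 2: the irregular terms are swapped.} Suppose $f = \otimes_v f_v$ with each $f_v$ of the form $C_v^t(1/\xi)\eta_v(\xi)|\xi|_v^{-t-1}$ in a neighborhood of $\infty$ and Schwartz in a neighborhood of $0$. A direct local computation then shows that near $\xi = 0$,
\[
(\iota_t f_v)(\xi) = \frac{\eta_v(\xi)}{|\xi|_v^{t+1}} \cdot C_v^t(\xi)\eta_v(\xi)|\xi|_v^{t+1} = C_v^t(\xi),
\]
i.e.\ the value of $\iota_t f$ at $0$ is precisely $C_v^t(0)$, the leading coefficient of $f$ at $\infty$. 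Symmetrically, near $\infty$ one computes $(\iota_t f_v)(\xi) = f_v(0)\,\eta_v(\xi)|\xi|_v^{-t-1}$, so the leading coefficient of $\iota_t f$ at $\infty$ equals $f_v(0)$. Consulting the definitions of $\tilde O_0$ and $\tilde O_\infty$ on $\mathcal S_2^t$ and $\mathcal S_3^t$ (which involve the same partial $L$-factor $L^S(\eta,t+1)$ in both cases, see the basic-vector table \eqref{tablebaby} which is symmetric in $0$ and $\infty$), this yields
\[
\tilde O_0(\iota_t f) = \tilde O_\infty(f), \qquad \tilde O_\infty(\iota_t f) = \tilde O_0(f).
\]

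\emph{Step 3: conclusion.} Adding the three contributions gives $PS_3(\iota_t f) = PS_2(f)$. The argument extends from pure tensors to the full restricted tensor product by continuity, once convergence is in force (which is why the statement is made only for $\Re(t) \gg 0$; the summation over $k^\times$ is absolutely convergent there, as will be proved immediately after). The only thing one has to be mildly careful about is that the $L$-factors appearing in $\tilde O_0$ and $\tilde O_\infty$ match, but this is visible in the table of basic vectors: the value at $\mathfrak o_v^\times$ and the leading coefficients at $0$ and at $\infty$ of $f^{t,0}_{2,v} = f^{t,0}_{3,v}$ are all governed by $L_v(\eta_v, t+1)$, and these are the factors that appear in both $\tilde O_0$ and $\tilde O_\infty$. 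There is no genuine obstacle here; the lemma is essentially a bookkeeping check, and the only substantive ingredient is the global triviality of $\eta$ and $|\cdot|$ on $k^\times$.
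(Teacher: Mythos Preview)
Your proof is correct and is exactly the direct verification the paper has in mind when it calls the lemma ``immediate.'' The paper gives no proof at all; you have simply unpacked the bookkeeping: the product formula and the idele-class property of $\eta$ make $\iota_t$ act as $f\mapsto f(1/\bullet)$ on $k^\times$, and $\iota_t$ swaps the germs at $0$ and $\infty$ so that $\tilde O_0$ and $\tilde O_\infty$ are exchanged (with the same partial $L$-factor $L^S(\eta,t+1)$ on both sides, since the basic vectors $f^{t,0}_{2,v}=f^{t,0}_{3,v}$ are $\iota_t$-invariant by axiom~(2) of \S\ref{basicbaby}).
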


Now we discuss convergence:

\begin{proposition}\label{babyconvergence}
The functional $\mathcal S_i^t\ni f^t\mapsto \sum_{\xi\in k^\times} f^t(\xi)$ converges absolutely for every $t$ when $i=1,4$, and for $\Re(t)\gg 0$ when $i=2,3$. For such values of $t$, the sum is bounded in vertical strips by polynomial seminorms on the spaces $S^t_i$; in particular, for an analytic section $t\mapsto f^t\in \mathcal S_i^t$ the value of the functional is analytic in $t$.  

Moreover, if we replace the basic functions by $1_{\mathfrak o_v}$ (the characteristic functions of the integers) outside of a finite set $S$ of places, the assertion remains true for $\Re(t)\gg 0$, and on any vertical strip there is a  bound by polynomial seminorms which is uniform in $S$. 
\end{proposition}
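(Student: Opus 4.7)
The plan is to exploit the product formula $\prod_v |\xi|_v = 1$ for $\xi \in k^\times$ to convert the product of the local bounds from axioms (1), (3), (4) at the places $v$ outside a fixed finite set into a weight at the remaining, finitely many places, where a standard Schwartz-type summation over a lattice gives absolute convergence. Fixing an analytic section $t \mapsto f^t = \bigotimes_v f^t_v$ with $f_v^t = f_{i,v}^{t,0}$ for all $v$ outside a finite set $S_0 \supset S$, I would bound the non-$S_0$ factors $|f_{i,v}^{t,0}(\xi)|$ by $|c_{i,v}^t|$ times a power of $|\xi|_v$, and package the Euler product $\prod_{v \notin S_0} |c_{i,v}^t|$ (convergent for the prescribed range of $t$ by axiom (1)) as a single constant.

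For $i = 1, 4$, axiom (3) forces $\supp f_{i,v}^{t,0} \subset \mathfrak{o}_v$, so the sum is restricted to $\xi \in \mathfrak{o}_{S_0} \cap k^\times$, and the bound $|f_{i,v}^{t,0}(\xi)| \le |c_{i,v}^t| \cdot |\xi|_v^{-r_i^t}$ combined with the place-independence of $r_i^t$ and the product formula yields
$$|f^t(\xi)| \le \Bigl(\prod_{v \notin S_0} |c_{i,v}^t|\Bigr) \cdot \prod_{v \in S_0} |\xi|_v^{r_i^t}\, |f_v^t(\xi)|.$$
Since the local asymptotic $C_1(\xi) + C_2(\xi)\eta_v(\xi) |\xi|_v^t$ of an element of $\mathcal{S}_{i,v}^t$ becomes bounded after multiplication by $|\xi|_v^{r_i^t}$ and retains Schwartz decay away from zero, the sum over $\xi$ in the lattice $\mathfrak{o}_{S_0} \subset \prod_{v \in S_0} k_v$ converges absolutely and is controlled by a continuous seminorm on $\bigotimes_{v \in S_0} \mathcal{S}_{i,v}^t$.

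For $i = 2, 3$, the basic vector is no longer compactly supported, so I would stratify $\xi \in k^\times$ according to the finite set $T = \{v : |\xi|_v > 1\}$. Axiom (4) gives $|c_{i,v}^t|^{-1}\, |f_{i,v}^{t,0}(\xi)| \le |\xi|_v^{-r_i}$ for $v \notin T$ and $\le |\xi|_v^{-r_i - M}$ for $v \in T$. Applying the product formula to the $|\xi|_v^{-r_i}$ factor as above, and rewriting $\prod_{v \in T \setminus S_0} |\xi|_v^{-M}$ as $H(\xi)^{-M} \cdot \prod_{v \in T \cap S_0} |\xi|_v^M$ where $H(\xi) = \prod_v \max(|\xi|_v, 1)$ is the adelic height, one arrives at a bound of the shape
$$|f^t(\xi)| \le C \cdot H(\xi)^{-M} \cdot \prod_{v \in S_0} |\xi|_v^{r_i + M \mathbf{1}_{v \in T}}\, |f_v^t(\xi)|,$$
where the $S_0$-product is controlled by a Schwartz-type seminorm on $\bigotimes_{v \in S_0} \mathcal{S}_{i,v}^t$ (the extra weight is absorbed by rapid decay at the archimedean places and by compact support of $\supp f_v^t$ at the non-archimedean ones). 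Since $\#\{\xi \in k^\times : H(\xi) \le X\} = O(X^{[k:\Q] + \epsilon})$, taking $M$ larger than a constant depending only on $[k:\Q]$ --- precisely the prescription in axiom (4) --- yields absolute convergence, provided $\Re(t) \gg 0$ so that the Euler product $\prod_{v \notin S_0} c_{i,v}^t$ converges by axiom (1).

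The polynomial seminorm bounds in bounded vertical strips follow by tracking the $t$-dependence in each ingredient: the Euler product $\prod_{v \notin S_0} c_{i,v}^t$ has at most polynomial growth (by the known growth of the relevant partial abelian $L$-functions implicit in axiom (1)), and the $S_0$-seminorm is polynomial by the construction of polynomial families of seminorms in appendix \ref{sec:families}. For the uniformity claim, replacing $f_{i,v}^{t,0}$ by $1_{\mathfrak{o}_v}$ at $v \notin S$ is harmless: the uniform estimate $|1_{\mathfrak{o}_v}| \le 1$ gives the same bound with $S$ in place of $S_0$, and the constants do not depend on $S$ because all relevant dependence has been absorbed into the convergent Euler product. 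The main obstacle will be the $i = 2, 3$ case --- verifying that the stratification by $T$, combined with the decay $|\xi|_v^{-M}$, produces a genuine adelic-height bound and not merely a finite-place estimate --- since this is where the Northcott-type lattice-point count forces the prescription of $M$ depending on the global field $k$.
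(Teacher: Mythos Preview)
Your approach is essentially the same as the paper's. For $i=1,4$ both you and the paper restrict to $S_0$-integers using the support condition, apply the product formula to the axiom-(3) bound on the basic vectors, and absorb the resulting polynomial weight at the places in $S_0$ by the Schwartz decay of $f_{S_0}$. For $i=2,3$ the paper defers to the more general Proposition~\ref{convergence}, whose proof is exactly your adelic-height argument: one obtains $|f(\xi)|\le C\,r(\xi)^{-M}$ with $r(\xi)=\prod_v\max(1,|\xi|_v)$ (your $H(\xi)$), invokes the lattice-point count $\#\{\xi\in k: r(\xi)<T\}\ll T^N$ (your Northcott bound), and sums. Your stratification by $T=\{v:|\xi|_v>1\}$ is just a bookkeeping device for the same estimate.

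One minor inaccuracy: in the $i=2,3$ paragraph you write that the extra weight $|\xi|_v^{r_i+M}$ at places $v\in S_0$ is ``absorbed by rapid decay at the archimedean places and by compact support \ldots\ at the non-archimedean ones.'' Neither is true for $\mathcal S^t_{2,v}$ or $\mathcal S^t_{3,v}$: elements there are \emph{not} of rapid decay at infinity (archimedean or not), but only decay like $|\xi|_v^{-\Re(t)-1}$. The correct justification --- which the paper gives parenthetically in the proof of Proposition~\ref{convergence} --- is that for $\Re(t)$ large enough (namely $\Re(t)>r_i+M-1$) this decay already dominates $|\xi|_v^{r_i+M}$, at every place in $S_0$. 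With that correction your argument goes through.
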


\begin{proof}
Let $f = \prod_{v\notin T} f_v^{t,0} \cdot f_T \in \mathcal S_i^t$, where $T$ is a finite set of places and $f_T \in \widehat\otimes_{v\in T} \mathcal S_{i,v}^t$. 

In cases $i=1$ or $4$, by the axioms of \S \ref{basicbaby}, the basic function $f_{i,v}^{t,0}$ is supported on the integers of $k_v$ and on every vertical strip we have a bound
$$ \left|\prod_{v\notin T} f_{i,v}^{t,0} (\xi_T)\right| \le C(t)\cdot |\xi_T|^{-r_i},$$
where $r_i\ge 0$ is a constant and $C(t)$ is of polynomial growth in vertical strips. Notice that for $\Re(t)\gg 0$, where the Euler product of the regular values is convergent, such an estimate holds, uniformly in $S$, if we replace $f_{i,v}^{t,0}$ by $1_{\mathfrak o_v}$ for $v\notin S$. 

On the other hand, $f_T$ is of rapid decay, i.e., $f_T(\xi_T)$ vanishes faster than any power of $|\xi_T|$, and from this it easily follows that the sum over $k^\times$ is absolutely convergent. 

For $\mathcal S^t_2$ and $\mathcal S^t_3$ I refer the reader to the more general Proposition \ref{convergence}.
\end{proof}

Combining this with Proposition \ref{irregOI} we get:

\begin{corollary}\label{analytic}
 For an analytic section $t\mapsto f^t \in \mathcal S^t_i$, the number $PS_i(f^t)$ varies analytically  for $\Re(t)\gg 0$ when $i=2,3$, and for all $t$ when $i=1,4$.
\end{corollary}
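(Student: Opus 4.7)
The claim is essentially that each summand in the definition of $PS_i$ varies analytically in $t$, so their sum does too; both ingredients have already been prepared.

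First consider the cases $i=1$ or $4$. By definition, $PS_i(f^t) = \tilde O_0(f^t) + \sum_{\xi\in k^\times} f^t(\xi)$. The first term is analytic in $t$ for all $t$ by Proposition \ref{irregOI}. For the second term, Proposition \ref{babyconvergence} asserts that the sum converges absolutely for every $t$ and is bounded by polynomial seminorms on $\mathcal S_i^t$ in bounded vertical strips. Since $t\mapsto f^t$ is an analytic section, each individual term $t\mapsto f^t(\xi)$ is analytic (a fixed $\xi\in k^\times$ lies in $\mathcal B^\reg(\adele)$, so evaluation at $\xi$ is a continuous linear functional on $\mathcal S_i^t$ that pulls back analytic sections to analytic functions). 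The uniform polynomial bound in vertical strips justifies interchanging the sum with, say, a contour integral, so Morera's theorem yields analyticity of the sum. Adding the two pieces completes the argument for $i=1,4$.

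For $i=2$ or $3$, we restrict to $\Re(t)\gg 0$. The sum $\sum_{\xi\in k^\times} f^t(\xi)$ is again absolutely convergent and analytic by the same Morera-type argument, using the convergence and polynomial-growth statements of Proposition \ref{babyconvergence}. The irregular contributions $\tilde O_0(f^t)$ and $\tilde O_\infty(f^t)$ are, by their definitions, products of finitely many local values $C_{i,v}^t(0)$, $C_v^t(0)$ at places $v\in S$ (outside $S$ one has $f_v = f_{i,v}^{t,0}$), times the partial $L$-values $L^S(\eta,t)$ and $L^S(\eta,t+1)$. The local evaluations are analytic functions of $t$ (by the definition of analytic section, these extracted asymptotic constants vary analytically, as in the setup of Section \ref{ssbabySchwartz} and Proposition \ref{variousproperties}), and the partial abelian $L$-functions $L^S(\eta,t)$, $L^S(\eta,t+1)$ are holomorphic for $\Re(t)\gg 0$. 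Hence $\tilde O_0$ and $\tilde O_\infty$ are analytic, and $PS_i(f^t)$ is analytic on this range.

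No single step is genuinely an obstacle here: the corollary is a direct combination of Proposition \ref{irregOI} (analyticity of the boundary/irregular term) with Proposition \ref{babyconvergence} (analyticity of the main sum, via absolute convergence with polynomial seminorm bounds that permit Morera's theorem). The only mild point of care is making sure, for $i=2,3$, that the asymptotic constants defining $\tilde O_\infty(f^t)$ depend analytically on $t$ along an analytic section; this is the analogue for the point at infinity of the statement made at $\xi=0$ in Proposition \ref{variousproperties}, and is built into the notion of analytic section in Appendix \ref{sec:families}.
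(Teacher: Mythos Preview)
Your proof is correct and follows the same approach as the paper, which simply states the corollary as an immediate consequence of combining Proposition~\ref{irregOI} (analyticity of the irregular term) with Proposition~\ref{babyconvergence} (which already includes the analyticity of the sum over $k^\times$ for analytic sections). Your Morera argument for the sum and your treatment of $\tilde O_0,\tilde O_\infty$ for $i=2,3$ just make explicit what the paper leaves implicit.
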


\subsection{Poisson summation formula}

We are ready to prove the main result of this section:
\begin{proposition}\label{babyPSF}
 For $\Re(t)\gg 0$, $f_1$ an element of the global Schwartz space $S^t_1$ and $f_3$ an element of $S^t_3$, we have: 
\begin{equation}\label{eqbabyPSF}
 PS_1(f_1) = PS_2\left(\mathcal Ff_1\right),
\end{equation}
$$ PS_3(f_3) = PS_4\left(\mathcal Ff_3\right).$$
\end{proposition}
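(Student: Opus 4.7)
The plan is to deduce \eqref{eqbabyPSF} from classical Poisson summation on $\mathbb{A}$, paralleling Lemma \ref{irregularsplit} but working directly with the base $\mathbb{A}$ and without passing to $\mathbb{A}_E$ or $\mathbb{A}^2$.

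By the continuity assertions of Proposition \ref{babyconvergence} and Corollary \ref{analytic}, together with the continuity of the Fourier transform (Proposition \ref{variousproperties}), both sides of the first identity are continuous on $\mathcal{S}_1^t$, so by density of simple tensors it suffices to check the identity on $f_1 = \bigotimes_v f_{1,v}$ with $f_{1,v} = f_{1,v}^{t,0}$ outside some finite set $T$. Invoking Proposition \ref{variousproperties}, I would fix a decomposition $f_{1,v}(\xi) = C_{1,v}^t(\xi) + C_{2,v}^t(\xi)\eta_v(\xi)|\xi|_v^t$ with $C_{i,v}^t \in \mathcal{S}(k_v)$; at unramified $v$ these are determined by \eqref{tablebaby} (up to a Schwartz function on $k_v^\times$), namely $C_{1,v}^t = L_v(\eta_v, t)\cdot 1_{\mathfrak{o}_v}$ and $C_{2,v}^t = L_v(\eta_v, -t)\cdot 1_{\mathfrak{o}_v}$.

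For $\Re(t)$ large enough that $\prod_v L_v(\eta_v, t)$ converges absolutely, the product $\phi := \bigotimes_v C_{1,v}^t$ defines an honest Schwartz function on $\mathbb{A}$; classical Poisson summation then yields
$$\phi(0) + \sum_{\xi \in k^\times} \phi(\xi) = \hat\phi(0) + \sum_{\xi \in k^\times} \hat\phi(\xi).$$
The complementary piece $\psi := f_1 - \phi$, locally of the form $C_{2,v}^t(\xi)\eta_v(\xi)|\xi|_v^t$ near zero, is not Schwartz on $\mathbb{A}$, but its adelic sum $\sum_{k^\times}\psi$ can be rewritten as a Tate zeta integral against $\eta|\bullet|^t$; the functional equation for $L(\eta, s)$ converts this into a Tate integral for $\mathcal{F}\psi$, whose behavior at $\infty$ takes the form $\eta(\xi)|\xi|^{-t-1}$, exactly the asymptotic captured by $\tilde O_\infty$. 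Combining, the left hand side of classical Poisson augmented by the Tate-integral correction for $\psi$ becomes $\tilde O_0(f_1) + \sum_{\xi\in k^\times} f_1(\xi) = PS_1(f_1)$, while the right hand side becomes $\tilde O_0(\mathcal{F}f_1) + \tilde O_\infty(\mathcal{F}f_1) + \sum_{\xi\in k^\times}\mathcal{F}f_1(\xi) = PS_2(\mathcal{F}f_1)$.

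The principal obstacle is the precise identification of the boundary terms: one must verify that $\phi(0)$ together with the polar contribution from the Tate integral of $\psi$ equals $\tilde O_0(f_1)$ as defined in \eqref{defzerononsplit}--\eqref{defzerosplit}, and analogously on the Fourier side for $\tilde O_0(\mathcal{F}f_1) + \tilde O_\infty(\mathcal{F}f_1)$. This is a global residue computation at $s=0$ using Lemma \ref{lemmad} at each ramified place together with the global functional equation (triviality of the adelic gamma factor) for $L^S(\eta, s)$; the matching parallels the calculation at the end of the proof of Proposition \ref{irregOI}. Finally, the second identity $PS_3(f_3) = PS_4(\mathcal{F}f_3)$ follows by the symmetric argument, with the roles of $\xi = 0$ and $\xi = \infty$ interchanged, starting from an honest Schwartz function on $\mathbb{A}$ that matches the $\eta|\bullet|^{-t-1}$-asymptotic of $f_3$ at infinity.
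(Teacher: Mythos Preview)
Your decomposition $f_1=\phi+\psi$ with $\phi=\bigotimes_v C_{1,v}^t$ does not work the way you describe. The difference
\[
\psi \;=\; \bigotimes_v\bigl(C_{1,v}^t+C_{2,v}^t\,\eta_v(\cdot)|\cdot|_v^t\bigr)\;-\;\bigotimes_v C_{1,v}^t
\]
is \emph{not} of the form $\Psi(\xi)\,\eta(\xi)|\xi|^t$ for a Schwartz $\Psi$ on $\mathbb A$; expanding the tensor product produces all cross-terms $\prod_{v\in S}C_{2,v}\eta_v|\cdot|^t\prod_{v\notin S}C_{1,v}$ over non-empty subsets $S$ of places, and there is no single Tate zeta integral that captures $\sum_{\xi\in k^\times}\psi(\xi)$. (Indeed, for $\Re(t)\gg 0$ the ``pure $C_2$'' piece has basic-vector coefficients $L_v(\eta_v,-t)$, whose Euler product diverges.) Consequently the passage from $\sum_{k^\times}\psi$ to $\sum_{k^\times}\widehat\psi$ via a Tate functional equation is not available, and the asserted identification of boundary terms with $\tilde O_0$ and $\tilde O_\infty$ does not follow.

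The paper avoids this obstruction by a truncation argument rather than a decomposition: replace the basic vectors outside a growing finite set $T$ by $1_{\mathfrak o_v}$, so that both the truncated function and its Fourier transform $\prod_{v\notin T}1_{\mathfrak o_v}\cdot\prod_{v\in T}\mathcal F f_v$ are honest functions on $\mathbb A$ to which classical Poisson applies; then take $T\to\infty$ using the uniform convergence of Proposition~\ref{babyconvergence} (this is Lemma~\ref{babyapproximation}). After the limit one obtains $\sum_{\xi\in k}f(\xi)=\sum_{\xi\in k}\mathcal F f(\xi)$, where the value at $\xi=0$ is the Euler product of local limits $C_{1,v}(0)$. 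The remaining irregular contributions --- namely $\prod_v C_{2,v}(0)$ on the left and $\tilde O_\infty(\mathcal F f)$ on the right --- are matched exactly by the relation $D(0)=\gamma(\eta_v,-t,\psi_v)\,C_2(0)$ of Lemma~\ref{lemmad} together with the global triviality of the gamma factor. Your treatment of this last matching step is correct in spirit; it is the handling of the ``regular'' sum via $\phi+\psi$ that fails.
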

Both equations amount to the same, of course.

 This immediately implies:
\begin{corollary}\label{babyPSF-special}
 For every $t$ and $f\in \mathcal S_1^t$:
\begin{equation}
  PS_1(f) = PS_4(\mathcal G_tf).
\end{equation}
\end{corollary}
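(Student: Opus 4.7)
The plan is to deduce the corollary from Proposition \ref{babyPSF}, Lemma \ref{iotainvariant}, and analytic continuation in $t$, using the chain of spaces
$$\mathcal S_1^t \xrightarrow{\mathcal F} \mathcal S_2^t \xrightarrow{\iota_t} \mathcal S_3^t \xrightarrow{\mathcal F} \mathcal S_4^t$$
whose composition is $\mathcal G_t$ by definition \eqref{defGt}.

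First I would prove the identity in the regime $\Re(t)\gg 0$, where all four Poisson sums $PS_i$ are defined and Fourier transforms are available. For such $t$ and $f\in\mathcal S_1^t$, apply Proposition \ref{babyPSF} to $f_1:=f$ to get $PS_1(f)=PS_2(\mathcal F f)$; then apply Lemma \ref{iotainvariant} to get $PS_2(\mathcal F f)=PS_3(\iota_t\mathcal F f)$; then apply the second half of Proposition \ref{babyPSF} to $f_3:=\iota_t\mathcal F f \in \mathcal S_3^t$ to get $PS_3(\iota_t\mathcal F f)=PS_4(\mathcal F\iota_t\mathcal F f)=PS_4(\mathcal G_t f)$. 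Chaining these gives the desired equality for $\Re(t)\gg 0$.

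To extend to arbitrary $t=t_0\in\CC$ (or $\CC/\tfrac{2\pi i}{\ln q}\Z$), I would fix an element $f\in\mathcal S_1^{t_0}$ and choose an analytic section $t\mapsto f^t\in\mathcal S_1^t$ specializing to $f^{t_0}=f$ (such sections always exist, e.g.\ using a local trivialization as in the discussion of $\mathcal S_i^t$ in \S\ref{ssbabySchwartz} and appendix \ref{sec:families}). By Proposition \ref{variousproperties}, $\mathcal G_t$ preserves analytic sections, so $t\mapsto \mathcal G_t f^t$ is an analytic section of $\mathcal S_4^t$. Now apply Corollary \ref{analytic} to both sides: since $i=1$ and $i=4$ are the ``good'' cases where $PS_i$ is analytic on analytic sections for \emph{every} $t$, both $t\mapsto PS_1(f^t)$ and $t\mapsto PS_4(\mathcal G_t f^t)$ are entire (or, in the function field case, analytic on the cylinder). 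They agree on the right half-plane $\Re(t)\gg 0$ by the previous step, so by the identity principle they agree everywhere, and in particular at $t=t_0$, yielding $PS_1(f)=PS_4(\mathcal G_t f)$.

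The main subtlety — rather than obstacle — is the asymmetry between $i=1,4$ and $i=2,3$: the intermediate spaces $\mathcal S_2^t, \mathcal S_3^t$ only make sense as globally convergent objects for $\Re(t)\gg 0$, so the intermediate equalities involving $PS_2, PS_3$ cannot be directly continued in $t$. The argument therefore bypasses them by proving the final $PS_1$ versus $PS_4$ equality in the convergent regime first, and only then using analytic continuation on the two outer Schwartz spaces, which by construction are well-behaved for all $t$. The proposition \ref{variousproperties} guarantee that $\mathcal G_t$ is defined for every $t$ (even where $\mathcal F$ individually fails on $\mathcal S_1^t$ or $\mathcal S_3^t$) is precisely what makes this continuation meaningful.
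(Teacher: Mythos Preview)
Your proof is correct and follows essentially the same approach as the paper: first establish the identity for $\Re(t)\gg 0$ via the chain $PS_1\to PS_2\to PS_3\to PS_4$ using Proposition \ref{babyPSF} and Lemma \ref{iotainvariant}, then pass to arbitrary $t$ by taking an analytic section through the given element and invoking Corollary \ref{analytic} for $i=1,4$. Your exposition is simply more detailed, in particular making explicit the use of Proposition \ref{variousproperties} for the preservation of analytic sections under $\mathcal G_t$ and the existence of analytic sections (Lemma \ref{insection} in appendix \ref{sec:families}).
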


\begin{proof}
 Given $f \in \mathcal S_1^{t_0}$ (for a fixed $t_0$), it can be realized as  is the specialization of an analytic section $f^t \in \mathcal S_1^t$, as $t$ varies in the parameter space. Since $PS_i(f^t)$ is analytic in $t$ (Lemma \ref{analytic}), it suffices to prove it for large $t$; but then it follows from the above proposition and the fact that Poisson sums are preserved under $\iota_t$ (Lemma \ref{iotainvariant}).
\end{proof}

\begin{proof}[Proof of Proposition \ref{babyPSF}]
 We can approximate both sides of (\ref{eqbabyPSF}) by expressions which depend on a finite set of places $T$, in the limit as $T$ tends to include all places.

Indeed, we notice:
\begin{lemma}\label{babyapproximation}
 If $f=\otimes^\prime_v f_v\in \mathcal S^t_i$, $\Re(t)\gg 0$, then
\begin{equation}
 \lim_T \sum_{\xi\in k} \prod_{v\notin T}1_{\mathfrak o_v}(\xi)\prod_{v\in T} f_v(\xi) = \sum_{\xi\in k} f(\xi).
\end{equation}
\end{lemma}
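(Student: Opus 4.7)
The plan is to prove this by a dominated-convergence argument, with pointwise convergence of the finite truncations coming from absolute convergence of the Euler product of regular values of basic vectors for $\Re(t)\gg 0$, and domination coming from the uniform-in-$S$ bound asserted in the second paragraph of Proposition \ref{babyconvergence}.

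First I would fix a finite set $S_0\supset S$ outside of which $f_v=f^{t,0}_{i,v}$, and organize each term of the sum by place type. For a fixed $\xi\in k^\times$, there are only finitely many $v$ at which $\xi\notin\mathfrak{o}_v^\times$; call the set of such places $T(\xi)$. For any $T\supset S_0\cup T(\xi)$ we have
$$\prod_{v\notin T}1_{\mathfrak{o}_v}(\xi)\prod_{v\in T}f_v(\xi)=\prod_{v\in S_0}f_v(\xi)\cdot\prod_{v\in T\smallsetminus S_0}f^{t,0}_{i,v}(\xi),$$
while the right-hand side corresponding to $\xi$ is
$$\prod_{v\in S_0}f_v(\xi)\cdot\prod_{v\notin S_0}f^{t,0}_{i,v}(\xi).$$
Outside $S_0\cup T(\xi)$, $\xi\in\mathfrak{o}_v^\times$ and $f^{t,0}_{i,v}(\xi)=c_{i,v}^t$. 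By Axiom (1) in \S\ref{basicbaby}, the Euler product $\prod_{v\notin S}c_{i,v}^t$ converges for $\Re(t)\gg 0$, so $\prod_{v\in T\smallsetminus S_0}f^{t,0}_{i,v}(\xi)\to\prod_{v\notin S_0}f^{t,0}_{i,v}(\xi)$ as $T$ exhausts all places. This gives pointwise convergence (in $\xi$) of each term of the $\xi$-sum.

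Second, I would exhibit a dominating function for the sum over $\xi$. Writing
$$g_T(\xi):=\prod_{v\notin T}1_{\mathfrak{o}_v}(\xi)\prod_{v\in T}f_v(\xi),$$
and regarding $g_T$ as an element of a Schwartz space where the basic vectors at places $v\in T\smallsetminus S_0$ have been replaced by $1_{\mathfrak{o}_v}$, the uniform-in-$S$ part of Proposition \ref{babyconvergence} (applied with $S$ equal to $T$) gives a bound
$$|g_T(\xi)|\le |f_{S_0}(\xi_{S_0})|\cdot C(t)\,|\xi^{T\smallsetminus S_0}|^{-r_i},$$
valid uniformly in $T$, together with the support condition $g_T(\xi)=0$ unless $\xi\in\mathfrak{o}_v$ for all $v\notin T$. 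Since $f_{S_0}$ is of rapid decay on $\prod_{v\in S_0}k_v$, the sum $\sum_{\xi\in k^\times}$ of the dominating expression is absolutely convergent (again by the argument of Proposition \ref{babyconvergence}, with the same bound holding for the limit $T\to\text{all places}$). Dominated convergence then permits the interchange of the limit in $T$ with $\sum_{\xi\in k^\times}$, which is the claimed statement.

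The main obstacle I anticipate is precisely the uniformity of the domination in $T$: one needs to know not just that the Euler tails converge, but that the bound of Proposition \ref{babyconvergence} holds with the same constant $C(t)$ and the same exponent $r_i$ whether the basic vector at $v\notin T$ is $f^{t,0}_{i,v}$ or $1_{\mathfrak{o}_v}$. This is exactly what the axioms on $f^{t,0}_{i,v}$ in \S\ref{basicbaby} are designed for: both $1_{\mathfrak{o}_v}$ and $|f^{t,0}_{i,v}|/|f^{t,0}_{i,v}(\mathfrak{o}_v^\times)|$ are bounded by $|\xi|_v^{-r_i}$, so the same polynomial bound applies. A minor additional care is needed because the $f_v$ are only defined on $\mathcal B^\reg(k_v)$, so the sum should really be read as being over $\xi\in k\cap\mathcal B^\reg(k_v)$ for all $v$ occurring in the product; irregular rational points contribute only through the distributions $\tilde O_0$, $\tilde O_\infty$, which are not at issue here.
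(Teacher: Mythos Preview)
Your approach is correct and is exactly the paper's argument: the paper simply says ``for any given $\xi$, we clearly have $\lim_T \prod_{v\notin T}1_{\mathfrak o_v}(\xi)\prod_{v\in T} f_v(\xi) = f(\xi)$; by Proposition~\ref{babyconvergence} we may interchange the sum over $\xi$ and the limit over $T$.'' You have unpacked both halves---the pointwise convergence via absolute convergence of $\prod c^t_{i,v}$, and the domination via the uniform-in-$S$ clause of Proposition~\ref{babyconvergence}---in precisely the way the paper intends.

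One correction to your final paragraph: the sum in the lemma really is over all of $k$, including $\xi=0$, and this term is \emph{not} handled through $\tilde O_0$. The paper explains (in the paragraph immediately following the lemma) that for $\Re(t)>0$ each local $f_v$ extends continuously to $\xi=0$, and one defines $f(0)$ as the Euler product of these local values; from Table~\eqref{tablebaby} this is $\prod_v L_v(\eta_v,t)$ for $i=1,4$ and $\prod_v L_v(\eta_v,t+1)$ for $i=2,3$, both convergent for $\Re(t)\gg 0$. The pointwise convergence for $\xi=0$ then follows by the same Euler-product argument you gave for $\xi\ne 0$. This ``continuous'' value $f(0)$ is only one of the two Euler products defining $\tilde O_0(f)$; the discrepancy is precisely what is matched across the Poisson formula by the global triviality of gamma factors (Lemma~\ref{lemmad}) in the last step of the proof of Proposition~\ref{babyPSF}. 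So do not exclude $\xi=0$ from the lemma.
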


Notice that the sums on both sides include $\xi = 0$; since we have only explained how to think of elements of $\mathcal S_i^t$ as functions on $\adele^\times$, this needs some explanation. Recall from Table \eqref{tablebaby} that the asymptotic behavior of $f_{i,v}^{t,0}$ around $\xi=0$ is of the form
$$f_{i,v}^{t,0}(\xi) = L_v(\eta_v,t) + L_v(\eta_v,-t) \eta_v(\xi)|\xi|_v^t$$
when $i=1$ or $4$, and of the form
$$f_{i,v}^{t,0}(\xi) = L_v(\eta_v,1+t) $$
when $i=2$ or $3$. Moreover, for $\Re(t)>0$ the elements of $\mathcal S_{i,v}^t$ extend continuously to $\xi=0$. Therefore it is natural, for large $t$, to extend the evaluation of elements of $\mathcal S_i^t$ to $\xi=0$ by taking the Euler product of their local extensions to $\xi=0$.

To prove the lemma, notice that for any given $\xi$, we clearly have
 $$ \lim_T  \prod_{v\notin T}1_{\mathfrak o_v}(\xi)\prod_{v\in T} f_v(\xi) = f(\xi).$$ By Proposition \ref{babyconvergence} we may interchange the sum over $\xi$ and the limit over $T$, and this proves the lemma.

Now for every given $T$, the function $\prod_{v\notin T}1_{\mathfrak o_v}\prod_{v\in T} f_v$ satisfies conditions for the usual Poisson summation formula: it is continuous, decays faster than $|\xi|^{-1-\delta}$ at infinity, and its Fourier transform $\prod_{v\notin T}1_{\mathfrak o_v}\prod_{v\in T} \mathcal F(f_v)$ also has the same properties (since $\mathcal F f_v$ belongs to $\mathcal S^t_{2,v}$ if $f_v$ belongs to $\mathcal S^t_{1,v}$ and vice versa). Hence we have (say, for $f\in \mathcal S_1^t$)
$$\sum_{\xi\in k} \prod_{v\notin T}1_{\mathfrak o_v}(\xi)\prod_{v\in T} f_v(\xi) = \sum_{\xi\in k}\prod_{v\notin T}1_{\mathfrak o_v}(\xi)\prod_{v\in T} \mathcal F f_v(\xi).$$

Taking the limit with $T$ we get
\begin{equation}
 \sum_{\xi\in k} f(\xi) =  \sum_{\xi\in k} \mathcal Ff(\xi).
\end{equation}

Finally, if we add to the above the relations \eqref{dfromc} and \eqref{dfrome} and take into account the fact that the global gamma factors $\gamma(\eta,-t,\psi)$ and $\gamma(\eta,-t,\psi)$ are equal to $1$, we get the desired result.
\end{proof}

\subsection{Application: the baby case}

Finally, we describe a deformation $\mathcal S_1^t=\mathcal S_4^t$ of the global Schwartz space $\mathcal S(\mathcal X(\adele))$ of the baby case, and verify that the spaces $\mathcal S_i^t$ ($i=1\dots 4$, where the spaces $\mathcal S_2^t=\mathcal S_3^t$ are obtained by Fourier transform from $\mathcal S_1^t$) satisfy the postulated axioms.

First of all, it is clear from the definitions that the local Schwartz spaces $\mathcal S_{1,v}^t$, $\mathcal S_{4,v}^t$ specialize to $\mathcal S(\mathcal X_v)$ when $t=0$. We now endow them with the following basic function, which, as we will see, on one hand coincides with the basic function of $\mathcal S(\mathcal X_v)$ when $t=0$, and on the other satisfies the axioms of \S \ref{basicbaby}: 
\begin{equation}\label{basicfnbaby}
 f_{1,v}^{t,0} (\xi):= \begin{cases}
              \frac{1-\eta(\varpi_v)q_v^{-t} \cdot \eta_v(\xi)|\xi|_v^t}{1-\eta_v(\varpi_v)q_v^{-t}}, & \text{ when } t\ne 0\text{ or }\eta_v\ne 1;\\
              1-\log_{q_v}|\xi|_v,& \text{ when }t=0\text{ and }\eta_v=1.
             \end{cases}
\end{equation}
This function can be obtained by suitably normalized orbital integrals corresponding to $(k_v^\times,\eta_v(\bullet)|\bullet|_v^t)$-coinvariants of the characteristic function of $1_{\mathfrak o^2}$; however, this is not important for us here. What is important is the following: 

\begin{lemma}
Let $X=\Res_{E/k}\Ga$ as in the previous section, and consider the action of $T=U(1)$ on $X$; thus $\mathcal X=X/T$. Then for $t=0$ and for a suitable Haar measure on $T(k_v)$, the above basic function is equal to image (i.e., the orbital integrals) in $\mathcal S(\mathcal X(k_v))$ of the characteristic function of $X(\mathfrak o_v)$.
\end{lemma}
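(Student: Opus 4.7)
The plan is to unwind the definitions and carry out the two orbital integral computations, then check they agree with the explicit formula \eqref{basicfnbaby} at $t=0$. Specializing \eqref{basicfnbaby} at $t=0$ gives, in the split case ($\eta_v=1$), the value $1-\log_{q_v}|\xi|_v$, and in the unramified nonsplit case, using $\eta_v(\varpi_v)=-1$, the value $\frac{1+\eta_v(\xi)}{2}$. These are exactly the values of the basic vector $f_v^0$ already recorded in the lemma earlier in the paper (the one computing $\tilde O_{0_v,\kappa}(f_v^0)$), so the assertion amounts to checking that the orbital integrals of $1_{X(\mathfrak o_v)}$, for a suitable measure on $T(k_v)$, produce that function on $\mathcal B^\reg(k_v)=k_v^\times$.

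For the split case, $X=\Ga\oplus\Ga$ with $T=\Gm$ acting by $t\cdot(a,b)=(ta,t^{-1}b)$, and the coordinate on $\mathcal B$ is $\xi=ab$. Taking $x_0=(\xi,1)$ as a representative of the fiber above $\xi\in k_v^\times$, the orbital integral becomes
$$O_\xi(1_{X(\mathfrak o_v)}) = \int_{k_v^\times} 1_{\mathfrak o_v}(t\xi)\,1_{\mathfrak o_v}(t^{-1})\, d^\times t,$$
and the integrand is the characteristic function of $\{t : 1\le |t|_v\le |\xi|_v^{-1}\}$. This set is empty for $|\xi|_v>1$, and for $|\xi|_v\le 1$ it decomposes into $1+v(\xi^{-1})=1-\log_{q_v}|\xi|_v$ cosets of $\mathfrak o_v^\times$. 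Normalizing $d^\times t$ so that $\mathrm{vol}(\mathfrak o_v^\times)=1$ yields the stated value.

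For the unramified nonsplit case, $X=\Res_{E/k}\Ga$, $T=\ker N_k^E$ acts on $X(k_v)=E_v$ by multiplication, and the map to the base is the norm $N\colon E_v\to k_v$. Since $|tx|_{E_v}=|x|_{E_v}$ for $t\in T(k_v)$ and $|x|_{E_v}=|N(x)|_v$, any $x_0$ with $N(x_0)=\xi$ satisfies $|x_0|_{E_v}=|\xi|_v$; thus the $T$-orbit of $x_0$ lies in $\mathfrak o_{E_v}$ precisely when $|\xi|_v\le 1$, in which case it is contained entirely there. On this locus, the orbital integral equals $\mathrm{vol}(T(k_v))$ whenever $\xi\in N(E_v^\times)$, and vanishes otherwise. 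In the unramified situation, $\xi\in N(E_v^\times)$ iff $v(\xi)$ is even iff $\eta_v(\xi)=1$, so the orbital integral equals $\mathrm{vol}(T(k_v))\cdot \frac{1+\eta_v(\xi)}{2}$ on $|\xi|_v\le 1$. Normalizing the Haar measure by $\mathrm{vol}(T(k_v))=1$ (legitimate since $T(k_v)$ is compact here) produces the required value.

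Assembling the two cases, the ``suitable Haar measure'' is in both cases the one giving $T(\mathfrak o_v)$ volume $1$. The normalization \eqref{basicvectornormalization} is then automatic, since both pieces evaluate to $1$ on $\mathfrak o_v^\times$ (in the split case, $v(\xi^{-1})=0$; in the nonsplit case, units are norms). There is nothing really hard in this calculation; the only point to watch is keeping track of which absolute value and which Haar measure normalization is in force, and using that in the unramified nonsplit case the image of the norm map on $k_v^\times$ is precisely $\{\xi : v(\xi)\in 2\Z\}=\{\xi : \eta_v(\xi)=1\}$.
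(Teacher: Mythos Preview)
Your argument is correct and carries out exactly the easy calculation that the paper omits. One small slip: in the split case you write the coset count as $1+v(\xi^{-1})$, but $v(\xi^{-1})=-v(\xi)\le 0$ for $|\xi|_v\le 1$; the correct count is $1+v(\xi)$, which is indeed $1-\log_{q_v}|\xi|_v$, so your conclusion stands.
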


The proof is an easy calculation and will be omitted. This lemma shows that for $t=0$ we get, indeed, the global Schwartz space $\mathcal S(\mathcal X(\adele))$ of the ``baby case''.

A calculation as in the proof of Lemma \ref{lemmad} shows that for large $t$ and $|\xi|\le 1$
\begin{equation}\label{babyfn2a}
 \mathcal F(f_{1,v}^{t,0})(\xi) = \frac{1}{1-\eta(\varpi_v)q_v^{-t}} \left( 1- \eta(\varpi_v)q_v^{-t} \frac{1-q_v^{-1}}{1-\eta(\varpi_v)q_v^{-t-1}}\right)= L(\eta_v,t+1),
\end{equation}
while for large $t$ and $|\xi|>1$
\begin{equation}\label{babyfn2b}
 \mathcal F(f_{1,v}^{t,0})(\xi) = \frac{-\eta(\varpi_v)q_v^{-t}}{1-\eta(\varpi_v)q_v^{-t}}\cdot \frac{1-\eta(\varpi_v)q_v^t}{1-\eta(\varpi_v)q_v^{-t-1}}\cdot \eta(\xi)|\xi|^{-t-1} = L(\eta_v,t+1) \eta(\xi) |\xi|^{-t-1}.
\end{equation}

In particular, the function $f_{2,v}^{t,0}=  \mathcal F(f_{1,v}^{t,0})$ is equal to $L_v(\eta_v,t+1)$ on $\mathfrak o_v^\times$, and is preserved by the transformation $\iota_t$. Hence $f_{3,v}^{t,0}  = f_{2,v}^{t,0}$ and $f_{4,v}^{t,0}=f_{1,v}^{t,0}$. It is immediate to verify that these functions satisfy the axioms of \S \ref{basicbaby}, hence Corollary \ref{babyPSF-special} holds for $\mathcal S_1^t=\mathcal S_4^t = \mathcal S(\mathcal X(\adele))$. This gives a direct proof of Theorem \ref{Poissonbaby}.

\section{Poisson summation for the relative trace formula} \label{sec:Poisson-RTF}

We now return to our main problem, namely proving a Poisson summation formula between the torus relative trace formula \eqref{RTFdef} (viewed as a functional on the global Schwartz space $\mathcal S(\mathcal Z(\adele))$, defined in \S \ref{globalSchwartz}) and the Kuznetsov formula with non-standard sections \eqref{KTFdef} (viewed as a functional on the global Schwartz space $\mathcal S(\mathcal W^s(\adele))$). As explained in the introduction, this will not be possible ``on the nose'', because the sum \eqref{KTFdef} does not converge at the desired point of evaluation $s=0$; therefore, we will also need to deform the space $\mathcal S(\mathcal Z(\adele))$ with a parameter $s$, and prove an identity for $\Re(s)\gg 0$.

Thus, here we will have two complex parameters, $s$ and $t$. The parameter $t$ will, as in the previous chapter, help us deform an exponent of the asymptotics of Schwartz functions by ``larger'' exponents so that our functions vanish faster and are suitable for Poisson summation. The parameter $s$ will be more than just a technical tool, and it parametrizes the space $\mathcal S(\mathcal W^s(\adele))$ of non-standard test functions for the Kuznetsov trace formula in such a way that (when $t=0$) this space corresponds to the $L$-function
$$ L(\pi,\frac{1}{2}+s) L(\pi\otimes \eta,\frac{1}{2}+s),$$
as we saw in \S \ref{globalSchwartz}.

The main result of this section is Theorem \ref{PSF}. It is a Poisson summation formula for large values of the parameter $s$. Its Corollary \ref{analyticcont} shows that the functional $\KTF$ of \eqref{KTFdef} can be analytically continued to arbitrary values of $s$; however, we will have no explicit expression for this functional at $s=0$, and we will continue to work in a domain of convergence of the Euler product for the above $L$-function ($\Re(s)\gg 0$) when we perform the spectral analysis in the next sections.

As in the baby case, we proceed axiomatically by defining various Schwartz cosheaves on the projective line, before applying the theory to the relative trace formulas. To avoid heavy notation, throughout our discussion in most of this section, \textbf{the parameter $s$ will be fixed and will usually not appear explicitly in the notation}.
Moreover, ``the parameters'' refers to the parameters $s, t, 2s\pm t$, and ``large values of the parameters'' means large values of their real parts. 

\subsection{Axioms for the local Schwartz spaces} \label{ssaxiomslocal}

As in the baby case, we introduce four local Schwartz spaces $\mathcal S^t_{i,v}$ ($i=1,\dots, 4$) (the parameter $s$ will be implicit). For this section we denote by $\mathcal B^\reg$ the complement of $\{-1,0,\infty\}$ in one-dimensional projective space. (The point $-1$ will not literally be irregular in all cases, but to condense and unify notation we consider it as such.) The cosheaves $\mathcal S^t_{i,v}$, restricted to $\mathcal B^\reg(k_v)$, all coincide with the cosheaf of Schwartz functions. The definition of these four spaces, for \emph{generic values} of the parameters, is completed by the following table, which describes the asymptotic behavior of their elements close to the ``singular points''. Here, the $C_i$'s and $D_i$'s are smooth functions. 

\begin{equation}\label{tablegerms}
 \begin{array}{|c|c|c|}
 \hline  
 i & \mbox{ around } \xi=0 &  \mbox{ around } \xi=-1 \\
\hline
 1 \mbox{ or }4 & C_1(\xi) + C_2(\xi) \eta(\xi)|\xi|^t & C_3(\xi) + C_4(\xi) \eta(\xi+1)|\xi+1|^{2s} \\
\hline
 2 \mbox{ or }3 & D_1(\xi) + D_2(\xi) |\xi|^{-t+2s} \psi\left(\frac{1}{\xi}\right) & D_3(\xi)  \\
\hline
 \mbox{(Continued) }i & \mbox{ around } \xi=\infty  &\\
\hline
 1 \mbox{ or }4 & C_5(\xi^{-1})|\xi|^{-s+\frac{t}{2}-1}\cdot \Kl(\xi) &\\
\hline
 2 \mbox{ or }3 & D_4(\xi^{-1})\eta(\xi)|\xi|^{-t-1} + D_5(\xi^{-1})\psi(\xi)\eta(\xi)|\xi|^{-2s-1} &\\
\hline
\end{array}
\end{equation}

Here and later, $\Kl$ (for ``Kloosterman''), at non-Archimedean places, denotes the function which is supported on $|\xi|>1$ and equal to
\begin{equation}\label{defKl}\int_{|x|^2=|\xi|} \psi(x-\xi x^{-1}) dx \end{equation} there. For the Archimedean case, the analogous ``Kloosterman germ'' at infinity is the germ of functions as in \cite[(4.27)]{SaBE1}. This definition is actually correct for the space $\mathcal S_{4,v}^t$; for the space $\mathcal S_{1,v}^t$ we need to replace $\xi$ by $-\xi$; however, since in our application to the relative trace formula these Kloosterman germs will not appear for $i=1$ (i.e., the germ of the function $C_5$ at zero will be zero), we do not introduce new notation for this minor modification.

In the limit when the exponents become integers (notably, when $s=t=0$ and $\eta_v=1$), we may also have logarithmic terms; the limiting behavior in those cases has been described in Appendix \ref{sec:families}, and is completely analogous to the baby case. The Fr\'echet structure on these spaces is also described in the appendix.

Again, the spaces $\mathcal S^t_{2,v}$ and $\mathcal S^t_{3,v}$ are mapped isomorphically onto each other by the operator $\iota_t$ of \eqref{defiotat}:
$$\iota_t(f) = \frac{\eta(\bullet)}{|\bullet|^{t+1}} f\left(\frac{1}{\bullet}\right).$$ 

In analogy to Proposition \ref{variousproperties} and Lemma \ref{lemmad}, we have:
\begin{proposition} \label{variousproperties-RTF}
 For $t$ and $2s$ outside the exceptional values \eqref{exceptions}, Fourier transform carries $\mathcal S^t_{1,v}$ isomorphically to $\mathcal S^t_{2,v}$, and $\mathcal S^t_{3,v}$ to $\mathcal S^t_{4,v}$. If $f\in \mathcal S^t_{1,v}$ has asymptotics denoted by $C_i$ as in table \eqref{tablegerms}, the corresponding asymptotic coefficients for its Fourier transform are: 
\begin{eqnarray} \label{ad2}
 D_4(0) = \gamma(\eta_v,-t,\psi_v) C_2(0),\nonumber\\
 D_2(0) = C_5(0), \\
 D_5(0) = \gamma(\eta_v,-2s, \psi_v) C_4(0) \nonumber 
\end{eqnarray}
 Similarly, if $f\in \mathcal S^t_{3,v}$ has asymptotics denoted as in table \eqref{tablegerms}, then its asymptotic coefficients $D_i$ are related to the asymptotic coefficients of its Fourier transform as follows:
 \begin{eqnarray} \label{ad3}
 D_4(0) = \gamma(\eta_v,-t,\psi_v^{-1}) C_2(0),\nonumber\\
 D_2(0) = C_5(0), \\
 D_5(0) = \gamma(\eta_v,-2s, \psi_v^{-1}) C_4(0) \nonumber 
\end{eqnarray}
 The transformation $\mathcal G_t$ of \eqref{defGt} carries $\mathcal S^t_{1,v}$ isomorphically to $\mathcal S^t_{4,v}$, even for values of $t$ and $2s$ as in \eqref{exceptions}.
 
 Both $\mathcal F$ and $\mathcal G_t$ are bounded by polynomial families of seminorms on the corresponding spaces, as $t$ varies, and preserve analytic sections.
\end{proposition}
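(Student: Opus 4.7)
The plan is to reduce the assertion to a local germ analysis at the three distinguished points $0, -1, \infty$ of $\mathcal{B}(k_v)$, closely paralleling the proof of Proposition \ref{variousproperties} and Lemma \ref{lemmad} in the baby case. First I would fix a smooth partition of unity on $\mathbb{P}^1(k_v)$ subordinate to small neighborhoods of the irregular points together with a compact piece $K\subset \mathcal{B}^{\reg}(k_v)$; each $f\in \mathcal{S}^t_{1,v}$ (respectively $\mathcal{S}^t_{3,v}$) then decomposes as a sum of four pieces. The piece supported on $K$ is a genuine Schwartz function on $k_v$, whose Fourier transform is Schwartz and contributes only smooth terms (i.e.\ feeds only into $C_1, C_3, D_1, D_3$) on the spectral side. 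The remaining work is to track each exotic germ through Fourier transform.

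For the germs at $\xi=0$ and $\xi=-1$, of the respective shapes $C_2(\xi)\eta(\xi)|\xi|^t$ and $C_4(\xi)\eta(\xi+1)|\xi+1|^{2s}$, the computation is essentially a repetition of Lemma \ref{lemmad}, namely an application of Tate's local functional equation to the tempered distributions $\eta(\xi)|\xi|^t$ and $\eta(\xi+1)|\xi+1|^{2s}$. The one new ingredient is the translation identity $\widehat{f(\bullet+1)}(\xi)=\psi_v(-\xi)\hat f(\xi)$, which explains why a finite-point singularity at $-1$ Fourier-transforms to a $\psi$-twisted germ at infinity (matching the summand $D_5(\xi^{-1})\psi(\xi)\eta(\xi)|\xi|^{-2s-1}$). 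The coefficient identities $D_4(0)=\gamma(\eta_v,-t,\psi_v)C_2(0)$ and $D_5(0)=\gamma(\eta_v,-2s,\psi_v)C_4(0)$ are then immediate from the abelian gamma factor appearing in the functional equation; the analogous formulas with $\psi_v^{-1}$ in place of $\psi_v$ handle $\mathcal{S}^t_{3,v}$.

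The main obstacle, and the place where the RTF case genuinely goes beyond the baby case, is the \emph{Kloosterman germ at infinity}: I need to show that a function of the form $C_5(\xi^{-1})|\xi|^{-s+t/2-1}\Kl(\xi)$ near $\infty$ has Fourier transform of the form $D_2(\xi)|\xi|^{-t+2s}\psi(1/\xi)$ near $0$, with $D_2(0)=C_5(0)$. My approach would be to unfold the defining integral \eqref{defKl}, substitute $x\mapsto 1/y$ inside the Kloosterman sum, and then recognize the resulting oscillatory integral over $k_v\times k_v$ as a stationary-phase expression whose critical set pins down the exponential $\psi(1/\xi)$ near $\xi=0$. In practice, the bulk of this calculation has already been carried out in \cite{SaBE1} (Sections 4 and 6) while establishing the local matching and fundamental lemma for the transfer operator $|\bullet|\mathcal{G}$; the present statement can be viewed as a repackaging of those orbital-integral identities at the level of germs, and the relation $D_2(0)=C_5(0)$ (without any gamma factor) ultimately reflects the fact that the Kloosterman integral is ``self-dual'' under $\mathcal{F}$ up to normalization.

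Finally, the operator $\mathcal{G}_t=\mathcal{F}\circ \iota_t\circ \mathcal{F}$ is well-defined for \emph{every} $t$ because the potential poles in $t$ of the two outer Fourier transforms arise only from the gamma factors in the formulas above, and these cancel against the corresponding factors of $\iota_t$ in the composition, exactly as in the baby case. The polynomial-seminorm bounds in bounded vertical strips then follow from the polynomial growth of $\gamma(\eta_v,-t,\psi_v)$ and $\gamma(\eta_v,-2s,\psi_v)$ combined with the standard polynomial-seminorm estimates for Fourier transforms of Schwartz functions with prescribed germ structure recalled in appendix \ref{sec:families}; preservation of analytic sections is a consequence of the meromorphic dependence of all the ingredients on $s$ and $t$.
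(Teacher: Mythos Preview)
The paper does not supply a proof of this proposition: it is stated ``in analogy to Proposition \ref{variousproperties} and Lemma \ref{lemmad},'' with the Fr\'echet-bundle machinery deferred to appendix \ref{sec:families}. Your outline is exactly the intended argument and is correct in substance: localize by a partition of unity, treat the germs at $0$ and $-1$ via Tate's local functional equation (the translation at $-1$ being what produces the $\psi(\xi)$-factor in the $D_5$-germ), and handle the Kloosterman germ at $\infty$ by the explicit identity that the paper records later, in \S\ref{transformcalculations}, as ``it can be seen that the inverse Fourier transform of $|x|^{-\alpha/2-1}\Kl(x)+q^{-\alpha-2}1_{|x|=q}$ is $q^{-\alpha}1_{|x|\le q^{-2}}+|x|^\alpha\psi(1/x)1_{|x|<1}$,'' which yields $D_2(0)=C_5(0)$ with no gamma factor.

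Two small corrections. First, with the paper's convention $\mathcal F f(\xi)=\int f(y)\psi_v^{-1}(\xi y)\,dy$ one has $\widehat{f(\bullet+1)}(\xi)=\psi_v(\xi)\hat f(\xi)$, not $\psi_v(-\xi)$; this is harmless for the coefficient identity but matches the $\psi(\xi)$ (not $\psi(-\xi)$) in the $D_5$-entry of table \eqref{tablegerms}. Second, your explanation of why $\mathcal G_t$ extends across the exceptional values is not accurate: $\iota_t$ carries no gamma factor to cancel anything. The mechanism the paper uses (Lemma \ref{Gtdefined} in the appendix) is rather that $\mathcal G_t$ admits an alternative description, either via twisted coinvariants of $\mathcal S(k_v^2)$ under Fourier transform on the plane, or equivalently via the identity $\mathcal G_t = |\bullet|^t(\mathcal G'_{-t})^{-1}|\bullet|^{-t}$ with $\mathcal G'$ built from $\mathcal F^{-1}$; either description is manifestly regular for all $t$ (the exceptional sets for $t$ and $-t$ being disjoint).
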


In our application to the relative trace formula, unlike in the baby case, the spaces $\mathcal S_{1,v}^t$ and $\mathcal S_{4,v}^t$ will not have the same basic functions; actually, we will restrict to the subspace of $\mathcal S_{1,v}^t$ where the germ of $C_5$ is zero (i.e., elements of $\mathcal S_{1,v}^t$ will standard Schwartz functions away from $0, -1$), and accordingly for $\mathcal S_{4,v}^t$ we will have $C_4=0$ (sections of $\mathcal S_{4,v}^t$ will be smooth at $-1$). This is very important in order to be able to continue to $s=0$ (continuation will be a result of the Poisson sum for $\mathcal S_{1,v}^t$ being the sum of a rapidly decaying function), but it is not important in proving the Poisson summation formula for large values of the parameters, and therefore we use this more general approach which applies to $\mathcal S_1^t$ and $\mathcal S_4^t$ simultaneously.

Finally, we note that the Schwartz space $\mathcal S(\mathcal W^s(\adele))$ for the Kuznetsov trace formula with non-standard test functions depending on a parameter $s$ (defined in \cite{SaBE1}, section 6) will belong to the space
$$|\bullet|^{s+1}\cdot \mathcal S^0_4.$$

\subsection{Global Schwartz spaces and convergence of the Poisson sum} \label{basicfnassumptions}

As before, we define global Schwartz spaces $\mathcal S^t_i$ as restricted tensor products with respect to basic vectors $f^{t,0}_{i,v}$ (again, the parameter $s$ is implicit in the notation). These basic vectors satisfy the compatibility relations:
\begin{eqnarray}\label{compatibility}
 f^{t,0}_{2,v} &=& \mathcal F(f^{t,0}_{1,v}), \nonumber\\
 f^{t,0}_{3,v} &=& \iota_t f^{t,0}_{2,v}\\
 f^{t,0}_{4,v} &=& \mathcal F(f^{t,0}_{3,v}). \nonumber
\end{eqnarray}

Besides, they are required to satisfy, \emph{for large values of the parameters}, the following axioms:

\begin{enumerate}
 \item Basic functions are constant on $\mathcal B^\reg(\mathfrak o_v)$ and their values there (their ``regular values'') are such that the corresponding Euler products converge, locally uniformly in the real parts of the parameters.
 \item The constant terms of the asymptotics of basic functions (i.e., $C_1(0)$, $C_3(0)$, $D_1(0)$, in the notation of Table \ref{tablegerms}) give convergent Euler products, locally uniformly in the real parts of the parameters. For the rest of the asymptotic constants (the rest of the numbers $C_i(0), D_i(0)$) we only need to assume that one representative in each implicit chain of identities given by \eqref{ad2}, \eqref{ad3} and \eqref{compatibility} has a convergent Euler product, locally uniformly in the real parts of the parameters. For example, the constants $C_5(0)$ of $f_{1,v}^{t,0}$, $D_2(0)$ of $f_{2,v}^{t,0}$, $D_5(0)$ of $f_{3,v}^{t,0}$ and $C_4(0)$ of $f_{4,v}^{t,0}$ are all determined by each other via these formulas, and it is enough to have a convergent Euler product for one of them.
 
 \begin{remark}
  In the baby case, \S \ref{basicbaby}, we explicitly gave the values of those asymptotic constants in our axioms for the basic functions. The reader can consult table \ref{table} to see the values that will be used for the relative trace formula.
 \end{remark}

 \item There is a constant $r_i\ge 0$ (possibly different for each of the spaces), independent of the place, with the property that the function: 
$$|1+\xi|_v^{r_i} \cdot |\xi|^{r_i} \cdot \frac{|f_{i,v}^{t,0}(\xi)|}{|f_{i,v}^{t,0}(\mathcal B^\reg(\mathfrak o_v))|}\mbox{ is}$$
\begin{itemize}
 \item $\le 1$ for $|\xi|\le 1$;
 \item $\le |\xi|^{-M}$ for $|\xi|>1$, where $M$ is a prescribed large positive number (depending on the global field, s.\ the proof of Proposition \ref{convergence}).
\end{itemize}
\end{enumerate}

I repeat that these axioms hold \emph{for large values of the parameters}, and they will be enough to prove a Poisson summation formula for such values, Proposition \ref{Poissonlarge}. We will then specialize to the specific basic functions showing up in our relative trace formulas, which will have additional properties allowing analytic continuation to other values of the parameters (\S \ref{sscontinuation}). (In the baby case, \S \ref{basicbaby}, we added these additional properties to the axioms, but that would be too cumbersome to do in this more complicated case.)

Given the above axioms, we have the following generalization of Proposition \ref{babyconvergence}:
\begin{proposition}\label{convergence} 
For large values of the parameters and $f$ in one of the global Schwartz spaces $\mathcal S^t_i$, the functional
$$f\mapsto \sum_{\xi\in \mathcal B^\reg(k)} f(\xi)$$ converges absolutely, and can be bounded by polynomial seminorms on the space $\mathcal S^t_i$ in any bounded vertical strip. 
 Moreover, if we replace the basic functions by $1_{\mathfrak o_v}$ outside of a finite set $S$ of places, the assertion remains true, and the polynomial bound can be taken to be independent of the set $S$. 
\end{proposition}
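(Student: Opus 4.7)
The plan is to extend the argument of Proposition \ref{babyconvergence} by replacing the compact-support property of basic vectors in the baby case with the uniform polynomial decay built into axiom 3 of \S\ref{basicfnassumptions}, while coordinating the three irregular points $0$, $-1$ and $\infty$ through the product formulas $\prod_v |\xi|_v = \prod_v |1+\xi|_v = 1$.

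First I would fix a finite set $T$ of places and decompose $f = f_T \otimes \bigotimes_{v \notin T} f_{i,v}^{t,0}$. Axiom 1 yields locally-uniform absolute convergence of $\prod_{v \notin T} c_v$, where $c_v := f_{i,v}^{t,0}|_{\mathcal{B}^\reg(\mathfrak{o}_v)}$; factoring this out, axiom 3 supplies the pointwise bound
\[
\frac{|f_{i,v}^{t,0}(\xi)|}{|c_v|} \leq |\xi|_v^{-r_i}\,|1+\xi|_v^{-r_i}\cdot \max(1,|\xi|_v)^{-M}
\]
uniformly in $v$. For $\xi \in \mathcal{B}^\reg(k)$, taking the product over $v\notin T$ and applying the product formulas to $\xi$ and $1+\xi$ collapses the global contribution into a bound supported on the places of $T$, together with a residual $\prod_{v\notin T,\,|\xi|_v>1}|\xi|_v^{-M}$ decay. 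Meanwhile the finite-place factor $f_T$ is of Schwartz type at each irregular point: from Table \ref{tablegerms}, every non-smooth germ carries a weight $|\xi|^t$, $|1+\xi|^{2s}$, $|\xi|^{-t+2s}\psi(1/\xi)$, $|\xi|^{-2s-1}\psi(\xi)$ or the Kloosterman germ $|\xi|^{-s+t/2-1}\Kl(\xi)$, each with positive-real-part exponent for large values of the parameters, so $f_T$ is dominated pointwise by a genuine Schwartz function on $\prod_{v\in T}k_v$.

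Combining these two bounds, $\sum_{\xi\in\mathcal{B}^\reg(k)}|f(\xi)|$ is majorised by a lattice sum against a Schwartz weight on $\prod_{v\in T}k_v$, finite provided $M$ exceeds a constant depending only on $[k:\QQ]$ (or $[k:\FF_q(t)]$) — which is precisely what the ``prescribed large $M$'' clause of axiom 3 is designed to accommodate. The polynomial bound in bounded vertical strips follows by tracking the parametric dependence at each step, using appendix \ref{sec:families}: regular values have polynomial growth in strips by axiom 1, the ratios in axiom 3 are uniform in the parameters with $r_i, M$ fixed, and the seminorm bounds on $f_T$ are polynomial by definition. The ``moreover'' clause is automatic since $1_{\mathfrak{o}_v}$ satisfies the axiom 3 inequality with constant $1$, so no $S$-dependent constant ever enters.

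The main obstacle, I expect, is not the combinatorial bookkeeping at the three irregular points but rather verifying that the \emph{actual} basic functions of the relative trace formula — whose germs near $\xi=\infty$ involve oscillatory Kloosterman-type integrals weighted by $|\xi|^{-s+t/2-1}$ — really do satisfy axiom 3 uniformly in $v$ for large values of $s, t, 2s \pm t$. Fortunately this verification is a separate matter to be taken up when the axiomatic setup is specialised in \S\ref{sscontinuation}, and plays no role in the abstract convergence argument itself.
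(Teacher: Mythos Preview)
Your approach is essentially the paper's own, including the key normalization step: the paper multiplies every local factor by $|1+\xi|_v^{r_i}|\xi|_v^{r_i}$, which by the product formula leaves the global evaluation unchanged and reduces to the situation where the normalized function is bounded by $\max(1,|\xi|_v)^{-M}$ at each place.

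There is, however, a real gap in your final step. First, $f_T$ is \emph{not} dominated by a Schwartz function on $\prod_{v\in T}k_v$: for fixed large parameters the decay at infinity is only polynomial, with exponent governed by $s,t$ (e.g.\ $|\xi|_v^{-t-1}$, $|\xi|_v^{-2s-1}$, or $|\xi|_v^{-s+t/2-1}$ times the Kloosterman germ), not faster than every power. What the paper uses instead is that, for parameters large enough, the factors at the finitely many places $v\in T$ satisfy the \emph{same} axiom-3 bound as the basic functions, so that globally $|f(\xi)|\le C\cdot r(\xi)^{-M}$ with $r(\xi)=\prod_v\max(1,|\xi|_v)$ the multiplicative height. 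Second, the sum over $\mathcal B^\reg(k)$ is not a lattice sum: since the basic functions for $i=2,3,4$ are not supported on $\mathfrak o_v$, the summation ranges over all of $k\smallsetminus\{0,-1\}$, which is dense in $\prod_{v\in T}k_v$. The paper handles this by a height-counting lemma (Lemma~\ref{heightslemma}): $\#\{\xi\in k:r(\xi)<T\}\ll T^N$ for some $N$ depending only on $k$, whence $\sum_\xi r(\xi)^{-M}<\infty$ once $M>N$. This is precisely where the ``prescribed large $M$'' of axiom~3 enters, as you correctly anticipate, but the mechanism is counting rational points of bounded height rather than summing a rapidly decaying function over a discrete set.
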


\begin{proof}
I prove it for number fields, and leave the case of function fields to the reader. Up to a convergent Euler product, the sum is the same if we replace $f_v$ by
$$|1+\xi|_v^{r_i}\cdot |\xi|_v^{r_i} \frac{f_v(\xi)}{f_v(\mathcal B^\reg(\mathfrak o_v))}$$ (where in a finite set of places we interpret the denominator as $1$), therefore we may assume that the basic functions themselves are $\le 1$ for $|\xi|_v\le 1$, $\le |\xi|_v^{-M}$ for $|\xi|_v>1$, and their regular value is $1$.

We define a ``height function'' on every $k_v$ by: $r_v(\xi) = \max(1,|\xi|_v)$. Notice that for $\xi \in k^\times$ we have $r_v(\xi)=1$ for almost every place, therefore $r(\xi)=\prod_v r_v(\xi)$ makes sense. If $k=\QQ$ then, for $(m,n)=1$ we have
$$ r\left(\frac{m}{n}\right) = \max(1,\left|\frac{m}{n}\right|_\RR) \cdot |n|_\RR = \max(|m|_\RR,|n|_\RR).$$

In general, we claim:
\begin{lemma}\label{heightslemma}
 There is a positive number $N$ such that
\begin{equation}
 \#\{\xi\in k| r(\xi)< T\} < T^N\text{ for all }T\gg 0.
\end{equation}
\end{lemma}

Indeed, choose a basis $(v_i)_{i=1}^n$ of $k$ over $\QQ$ and define modified height functions:
$$r'_v(x=\sum a_i v_i) = \prod_{i=1}^n r_v(a_i),$$
where the height functions on the right hand side are those of $\QQ_v$. At almost every place $k_v/\QQ_v$ is unramified, and the elements $v_i$ are integral and a basis for the unramified residue field extension; for those places: $|x|_v=\max_i|a_i|_{k_v} = \max_i |a_i|^n_{\QQ_v}$ and therefore $r'_v(x)\le r_v(x)\le r'_v(x)^n$. A similar relation holds for the remaining finite set of places, but with certain positive constants: $m_{1,v} r'_v(x)\le r_v(x)\le m_{2,v} r'_v(x)^n$, and therefore also globally on $k$-points:
$$m_{1} r'(\xi)\le r(\xi)\le m_{2} r'(\xi)^n, \,\,\xi\in k.$$
This reduces the lemma to the case of $k=\QQ$, where it is obvious.

\vspace{4pt}

We continue with the proof of the proposition. By our axioms for the basic functions, we have the following estimate on $f$:
$$|f(\xi)|\le C\cdot r(\xi)^{-M}$$
for $\xi\in \mathcal B^\reg(k)$ and some constant $C$ bounded by polynomial seminorms. (We assume that the real values of the parameters are large enough, so that the bound by a multiple of $|\xi|_v^{-M}$ when $|\xi|_v>1$ holds at every place, not just for the basic functions.)

Hence
$$\sum_\xi |f(\xi)| \ll   \sum_{T\in \mathbb N} T^N\cdot T^{-M},$$  
which converges absolutely for large $M$, and the implicit constant is bounded by polynomial seminorms.

Notice that if we replace $f_v$ by ${f_v(\mathcal B^\reg(\mathfrak o_v))}\cdot 1_{\mathfrak o_v}$ outside of a finite number of places $S$, the same estimates are true, uniformly in $S$. Finally, by the (locally uniform in the real part of the parameters) absolute convergence of the partial Euler product $\prod_{v\notin S}{f_v(\mathcal B^\reg(\mathfrak o_v))}$, we deduce that the same is true if we replace $f_v$ by $1_{\mathfrak o_v}$. 
\end{proof}

\subsection{Irregular distributions.}\label{ssirregular} 

In complete analogy with the previous section, we define functionals $\tilde O_0$, $\tilde O_{-1}$ and $\tilde O_\infty$ on the global Schwartz spaces $\mathcal S_i^t$ which formally, in the notation of Table \ref{tablegerms} but adding an index $v$, are (for generic values of the parameters):

For $i=1$ or $4$: 
$$\tilde O_0(f) = \prod_v' C_{1,v}(0) + \prod_v' C_{2,v}(0),$$
$$\tilde O_{-1}(f) = \prod_v' C_{3,v}(0) + \prod_v' C_{4,v}(0),$$
$$\tilde O_\infty(f) = \prod_v' C_{5,v}(0).$$

For $i=2$ or $3$:
$$\tilde O_0(f) = \prod_v' D_{1,v}(0) + \prod_v' D_{2,v}(0),$$
$$\tilde O_{-1}(f) = \prod_v' D_{3,v}(0),$$
$$\tilde O_\infty(f) = \prod_v' D_{4,v}(0) + \prod_v' D_{5,v}(0).$$

The rigorous definition is by using partial Euler products as in \eqref{zerocontrib}, which is possible by our axioms for the basic functions. These functionals extend analytically to all values of the parameters, as in \S \ref{ssbabyirregular}.

\subsection{Poisson summation for large values of the parameters}

The functionals $PS_i$ will be defined on the global Schwartz spaces $\mathcal S^t_i$ in a completely analogous way to the baby case, namely: 

\begin{equation}\label{Poissonsum}
 PS_i(f) = \tilde O_0 (f) + \tilde O_{-1}(f) + \tilde O_\infty(f) + \sum_{\xi \in k\smallsetminus\{-1,0\}} f(\xi).
\end{equation}

They are well-defined when the parameters are large. 
Again, the following is clear:

\begin{lemma}\label{itpreserves}
 The transformation $\iota_t: \mathcal S_2^t\xrightarrow{\sim} \mathcal S_3^t$ preserves Poisson sums.
\end{lemma}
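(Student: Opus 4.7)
The plan is to decompose $PS_2$ and $PS_3$ into their three constituent pieces (regular sum, $\tilde O_{-1}$, and $\tilde O_0 + \tilde O_\infty$) and check that $\iota_t$ matches them: the regular sums and the $\tilde O_{-1}$ terms are preserved directly, while $\tilde O_0$ and $\tilde O_\infty$ get swapped. The whole statement is essentially that $\iota_t$ is the adelic realization of the geometric involution $\xi \leftrightarrow 1/\xi$ on $\mathbb P^1$, which swaps $0$ and $\infty$ and fixes $-1$.

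First I would treat the regular sum. For a rational point $\xi \in k^\times \setminus \{-1\}$, the multiplier $\frac{\eta(\xi)}{|\xi|^{t+1}}$ equals $1$: indeed $\eta$ is an idele class character, trivial on principal ideles, so $\prod_v \eta_v(\xi) = 1$, and the product formula gives $\prod_v |\xi|_v = 1$. Hence $\iota_t(f)(\xi) = f(1/\xi)$ for all $\xi \in k^\times$. The involution $\xi \mapsto 1/\xi$ on $\mathbb P^1(k)$ fixes $-1$ and bijects $k \setminus \{0, -1\}$ with itself, and since both sides are absolutely convergent by Proposition \ref{convergence}, reindexing gives $\sum_{\xi \in k \setminus\{0,-1\}} \iota_t(f)(\xi) = \sum_{\xi \in k \setminus \{0,-1\}} f(\xi)$.

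Next I would match the irregular pieces by expanding the germs of $\iota_t f$ in the form of Table \eqref{tablegerms} and comparing coefficients place-by-place. Substituting $1/\xi$ into the $\infty$-expansion of $f \in \mathcal S_2^t$ and multiplying by $\frac{\eta(\xi)}{|\xi|^{t+1}}$, the factors $\eta^{\pm 1}$ and $|\cdot|^{\pm(t+1)}$ cancel (using $\eta^2 = 1$), leaving $\iota_t f(\xi) = D_4^f(\xi) + D_5^f(\xi)\, \psi(1/\xi)\, |\xi|^{2s-t}$ near $\xi = 0$; this is exactly the template for $\mathcal S_3^t$ near $0$, and reads off $D_1^{\iota f}(0) = D_4^f(0)$, $D_2^{\iota f}(0) = D_5^f(0)$. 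The symmetric calculation near $\xi = \infty$ gives $D_4^{\iota f}(0) = D_1^f(0)$, $D_5^{\iota f}(0) = D_2^f(0)$. Near $\xi = -1$ the multiplier is smooth and takes the value $\eta_v(-1)$ at $\xi = -1$, so $D_3^{\iota f}(-1) = \eta_v(-1) D_3^f(-1)$ locally; taking the Euler product and using once more that $\prod_v \eta_v(-1) = 1$ on the principal idele $-1 \in k^\times$, we obtain $\tilde O_{-1}^{(3)}(\iota_t f) = \tilde O_{-1}^{(2)}(f)$. Combining all germ identities yields $\tilde O_0^{(3)}(\iota_t f) = \tilde O_\infty^{(2)}(f)$ and $\tilde O_\infty^{(3)}(\iota_t f) = \tilde O_0^{(2)}(f)$, so the sum $\tilde O_0 + \tilde O_{-1} + \tilde O_\infty$ is preserved.

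There is no serious obstacle: the content is bookkeeping driven by the product formula, the triviality of $\eta$ on $k^\times$, and the already-established convergence. The only point worth care is tracking that the exponents and the Kloosterman-type factor $\psi(\pm 1/\xi)$ exchange compatibly under $\xi \mapsto 1/\xi$, so that the resulting asymptotics do indeed fit the template of the target space $\mathcal S_3^t$; this is guaranteed by how Table \eqref{tablegerms} is symmetrically set up for $i=2$ and $i=3$ and by the defining formula \eqref{defiotat} of $\iota_t$.
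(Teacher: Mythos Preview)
Your proposal is correct and is exactly the verification that the paper leaves implicit: the paper simply says ``the following is clear'' before stating the lemma (just as in the baby case, Lemma~\ref{iotainvariant}, it says ``the following is immediate''). You have spelled out precisely the three ingredients that make it clear---the product formula and triviality of $\eta$ on $k^\times$ for the regular sum, and the bookkeeping of germs at $0,\infty,-1$ under $\xi\mapsto 1/\xi$ for the irregular terms---so there is nothing to correct or add.
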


We are now ready to prove the Poisson summation formula for large parameters:
\begin{proposition} \label{Poissonlarge}
 For large values of the parameters, $f_1$ an element of the global Schwartz space $\mathcal S_1^t$ and $f_3$ an element of $\mathcal S_3^t$ we have:
\begin{equation}
 PS_1(f_1) = PS_2 (\mathcal F(f_1));
\end{equation}
\begin{equation}
 PS_3(f_3) = PS_4 (\mathcal F(f_3)).
\end{equation}
\end{proposition}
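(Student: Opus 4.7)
The strategy closely follows that of Proposition~\ref{babyPSF}. By the symmetry between the pairs $(\mathcal S_1^t, \mathcal S_2^t)$ and $(\mathcal S_3^t, \mathcal S_4^t)$ under Fourier transform, I focus on proving $PS_1(f_1) = PS_2(\mathcal Ff_1)$; the second equality is entirely analogous, and in fact follows from the first by composing with $\iota_t$ and invoking Lemma~\ref{itpreserves}. For $f = \otimes'_v f_v \in \mathcal S_1^t$ I form the truncation
$$F_T := \prod_{v \notin T} 1_{\mathfrak o_v} \cdot \prod_{v \in T} f_v$$
in which the basic vectors outside a finite set $T$ of places are replaced by the characteristic functions of $\mathfrak o_v$. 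The plan is to apply the classical adelic Poisson summation formula to $F_T$ and pass to the limit $T \to \{\text{all places}\}$.

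The first step is the analogue of Lemma~\ref{babyapproximation}: for $f$ with all parameters large,
$$\lim_T \sum_{\xi \in k} F_T(\xi) = \sum_{\xi \in k} f(\xi),$$
where both sums are interpreted to include $\xi = 0, -1$, the value at each singular point being the Euler product of local continuous extensions. This makes sense because for large $\Re(t), \Re(s)$ the non-smooth asymptotic terms $|\xi|^t$ and $|\xi+1|^{2s}$ vanish at the respective singular points, leaving only the smooth pieces, whose Euler products converge by the axioms of \S\ref{basicfnassumptions}. The interchange of limit and sum is justified by the uniform-in-$T$ polynomial bound of Proposition~\ref{convergence}. For each finite $T$ the function $F_T$ is a classical Schwartz--Bruhat function on $\adele$ --- continuous across $\xi = 0, -1, \infty$ and with sufficient decay at archimedean infinity by Axiom~3 of \S\ref{basicfnassumptions} --- and its Fourier transform $\prod_{v \notin T} 1_{\mathfrak o_v} \cdot \prod_{v \in T} \mathcal Ff_v$ has the same properties by Proposition~\ref{variousproperties-RTF}. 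Classical adelic Poisson summation therefore applies, and passing to the limit yields the master identity
\begin{equation}\label{PSFmaster}
\sum_{\xi \in k} f(\xi) = \sum_{\xi \in k} \mathcal Ff(\xi).
\end{equation}

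It remains to rewrite both Poisson sums in terms of \eqref{PSFmaster}. Using that $f(0) = \prod_v C_{1,v}(0)$, $f(-1) = \prod_v C_{3,v}(-1)$, $\mathcal Ff(0) = \prod_v D_{1,v}(0)$, $\mathcal Ff(-1) = \prod_v D_{3,v}(-1)$ for large parameters, the definitions of \S\ref{ssirregular} give
$$PS_1(f) - \sum_{\xi \in k} f(\xi) = \prod'_v C_{2,v}(0) + \prod'_v C_{4,v}(-1) + \prod'_v C_{5,v}(0),$$
$$PS_2(\mathcal Ff) - \sum_{\xi \in k} \mathcal Ff(\xi) = \prod'_v D_{2,v}(0) + \prod'_v D_{4,v}(0) + \prod'_v D_{5,v}(0).$$
The three asymptotic identities $D_2(0) = C_5(0)$, $D_4(0) = \gamma(\eta_v, -t, \psi_v) C_2(0)$, and $D_5(0) = \gamma(\eta_v, -2s, \psi_v) C_4(0)$ of Proposition~\ref{variousproperties-RTF}, combined with the global triviality $\prod_v \gamma(\eta_v, \tau, \psi_v) = 1$ of the abelian gamma factors (used at $\tau = -t$ and $\tau = -2s$), show that the two right-hand sides coincide, whence $PS_1(f) = PS_2(\mathcal Ff)$.

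The main obstacle, as in the baby case but sharpened by the richer singular structure at $\xi = 0, -1, \infty$, is ensuring that for sufficiently large $\Re(s), \Re(t), \Re(2s \pm t)$ the truncation $F_T$ and its Fourier transform genuinely lie in the classical Schwartz--Bruhat space, despite the oscillatory germ $\psi(1/\xi)$ at $\xi = 0$ in $\mathcal S_2^t$ and the Kloosterman germ $\Kl(\xi)$ at $\xi = \infty$ in $\mathcal S_1^t$. Once the required continuity at the singular points and the decay bounds of Axiom~3 of \S\ref{basicfnassumptions} are in place for the chosen domain of parameters, the argument is a clean formal parallel to the baby case.
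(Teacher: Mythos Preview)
Your proposal is correct and follows essentially the same route as the paper's own proof: truncate by replacing basic vectors outside a finite set $T$ with $1_{\mathfrak o_v}$, invoke the analog of Lemma~\ref{babyapproximation} via Proposition~\ref{convergence}, apply classical Poisson summation to the truncation, pass to the limit, and then match the extra irregular terms using the asymptotic relations \eqref{ad2} of Proposition~\ref{variousproperties-RTF} together with the global triviality of abelian gamma factors. Your identification of the three pairs $(C_2,D_4)$, $(C_4,D_5)$, $(C_5,D_2)$ and the two gamma-factor products is exactly what the paper has in mind.

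One small inaccuracy: the claim that the second equality ``follows from the first by composing with $\iota_t$ and invoking Lemma~\ref{itpreserves}'' is not right. Composing the first equality with $\iota_t$ gives $PS_1(f_1)=PS_3(\iota_t\mathcal F f_1)$, and to reach $PS_4(\mathcal F f_3)$ from there you would need precisely the second equality (this is how the two together yield Corollary~\ref{GPoisson}). The correct justification is the one you give first: the two statements are formally identical because the local descriptions of $(\mathcal S_1^t,\mathcal S_2^t)$ and $(\mathcal S_4^t,\mathcal S_3^t)$ coincide and the axioms of \S\ref{basicfnassumptions} hold for both sets of basic vectors; the paper simply says ``both formulas amount to the same.'' Also, to align with the paper's convention you should write $C_{3,v}(0),C_{4,v}(0)$ rather than $C_{3,v}(-1),C_{4,v}(-1)$ (the argument refers to the local coordinate at the singular point, cf.\ \S\ref{ssirregular}), though your meaning is clear.
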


Of course, both formulas amount to the same.

\begin{proof}
We essentially go over the same steps as in the baby case, Proposition \ref{babyPSF}. First, the analog of Lemma \ref{babyapproximation} holds: for large values of the parameters and $f=\otimes_v f_v \in \mathcal S_i^t$ we have:
\begin{equation}\lim_{T} \sum_{\xi\in k} \prod_{v\notin T} 1_{\mathfrak o_v} \prod_{v\in T} f_v(\xi) = \sum_{\xi\in k} f(\xi),\end{equation}
where $T$ is a finite set of places and the limit is taken as $T$ includes every place. Again, we take summation over all points, not just regular ones, and here is where we use the fact that the asymptotic constant terms at irregular points give convergent Euler products. The argument is the same, using this time Proposition \ref{convergence}. 

The rest of the proof is also the same: we apply the usual Poisson summation formula to the sum: $\sum_{\xi\in k} \prod_{v\notin T} 1_{\mathfrak o_v} \prod_{v\in T} f_v(\xi)$, taking the limit over $T$, and finally add the contributions of the rest of the terms to the Poisson sum, which match because of Proposition \ref{variousproperties-RTF} and the global triviality of gamma factors.

\end{proof}

Combining Proposition \ref{Poissonlarge} and Lemma \ref{itpreserves} we get:

\begin{corollary}\label{GPoisson}
 For large values of the parameters and $f\in \mathcal S_1^t$ we have:
 $$ PS_1(f) = PS_4(\mathcal Gf).$$
\end{corollary}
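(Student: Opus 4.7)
The plan is to recognize the corollary as an immediate concatenation of the two ingredients just established, exploiting the factorization $\mathcal G_t = \mathcal F \circ \iota_t \circ \mathcal F$ from \eqref{defGt}. No new analytic input is required; the three intermediate spaces $\mathcal S_2^t$, $\mathcal S_3^t$ and the auxiliary operator $\iota_t$ were introduced precisely so that Poisson summation could be verified on each factor of $\mathcal G_t$ separately and then strung together.

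Concretely, I would start with $f\in \mathcal S_1^t$ and, using that the parameters are large so Fourier transform realizes an isomorphism $\mathcal F:\mathcal S_1^t\xrightarrow{\sim}\mathcal S_2^t$ (Proposition \ref{variousproperties-RTF}), apply the first assertion of Proposition \ref{Poissonlarge} to obtain
\begin{equation*}
PS_1(f)=PS_2(\mathcal Ff).
\end{equation*}
Since $\iota_t$ carries $\mathcal S_2^t$ isomorphically to $\mathcal S_3^t$ and preserves Poisson sums by Lemma \ref{itpreserves},
\begin{equation*}
PS_2(\mathcal Ff)=PS_3(\iota_t\mathcal Ff).
\end{equation*}
Finally, applying the second assertion of Proposition \ref{Poissonlarge} to $\iota_t\mathcal Ff\in \mathcal S_3^t$ yields
\begin{equation*}
PS_3(\iota_t\mathcal Ff)=PS_4(\mathcal F\iota_t\mathcal Ff)=PS_4(\mathcal G_tf),
\end{equation*}
and chaining these three equalities gives the claim.

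Because the argument is purely formal composition, there is no substantive obstacle here: all the real work has been absorbed into the proofs of Proposition \ref{Poissonlarge} (where convergence of the sum over $k$-points, the approximation by finite-place truncations, the usual Poisson summation formula, and the matching of the irregular contributions via the relations \eqref{ad2}, \eqref{ad3} and the global triviality of gamma factors intervene) and of Lemma \ref{itpreserves} (essentially a change of variables $\xi\mapsto \xi^{-1}$ on the base). The only thing worth double-checking in passing is that the hypothesis ``large values of the parameters'' is preserved at each intermediate stage, which it is, since neither $\mathcal F$ nor $\iota_t$ shifts the parameters $s$ and $t$; in particular $\iota_t\mathcal Ff$ lies in $\mathcal S_3^t$ with the same $(s,t)$, so the second invocation of Proposition \ref{Poissonlarge} is legitimate.
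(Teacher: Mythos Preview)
Your proof is correct and follows exactly the paper's approach: the paper simply states that the corollary is obtained by ``Combining Proposition \ref{Poissonlarge} and Lemma \ref{itpreserves}'', and you have spelled out that combination via the factorization $\mathcal G_t=\mathcal F\circ\iota_t\circ\mathcal F$.
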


\subsection{The spaces of the relative trace formula} \label{spacesRTF}

In \cite{SaBE1}, section 6, I defined a local ``Schwartz space'' of functions (here with an index $v$) $\mathcal S(\mathcal W_v^s)$,
corresponding to the orbital integrals for a space of non-standard test functions for the Kuznetsov trace formula, depending on a parameter $s$. We also endowed this space (at almost every place) with a basic vector (which we will modify here by a volume factor, see below).

The following completely characterizes the space $\mathcal S(\mathcal W_v^s)$:

\begin{lemma}\label{Wembedding}
 Multiplication by $|\bullet|^{-s-1}$ is an isomorphism:
\begin{equation}
 \mathcal S(\mathcal W_v^s)\xrightarrow{\sim} (\mathcal S^0_{4,v})',
\end{equation}
where $(\mathcal S^0_{4,v})'$ denotes the closed subspace of $\mathcal S^0_{4,v}$ consisting of elements for which, in the notation of Table \ref{tablegerms}, the stalk of $C_4$ at $-1$ is zero (i.e., those elements of $\mathcal S^0_{4,v}$ which are smooth at $\xi=-1$). 

Moreover, the transform $\mathcal G$ is an isomorphism:
\begin{equation}
(\mathcal S^0_{1,v})' \xrightarrow{\sim} (\mathcal S^0_{4,v})',
\end{equation}
where $(\mathcal S^0_{1,v})'$ denotes the closed subspace of $\mathcal S^0_{1,v}$ consisting of elements for which, in the notation of Table \ref{tablegerms}, the stalk of $C_5$ at $0$ is zero (i.e., those elements of $\mathcal S^0_{1,v}$ which are of rapid decay at infinity).
\end{lemma}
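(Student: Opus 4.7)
The lemma has two independent assertions, which I would treat in turn by directly comparing germs at the points $\{-1, 0, \infty\}$ of $\mathcal B$.

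For the first isomorphism, I would recall the germ structure of $\mathcal S(\mathcal W_v^s)$ from \cite{SaBE1}, \S 6. Elements there carry an intrinsic factor $|\xi|^{s+1}$ in the germ at $\xi = \infty$, they have germs of type $C_1+C_2\eta$ at $\xi = 0$, and --- crucially --- they are smooth at $\xi = -1$, since $-1$ is a regular point for the Kuznetsov orbital integral map. One then checks that multiplication by $|\bullet|^{-s-1}$ converts these germs precisely into those listed in Table \ref{tablegerms} for $(i,t)=(4,0)$: near $\infty$ the factor cancels the $|\xi|^{s+1}$ and leaves $C_5(\xi^{-1})|\xi|^{-s-1}\Kl(\xi)$; near $-1$ it is a smooth nonvanishing factor, so it preserves smoothness (matching the subspace condition $C_4 \equiv 0$ that defines $(\mathcal S^0_{4,v})'$); near $0$ it is absorbed into the smooth coefficients $C_1, C_2$. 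Bijectivity is immediate, since we are multiplying by a nowhere-vanishing smooth function on $\mathcal B^\reg(k_v)$.

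For the second isomorphism, I would trace the germ data through the three factors of $\mathcal G = \mathcal F \circ \iota_0 \circ \mathcal F$ using Proposition \ref{variousproperties-RTF}. Starting from $f \in (\mathcal S^0_{1,v})'$, the hypothesis $C_5 \equiv 0$ means $f$ is Schwartz-rapid at $\infty$. Relation \eqref{ad2} at $t=0$ gives $D_2(0) = C_5(0) = 0$, so $\mathcal F(f)\in \mathcal S^0_{2,v}$ loses its Whittaker-twisted germ at $\xi = 0$. A direct substitution $x \mapsto 1/x$ in the formula \eqref{defiotat} for $\iota_0$ shows that this germ at $0$ is carried to the $D_5$-germ at $\infty$ of $\iota_0(\mathcal F f) \in \mathcal S^0_{3,v}$. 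Finally \eqref{ad3} yields $C_4(0) = \gamma(\eta_v,-2s,\psi_v^{-1})^{-1} D_5(0) = 0$ for $\mathcal G(f) \in \mathcal S^0_{4,v}$, placing $\mathcal G(f)$ in $(\mathcal S^0_{4,v})'$. Since $\mathcal G$ is already a continuous isomorphism on the full Schwartz spaces by Proposition \ref{variousproperties-RTF}, applying the same argument to $\mathcal G^{-1}$ gives surjectivity onto the subspace.

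The main obstacle I see is that Proposition \ref{variousproperties-RTF} records only pointwise identities of leading coefficients $C_i(0), D_i(0)$, whereas the lemma requires the entire \emph{stalk} (germ) to vanish. To upgrade from pointwise to germ-level vanishing, I would exploit the $k_v^\times$-equivariance (resp.\ anti-equivariance, for $\mathcal F$) of all operators involved under the normalized action \eqref{unitaryaction}: the germs in question are generalized eigenfunctions for $k_v^\times$ with prescribed exponent, and so their coefficient is determined by its values at all translates. Applying the asymptotic formulas to every $a \cdot f$ ($a \in k_v^\times$) therefore promotes the scalar identities into identities of germs. Beyond this, the first part is bookkeeping of normalizations from \cite{SaBE1}: the factor $|\bullet|^{s+1}$ appearing in the statement must be matched to the intrinsic weight of Kuznetsov orbital integrals as they are pushed to the base.
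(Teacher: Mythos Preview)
The paper omits the proof of this lemma, calling the first statement a straightforward generalization of the description in \cite{SaBE1}, \S4, and the second ``easy''. Your germ-tracking strategy is the right one and already more detailed than anything the paper offers.

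Your instinct that the pointwise identities \eqref{ad2}--\eqref{ad3} are not enough is correct, but the $k_v^\times$-equivariance fix you propose does not work here: the germ types you need to track are \emph{not} generalized eigenfunctions for the action \eqref{unitaryaction}. Under $\xi\mapsto a\xi$ the phase $\psi(1/\xi)$ in the $D_2$-germ becomes $\psi(a^{-1}/\xi)$, which is not a smooth multiple of $\psi(1/\xi)$ near $0$, and the Kloosterman integral $\Kl(\xi)$ behaves similarly. The translates $a\cdot f$ therefore leave the cosheaf, and varying $a$ does not give access to the higher Taylor coefficients of $D_2$ or $C_4$.

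What does work is to argue at the level of germ \emph{types}, bypassing \eqref{ad2}--\eqref{ad3}. If $f\in(\mathcal S^0_{1,v})'$ then $f$ is Schwartz at $\infty$ and its remaining singular germs sit at the finite points $0,-1$, hence are compactly supported; so $\mathcal F f$ is smooth on all of $k_v$, forcing the entire $D_2$-germ (the only non-smooth component of the stalk of $\mathcal S^0_{2,v}$ at $0$) to vanish. After $\iota_0$ the $D_5$-germ at $\infty$ vanishes, leaving there only the type $D_4(\xi^{-1})\eta(\xi)|\xi|^{-1}$; modulo the compactly supported piece near $0$ this is precisely the model space $\mathcal B^0$ of Lemma~\ref{lemmaFourier}. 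That lemma is a germ-level isomorphism (not merely a leading-coefficient identity), and it shows the Fourier transform lands in $\mathcal A^0$, smooth at every finite point --- in particular at $-1$, so the full $C_4$-germ vanishes. The reverse inclusion runs the same way on $\mathcal G^{-1}$, now using that the inverse Fourier transform of the Kloosterman germ is supported near $0$ (cf.\ the explicit formula in \S\ref{transformcalculations}) and hence contributes nothing at $\infty$.
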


The first statement is a straightforward generalization of the description of $\mathcal S(\mathcal W_v^0)$ in \cite{SaBE1}, section 4, and the second statement is easy; we omit the proofs. For later use, we denote the space $(\mathcal S^0_{1,v})'$ by $\mathcal S(\mathcal Z_v^s)$; at $s=0$ it specializes to the space $\mathcal S(\mathcal Z_v)$ of orbital integrals for the torus RTF, as proven in the matching theorem \cite[Theorem 5.1]{SaBE1}.

To apply the Poisson summation formula that we proved to this space, we need to define the degeneration of its basic function with a parameter $t$ and verify that it, and the corresponding basic functions for the other spaces $\mathcal S_{i,v}^t$, satisfy the axioms of \S \ref{basicfnassumptions}. In this paper we will use the basic function for $\mathcal S(\mathcal W_v^s)$ in \cite{SaBE1}, Lemma 6.4, but \textbf{divided by a volume term} (for convenience, since such a volume term does not affect the Poisson summation formula); this modification, as we will see, will be compatible with our requirement for $f_{\mathcal Z_v}^0$ to have regular value 1. Here is that function, translated to the space $\mathcal S_{4,v}^0$ (i.e., multiplied by $|\bullet|_v^{-s-1}$):

\begin{equation}\label{basicvector}
 f_{4,v}^{0,0}(\xi) := L(\eta_v,2s+1)\left((I-q_v^{-2s-1} \varpi_v^2\cdot) f(\xi) + |\xi|_v^{-s-1} 1_{|\xi|_v=q_v^2}  + |\xi|_v^{-s-1} \Kl(\xi)\right),
\end{equation}
where
\begin{itemize}
 \item $\Kl(\xi)$ was defined in \eqref{defKl};
 \item $f$ is the basic function of the ``baby case'' Schwartz space (supported on $|\xi|_v\le 1$):
 $$ f(\xi) = \begin{cases}
              1-\log_{q_v}|\xi|_v, & |\xi|_v\le 1, \mbox{ in the split case,}\\
              \frac{1+\eta_v(\xi)}{2}, & |\xi|_v\le 1, \mbox{ in the non-split case;}
             \end{cases}$$
 \item the action of $\varpi_v^2$ is normalized as in \eqref{unitaryaction}.
\end{itemize}

Now we modify this to define the basic function $f_{4,v}^{t,0}$ as follows:
\begin{itemize}
 \item we replace the function $f$ by the basic function $f^t$ of the baby case corresponding to arbitrary $t$, which for $t\ne 0$ is equal to:
$$ (L(\eta_v,t) + L(\eta_v,-t) \eta_v(\xi)|\xi|_v^t) \cdot 1_{\mathfrak o_v};$$
 \item we replace the factor $|\xi|_v^{-s-1}$ by $|\xi|_v^{-s+\frac{t}{2}-1}$.
\end{itemize}

For the calculations that follow, we split $f_{4,v}^{t,0}$ into a sum $F_1 + F_2$ (we hide $t$ and $v$ from this notation, as we do throughout for the parameter $s$), where the summands are:
\begin{eqnarray}\label{basicfnvariation} F_1(\xi) &=&L(\eta_v,2s+1)(I-q_v^{-2s-1} \varpi_v^2\cdot) f_t(\xi) \nonumber \\
F_2(\xi) &=& L(\eta_v,2s+1) \left( q_v^{-2s+t-2} 1_{|\xi|_v=q_v^2}  + |\xi|_v^{-s+\frac{t}{2}-1} \Kl(\xi) \right).
\end{eqnarray}

\begin{remark}
 It can easily be computed that the function $F_1$ is equal, for small $|\xi|_v$, to
$$\left(\frac{L(\eta_v,2s+1)}{\zeta_v(2s+2)} L(\eta_v,t) + \frac{L(\eta_v,2s+1)}{\zeta_v(2s+2t+2)} L(\eta_v,-t) \eta_v(\xi)|\xi|_v^t\right).$$
\end{remark}

\subsection{Verification of the axioms}\label{transformcalculations}

Here we verify that the basic function $f_{4,v}^{t,0}$ defined in the previous subsection gives rise to functions $f_{i,v}^{t,0}$  ($i=1\dots4$) which satisfy the axioms of \S \ref{basicfnassumptions}. For this subsection we drop the index $v$ and set: $\epsilon = \eta_v(\varpi_v)$. (Recall that we are defining the basic functions outside of a finite set of places which includes the places of ramification of $\eta$; therefore, $\eta_v(\varpi_v)$ makes sense.)

\begin{lemma}
 The basic function $f_{4,v}^{t,0}$ defined in the previous subsection, and the basic functions $f_{i,v}^{t,0}$, $i=1,2,3$, obtained from $f_{4,v}^{t,0}$ via \eqref{compatibility}, satisfy the axioms of \S \ref{basicfnassumptions}.
\end{lemma}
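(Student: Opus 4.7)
My approach is to verify the three axioms of \S\ref{basicfnassumptions} directly on $f_{4,v}^{t,0} = F_1 + F_2$ as given in \eqref{basicfnvariation}, and then use the compatibility relations \eqref{compatibility} together with Proposition \ref{variousproperties-RTF} to transfer the statements to $f_{3,v}^{t,0}, f_{2,v}^{t,0}, f_{1,v}^{t,0}$. Since axiom~2 only requires convergence of one representative in each chain of asymptotic constants related by \eqref{ad2}, \eqref{ad3}, \eqref{compatibility}, it is enough to work on the $f_{4,v}^{t,0}$-side for most of the chains and on the $f_{2,v}^{t,0}$-side (via explicit Fourier transform of $F_1$) for the chain through $D_1(0)$.

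For axiom~1, on $\mathcal{B}^\reg(\mathfrak o_v) = \mathfrak o_v^\times \setminus \{-1\}$ both summands of $F_2$ vanish, since $\Kl$ is supported on $|\xi|_v>1$ and so is $1_{|\xi|_v=q_v^2}$. Hence the regular value of $f_{4,v}^{t,0}$ comes only from $F_1$. Using $f_t(\xi) = L(\eta_v,t)+L(\eta_v,-t)\eta_v(\xi)$ on $\mathfrak o_v^\times$ and computing the action of $(I - q_v^{-2s-1}\varpi_v^2\cdot)$ yields a constant of the form $\frac{L(\eta_v,2s+1)}{\zeta_v(2s+2)}L(\eta_v,t) + \frac{L(\eta_v,2s+1)}{\zeta_v(2s+2t+2)}L(\eta_v,-t)\eta_v(\xi)$; the associated partial Euler product is then a ratio of partial abelian $L$-functions, hence convergent for $\Re(2s)$, $\Re(2s\pm t)$ sufficiently large.

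For axiom~2, near $\xi=0$ the $F_2$-terms again vanish, so the germ of $f_{4,v}^{t,0}$ at $0$ is read off from $F_1$ as in the remark following \eqref{basicfnvariation}: $C_1(0)$ and $C_2(0)$ are quotients of local abelian $L$-values giving convergent Euler products. Near $\xi=-1$, $f_t$ is locally constant on $\mathfrak o_v^\times$, and the characteristic-function piece of $F_2$ is supported away from $-1$; so $f_{4,v}^{t,0}$ is smooth at $-1$ and, in particular, $C_4(0)=0$ for the basic function --- consistent with Lemma~\ref{Wembedding}, so we land in $(\mathcal S^0_{4,v})'$. The constant $C_3(0)$ is evaluated explicitly and again gives a convergent Euler product. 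Near $\xi=\infty$, $F_1$ is supported on $|\xi|_v\le q_v^2$, so the asymptotics come entirely from the Kloosterman piece of $F_2$; comparing with Table~\eqref{tablegerms} identifies $C_5$ and gives the regular value $C_5(0)$ as a product involving $L(\eta_v,2s+1)$, again with convergent Euler product. The remaining asymptotic constants $D_1,\dots,D_5$ for $f_{2,v}^{t,0},f_{3,v}^{t,0}$ and $C_1,\dots,C_5$ for $f_{1,v}^{t,0}$ are determined by \eqref{ad2}, \eqref{ad3}, $\iota_t$ and \eqref{compatibility}; since global gamma factors are trivial, the convergence of one representative per chain is equivalent to that of the others, which we have just checked.

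For axiom~3 the key observation is that $F_1$ has bounded support (in $|\xi|_v \le q_v^2$), where it is uniformly bounded by a constant multiple of its regular value, and similarly for the term $L(\eta_v,2s+1)q_v^{-2s+t-2}1_{|\xi|_v = q_v^2}$; together they contribute a bounded-support piece satisfying axiom~3 trivially. The Kloosterman piece $L(\eta_v,2s+1)|\xi|_v^{-s+t/2-1}\Kl(\xi)$ inherits decay at $\infty$ from the classical bound $|\Kl(\xi)|\ll|\xi|_v^{1/2}$, which for $\Re(s)$ large is bounded by $|\xi|_v^{-M}$ for any prescribed $M$; hence $f_{4,v}^{t,0}$ satisfies axiom~3 with $r_4 = 0$ (it is smooth at $0$ and $-1$ for large parameters). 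For $f_{1,v}^{t,0},f_{2,v}^{t,0},f_{3,v}^{t,0}$ the polynomial bound at the irregular points follows from the fact that the germs are of the stated generalized-eigenfunction form, where the exponents are controlled by the parameters; and the decay at infinity for $f_{1,v}^{t,0}$ is absorbed by the rate of $\Kl$ (i.e., $C_5$ vanishes because $f_1$ lies in $(\mathcal S^0_{1,v})'$ under $\mathcal G_t$), while for $f_{2,v}^{t,0},f_{3,v}^{t,0}$ the rate $|\xi|_v^{-t-1}$, $|\xi|_v^{-2s-1}$ is arbitrarily fast in the range $\Re(t),\Re(2s)\gg 0$. The main bookkeeping obstacle is to match asymptotic constants across the four spaces via \eqref{ad2}, \eqref{ad3}, which I expect to reduce to standard Tate-integral computations of Fourier transforms of $\eta_v|\cdot|_v^\alpha 1_{\mathfrak o_v}$ and the self-duality of $1_{\mathfrak o_v}$, with no conceptual obstruction.
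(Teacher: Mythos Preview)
Your plan is in the same spirit as the paper's proof, but it overestimates how much can be ``transferred'' from $f_{4,v}^{t,0}$ to the other three basic functions using only Proposition~\ref{variousproperties-RTF}. That proposition relates only the \emph{non-constant} asymptotic coefficients (the $C_2,C_4,C_5$ to the $D_2,D_4,D_5$) via gamma factors; it says nothing about the regular values $f_{i,v}^{t,0}(\mathcal B^\reg(\mathfrak o_v))$ required for axiom~1, nothing about the ``constant'' asymptotic terms $C_1(0),C_3(0)$ of $f_{1,v}^{t,0}$ or $D_1(0)$ of $f_{2,v}^{t,0},f_{3,v}^{t,0}$ required for axiom~2, and nothing about the pointwise shape of $f_{i,v}^{t,0}$ needed for the uniform bounds in axiom~3. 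In particular, your sentence ``use the compatibility relations together with Proposition~\ref{variousproperties-RTF} to transfer the statements'' does not go through for axioms~1 and~3.

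Concretely: to verify axiom~1 for $i=1,2,3$ you must know the actual value of each $f_{i,v}^{t,0}$ on $\mathfrak o_v^\times$, and to verify axiom~3 you must know each function on all of $k_v^\times$ (support and values), uniformly in $v$. Knowing only that $C_5=0$ for $f_{1,v}^{t,0}$ tells you it is compactly supported, but not where or with what values. The paper therefore computes $\mathcal F^{-1}F_1$, $\iota_t\mathcal F^{-1}F_1$, $\mathcal G_t^{-1}F_1$, and similarly for $F_2$ (including the Fourier transform of the Kloosterman piece, which is not just a Tate integral), obtaining closed formulas for all four basic functions; only then are the three axioms ``easily checked'' and the data recorded in Table~\eqref{table}. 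Your final paragraph hints at such computations but frames them as bookkeeping on asymptotic constants, whereas in fact they are the substance of the proof: you need the full functions, not only their germs. (Minor slip: the expression you write for the regular value of $f_{4,v}^{t,0}$ is actually its germ at $0$, not its value on $\mathfrak o_v^\times$; the correct regular value is the one appearing in Table~\eqref{table}.)
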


\begin{proof}
The summand $F_1$ of (\ref{basicfnvariation})  has inverse Fourier transform
$$ L(\eta,2s+1) \cdot (I-q^{-2s-1} \varpi^{-2}\cdot) \mathcal F^{-1}f.$$
In particular, from (\ref{babyfn2b}) we deduce that its value on $\mathfrak o^\times$ is equal to
$$ \frac{L(\eta,2s+1)}{\zeta(2s+2t+2)} L(\eta,t+1).$$

Notice that $\iota_t(a\cdot f) = \eta(a) |a|^{-t} a^{-1}\cdot (\iota_t f)$. Therefore
$$\iota_t\mathcal F^{-1}(F_1)= L(\eta,2s+1) \cdot (I-q^{-2s-2t-1}  \varpi^{2}\cdot) \iota_t\mathcal F^{-1}f.$$
In particular, from (\ref{babyfn2a}) we deduce that its value on $\mathfrak o^\times$ is equal to
$$ \frac{L(\eta,2s+1)}{\zeta(2s+2t+2)} L(\eta,t+1).$$ 

Finally, by applying inverse Fourier transform once more we get
$$\mathcal G_t^{-1}(F_1)= L(\eta,2s+1) \cdot (I-q^{-2s-2t-1}  \varpi^{-2}\cdot) \mathcal G_t^{-1}f,$$
which by (\ref{basicfnbaby}) takes on $\mathfrak o^\times$ the value
$$ L(\eta,2s+1).$$

It can be seen that the inverse Fourier transform of $|x|^{-\frac{\alpha}{2}-1}\cdot \Kl(x) + q^{-\alpha-2} 1_{|x|=q}(x)$ is
$$q^{-\alpha} 1_{|x|\le q^{-2}} + |x|^\alpha \psi(\frac{1}{x}) 1_{|x|<1}.$$ 

Hence, 
$$\mathcal F^{-1} F_2 (x)= L(\eta,2s+1)q^{-2s+t} 1_{|x|\le q^{-2}} + L(\eta,2s+1) |x|^{2s-t} \psi\left(\frac{1}{x}\right) 1_{|x|<1}.$$
Its value on $\mathfrak o^\times$ is equal to $0$.

Now,
$$\iota_t\mathcal F^{-1} F_2 (x)= L(\eta,2s+1)q^{-2s+t} \eta(x)|x|^{-t-1} 1_{|x|\ge q^2} + L(\eta,2s+1) \eta(x)|x|^{-2s-1} \psi(x) 1_{|x|>1},$$
with value on $\mathfrak o^\times$ equal to $0$.

Finally, it can be seen that $\eta(x)|x|^{-\delta-1} 1_{|x|>1}$ is the Fourier transform of
$$\left(\frac{L(\eta,-\delta)}{L(\eta,\delta+1)} \eta(x) |x|^\delta - \frac{L(\eta,-\delta)}{\zeta(1)}\right) 1_\mathfrak o(x).$$
Notice that $\eta(x)|x|^{-t-1} 1_{|x|\ge q^2}= \epsilon q^{-t-\frac{1}{2}} \varpi\cdot \left(\eta(x) |x|^{-t-1}1_{|x|>1}(x)\right)$. Therefore, 
$$ \mathcal G_t^{-1} F_2 (x)=L(\eta,2s+1) q^{-2s} \epsilon \left(\frac{L(\eta,-t)}{L(\eta,t+1)} \epsilon q^t \eta(x) |x|^t - \frac{L(\eta,-t)}{\zeta(1)}\right) 1_{|x|<1}(x) +$$
$$ + L(\eta,2s+1) \left(\frac{L(\eta,-2s)}{L(\eta,2s+1)}  \eta(x+1) |x+1|^{2s} - \frac{L(\eta,-2s)}{\zeta(1)}\right) 1_\mathfrak o(x).$$
Its value at integral points of $\mathbb A^1\smallsetminus\{-1,0\}$ is:
$$ -\epsilon q^{-2s-1} L(\eta,2s+1).$$

Based on these calculations, the axioms for the basic functions $f_{i,v}^{t,0}$ obtained from $f_{4,v}^{t,0}$ via \eqref{compatibility} are easily checked. We present their regular values and asymptotic behavior in a table, which is the analog of \eqref{tablebaby}, using the notation of Table \eqref{tablegerms}. By ``reg.val'' we mean the value $f_{i,v}^{t,0}(\mathcal B^\reg(\mathfrak o_v))$.

\begin{equation}\label{table}
 \begin{array}{|c|c|c|}
 \hline  
 i & f_{i,v}^{t,0} \mbox{ around } \xi=0 & f_{i,v}^{t,0} \mbox{ around } \xi=-1 \\ 
\hline 
4  & C_1(0)=\frac{L(\eta,2s+1)L(\eta,t)}{\zeta(2s+2)} & C_3(0)=\mbox{Reg.\ val.} \\
& C_2(0) =\frac{L(\eta,2s+1)L(\eta,-t)}{\zeta(2s+2t+2)} & C_4 =0  \\
\hline
3  &  D_1(0) = L(\eta,2s+1)L(\eta,t+1)\cdot  & D_3(0) = \mbox{Reg.\ val.} \\
& \cdot (1-q^{-2s}+q^{-2s+t}-\epsilon q^{-2s-1})&\\
& D_2(0) =L(\eta,2s+1) &  \\
\hline
2  &  D_1(0) = \frac{L(\eta,2s+1)L(\eta,t+1)}{\zeta(2s+2t+2)} & D_3(0) = \mbox{Reg.\ val.} \\
& D_2 = 0 &  \\
\hline
1  &  C_1 (0) = L(\eta,2s+1) L(\eta,t) \cdot & C_3(0) = L(\eta,2s) \\
& \cdot (1-q^{-2s-2t}+q^{-2s-t}-\epsilon q^{-2s-1}) &  \\
& C_2 (0)= L(\eta,2s+1)L(\eta,-t)\cdot & C_4(0) = L(\eta,-2s) \\
& \cdot (1-q^{-2s}+q^{-2s+t}-\epsilon q^{-2s-1})&\\
\hline
\mbox{(Continued) } i & f_{i,v}^{t,0} \mbox{ around } \xi=\infty & f_{i,v}^{t,0} (\mathcal B^\reg(\mathfrak o_v)) \\
\hline
4 & C_5(0)= L(\eta,2s+1) & L(\eta,2s+1)\cdot \\
  & & \cdot [1-q^{-2s-2}(1+\epsilon q^{-t}+q^{-2t})]  \\
\hline
3 & D_4(0)= \frac{L(\eta,2s+1)L(\eta,t+1)}{\zeta(2s+2t+2)} & \frac{L(\eta,2s+1)L(\eta,t+1)}{\zeta(2s+2t+2)}\\
  &  D_5 =0 &\\
\hline
2 & D_4(0)= L(\eta,2s+1)L(\eta,t+1)\cdot & \frac{L(\eta,2s+1)L(\eta,t+1)}{\zeta(2s+2t+2)}\\
  & \cdot (1-q^{-2s}+q^{-2s+t}-\epsilon q^{-2s-1})& \\
  &  D_5(0) = L(\eta,2s+1) &\\
\hline
1 & C_5 =0 & 1\\
\hline
\end{array}
\end{equation}

\end{proof}

I remark the following:
\begin{enumerate}
 \item The constant terms ($C_1(0), C_3(0), D_1(0)$), as well as the regular values, give convergent Euler products when $\Re(s),\Re(t),\Re(s\pm t)\gg 0$ (i.e., ``for large values of the parameters''). They admit meromorphic continuation to $t=0$, as long as the rest of the parameters remain large.
 \item For the rest of the (non-zero) terms, the non-convergent factors of the Euler products can always be interpreted as an $L$-function. They, too, admit meromorphic continuation to $t=0$ when the rest of the parameters remain large.
 \item In the limit when $t\to 0$, $f_{1,v}^{t,0}$ converges to the following function supported on $\mathfrak o_v$:
\begin{equation} \label{basicfn}
\begin{array}{|c|c|c|}
   \hline
 |\xi|_v< 1 & |\xi+1|_v<1 & |\xi|_v=|\xi+1|_v=1 \\

\hline
\begin{cases}
 \frac{1+\eta_v(\xi)}{2}, &\mbox{ if }\eta_v\ne 1 \\
 1-\log_q|\xi|_v, &\mbox{ if } \eta_v=1
\end{cases} & L(\eta_v,2s) + L(\eta_v,-2s)\cdot \eta_v(\xi+1)|\xi+1|_v^{2s} & 1 
\\
\hline
  \end{array}
\end{equation}
\end{enumerate}

\subsection{Continuation to $t=0$ and $s=0$.} \label{sscontinuation}

We remain in the setting of the relative trace formula, i.e., the basic functions discussed in the previous subsection. From the calculations of \S \ref{transformcalculations} it is easy to see that the basic functions $f_{1,v}^{t,0}$ and $f_{4,v}^{t,0}$ satisfy the assumptions of \S \ref{basicfnassumptions} even for $t$ around zero, as long as $s\gg 0$. Therefore, Proposition \ref{convergence} continues to hold for the spaces $\mathcal S_1^t, \mathcal S_4^t$, i.e., summation over the regular points of $\mathcal B(k)$ converges absolutely.

Hence, by analytic continuation we get that Corollary \ref{GPoisson} continues to hold for $t=0$ and large values of $s$; we state it only for the subspace of $\mathcal S_4^0$ which corresponds to the space with parameter $s$: $\mathcal S(\mathcal W^s(\adele))$ of the Kuznetsov trace formula.  We recall once again that $|\bullet|^{-s-1} \mathcal S(\mathcal W^s(\adele))$ can be identified with a subspace $(\mathcal S_4^0)'$ of $\mathcal S_4^0$, cf.\ Lemma \ref{Wembedding}, and the preimage of this in $\mathcal S_1^0$ under $\mathcal G$ is denoted by $\mathcal S(\mathcal Z^s(\adele))$.  For later use, we introduce the following notation for their basic functions:
$$ f_{\mathcal Z_v^s}^0= f_{1,v}^{t,0} \mbox{ (with implicit parameter $s$)},$$
$$ f_{\mathcal W_v^s}^0= |\bullet|^{s+1} f_{4,v}^{t,0} \mbox{ (with implicit parameter $s$)}.$$

By the Matching Theorem 5.1 and the Fundamental Lemma 5.4 of \cite{SaBE1}, the space $\mathcal S(\mathcal Z^s(\adele))$ specializes at $s=0$ to the Schwartz space $\mathcal S(\mathcal Z(\adele))$ of the relative trace formula for $T\backslash G/T$. (I remark here that the basic functions here differ by the basic functions of \cite{SaBE1} by a scalar so that the regular value of $f_{\mathcal Z_v^s}^0$ is one.) Thus, the space $\mathcal S(\mathcal Z^s(\adele))$ is a degeneration of a space of orbital integrals, which carries no 
representation-theoretic information and no Hecke action.\footnote{The space can be actually obtained from the split-torus relative trace formula of Jacquet, by twisting torus periods by a continuous family of characters; thus, it can be endowed with a Hecke action, but of course our purpose is to ignore this action, and any relevance to the split-torus RTF, in hopes that the method can be generalized. For local purposes, in order to shorten calculations, we feel free to use this fact (such as in the proof of Proposition \ref{propfeKuz}).} 
It will only be used to analytically continue the Kuznetsov trace formula with parameter $s$ to $s=0$; thus, a problem of analytic continuation which has a flavor of Langlands' ``Beyond Endoscopy'' is being treated by algebraic means, by transforming the non-compactly supported orbital integrals of the Kuznetsov trace formula to the essentially compactly supported ones of the spaces $\mathcal S(\mathcal Z^s(\adele))$. But, first, let us formulate the Poisson summation 
formula for large values of $s$. 

We set, for large values of $s$,
\begin{equation} \RTF(f) := PS_1(f) \mbox{ for } f\in \mathcal S(\mathcal Z^s(\adele)),
\end{equation}
\begin{equation} \KTF(f) := PS_4(|\bullet|^{-s-1} f) \mbox{ for } f\in \mathcal S(\mathcal W^s(\adele)).
\end{equation}
These are the same definitions for the torus relative trace formula and the Kuznetsov trace formula as given in \eqref{RTFdef}, \eqref{KTFdef}, except that the definition for $\RTF$ is now being applied to deformations of the space $\mathcal S(\mathcal Z(\adele))$. The analytic continuation of Corollary \ref{GPoisson} to $t=0$, applied to those spaces, reads:

\begin{theorem}\label{PSF} 
 For $\Re(s)\gg 0$ and $f\in \mathcal S(\mathcal Z^s(\adele))$ we have:
$$\RTF(f) = \KTF( |\bullet|^{s+1} \mathcal G f).$$
\end{theorem}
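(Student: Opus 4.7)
The plan is to obtain Theorem \ref{PSF} by analytic continuation of Corollary \ref{GPoisson} in the auxiliary parameter $t$ down to $t=0$, holding $\Re(s)$ large throughout. Via Lemma \ref{Wembedding}, the statement is equivalent to $PS_1(f) = PS_4(\mathcal{G}_0 f)$ for $f \in \mathcal{S}(\mathcal Z^s(\adele)) = (\mathcal{S}_1^0)'$; by inspection of table \eqref{table}, the basic functions $f_{1,v}^{0,0}, f_{4,v}^{0,0}$ already lie in the appropriate subspaces (the germ $C_5$ at $\infty$ vanishes for the former, and $C_4$ at $-1$ for the latter), so there is no compatibility obstruction at the level of basic vectors.

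First I would realize the given $f$ as the specialization at $t=0$ of an analytic section $t\mapsto f^t \in \mathcal S_1^t$; by Proposition \ref{variousproperties-RTF}, $\mathcal G_t f^t$ is then an analytic section of $\mathcal S_4^t$, and Corollary \ref{GPoisson} gives $PS_1(f^t) = PS_4(\mathcal G_t f^t)$ for $\Re(t) \gg 0$. The substance of the argument is then to show that both sides extend holomorphically in $t$ to a neighbourhood of $0$ when $\Re(s) \gg 0$; the theorem is then immediate by analytic continuation, and restriction to $t=0$ recovers the specific spaces $\mathcal S(\mathcal Z^s(\adele))$ and $\mathcal S(\mathcal W^s(\adele))$ via the identifications of Lemma \ref{Wembedding} and the normalization of the transfer operator by $|\bullet|^{s+1}$.

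Holomorphy of the absolutely convergent part $\sum_{\xi \in k \smallsetminus \{-1,0\}} f^t(\xi)$ (and similarly for $\mathcal G_t f^t$) is obtained from Proposition \ref{convergence}, whose hypotheses from \S\ref{basicfnassumptions} continue to hold for $t$ near $0$ provided $\Re(s) \gg 0$ --- as stated at the start of \S\ref{sscontinuation}, and as one checks directly from table \eqref{table}, where factors such as $\zeta(2s + 2t + 2)^{-1}$ are holomorphic and non-vanishing near $t=0$ once $\Re(s)$ is large. Polynomial seminorm bounds in vertical strips then upgrade absolute convergence to holomorphy via Morera's theorem. For the irregular contributions $\tilde O_0, \tilde O_{-1}, \tilde O_\infty$, the argument mirrors Proposition \ref{irregOI}: each term is a partial abelian $L$-value times a finite product of local asymptotic coefficients, with polynomial growth of the former and polynomial seminorm control of the latter (via Proposition \ref{variousproperties-RTF}). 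The main obstacle I anticipate is the bookkeeping step of verifying, germ by germ through the six rows of table \eqref{table}, that any pole in $t$ of a local asymptotic coefficient is either cancelled by a compensating zero of the global $L$-prefactor --- exactly as in the cancellation between $\zeta^S(t)$ and $C_{1,v}^t(0)$ in the split case of Proposition \ref{irregOI} --- or is pushed out of a neighbourhood of $t=0$ by the assumed largeness of $\Re(s)$ (this is what keeps $\gamma(\eta_v,-2s,\psi_v)$ and the factors $L(\eta,\pm 2s)$ away from their singular loci). Once this bookkeeping is carried out, both $PS_1(f^t)$ and $PS_4(\mathcal G_t f^t)$ are holomorphic in $t$ near $0$, agree for $\Re(t) \gg 0$ by Corollary \ref{GPoisson}, and therefore agree at $t=0$, which is the theorem.
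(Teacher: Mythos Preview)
Your proposal is correct and follows essentially the same approach as the paper: the theorem is obtained by analytically continuing Corollary~\ref{GPoisson} in the parameter $t$ down to $t=0$ while keeping $\Re(s)$ large, after verifying (via table~\eqref{table}) that the axioms of \S\ref{basicfnassumptions} for $f_{1,v}^{t,0}$ and $f_{4,v}^{t,0}$ persist near $t=0$ so that Proposition~\ref{convergence} and the argument of Proposition~\ref{irregOI} apply. The paper is somewhat terser in invoking ``analytic continuation'' directly, but your more explicit breakdown of the regular sum and the irregular terms $\tilde O_0, \tilde O_{-1}, \tilde O_\infty$ is exactly the content being appealed to.
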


Explicitly, this means
\begin{equation}\label{eqPSF}
 \tilde O_0(f) + \tilde O_{-1}(f) + \sum_{\xi \in k\smallsetminus\{-1,0\}} f(\xi) = \tilde O_0(\mathcal G f) + \tilde O_\infty(\mathcal Gf) + \sum_{\xi\in k\smallsetminus\{0\}} \mathcal Gf(\xi).
\end{equation}

Now suppose that $f$ varies in an analytic section of $s\mapsto f_s\in \mathcal S(\mathcal Z^s(\adele))$, 
The basic functions $f_{\mathcal Z_v^s}^0$ satisfy the assumptions of \S \ref{basicfnassumptions} \emph{for all $s$} and their stalks at $\infty$ are trivial, i.e., they are of rapid decay in a neighborhood of infinity. Thus, Proposition \ref{convergence} continues to hold for elements of the global Schwartz space $\mathcal S(\mathcal Z^s(\adele))$:

\begin{proposition}\label{convergence2}
 For $f_s \in \mathcal S(\mathcal Z^s(\adele))$, any value of $s$, the sum
 $$\sum_{\xi \in k\smallsetminus\{-1,0\}} f_s(\xi)$$
 converges absolutely and is bounded by polynomial seminorms on $\hat\otimes_{v\in T} \mathcal S(\mathcal Z^s_v)$ in vertical strips.
\end{proposition}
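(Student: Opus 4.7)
The plan is to deduce the statement directly from Proposition~\ref{convergence} by verifying that the basic functions $f_{\mathcal Z_v^s}^0$ satisfy the three axioms of \S\ref{basicfnassumptions} for \emph{every} value of $s$, not just for $\Re(s)\gg 0$. Once this is done, the same height-counting argument used to prove Proposition~\ref{convergence} will go through, applied in an arbitrary bounded vertical strip of $s$.

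Reading off the row $i=1$ of table~\eqref{table} together with the explicit description~\eqref{basicfn}, three features make the axioms essentially automatic in $s$. First, the regular value $f_{\mathcal Z_v^s}^0(\mathcal B^\reg(\mathfrak o_v))=1$, so the convergence of the Euler product of regular values (axiom~(1)) is trivial. Second, the stalk at $\infty$ vanishes ($C_5=0$), i.e.\ $f_{\mathcal Z_v^s}^0$ is supported inside $\mathcal B(\mathfrak o_v)$, so the decay condition of axiom~(3) for $|\xi|_v>1$ is vacuous. Third, the asymptotic constants at $\xi=0$ and $\xi=-1$ are expressed in terms of local $L$-factors such as $L_v(\eta_v,\pm 2s)$ and $L_v(\eta_v,2s+1)$; their Euler products are partial abelian $L$-functions, which have meromorphic continuation with polynomial growth in bounded vertical strips, so axiom~(2) also holds for all $s$ in the sense compatible with the polynomial-seminorm formalism of appendix~\ref{sec:families}.

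For the pointwise bound of axiom~(3) on $|\xi|_v\le 1$: near $\xi=0$ the basic function exhibits at worst a $\log|\xi|_v$ singularity in the split case, absorbed by any $|\xi|_v^{r}$ with $r>0$, while near $\xi=-1$ the singular factor $|\xi+1|_v^{2s}$ is, in any bounded vertical strip $|\Re(s)|\le R$, dominated by $|\xi+1|_v^{-2R}$ and thus absorbed by taking $r_1\ge 2R$. Hence axiom~(3) holds with a constant $r_1=r_1(R)$ depending on the strip but independent of the place. With all three axioms in place, the argument of Proposition~\ref{convergence} applies \emph{mutatis mutandis}: after dividing by the regular value and multiplying by $|1+\xi|_v^{r_1}|\xi|_v^{r_1}$, the basic function at almost every place is bounded by a characteristic function, and Lemma~\ref{heightslemma} then controls the number of $\xi\in k$ of bounded height, yielding both absolute convergence of $\sum_{\xi\in k\setminus\{-1,0\}}f_s(\xi)$ and the polynomial seminorm bound on $\widehat\otimes_{v\in T}\mathcal S(\mathcal Z^s_v)$.

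The main subtlety to monitor is that in the original Proposition~\ref{convergence} the constant $r_i$ was uniform, whereas here $r_1$ must be allowed to depend on the width of the strip; since the conclusion is itself only claimed in bounded vertical strips, this is harmless. Similarly, poles of the $s$-dependent $L$-factors (for instance $L(\eta,2s+1)$ at $s=0$ in the split case) are absorbed into the polynomial-seminorm structure of the Fr\'echet families $\mathcal S(\mathcal Z^s_v)$ described in appendix~\ref{sec:families}, so they do not obstruct the required estimate.
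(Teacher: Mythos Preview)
Your proposal is correct and follows exactly the paper's approach: the paper simply states that the basic functions $f_{\mathcal Z_v^s}^0$ satisfy the assumptions of \S\ref{basicfnassumptions} for all $s$ and have trivial stalk at $\infty$, and hence Proposition~\ref{convergence} continues to hold. Your write-up is a faithful expansion of this one-line argument; the only superfluous step is your discussion of axiom~(2), which is not actually used in the proof of Proposition~\ref{convergence} (only axioms~(1) and~(3) enter there), but this does no harm.
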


\begin{corollary} \label{analyticcont} 
 For $s\mapsto F_s\in \mathcal S(\mathcal W^s(\adele))$ an analytic section, $\KTF(F_s)$  can be analytically continued to all $s\in \CC$ and is bounded by polynomial seminorms in vertical strips. In particular, if $F_s$ is of polynomial growth or rapid decay in a vertical strip, then so is $\KTF(F_s)$, and the value of $\KTF(F_s)$ at any specific $s$ depends only on $F_s$ and not on the section.
\end{corollary}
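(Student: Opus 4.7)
The plan is to transfer the problem to the $\mathcal{Z}^s$ side via the transfer operator, where everything is well-behaved for all $s$. Given an analytic section $s \mapsto F_s \in \mathcal{S}(\mathcal{W}^s(\adele))$, I would set
$$
f_s := \mathcal{G}^{-1}\bigl(|\bullet|^{-s-1} F_s\bigr) \in \mathcal{S}(\mathcal{Z}^s(\adele)),
$$
using Lemma \ref{Wembedding} and Proposition \ref{variousproperties-RTF} (which guarantee that $\mathcal{G}$ preserves analytic sections and is bounded by polynomial families of seminorms). The key point is that, unlike $\mathcal{S}(\mathcal{W}^s(\adele))$ whose basic functions involve Kloosterman-type germs that only yield convergent Euler products for $\Re(s) \gg 0$, the space $\mathcal{S}(\mathcal{Z}^s(\adele))$ has basic functions with trivial stalks at $\infty$ and satisfies the axioms of \S\ref{basicfnassumptions} \emph{for all $s$}.

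Next, I invoke Theorem \ref{PSF}: for $\Re(s) \gg 0$,
$$
\KTF(F_s) = \RTF(f_s) = \tilde{O}_0(f_s) + \tilde{O}_{-1}(f_s) + \sum_{\xi \in k \setminus \{-1,0\}} f_s(\xi).
$$
Each summand on the right-hand side extends analytically in $s$ to all of $\CC$, with bounds by polynomial seminorms on vertical strips:
\begin{itemize}
\item The Poisson-type sum $\sum_{\xi \in k\setminus\{-1,0\}} f_s(\xi)$ converges absolutely for every $s$ and is polynomially bounded in vertical strips by Proposition \ref{convergence2}, and analyticity in $s$ follows from the analyticity of the section.
\item The irregular distributions $\tilde{O}_0(f_s)$ and $\tilde{O}_{-1}(f_s)$ admit analytic continuation in $s$ by the same mechanism as in Proposition \ref{irregOI}: the formal Euler products defining them decompose as partial $L$-values times a finite product of local asymptotic constants, and the poles of the partial $L$-functions cancel against the poles of the local constants (with both factors polynomially bounded in vertical strips, using Proposition \ref{variousproperties-RTF} to control the local asymptotic coefficients via their Fourier counterparts, as in the baby case).
\end{itemize}

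Combining these, $\RTF(f_s)$ is holomorphic on all of $\CC$ and polynomially bounded on bounded vertical strips; by the identity principle, $\KTF(F_s)$ inherits this analytic continuation, agreeing with $\RTF(f_s)$ everywhere. The final assertions are then formal: polynomial growth or rapid decay of $F_s$ translates into the same property of $f_s$ (through the bounds on $\mathcal{G}$), hence of $\KTF(F_s) = \RTF(f_s)$; and since analytic continuation is unique, the value at any specific $s_0$ depends only on $F_{s_0}$ rather than on the chosen section.

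The main obstacle I anticipate is verifying cleanly that the irregular distributions $\tilde{O}_0, \tilde{O}_{-1}$ genuinely extend to entire functions of $s$ with polynomial bounds. This is the RTF analogue of the delicate computation in the baby case (Proposition \ref{irregOI}), where the cancellation of poles of partial abelian $L$-functions against poles of local asymptotic constants had to be tracked carefully, together with bounds on the residues via the polynomial-seminorm structure on the local Schwartz spaces. The presence of three irregular points (rather than one) and the simultaneous dependence on $s$ and on the deformation parameter $t=0$ complicates the bookkeeping, but the argument is structurally identical and all ingredients --- namely strong meromorphy of the local asymptotic coefficients, polynomial growth of abelian $L$-functions on vertical strips, and the $\gamma$-factor identities of Lemma \ref{lemmad} --- are already in place.
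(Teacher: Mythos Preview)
Your proposal is correct and follows essentially the same approach as the paper: transfer to the $\mathcal Z^s$ side via $f_s = \mathcal G^{-1}(|\bullet|^{-s-1}F_s)$, invoke Theorem \ref{PSF} to write $\KTF(F_s)=\RTF(f_s)$, and then analyze the three pieces (the regular sum via Proposition \ref{convergence2}, and $\tilde O_0, \tilde O_{-1}$ via the mechanism of Proposition \ref{irregOI}, with the observation that near $\xi=-1$ the parameter $t$ of the baby case is replaced by $2s$). One small remark: your justification for ``the value at $s_0$ depends only on $F_{s_0}$'' via uniqueness of analytic continuation is not quite the right reason---uniqueness of continuation gives uniqueness \emph{given the section}, not independence from the section; the correct argument (implicit in what you wrote) is that $\RTF(f_{s_0})$ is defined pointwise in $s_0$ and $f_{s_0}$ depends only on $F_{s_0}$, or equivalently that the bound by polynomial seminorms makes $\KTF$ a continuous linear functional at each fixed $s_0$.
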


\begin{proof}
 The section $f_s=\mathcal G^{-1}\left(|\bullet|^{-s-1}F_s\right) \in \mathcal S(\mathcal Z^s(\adele))$ is also analytic by Proposition \ref{variousproperties-RTF}, and bounded by polynomial seminorms on $\mathcal S(\mathcal W^s(\adele))$. Applying to it the series that appears on the left hand side of \eqref{eqPSF} we get an analytic function of $s$, bounded by polynomial seminorms of the original section $s\mapsto F_s$. 

 We now study the other two terms on the left hand side of \eqref{eqPSF}, $\tilde O_0(F_s)$ and $\tilde O_{-1}(F_s)$. The reader should keep in mind the asymptotic behavior of basic functions described in \eqref{basicfn}, as well as Lemma \ref{analyticsections} and Proposition \ref{irregOI} which describe the form that analytic sections and irregular orbital integrals have when $s\to 0$; as remarked, similar descriptions hold for other ``bad'' values of $s$, i.e., when $2s$ is among the values of \eqref{exceptions}.
 
 The term $\tilde O_0(f_s)$ is clearly defined for all values of $s$, since the behavior of basic functions in the neighborhood of zero is identical to that in the baby case; the corresponding irregular orbital integrals for $s=0$ were defined in \eqref{defzerononsplit}, \eqref{defzerosplit}. 
 
 The behavior in a neighborhood of $\xi=-1$ is analogous to the behavior of orbital integrals in the baby case, \S \ref{basicbaby}, with the parameter $t$ replaced by $2s$. Thus, again $\tilde O_{-1}(f_s)$ is defined for every $s$. Therefore, the expression $PS_4(F_s)=PS_1(f_s)$ can be analytically continued to every $s$.
 
 By the polynomial growth of partial abelian $L$-functions on vertical strips, $\tilde O_0(f_s)$ and $\tilde O_{-1}(f_s)$ are bounded by polynomial seminorms of $f_s$, and hence by polynomial seminorms of $F_s$.
\end{proof}

Notice that the continuation is not at all obvious from the definition of $\KTF$, as even the individual terms in the sum do not admit analytic continuation. As we shall see, this corollary is equivalent to the analytic continuation of the weighted sum of $L$-functions $L(\pi,\frac{1}{2}+s)L(\pi\otimes\eta,\frac{1}{2}+s)$ which appear on the spectral side of the Kuznetsov trace formula with non-standard sections.

\part{Spectral analysis.}

\section{Main theorems of spectral decomposition}\label{sec:spectral-main}

In this section we use several results that will be proven in following sections in order to deduce the main conclusions of spectral analysis: 

\begin{enumerate}
 \item For the Kuznetsov trace formula with parameter $s$, the contribution of each cuspidal automorphic representation admits analytic continuation to all $s\in \CC$.
 \item For matching functions, the contributions of each cuspidal automorphic representation to the relative trace formula for the torus and the Kuznetsov trace formula (with $s=0$) coincide.
\end{enumerate}

\subsection{Notation} \label{notation-spectral}

For a finite set $S$ of places (including the Archimedean ones, as per our standard assumptions from \S \ref{ssnotation}), denote by $\mathcal S(\mathcal Z(\adele))_S$, $\mathcal S(\mathcal W^s(\adele))_S$ etc.\ the subspaces of the global Schwartz spaces that we have seen thus far, consisting of vectors of the form:
$$ f^{0,S} \otimes f_S,$$
where $f^{0,S}$ denotes the tensor product over $v\notin S$ of the corresponding basic vectors and $f_S$ belongs to the (completed) tensor product of the local spaces over $v\in S$. For our introductory discussion, let us denote one of these spaces by $\mathcal S_S$, and denote by $\RTF$ the corresponding version of the relative trace formula (which for the spaces $\mathcal S(\mathcal W^s(\adele))$ was denoted by $\KTF$). By abuse of notation, we may consider $f_S$ as an element of $\mathcal S_S$, by tensoring it with $f^{0,S}$.

As explained in \cite[5.4]{SaBE1}, if $v$ is a non-Archimedean place, $f^0_v$ denotes the basic vector of $\mathcal S(\mathcal Z_v)$ or $\mathcal S(\mathcal W^s_v)$ and $h$ belongs to the unramified Hecke algebra $ \mathcal H(G(k_v),G(\mathfrak o_v))$ of $G(\mathfrak o_v)$-biinvariant, compactly supported measures on $G(k_v)$, then it makes sense to write 
$$ h\star f^0_v,$$
for the orbital integrals of the convolution by $h$ of the corresponding basic function ``upstairs''; equivalently, $h$ is considered as an element of component of the Bernstein center where unramified representations live, and the Bernstein center acts naturally on $\mathcal S(\mathcal Z_v)$ and $\mathcal S(\mathcal W^s_v)$.

Thus, the unramified Hecke algebra outside of $S$
\begin{equation}\label{Heckealgebra}\mathcal H^S:=\otimes^\prime_{v\notin S} \mathcal H(G(k_v),G(\mathfrak o_v))
\end{equation}
acts by mapping $\mathcal S_S$ into the corresponding space $\mathcal S(\mathcal Z(\adele))$ or $\mathcal S(\mathcal W^s(\adele))$; hence, we get a functional
\begin{equation}\label{Heckefunctional} \RTF^S: \mathcal H^S\otimes \mathcal S_S \ni h\otimes f_S \mapsto \RTF(h\star f^{0,S} \otimes f_S) \in \CC,
\end{equation}
which in the case of the Kuznetsov formula we will, correspondingly, denote by $\KTF^S$.

By the Satake isomorphism, we have
$$ \mathcal H(G(k_v),G(\mathfrak o_v)) = \CC[\check G\sslash\check G],$$
where $\sslash$ denotes the invariant-theoretic quotient, i.e., $\CC[\check G\sslash\check G] = \CC[\check G]^{\check G}$, and $\check G$ acts here by conjugaction. Hence:
\begin{equation}\mathcal H^S = \CC\left[\prod_{v\notin S} (\check G\sslash\check G)\right],\end{equation}
where by definition regular functions on an infinite product are the restricted tensor product of regular functions on the factors with respect to the constant function $1$. This isomorphism will be denoted: $h\mapsto \hat h$.

Notice that every automorphic representation which is unramified outside of $S$ determines a unique point on $\prod_{v\notin S} (\check G\sslash\check G)$ and, by strong multiplicity one, is determined by it (so we will identify $\pi$ as a point on this product space). 
We let
$$ U^S\subset \prod_{v\notin S} (\check G\sslash\check G)$$
denote the subset corresponding to unitary unramified representations; it is a compact subset with respect to the product Hausdorff topology. We will be writing $\CC[U^S]$ for the restrictions of polynomial functions to $U^S$ -- of course, $U^S$ is Zariski dense, so $\CC[U^S]= \CC\left[\prod_{v\notin S} (\check G\sslash\check G)\right]$.

Our goal is \emph{to express the functionals $\RTF^S$, $\KTF^S$ defined by (\ref{Heckefunctional}) as $(\mathcal S_S)^*$-valued measures on $U^S$}. (In particular, for any fixed $f_S$ this would make them scalar-valued measures on $U^S$, allowing us to extend them from the Hecke algebra -- polynomials on $U^S$ -- to continuous functions on $U^S$, and thus to separate points.)  This is not quite possible in the split case, but it is possible up to a derivative of a delta distribution. There is no clear a priori reason why ``unitary'' should play a role here; however, polynomials satisfy the conditions of the Stone-Weierstrass theorem when restricted to the unitary spectrum -- and this is what allows us to decompose a comparison of trace formulas spectrally. We include a proof of this application of the Stone-Weierstrass theorem for the sake of completeness:

\begin{lemma}\label{SW}
 $\CC[U^S]$ is dense on the space $C(U^S)$ of continuous functions on $U^S$.
\end{lemma}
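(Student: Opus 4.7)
The plan is to apply the complex form of the Stone--Weierstrass theorem to the subalgebra $\CC[U^S] \subset C(U^S)$. We need four ingredients: $U^S$ is compact Hausdorff, $\CC[U^S]$ is a unital subalgebra, it separates points, and it is closed under complex conjugation.

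Compactness of $U^S$ is immediate from Tychonoff's theorem: at each place $v \notin S$ the set $U^S_v$ of unitary unramified Satake parameters is a compact subset of $\check G \sslash \check G$ (for $G = \PGL_2$, the conjugacy class $\{\alpha,\alpha^{-1}\} \subset \check G = \SL_2(\CC)$ is unitary iff either $|\alpha|=1$ or $\alpha \in \RR^\times_+$ with $q_v^{-1/2} < \alpha < q_v^{1/2}$, which is closed and bounded). The product $U^S = \prod_{v \notin S} U^S_v$ is Hausdorff as a subspace of a product of Hausdorff spaces, and $\CC[U^S]$ is by definition a unital subalgebra of $C(U^S)$.

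For separation of points, let $\pi \ne \pi'$ be two points of $U^S$; they differ at some place $v_0 \notin S$, i.e.\ they give distinct conjugacy classes in $\check G$. Since regular functions on $\check G \sslash \check G$ separate (closed, in particular semisimple) conjugacy classes, there exists $h_{v_0} \in \CC[\check G \sslash \check G]$ with $\hat h_{v_0}(\pi_{v_0}) \ne \hat h_{v_0}(\pi'_{v_0})$; tensoring with the constant function $1$ at the other places gives an element of $\CC[U^S]$ separating $\pi$ from $\pi'$.

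For closure under complex conjugation, it suffices to show that the natural generators, namely the trace-of-standard-representation functions $\tau_v \colon \{\alpha,\alpha^{-1}\} \mapsto \alpha + \alpha^{-1}$ (one for each $v \notin S$), take \emph{real} values on $U^S$. This holds because a unitary unramified Satake parameter for $\PGL_2(k_v)$ is either tempered ($|\alpha|=1$, whence $\alpha^{-1} = \bar\alpha$ and $\tau_v = \alpha + \bar\alpha \in \RR$) or of complementary series type ($\alpha \in \RR^\times_+$, whence $\tau_v \in \RR$ trivially). Since the $\tau_v$ generate $\CC[\check G \sslash \check G]$ as a $\CC$-algebra and take real values on $U^S$, the restriction of any polynomial $f \in \CC[U^S]$ satisfies $\bar f \in \CC[U^S]$ (its conjugate is obtained by conjugating the coefficients). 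Stone--Weierstrass then yields density of $\CC[U^S]$ in $C(U^S)$. The only mildly subtle point is the verification that the unitary unramified parameters of $\PGL_2$ are all real in the sense above, which is a direct consequence of the classification of the unitary unramified dual in rank one.
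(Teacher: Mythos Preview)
Your proof is correct and follows the same overall plan as the paper (Stone--Weierstrass on the compact space $U^S$), but you verify closure under complex conjugation by a different, more explicit route. The paper gives a one-line argument valid for any reductive group: for $h \in \mathcal H^S$ set $h^*(g) = \overline{h(g^{-1})}$; then for a unitary unramified $\pi$ with unit spherical vector $v$,
\[
\widehat{h^*}(\pi) \;=\; \langle \pi(h^*)v, v\rangle \;=\; \langle v, \pi(h)v\rangle \;=\; \overline{\hat h(\pi)},
\]
so $\hat h \mapsto \widehat{h^*}$ realizes complex conjugation inside $\CC[U^S]$. Your approach instead computes the unitary unramified dual of $\PGL_2$ explicitly and observes that the single generator $\tau_v = \alpha + \alpha^{-1}$ takes real values there; this is perfectly valid but specific to rank one. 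One small imprecision: the complementary-series branch should also allow negative real $\alpha$ (twist by the unramified quadratic character), and the endpoints $\alpha = \pm q_v^{\pm 1/2}$ (one-dimensional representations) must be included for closedness; in all these cases $\tau_v$ is still real, so your conclusion is unaffected. The paper's argument has the advantage of bypassing any classification of the unitary dual.
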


\begin{proof}
 To apply Stone-Weierstrass, we need to show that as functions on $U^S$ the polynomials are closed under complex conjugation. For every Hecke element $h$ we let $h^*(g) = \overline{h(g^{-1})}$, and then for any unramified unitary representation $\pi$ (considered as a point on $U^S$), if $v\in \pi$ is a unitary unramified vector we have: 
 $$\widehat{h^*}(\pi) = \left< \pi(h^*)v, v\right> = \left< v, \pi(h) v \right> = \left< v, \hat h(\pi) v\right> = \overline{\hat h(\pi)}.$$
\end{proof}

We will denote by $\CC_1, \CC_\eta \in U^S$ the points corresponding to the characters $1, \eta$. For any unramified character $\chi$ of the Borel subgroup $B(\adele^S)$, we will denote by $\chi$ the point on $\prod_{v\notin S} (\check G\sslash\check G)$ corresponding to the unramified principal series representation unitarily induced from $\chi$. In particular, $\delta^{\pm\frac{1}{2}}$ and $\CC_1$ denote the same point on $U^S$ (where $\delta$ denotes the modular character of the Borel). If $\pi\in \prod_{v\notin S} (\check G\sslash\check G)$ is a point corresponding to a character $\chi$ of the Borel, by an \emph{evaluation (of polynomials) of order $d$ at $\pi$} we will mean any linear combination of the value and the first $d-1$ derivatives of the polynomials \emph{along the complex one-parameter family $s\mapsto\chi\delta^s$}; notice that the choice of $\chi$ vs.\ a Weyl group-conjugate character ${^w\chi}$ does not make a difference for this notion. When the point is mentioned many times, the analogous multiplicity is implied, e.g., an ``evaluation at $\CC_1, \CC_\eta$'', when $\eta=1$, is an 
evaluation at $\CC_1$ of order $2$. Notice that an evaluation of order $d$ is also an evaluation of any larger order, in this language.

We will denote by $\hat G^\aut$ the set of automorphic representations of $G(\adele)$ which appear in the Plancherel formula for $L^2([G])$, that is: the points  corresponding to cuspidal representations, unitary idele class characters of the Borel, and residual representations (in this case, idele class characters of $G$). \emph{We consider $\hat G^\aut$ as a subset of $U^S$} -- to be precise, it is a subset of $\lim_\to U^S$ as $S$ becomes larger, but we will freely talk about it as a subset of $U^S$, meaning its intersection with $U^S$. We denote by $\hat G^{\aut}_{\Ram}$ the subset of those which are not residual discrete series (i.e., characters, in the case of $\PGL_2$). These are the representations of ``Ramanujan type'', but of course we do not use their expected temperedness at any point. For $\PGL_2$ they coincide with the generic elements of $\hat G^\aut$, but since we will also use these sets for inner forms, we prefer the name ``Ramanujan'' to ``generic''. 

Hence we have:
$$ \hat G^\aut \supset \hat G^{\aut}_{\Ram} = \hat G^\aut_\cusp \sqcup \hat G^\aut_\Eis,$$
where $\hat G^\aut_\cusp$ denotes cuspidal representations and $\hat G^\aut_\Eis$ denotes principal series unitarily induced from unitary idele class characters of the Borel subgroup. These sets are clearly measurable with respect to the standard Borel structure on $U^S$ (notice that we are not using the Fell topology anywhere, but the topology induced from the space $\prod_{v\notin S} (\check G\sslash \check G)$), since $\hat G^\aut_\cusp$ consists of a countable number of points and $\hat G^\aut_\Eis$ is a countable union of ``lines''. Any reference to ``measures'' on $U^S$ or $\hat G^\aut_\Ram$ will imply regular Borel measures. 

Here is a delicate point involving inner forms: we will also need to consider the sets $\widehat {G^\alpha}^\aut$, for inner forms of $G$ corresponding to nontrivial torsors of the torus $T$ (when it is not split). Of course, as a subset of $U^S$ it belongs to $\widehat G^\aut$ by the global Jacquet--Langlands correspondence. We will therefore never use $\widehat {G^\alpha}^\aut$ explicitly -- only $\widehat G^\aut$ will appear. However, as much as it simplifies notation, one should not assume that the Jacquet--Langlands correspondence is being used: the comparison of trace formulas \emph{shows}, a posteriori, that those elements of $\widehat {G^\alpha}^\aut$ which have nonzero contribution to the torus RTF correspond, as points of $U^S$, to points of $\hat G^\aut$. We point the reader to the proof of Proposition \ref{factorsthrough}, which clarifies the issue.

\subsection{Lifts and relative characters} \label{sslifts} 

Up to now, except for the baby case, we have avoided talking about the spaces ``upstairs'', which give rise to our Schwartz spaces via orbital integrals. 	We now recall from \cite[section 3]{SaBE1} that any element $f\in \mathcal S(\mathcal Z(\adele))$ of the form $f = \otimes_v f_v$ is obtained by the $G(\adele)$-orbital integrals (diagonal action) of an element $\Phi = \otimes \Phi_v$ of the space
\begin{equation}\label{spaceabove} \bigotimes_v' \bigoplus_\alpha \mathcal S(Y^\alpha_v\times Y_v^\alpha),\end{equation}
where, for each place $v$, the sum over $\alpha$ runs over isomorphism classes of torsors $R^\alpha$ of the torus $T$ over $k_v$, and $Y^\alpha$ denotes the ``pure inner form'' $Y^\alpha\simeq T^\alpha\backslash G^\alpha$ of $Y=T\backslash G$, where $Y^\alpha = Y\times^T R^\alpha$, $T^\alpha=\Aut_T R^\alpha \simeq T$ and $G^\alpha=\Aut_G (G\times^T R^\alpha)$. The restricted tensor product is taken with respect to the characteristic functions of the $\mathfrak o_v$-points of $(Y_v \times Y_v)(\mathfrak o_v)$ (at non-Archimedean places).

An element $\Phi$ of \eqref{spaceabove} whose orbital integrals give $f$ as a function on $\mathcal B^\reg_{\mathcal Z}(\adele)$ will be called a \emph{lift} of $f$. (Such an element is non-unique.) In order for this notion to be meaningful, we need to fix Haar measures on the groups $G^\alpha_v$, and we start by fixing any choice of measures on $G_v$ which \emph{factorize the Tamagawa measure on $G(\adele)$}. Since inner twists preserve rational volume forms, this factorization also determines Haar measures on the inner forms $G^\alpha_v$, in such a way that if $R^\alpha$ is a \emph{globally} defined torsor, the product volume on $G^\alpha(\adele)$ is also the Tamagawa measure. We can also talk about local lifts of $f_v\in \mathcal S(\mathcal Z_v)$, but then we will implicitly mean an element of $\bigoplus_\alpha \mathcal S(Y^\alpha_v\times Y_v^\alpha)$ \emph{together} with a choice of Haar measure on $G_v$ (and the induced Haar measures on its inner forms).

Similarly, an element $f_s\in \mathcal S(\mathcal W^s(\adele))$ can be obtained by the orbital integrals of an element $\Phi_s\in\mathcal S^s(\overline X\times X (\adele), \mathcal L_\psi\boxtimes\mathcal L_\psi^{-1})$ in the notation of \cite[\S 6.1]{SaBE1}. Here $X\simeq N\backslash\PGL_2$, $\mathcal L_\psi$ denotes the complex line bundle whose sections are functions on $\PGL_2(\adele)$ such that $f(ng)=\psi(n)f(g)$, and $\mathcal L_\psi^{-1}$ denotes the inverse line bundle (described similarly using the character $\psi^{-1}$). 
The exponent $s$ denotes a certain non-standard space of test functions defined in \cite{SaBE1}. These spaces also have the natural structure of a Fr\'echet bundle over $\CC$, as their local components are Schwartz cosheaves which are isomorphic away from ``infinity'', and they are also isomorphic in a neighborhood of infinity where they are, up to smooth functions, equal to elements of generalized principal series varying analytically with $s$; we leave the details to the reader. The notion of \emph{polynomial families of seminorms} (in bounded vertical strips) is defined in a completely analogous way as for $\mathcal S(\mathcal W^s(\adele))$  (cf.\ Appendix \ref{sec:families}), and one can easily see that polynomial seminorms on $\mathcal S(\mathcal W^s(\adele))$ are the $G(\adele)$-invariant polynomial seminorms on $\mathcal S^s(\overline X\times X (\adele), \mathcal L_\psi\boxtimes\mathcal L_\psi^{-1})$.

The generalization of the notion of a \emph{character} from the adjoint quotient of a group $H/(H-\mbox{conj})$ to a quotient $X_1\times X_2/G$ such as in the relative trace formula can be found in the literature under the names \emph{relative character, spherical character} or \emph{Bessel distribution}. We will use the former term. Unlike the case of the group, there is no canonical normalization of relative characters in general, and they depend on the functionals chosen to define them. 

\begin{definition}
 A \emph{relative character} on $\mathcal S(\mathcal Z_v)$ is a functional which (for any choice of Haar measure on $G_v$) factors through a sequence of morphisms: 
 $$\bigoplus_\alpha \mathcal S(Y^\alpha_v\times Y_v^\alpha) \to \bigoplus_\alpha \pi_\alpha\otimes\tilde \pi_\alpha\to \CC,$$
 where, for each $\alpha$, $\pi_\alpha$ is an irreducible admissible representation of $G^\alpha$ and the last arrow is the canonical pairing. 
 
 An \emph{automorphic relative character} on $\mathcal S(\mathcal Z(\adele))$ is a functional which factors through a sequence of morphisms:
 $$ \bigotimes_v^\prime \bigoplus_\alpha \mathcal S(Y^\alpha_v\times Y_v^\alpha) \to \bigoplus_\beta \pi_\beta\otimes\tilde\pi_\beta\to \CC,$$
 where $\beta$ runs over isomorphism classes\footnote{As before, we use $\beta$ for globally defined torsors and $\beta_v$ for their localizations, while the symbols $\alpha$, $\alpha_v$ are reserved for torsors defined locally. } of \emph{$k$-rational} torsors of $T$, for each $\beta$, $\pi_\beta$ is an irreducible automorphic representation of $G^\beta$, and it is understood that the projection to the $\beta$-summand factors through $\bigotimes^\prime_v \mathcal S(Y^\beta_v\times Y_v^\beta)$ and is equivariant with respect to $G^\beta(\adele)$.
 
 The same definitions (without the need for torsors) apply to the spaces $\mathcal S(\mathcal W^s_v)$, $\mathcal S(\mathcal W(\adele))$.
\end{definition}

Now we come to the definition of global periods and the corresponding relative characters.
We fix a point $\varphi\in \hat G^\aut_\disc$. Let $\beta$ be a $k$-rational $T$-torsor, and let $V^\beta_\varphi$ be the $\varphi$-isotypic subspace of $L^2([G^\beta])$. At this point we do not need to use the Jacquet--Langlands correspondence or strong multiplicity one for inner forms; however, if we do not use strong multiplicity one, we should define ``corresponds to $\varphi$'' as the limit, over finite sets $S$ of places, of the subspaces of $L^2([G^\beta])$ on which the unramified Hecke algebra outside of $S$ acts as $\varphi$. 

Then, with the exception of the case $T=$split and $\varphi=$the trivial representation, the \emph{period integral} over $[T]$ is defined, possibly after regularization (cf.\ Lemma \ref{regT}):
\begin{equation}\label{periodintegral} V^\beta_{\varphi} \ni \phi \mapsto \int_{[T^\beta]}^* \phi(t) dt.
\end{equation}
By Frobenius reciprocity this gives a map:
$$ V^\beta_\varphi \to C^\infty(Y^\beta(\adele))$$
(notice, in this case, that $Y^\beta(\adele)=T^\beta(\adele)\backslash G^\beta(\adele)$),
and conjugate-dually:
\begin{equation}\label{conjugateperiod} \mathcal S(Y^\beta(\adele))\to V^\beta_\varphi.
\end{equation}

We make a remark on measures here: We use measures defined by global volume forms throughout, including to define a pairing between $C^\infty(Y^\beta(\adele))$ and $\mathcal S(Y^\beta(\adele))$, but the map \eqref{conjugateperiod} \emph{depends only on the measure on $G^\beta(\adele)$, not on the measures on $T^\beta(\adele)$ and $Y^\beta(\adele)$} (as long as they are chosen compatibly); thus, it is well-defined even in the case when $T$ is a split torus, in which case global volume forms do not give well-defined measures on $[T]$ and $Y(\adele)$.

Similar definitions hold when we replace the period over $T$ by the period over $N$ against an idele class character $\psi$ or $\psi^{-1}$, and $\mathcal S(Y(\adele))$ by $\mathcal S^s(\overline X(\adele), \mathcal L_\psi)$, in the above notation, provided that the integrals make sense (which they do, as we will see, in Proposition \ref{continuousmap}, when $\Re(s)\gg 0$).

Finally, for $\varphi$ in the continuous spectrum, the same definitions hold (with $\beta$ necessarily the trivial torsor), except that $V_\varphi$  is not a subspace of $L^2([G])$. However, any choice of ``continuous'' Plancherel measure $d\varphi$ will endow the spaces $V_\varphi$ with a unitary structure $\left<\, , \, \right>_\varphi$. Instead of fixing such a Plancherel measure, we will use the canonical product $\left<\, , \, \right>_\varphi d\varphi$.

\begin{definition}
For $\varphi\in \hat G^\aut_\disc$ (except for the trivial representation when $T$ is split) the \emph{period relative character} is the relative character obtained by combining \eqref{conjugateperiod} with its dual, for all classes $\beta$ of $T$-torsors over $k$:
  \begin{equation}\label{torusperiod} \mathcal J_\varphi: \bigotimes_v^\prime \bigoplus_\alpha \mathcal S(Y^\alpha_v\times Y_v^\alpha) \to \bigoplus_\beta V_\varphi^\beta\otimes\overline{V_\varphi^\beta}\to \CC,
  \end{equation}
where, by definition, the projection to the $\beta$-summand  factors through $\bigotimes_v^\prime \mathcal S(Y^\beta_v\times Y^\beta_v)$.
    
  We similarly define (without the use of nontrivial torsors) the \emph{period relative character} on $\mathcal S(\mathcal W^s(\adele))$ (it makes sense, as we will see, when $\Re(s)\gg 0$ and $\varphi\in \hat G^\aut_\Ram$), and denote it by $\mathcal I_\varphi$.
  
  For the continuous spectrum we similarly define measures $\mathcal J_\varphi d\varphi$, $\mathcal I_\varphi d\varphi$ valued in the space of functionals on $\mathcal S(\mathcal Z(\adele))$, resp.\ $\mathcal S(\mathcal W^s(\adele))$.
\end{definition}

We will see an alternate, more direct definition of these relative characters in \S \ref{ssoutline}. 

\subsection{Spectral decomposition: results} \label{ssspectral-results}

\begin{theorem}\label{spectral-RTF}
Consider the case $\mathcal S_S = \mathcal S(\mathcal Z(\adele))_S$. Fix an element $f \in \mathcal S_S$. Then the functional $\RTF^S$, \eqref{Heckefunctional}, on $\mathcal H^S$ is the sum of the following two summands:
 \begin{enumerate}
  \item a finite complex measure $\nu_{f}$ on $\widehat G^\aut_\Ram$;
  \item an evaluation of $\hat h$ at $\CC_1, \CC_\eta$ and (in the split case) $\CC_1$.
 \end{enumerate}
The measure $\nu_{f}$ is equal to the period relative character $\mathcal J_\varphi(f) d\varphi$,
and its norm (in the Banach space of finite measures on $\widehat G^\aut_\Ram$) is bounded by seminorms on $\mathcal S_S$.
\end{theorem}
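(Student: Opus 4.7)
The plan is to execute Jacquet's spectral decomposition of the relative trace formula for $T\backslash G/T$, using the standard identity between the geometric expansion and the theta-kernel, and then to repackage the resulting terms as a measure on $\widehat G^\aut_\Ram$ together with boundary evaluations at $\CC_1, \CC_\eta$.

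First I would pick a lift $\Phi = \bigoplus_\beta \Phi_\beta$ of $f^{0,S}\otimes f_S$ from $\mathcal S(\mathcal Z(\adele))$ to $\bigotimes_v' \bigoplus_\alpha \mathcal S(Y^\alpha_v\times Y_v^\alpha)$, where $\beta$ runs over $k$-rational $T$-torsor classes. Identifying $\RTF$ as a Poisson sum over the rational orbits plus the irregular orbital integrals $\tilde O_0, \tilde O_{-1}$, one derives the kernel identity
\begin{equation*}
 \RTF(h\star f^{0,S}\otimes f_S) = \sum_\beta \int_{[T^\beta]}^{*}\!\!\int_{[T^\beta]}^{*} \Sigma(h\star \Phi_\beta)(t_1,t_2)\, dt_1\, dt_2,
\end{equation*}
where $\Sigma$ is the theta map and the $*$ denotes Jacquet-style regularization in the split case. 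Inserting the Langlands spectral decomposition of each $L^2([G^\beta])$ and interchanging integrals (justified by the Schwartz property of $\Phi$) produces three kinds of contributions: a discrete sum over cuspidal $\varphi$ of $\hat h(\varphi)\mathcal J_\varphi(f)$; a continuous integral $\int \hat h(I(\chi))\,\mathcal J_{I(\chi)}(f)\, d\chi$ over unitary idele class characters modulo the Weyl group; and the one-dimensional residual spectrum of $\PGL_2$ (the trivial representation), which contributes a multiple of $\hat h(\CC_1)$.

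The cuspidal and Eisenstein contributions combine into the measure $\nu_f = \mathcal J_\varphi(f)\, d\varphi$ on $\widehat G^\aut_\Ram$; its finite total mass, together with the seminorm bound, follows from rapid decay of Whittaker models (for the cuspidal sum) and from the boundedness of the Eisenstein period over the compact unitary contour, applied to the Schwartz lift $\Phi$. The remaining evaluation terms come from the pole structure of the Eisenstein period integral: the functional $\int_{[T]} E(g,\chi)\, dt$ factors, up to local data, through $L(\chi,\frac{1}{2})L(\chi\eta,\frac{1}{2})$, which in the nonsplit case has simple poles at the two distinct points $\chi=1$ and $\chi=\eta$, yielding evaluations at $\CC_1$ and $\CC_\eta$. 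In the split case the two poles collide at $\chi=1$, producing a double pole whose Laurent expansion forces an additional order-one evaluation at $\CC_1$, which is precisely the combination encoded by the geometric irregular distribution \eqref{defzerosplit}.

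The main obstacle is the split case. There, both the geometric side (via $\tilde O_0$ built from $a_{-1}^S$ and $a_0^S$ of $\zeta^S$) and the spectral side (via the double pole of the Eisenstein period at $\chi=1$) must be expanded in matching Laurent series, and one must check that the Tamagawa/adjoint-$L$ normalizations align coefficient-by-coefficient, so that the residual representation together with the order-two part of the Eisenstein contribution produce exactly the multiplicity-two evaluation at $\CC_1$ predicted by the theorem, while the cuspidal and regular Eisenstein contributions assemble cleanly into $\mathcal J_\varphi(f)\, d\varphi$ with no spurious discrete contamination at $\CC_1$.
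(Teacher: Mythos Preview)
Your overall plan—lift $f$ to $\Phi$, form $\Sigma\Phi$, and spectrally decompose—matches the paper's, but two steps are not right.

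First, the evaluation terms do not come from poles of Eisenstein periods. In the nonsplit case $\Sigma\Phi$ is genuinely of rapid decay on each $[G^\beta]$ (Proposition~\ref{constantterm-torus}), so the ordinary Plancherel formula for $L^2([G^\beta])$ applies with no contour shifting at all; the evaluations at $\CC_1,\CC_\eta$ are simply the contributions of those one-dimensional residual representations of the inner forms $G^\beta$ which happen to be trivial on $T^\beta(\adele)$ (all others vanish because $\Phi$ is induced from the trivial character of $T$). Your claim that poles of $L(\chi,\tfrac12)L(\chi\eta,\tfrac12)$ at unitary $\chi=1,\eta$ produce discrete evaluations is incorrect—those $L$-values are finite there, and the unitary Eisenstein integral is a bona fide part of the continuous measure $\nu_f$, with no residues to extract.

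Second, and this is the real gap, in the split case you cannot ``insert the spectral decomposition and interchange integrals, justified by the Schwartz property of $\Phi$'': by Proposition~\ref{constantterm-torus}, $\Sigma\Phi$ is \emph{not} rapidly decaying—it is only asymptotically $B$-finite with exponent $\delta^{1/2}$—and in particular is not in $L^2([G])$. The paper's actual work here is first to identify $\RTF(f)$ with a \emph{regularized} diagonal integral $\langle\Sigma\Phi\rangle^*$ over $[G]$ (Proposition~\ref{RTF-regularizedip}; this is an integral over $[G]$, not over $[T]\times[T]$ as you wrote, which is inconsistent with your choice of lift to $\mathcal S(Y^\alpha\times Y^\alpha)$), and then to extend the Plancherel formula to such asymptotically $B$-finite functions (Proposition~\ref{Plancherel-Bfinite}). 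That proposition shows that the defect between $\langle\,\cdot\,,\,\cdot\,\rangle^*$ and the $\hat G^\aut_\Ram$-integral is precisely an evaluation of order $3$ at $\CC_1$, which is where the extra $\CC_1$ in the split case comes from. Your proposed coefficient-by-coefficient matching of Laurent series between the geometric $\tilde O_0$ and a putative double pole of an Eisenstein period is not a substitute: the irregular terms $\tilde O_0,\tilde O_{-1}$ are already absorbed into $\langle\Sigma\Phi\rangle^*$ (via Lemma~\ref{regT}) \emph{before} any spectral analysis begins, so there is nothing left on the geometric side to match.
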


\begin{theorem}\label{spectral-KTFs}
Consider the case $\mathcal S_S = \mathcal S(\mathcal W^s(\adele))_S$, where $\Re s \gg 0$. Fix an element $f \in \mathcal S_S$. Then the functional $\KTF^S$, s.\ \eqref{Heckefunctional}, on $\mathcal H^S$ is the sum of the following two summands:
 \begin{enumerate}
  \item a finite complex measure $\mu_{f}$ on $\widehat G^\aut_\Ram$;
  \item an evaluation of $\hat h$ at $\delta^{\frac{1}{2}+s}$ and $\eta\cdot \delta^{\frac{1}{2}+s}$.
 \end{enumerate}
The measure $\mu_{f}$ is equal to the period relative character $\mathcal I_\varphi(f) d\varphi$,
and its norm is bounded by polynomial seminorms on $\mathcal S_S$. In particular, for an analytic section of rapid decay in vertical strips $s\mapsto f_s \in \mathcal S(\mathcal W^s(\adele))_S$, the corresponding measures $\mu_{f_s}$ are also of analytic of rapid decay (in the Banach space of finite measures on $\widehat G^\aut_\Ram$).
\end{theorem}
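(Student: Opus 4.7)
The plan is to exploit the fact that for $\Re(s)\gg 0$ the Kuznetsov trace formula with parameter $s$ is given by an absolutely convergent sum (as stated in the discussion after \eqref{KTFdef}), so no analytic continuation is needed and one may argue directly from the classical spectral expansion. Concretely, I would lift $f_s\in \mathcal{S}(\mathcal{W}^s(\adele))_S$ to a test function $\Phi_s \in \mathcal{S}^s(\overline{X}\times X(\adele),\mathcal{L}_\psi\boxtimes\mathcal{L}_\psi^{-1})$ as in \S\ref{sslifts}, so that $\KTF(h\star f^{0,S}\otimes f_S)$ equals the integral over $[G]$ of a product of two pseudo-Eisenstein series built from $h\star\Phi_s$. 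For $\Re(s)\gg 0$ the rapid-decay properties of the non-standard Schwartz space, together with axioms (3)--(4) of \S\ref{basicfnassumptions} applied to the components of $\Phi_s$, make this double integral absolutely convergent, allowing unfolding in the usual Kuznetsov fashion against the spectral decomposition of $L^2([G])$.

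Applying that spectral decomposition yields three pieces. The cuspidal and Eisenstein (Ramanujan) part gives $\int_{\hat G^{\aut}_\Ram}\mathcal I_\varphi(h\star f)\,d\varphi$, where $\mathcal I_\varphi$ factorizes as an Euler product whose local unramified factor at $v\notin S$ equals $\hat h_v(\varphi_v)$ times the expression \eqref{unr} with argument shifted by $s$. Since $\Re(s)\gg 0$ places $L^S(\pi,\tfrac12+s)L^S(\pi\otimes\eta,\tfrac12+s)$ in its domain of absolute convergence uniformly in $\varphi\in \hat G^{\aut}_\Ram$, this piece is a genuine finite measure $\mu_{f}=\mathcal I_\varphi(f)\,d\varphi$ on $\hat G^{\aut}_\Ram$, and the factorization statement of the theorem is immediate. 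The residual spectrum of $L^2([\PGL_2])$ consists exactly of the quadratic idele class characters $\chi\circ\det$ (with $\chi^2=1$); among these only the trivial character $1$ and $\eta$ have a chance of contributing, because the rest are ramified outside $S$ and are killed by $\hat h_v$-compatibility with the basic vector. Their contribution is finite-dimensional and, after accounting for the $\delta^s$-twist that is baked into the non-standard test functions with parameter $s$ (cf.\ the description of basic functions in \S\ref{spacesRTF}--\S\ref{transformcalculations}), appears as the evaluation of $\hat h$ at the points $\delta^{1/2+s}$ and $\eta\cdot\delta^{1/2+s}$ of $U^S$, exactly as claimed.

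For the polynomial seminorm bound: the abelian partial $L$-functions are of polynomial growth in vertical strips; the unramified local contributions at $v\notin S$ are uniformly bounded in $\varphi$ by the absolute convergence just mentioned; the local functionals at $v\in S$ are continuous and, by the definition of polynomial families of seminorms on $\mathcal{S}^s(\overline X_v\times X_v,\mathcal{L}_\psi\boxtimes\mathcal{L}_\psi^{-1})$ in appendix \ref{sec:families}, are bounded by polynomial seminorms in $s$; and the spectral sum/integral converges with polynomial control by standard bounds on the Kuznetsov spectral density. Putting these together gives $|\mu_{f_s}|\le P(|\Im s|)\cdot \|f_s\|$ for some polynomial $P$ and some polynomial seminorm $\|\cdot\|$. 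Analyticity of $s\mapsto \mu_{f_s}(\hat h)$ for an analytic section then follows because each ingredient (Hecke action, Whittaker functional, $L$-value) is analytic and the bounds are uniform in vertical strips, so the dominated convergence--Morera combination applies.

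The hard part will be obtaining the polynomial bound \emph{uniformly} as $\varphi$ varies over $\hat G^{\aut}_\Ram$ \emph{simultaneously} with $s$ in a bounded vertical strip. Pointwise uniformity in $\varphi$ for fixed $s$ is classical, and pointwise analyticity in $s$ for fixed $\varphi$ follows from the definition of $\mathcal{S}(\mathcal{W}^s_v)$; but combining these in a way that gives a genuine finite-measure bound requires controlling local normalized Whittaker functionals at $v\in S$ as \emph{both} $\varphi_v$ and $s$ move, for which one either invokes continuous Plancherel-theoretic estimates for $L^2(N_v\backslash G_v,\psi_v)$, or gains decay in the spectral parameter by exploiting the $\mathcal H^S$-action (integration by parts in Satake parameters). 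A secondary subtlety is justifying that the residual representations truly appear as isolated evaluations rather than being absorbed into $\mu_f$; this reduces to verifying that in the spectral decomposition of $\mathcal{S}(\mathcal{W}^s(\adele))$ the constant-term/residue contributions of Eisenstein series at the distinguished points $\delta^{1/2+s}$ and $\eta\delta^{1/2+s}$ are separated cleanly from the continuous spectrum integral.
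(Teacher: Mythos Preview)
Your overall architecture (lift to $\Phi_s$, form $\Sigma\Phi_s$, integrate over the diagonal of $[G]$, apply Plancherel) matches the paper's. But there is a genuine misidentification in your argument: the evaluation terms at $\delta^{\frac12+s}$ and $\eta\delta^{\frac12+s}$ do \emph{not} come from the residual spectrum of $L^2([\PGL_2])$. The residual representations are one-dimensional characters, hence non-generic, so their Whittaker periods vanish and they contribute nothing to $\langle\Sigma\Phi_s\rangle$. Moreover, as points of $U^S$ those characters sit at $\delta^{\frac12}$ and $\eta\delta^{\frac12}$; no ``$\delta^s$-twist baked into the test functions'' shifts where an automorphic representation lies in $U^S$. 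In the paper the diagonal integral $\langle\Sigma\Phi_s\rangle$ accounts only for $\sum_{\xi\in k^\times}O_\xi(f_s)+\tilde O_\infty(f_s)$ (Proposition~\ref{KTF-ip}); the remaining piece of $\KTF(f_s)$ is the irregular orbital integral $\tilde O_0(f_s)$, which is a separate boundary term on the geometric side. It is this term that produces the evaluations, via Lemma~\ref{irregular-spectral}: the stalk of $\mathcal S(\mathcal W^s_v)$ at $\xi=0$ is identified with a stalk of $(N,\psi^{-1})$-coinvariants of the generalized principal series $I(\delta^{\frac12+s},\eta\delta^{\frac12+s})$, and tracking the Hecke action through this identification shows $h\mapsto\tilde O_0(h\star f_s)$ is an evaluation at those two points.

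Your approach to the norm bound on $\mu_{f_s}$ is also more delicate than necessary. You try to control local Whittaker functionals at $v\in S$ uniformly in $(\varphi_v,s)$ and then sum over the automorphic spectrum; you correctly flag this joint uniformity as the hard part. The paper avoids it entirely: for $\Re(s)$ large, $\mathcal S^s(\overline X\times X(\adele),\mathcal L_\psi\boxtimes\mathcal L_\psi^{-1})$ embeds into a Banach space $C_{-N}(X\times X(\adele),\mathcal L_\psi\boxtimes\mathcal L_\psi^{-1})$ (Lemma~\ref{belongsCN}), the map $\Sigma$ is continuous from there into $C_{-cN}([G]^2)$ (Proposition~\ref{continuousmap}), and then the total mass of $\mu_{f_s}$ is bounded simply by the $L^2([G]^2)$-norm of $\Sigma\Phi_s$, hence by its $C_{-cN}$-norm, hence by polynomial seminorms on $\mathcal S(\mathcal W^s(\adele))_S$. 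No spectral-parameter-by-spectral-parameter estimates are needed.
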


\subsection*{Analytic continuation} We will then use a second ``miracle'', which corresponds to the reflection of the functional equation of the $L$-function $L(\pi,\frac{1}{2}+s) L(\pi\otimes \eta,\frac{1}{2}+s)$ at the level of orbital integrals, to prove:

\begin{theorem}\label{spectral-KTF}
Consider the case $\mathcal S_S = \mathcal S(\mathcal W^s(\adele))_S$, with arbitrary $s$. The expression of $\KTF^S$ as a sum of a measure $\mu_{f}$ on $\widehat G^\aut_\Ram$ bounded by polynomial seminorms and an evaluation, as in Theorem \ref{spectral-KTFs}, holds whenever $\Re(s)\ne \pm\frac{1}{2}$, with the possible modification that the evaluation is at the set of four points: $\delta^{\frac{1}{2}\pm s}$ and $\eta\cdot \delta^{\frac{1}{2}\pm s}$.

Moreover, for an analytic $s\mapsto f_s\in \mathcal S(\mathcal W^s(\adele))_S$ the measure $\mu_{f_s}$ is well-defined as a measure on $\widehat G^\aut_\Ram\smallsetminus \{\delta^{\frac{1}{2} \pm s} , \eta\cdot \delta^{\frac{1}{2}\pm s}\}$ (possibly infinite if $\Re(s) = \pm\frac{1}{2}$), and its restriction to any closed subset not including $\delta^{ \frac{1}{2} \pm s}, \eta\cdot \delta^{\frac{1}{2}\pm s}$ is finite and varies analytically in $s$.
\end{theorem}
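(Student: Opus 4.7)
The plan is to combine three ingredients: the analytic continuation of $\KTF$ from Corollary \ref{analyticcont}, the spectral decomposition in the region of convergence (Theorem \ref{spectral-KTFs}), and a functional equation $s\leftrightarrow -s$ at the level of orbital integrals---the ``second miracle'' alluded to in the introduction. Throughout, fix an analytic section $s\mapsto f_s\in \mathcal S(\mathcal W^s(\adele))_S$ of rapid decay in vertical strips. By Theorem \ref{PSF} the Hecke-twisted value $\KTF(h\star f_s)$ agrees, for $\Re(s)\gg 0$, with $\RTF(h\star \mathcal G^{-1}(|\bullet|^{-s-1}(h\star f_s)))$, and by Corollary \ref{analyticcont} this extends to an entire function of $s$ bounded in every vertical strip by a polynomial seminorm of $f_s$ times a seminorm of $h$. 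Thus $\Lambda_s(h):=\KTF(h\star f_s)$ is a family of linear functionals on $\mathcal H^S=\CC[U^S]$ varying analytically and polynomially in $s$.

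Next, I construct a global involutive operator $\sigma_s:\mathcal S(\mathcal W^s(\adele))\xrightarrow{\sim} \mathcal S(\mathcal W^{-s}(\adele))$ as a product of local operators built from Fourier transform on $\mathcal B$ and multiplication by $\eta_v(\bullet)|\bullet|_v^{-2s}$, normalized so that the global abelian $\gamma$-factors cancel. This operator should satisfy $\KTF(f_s)=\KTF(\sigma_sf_s)$ (a global Poisson-summation identity of the same type as Theorem \ref{PSF}, but now using the $s\leftrightarrow -s$ symmetry of the asymptotic behavior of $\mathcal S(\mathcal Z^s_v)$ at $\xi=-1$), commute with the unramified Hecke action, and intertwine the spectral decomposition by sending the evaluation functionals at $\delta^{\frac{1}{2}+s},\eta\delta^{\frac{1}{2}+s}$ to those at $\delta^{\frac{1}{2}-s},\eta\delta^{\frac{1}{2}-s}$. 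Applying Theorem \ref{spectral-KTFsKt} through $\sigma_{-s}$ then transports the measure-plus-evaluation decomposition from $\Re(s)\gg 0$ to $\Re(s)\ll 0$, with the evaluation points now at $\delta^{\frac{1}{2}-s},\eta\delta^{\frac{1}{2}-s}$.

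Now let $E_\chi(\hat h)$ denote the appropriate evaluation functional at $\chi\in\{\delta^{\frac{1}{2}\pm s},\eta\delta^{\frac{1}{2}\pm s}\}$, with derivatives of suitable order where two of the four points coincide, and set
\[
\widetilde\Lambda_s(h) := \Lambda_s(h)-\sum_{\chi} E_\chi(\hat h).
\]
By Theorem \ref{spectral-KTFs} on $\Re(s)\gg 0$ and its counterpart on $\Re(s)\ll 0$ obtained in the previous step, we have $|\widetilde\Lambda_s(h)|\le C(s)\|\hat h\|_{L^\infty(U^S)}$ with $C(s)$ of polynomial growth in $|\Im s|$ times a polynomial seminorm of $f_s$. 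By Corollary \ref{analyticcont} and the analyticity of the evaluations, $\widetilde\Lambda_s(h)$ is holomorphic in $s$ away from $\Re(s)=\pm\frac{1}{2}$ with polynomial growth in vertical strips. Phragm\'en--Lindel\"of on each vertical strip avoiding $\Re(s)=\pm\frac{1}{2}$ propagates the supremum-norm bound $|\widetilde\Lambda_s(h)|\le C'(s)\|\hat h\|_{L^\infty(U^S)}$ to the whole strip. By Lemma \ref{SW}, $\widetilde\Lambda_s$ extends from $\CC[U^S]$ to a bounded linear functional on $C(U^S)$, hence is given by a finite complex Borel measure $\mu_{f_s}$ on $U^S$; the usual support considerations (vanishing on Hecke elements whose Satake transform is supported away from $\hat G^\aut_\Ram$ modulo the exceptional points) localize this measure on $\hat G^\aut_\Ram$ and identify it with the analytic continuation of $\mathcal I_\varphi(f_s)\,d\varphi$.

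The principal obstacle is the construction of $\sigma_s$ and the verification that it produces the claimed spectral swap $\delta^{\frac{1}{2}+s}\leftrightarrow\delta^{\frac{1}{2}-s}$ (and its $\eta$-twist). Morally this is forced by the functional equation of $L(\pi,\frac{1}{2}+s)L(\pi\otimes\eta,\frac{1}{2}+s)$, and at unramified places it can be checked directly from Table \eqref{table} and the formulas of \S\ref{transformcalculations}, but one needs an intrinsic description of $\sigma_s$ on $\mathcal S(\mathcal W^s_v)$ and a global compatibility argument with the $\gamma$-factor cancellation, paralleling Proposition \ref{Poissonlarge}. The boundary lines $\Re(s)=\pm\frac{1}{2}$ are excluded precisely because the four evaluation points $\delta^{\frac{1}{2}\pm s},\eta\delta^{\frac{1}{2}\pm s}$ can collide with the discrete residual part of the Plancherel decomposition for $\PGL_2$, and $\mu_{f_s}$ may acquire infinite total mass there while remaining locally finite on complements of these points---exactly as asserted in the theorem.
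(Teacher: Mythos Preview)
Your overall architecture is right and matches the paper's: (i) the functional equation $s\leftrightarrow -s$ at the level of orbital integrals (the paper's Theorem \ref{2ndmiracle}, your $\sigma_s$), (ii) Theorem \ref{spectral-KTFs} on both boundaries of a wide symmetric vertical strip, (iii) Phragm\'en--Lindel\"of, (iv) Stone--Weierstrass. Two points need correction.

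\textbf{The subtraction step is the real gap.} You set $\widetilde\Lambda_s(h)=\Lambda_s(h)-\sum_\chi E_\chi(\hat h)$ with all four evaluations subtracted, and then want $|\widetilde\Lambda_s(h)|\le C(s)\|\hat h\|_{L^\infty(U^S)}$ on the boundaries. But on the right boundary $\Re(s)\gg 0$, Theorem \ref{spectral-KTFs} only produces evaluations at $\delta^{\frac12+s},\eta\delta^{\frac12+s}$; if you also subtract evaluations at $\delta^{\frac12-s},\eta\delta^{\frac12-s}$ (with coefficients analytically continued from the other side), those are evaluations of $\hat h$ at points \emph{far from} $U^S$, hence not controlled by $\|\hat h\|_{L^\infty(U^S)}$. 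The sup-norm bound therefore fails, and Phragm\'en--Lindel\"of cannot be applied to $\widetilde\Lambda_s$. The paper sidesteps this entirely: instead of subtracting, it chooses a holomorphic section $s\mapsto h_s\in\mathcal H^S$ with $\hat h_s$ vanishing at all four points $\delta^{\frac12\pm s},\eta\delta^{\frac12\pm s}$ and $\|\hat h_s\|_{L^\infty(U^S)}$ bounded, sets $f'_s=h_s\star f_s$, and applies Phragm\'en--Lindel\"of to $s\mapsto\KTF(h\star f'_s)$. On each boundary this \emph{is} a measure (the evaluations are killed by $h_s$), so the sup-norm bound is immediate from Theorems \ref{spectral-KTFs} and \ref{2ndmiracle}. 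After obtaining the measure $\mu_{f'_s}$ one divides back by $\hat h_s$ on $\hat G^{\aut}_\Ram$, which is harmless away from the four points since they lie outside $\hat G^{\aut}_\Ram$ when $\Re(s)\ne\pm\frac12$.

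\textbf{The description of the functional-equation operator is off.} Your $\sigma_s$ is not ``Fourier transform on $\mathcal B$ and multiplication by $\eta_v|\bullet|_v^{-2s}$''. The paper's $\mathcal T$ (Theorem \ref{2ndmiracle}) is the conjugate, by the transfer operator $|\bullet|^{s+1}\mathcal G$, of the simple multiplication $\mathcal R: f\mapsto \eta(\xi)\eta(\xi+1)|\xi+1|^{4s}\, f$ on the $\mathcal Z^s$ side (Proposition \ref{propfetorus}); the Hecke-equivariance on basic vectors is Proposition \ref{propfeKuz}. You correctly flag this as the principal obstacle, but note that the actual operator lives naturally on the $\mathcal Z$ side, not directly on $\mathcal W$. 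Also, the exclusion of $\Re(s)=\pm\frac12$ is because the evaluation points then meet the \emph{continuous} Eisenstein spectrum in $\hat G^{\aut}_\Ram$, not the discrete residual part.
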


The reason that points with $\Re(s)=\pm\frac{1}{2}$ are excluded is that the evaluations and the continuous spectrum of $\widehat G^\aut_\Ram$ are not disjoint in this case, and their contribution cannot be separated. It is easy to see (cf.\ the argument in the proof of Theorem \ref{comparison-spectral}) that the measure $\mu_{f}$ is uniquely defined. 

In particular, for cuspidal representations which are always disjoint from the points $\delta^{\frac{1}{2}\pm s}$ and $\eta\cdot \delta^{\frac{1}{2}\pm s}$, we get the following:
\begin{corollary}\label{analyticcont-cuspidal}
 For every holomorphic section $s\mapsto  f_s\in \mathcal S(\mathcal W^s(\adele))_S$ and every $\varphi \in \hat G^\aut_\cusp$ (the cuspidal automorphic spectrum) the function $s\mapsto \mathcal I_\varphi(f_s)$ (defined, originally, for $\Re(s)\gg 0$) extends to a holomorphic function in the domain of $f_s$.
\end{corollary}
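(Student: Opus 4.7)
The plan is to derive the corollary as a direct application of Theorem~\ref{spectral-KTF}. The key observation is that the four exceptional evaluation points $\delta^{\frac{1}{2}\pm s}$, $\eta\cdot\delta^{\frac{1}{2}\pm s} \in U^S$ correspond to characters of the Borel of $\PGL_2$, and hence are never cuspidal automorphic representations; thus a fixed cuspidal $\varphi$ remains disjoint from the exceptional locus for every $s\in \CC$. Moreover, $\varphi$ is an isolated point of $\widehat{G}^{\aut}_{\Ram}$ in the topology induced from $U^S$, since $\widehat{G}^{\aut}_{\cusp}$ is a countable discrete set, separated from the Eisenstein spectrum $\widehat{G}^{\aut}_{\Eis}$ (which is parametrized by continuous one-parameter families of characters).

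With this in hand, I would fix $\varphi$ cuspidal and $s_0$ in the domain of the section $s\mapsto f_s$, and choose a compact neighborhood $\Omega$ of $s_0$ together with a small closed neighborhood $K\subset U^S$ of $\varphi$ such that $K\cap \widehat{G}^{\aut}_{\Ram} = \{\varphi\}$ and $K$ is disjoint from the compact set $\{\delta^{\frac{1}{2}\pm s},\,\eta\cdot\delta^{\frac{1}{2}\pm s}:s\in\Omega\}$. Both conditions can be arranged by the isolation property above and the continuity of the exceptional points in $s$. Applying the ``moreover'' clause of Theorem~\ref{spectral-KTF} to $K$, the restriction $\mu_{f_s}|_K$ is a finite complex measure varying analytically in $s\in \Omega$; being supported at the single point $\varphi$, its total mass $\mu_{f_s}(K)$ is a holomorphic function of $s\in\Omega$.

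To close, I would identify $\mu_{f_s}(K)$ with $\mathcal{I}_\varphi(f_s)$. For $\Re(s)\gg 0$, Theorem~\ref{spectral-KTFs} asserts $d\mu_{f_s}(\varphi') = \mathcal{I}_{\varphi'}(f_s)\,d\varphi'$; since the Plancherel ``measure'' $d\varphi'$ assigns a fixed positive atomic weight at each cuspidal point, independent of $s$, the mass at $\varphi$ equals a fixed constant multiple of $\mathcal{I}_\varphi(f_s)$. Since $\mu_{f_s}(K)$ is holomorphic on all of $\Omega$ and agrees with this constant times $\mathcal{I}_\varphi(f_s)$ on the overlap with $\Re(s)\gg 0$, analytic continuation propagates the identity throughout the domain of $f_s$ (covering the possibly excluded lines $\Re(s) = \pm\tfrac{1}{2}$ by the ``moreover'' clause and uniqueness of continuation), yielding the desired holomorphic extension of $\mathcal{I}_\varphi(f_s)$.

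The main point to verify carefully is the first one: that each cuspidal $\varphi$ is genuinely isolated in $\widehat{G}^{\aut}_{\Ram}$ as a subset of $U^S$, so that the intersection $K\cap \widehat{G}^{\aut}_{\Ram}$ can be arranged to consist of the single atom at $\varphi$. Once this (standard but slightly delicate) topological fact is established, the rest of the argument is essentially bookkeeping using the two spectral theorems.
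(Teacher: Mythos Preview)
Your approach is essentially the same as the paper's, which derives the corollary in a single sentence from Theorem~\ref{spectral-KTF}: cuspidal points are never among the exceptional evaluation points, so the mass of the measure at $\{\varphi\}$ is holomorphic in $s$.

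However, the step you flag as ``the main point to verify carefully'' --- that a cuspidal $\varphi$ is isolated in $\widehat G^\aut_\Ram$ in the product topology on $U^S$ --- is both unnecessary and likely false. If $\varphi$ is tempered at every $v\notin S$ (as Ramanujan predicts, and as is known for holomorphic forms), then its local Satake parameters lie on the unitary circle, exactly where the unitary Eisenstein parameters $\{p_v^{it},p_v^{-it}\}$ live. Since the numbers $\log p_v$ are $\mathbb Q$-linearly independent, Kronecker's theorem shows that for any finite set of places one can find $t$ with $p_v^{it}$ arbitrarily close to the Satake parameter of $\varphi$ at $v$; in the product topology this means $\varphi$ lies in the closure of $\widehat G^\aut_\Eis$. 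So no closed \emph{neighborhood} $K$ with $K\cap \widehat G^\aut_\Ram=\{\varphi\}$ need exist.

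The fix is trivial: Theorem~\ref{spectral-KTF} requires only a closed \emph{subset}, not a neighborhood. Take $K=\{\varphi\}$ itself, which is closed since $U^S$ is Hausdorff and avoids the exceptional points for all $s$ because $\varphi$ is cuspidal. The proof of Theorem~\ref{spectral-KTF} shows that $s\mapsto \mu_{f_s}$ is strongly holomorphic into the Banach space of measures (away from the exceptional points), so the mass of any fixed measurable set --- in particular $\mu_{f_s}(\{\varphi\})$ --- is holomorphic. For $\Re(s)\gg 0$ this mass equals $\mathcal I_\varphi(f_s)$: the Plancherel measure is counting measure on the cuspidal spectrum, while the Eisenstein part is absolutely continuous with respect to Lebesgue measure on the parameter line and hence assigns zero mass to any singleton, regardless of whether $\varphi$ is a limit point of the continuous spectrum.
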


This implies, in particular, the meromorphic continuation of the partial $L$-function $L^S(\pi,\frac{1}{2}+s)L^S(\pi\otimes\eta,\frac{1}{2}+s)$, which is a factor of $\mathcal I_\varphi(f_s)$ (see \eqref{unrW}).

\subsection{Comparison}

The above theorems allow us to spectrally decompose the comparison between the two relative trace formulas:

\begin{theorem}\label{comparison-spectral}
 Fix an element $f\in \mathcal S(\mathcal Z(\adele))_S$ and let $f'= |\bullet|\cdot \mathcal G f\in \mathcal S(\mathcal W^0(\adele))_S$. If $\nu_{f}$, $\mu_{f'}$ are the measures on $\hat G^\aut_\Ram$ obtained by Theorems \ref{spectral-RTF} and \ref{spectral-KTF}, then $\nu_{f} = \mu_{f'}$.
\end{theorem}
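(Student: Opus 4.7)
The plan is to combine the trace formula identity of Theorem \ref{PSF} with the spectral decompositions of Theorems \ref{spectral-RTF} and \ref{spectral-KTF}, and then use Stone--Weierstrass together with the support conditions on the measures to conclude equality. First, I would extend the Poisson summation identity of Theorem \ref{PSF} to $s = 0$: both sides are holomorphic in $s$ (the $\RTF$ side by Proposition \ref{convergence2}, the $\KTF$ side by Corollary \ref{analyticcont}) and agree for $\Re(s) \gg 0$, hence they agree at $s = 0$, yielding $\RTF(f) = \KTF(f')$ with $f' = |\bullet|\mathcal{G} f$. Because the unramified Hecke action commutes with $\mathcal{G}$ and with multiplication by $|\bullet|$ outside $S$, convolution by any $h \in \mathcal{H}^S$ gives $\RTF(h \star f) = \KTF(h \star f')$. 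Applying the spectral decompositions of Theorems \ref{spectral-RTF} and \ref{spectral-KTF} (the latter at $s = 0$, permitted since $0 \ne \pm\tfrac{1}{2}$) to the two sides then produces
\[
\int_{\hat G^{\aut}_{\Ram}} \hat h \, d\nu_f + E_{\RTF}(\hat h) = \int_{\hat G^{\aut}_{\Ram}} \hat h \, d\mu_{f'} + E_{\KTF}(\hat h),
\]
valid for every $h \in \mathcal{H}^S$, where $E_{\RTF}, E_{\KTF}$ are evaluation functionals supported at $\{\CC_1, \CC_\eta\}$.

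Set $\lambda := \nu_f - \mu_{f'}$ and $D := E_{\KTF} - E_{\RTF}$, so that $\int \hat h \, d\lambda = D(\hat h)$ on $\mathcal{H}^S = \CC[U^S]$. Since $\lambda$ is a finite complex Borel measure on the compact Hausdorff space $U^S$, the functional $\hat h \mapsto \int \hat h\, d\lambda$ is bounded in $\|\hat h\|_{\infty, U^S}$; hence $D$ must be sup-norm bounded on polynomials. This rules out any derivative component of $D$: using polynomials in a single Hecke operator at an auxiliary place $v_0 \notin S$ of Chebyshev/Markov type, scaled to be arbitrarily small in sup norm on the compact unitary Satake locus but with non-vanishing derivative along the curve $s \mapsto \chi\delta^s$ at $\CC_1$ (or $\CC_\eta$), one would contradict the sup-norm bound if any derivative coefficient of $D$ were nonzero. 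Thus $D = \alpha \delta_{\CC_1} + \beta \delta_{\CC_\eta}$ for some scalars $\alpha, \beta$, and by Lemma \ref{SW} the identity extends from $\CC[U^S]$ to all of $C(U^S)$, yielding $\lambda = \alpha \delta_{\CC_1} + \beta \delta_{\CC_\eta}$ as measures on $U^S$.

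The conclusion is now a support argument: $\lambda = \nu_f - \mu_{f'}$ is by definition supported on $\hat G^{\aut}_{\Ram} = \hat G^{\aut}_{\cusp} \sqcup \hat G^{\aut}_{\Eis}$, which excludes the residual characters sitting at $\CC_1, \CC_\eta$. So $\lambda$ has zero mass at each of these two points, forcing $\alpha = \beta = 0$, and hence $\lambda = 0$, i.e.\ $\nu_f = \mu_{f'}$. The main technical obstacle I anticipate is the Chebyshev--Markov construction in the middle paragraph: one must exhibit polynomials in the Hecke algebra that are uniformly small on $U^S$ yet have prescribed derivatives along $s \mapsto \chi\delta^s$ at $\CC_1, \CC_\eta$, with the order of derivatives to kill matching the order of the evaluations appearing in Theorems \ref{spectral-RTF} and \ref{spectral-KTF} (up to order three at $\CC_1$ in the split case). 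Working at a single auxiliary place where the Satake-parameter curve passes through the endpoint of a real compact set should give the needed Markov-type estimate.
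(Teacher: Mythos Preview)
Your argument is correct, but it takes a more laborious route than the paper's. The paper avoids the Chebyshev--Markov analysis entirely: instead of showing that the derivative components of $D = E_{\KTF} - E_{\RTF}$ must vanish by a boundedness argument, it simply \emph{kills} both evaluation terms at once by multiplying by an auxiliary Hecke element $h_1 \in \mathcal H^S$ whose Satake transform vanishes to the requisite order at $\CC_1, \CC_\eta$. Then $\RTF(h \star h_1 \star f) = \KTF(h \star h_1 \star f')$ is an equality of \emph{pure measures} $\hat h_1 \nu_f = \hat h_1 \mu_{f'}$, and Stone--Weierstrass (Lemma~\ref{SW}) gives equality of these measures immediately. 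Finally, since the common zero set of all such $\hat h_1$ is exactly $\{\CC_1, \CC_\eta\}$, which lies outside $\hat G^{\aut}_{\Ram}$, varying $h_1$ yields $\nu_f = \mu_{f'}$.

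Your approach does work: the key point is that any nonzero derivative component of $D$ makes it unbounded in sup norm on $U^S$ (the endpoint location of $\CC_1, \CC_\eta$ in each local unitary interval makes rescaled Chebyshev polynomials $T_n/n^{2d}$ do the job cleanly, since $T_n^{(k)}(\pm 1) \sim c_k n^{2k}$). But this analysis is unnecessary once you observe that multiplication by $h_1$ vanishing to order~$2$ at each of $\CC_1, \CC_\eta$ (order~$4$ at $\CC_1$ in the split case) annihilates both $E_{\RTF}$ and $E_{\KTF}$ simultaneously. The paper's trick trades the analytic Markov estimate for a purely algebraic vanishing condition, and the conclusion then follows from varying $h_1$ rather than from a support argument on $\lambda$.
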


\begin{proof}
 By Theorem \ref{spectral-RTF}, the functional $h\mapsto \RTF(h\star f)$ is the sum of the integral against $\nu_{f}$ and an evaluation at $\CC_1$, $\CC_\eta$, $\CC_1$, while the functional $h\mapsto \KTF(h\star f')$ is the sum of the integral against $\mu_{f'}$ and an evaluation at $\CC_1$, $\CC_1$, $\CC_\eta$ and $\CC_\eta$. (Of course, a posteriori it will turn out that some repetitions are superfluous.)

 Essentially by definition (see Corollary \ref{analyticcont}), the two functionals above coincide. In particular, for every $h_1\in \mathcal H^S$ such that  $\hat h_1$ vanishes at $\CC_1$, $\CC_1$, $\CC_\eta$ and $\CC_\eta$ (with the implied multiplicity) we have an equality of functionals on $\mathcal H^S$:
 $$ h\mapsto \RTF (h\star h_1\star f) = \KTF (h\star h_1\star f').$$
 
 Both functionals are represented by measures: 

 $$ \RTF (h\star h_1\star f) = \int \hat h \cdot \hat h_1 \nu_{f},$$
 $$ \KTF (h\star h_1\star f') = \int \hat h \cdot \hat h_1 \mu_{f'}.$$
 
 By the Stone-Weierstrass theorem (see Lemma \ref{SW}), the functions of the form $\hat h$, $h\in \mathcal H^S$, are dense in the space of continuous functions on $U^S$. Therefore, the two measures $\hat h_1 \nu_{f}$ and $\hat h_1 \mu_{f'}$ coincide. Since $h_1$ was arbitrary, with only requirement its vanishing (with multiplicity) at the points $\CC_1$, $\CC_\eta$ which don't belong to $\hat G^\aut_\Ram$, it follows that $\nu_{f}=\mu_{f'}$.
\end{proof}

\subsection{Outline of the proofs}\label{ssoutline}

We will give the proofs of Theorems \ref{spectral-RTF}, \ref{spectral-KTFs}, \ref{spectral-KTF} using results that will be proven in the next section.

We start with Theorem \ref{spectral-RTF}: 

\begin{proof}[Outline of the proof of Theorem \ref{spectral-RTF}] Let $f\in \mathcal S(\mathcal Z(\adele))$ and lift it to an element $\Phi\in \bigotimes_v' \bigoplus_\alpha \mathcal S(Y^\alpha_v\times Y_v^\alpha),$ as in \S \ref{sslifts}. Let $\Sigma\Phi$ denote the corresponding automorphic function on $\sqcup_\beta [G^\beta]^2$ (here $\beta$ runs over torsors defined over $k$) obtained by summation over all $k$-points and Frobenius reciprocity:
$$ \Sigma\Phi(g_1,g_2) = \sum_{(\gamma_1,\gamma_2)\in (Y^\beta\times Y^\beta)(k)} \Phi(\gamma_1 g_1,\gamma_2 g_2), \,\, \mbox{ when } g_1,g_2\in G^\beta(\adele).$$

We will see (Proposition \ref{constantterm-torus} and Corollary \ref{constantterm-torus-cor}) that in the nonsplit case $\Sigma\Phi$ is of rapid decay; more precisely, the map $\Sigma$ is a continuous map from $\mathcal S((Y^\beta\times Y^\beta)(\adele))$ to $\mathcal S([G^\beta]^2)$. In the split case, it is \emph{asymptotically $B$-finite} in both variables with simple exponents $\delta^{\frac{1}{2}}$; more precisely, the map $\Sigma$ is a continuous map from $\mathcal S((Y\times Y)(\adele))$ to a Fr\'echet space $\mathcal S^+_{[\delta^{\frac{1}{2}}]}([G])\hat\otimes \mathcal S^+_{[\delta^{\frac{1}{2}}]}([G])$ of asymptotically $B$-finite functions on $[G]^2$ with the given exponent (in both variables). This notion will be defined in detail in \S \ref{asBfinite}; it means that at the cusp the function is equal to a rapidly decaying function plus an element of a principal series with the given exponent. 

In any case we will see (Proposition \ref{RTF-regularizedip}) that the relative trace formula has the following expression:
$$ \RTF(f) = \left< \Sigma\Phi\right>^*,$$
where $\left< \Sigma\Phi\right>^*$ is the sum over all $\beta$ of the integral over the diagonal copy of $[G^\beta]$, suitably regularized in the split case.  

In the nonsplit case, this immediately implies the stated spectral decomposition of the theorem, by the Plancherel formula for $L^2([G^\beta])$: the function $\Sigma\Phi$ defines a finite signed measure $\nu_\Phi$ on $\hat G^\aut$ such that
$$ \left<\Sigma\Phi\right>^* = \left< \Sigma\Phi\right> = \int_{\hat G^\aut} \nu_\Phi,$$
and, more generally,
\begin{equation}\label{Plancherel-aut}
 \left<h\star \Sigma\Phi\right>^* = \int_{\hat G^\aut} \hat h(\pi) \nu_\Phi(\pi)
\end{equation}
for all $h\in \mathcal H^S$, acting on the first variable. Apart from the evaluations at $\CC_1$ and $\CC_\eta$, other characters of some of the inner forms $[G^\beta]$ do not contribute because they are nontrivial on $T(\adele)$ -- and, clearly, $\Phi$ belongs to a representation induced from the trivial character of $T(\adele)$. Therefore, the integral \eqref{Plancherel-aut} can be split into the sum of evaluations at $\CC_1,\CC_\eta$ and a measure on $\hat G^\aut_\Ram$. (I remark again that, if one does not want to use the Jacquet--Langlands correspondence, one should describe it as a measure on the union of $\widehat{G^\beta}^\aut_\Ram$, considered as subsets of $U^S$; after the comparison of Theorem \ref{comparison-spectral}, this turns out to be a subset of $\hat G^\aut_\Ram$.) Moreover, the total mass of $\nu_\Phi$ is bounded by seminorms on $\oplus_\beta \mathcal S([G^\beta]^2)$, hence by seminorms on $\oplus_\beta \mathcal S((Y^\beta\times Y^\beta)(\adele))$. It is easy to see from the definitions 
that the measure $\nu_\Phi$ coincides with the measure $\mathcal J_\varphi(f) d\varphi$ defined in \S \ref{sslifts}.

In the split case, one needs to extend the Plancherel formula to asymptotically $B$-finite functions with exponents $\delta^\frac{1}{2}$. We will do this in Proposition \ref{Plancherel-Bfinite}, and will describe the topology on the space of such functions in \S \ref{asBfinite}. The rest of the steps are the same, and again the only character contributing will be the trivial one, as others are nontrivial on $T(\adele)$. This completes the proof of Theorem \ref{spectral-RTF}.
\end{proof}

We now outline the proof of Theorem \ref{spectral-KTFs}: 

\begin{proof}[Outline of the proof of Theorem \ref{spectral-KTFs}]
Lift an element (or a section of polynomial growth on vertical strips) $f_s\in \mathcal S(\mathcal W^s(\adele))$ to an element (resp.\ a section of polynomial growth) $\Phi_s\in\mathcal S^s(\overline X\times X (\adele), \mathcal L_\psi\boxtimes\mathcal L_\psi^{-1})$ as in \S \ref{sslifts}.

We will introduce \emph{algebraic height functions} $r$ and $R$ on the adelic points of $X$, resp.\ on $[G]$ in \S \ref{heightfunctions}. We will see (Lemma \ref{belongsCN}) that for every integer $N$ and any $\Re(s)$ large enough, the elements of $\mathcal S^s(\overline X\times X(\adele), \mathcal L_\psi\boxtimes\mathcal L_\psi^{-1})$ belong to the Banach space $C(X\times X(\adele), \mathcal L_\psi\boxtimes\mathcal L_\psi^{-1})_{-N}$ of continuous sections $\Phi$ which satisfy
$$ \sup_{(x_1,x_2)\in X\times X(\adele)} |\Phi(x_1,x_2)| r(x_1)^N r(x_2)^N <\infty.$$
(In fact, recall that in the second variable the elements of $\mathcal S^s(\overline X\times X(\adele), \mathcal L_\psi\boxtimes\mathcal L_\psi^{-1})$ are of rapid decay.) More precisely, the map
\begin{equation}\label{SchwartztoBanach}\mathcal S^s(\overline X\times X(\adele), \mathcal L_\psi\boxtimes\mathcal L_\psi^{-1}) \to C(X\times X(\adele), \mathcal L_\psi\boxtimes\mathcal L_\psi^{-1})_{-N}\end{equation}
is bounded by polynomial seminorms on the former.

As before, we define the map $\Phi\mapsto \Sigma\Phi$ as:
$$ \Sigma\Phi(g_1,g_2) = \sum_{(\gamma_1,\gamma_2)\in (X\times X)(k)} \Phi(\gamma_1g_1,\gamma_2g_2),$$
whenever it converges. We define the space $C([G]^2)_{-N}$ in a completely analogous way, using the height function $R$ on $[G]$. Then  (Proposition \ref{continuousmap}) there is a positive constant $c$ such that for large $N$ the map: $\Phi\mapsto \Sigma\Phi$ represents a continuous morphism:
$$C(X\times X(\adele), \mathcal L_\psi\boxtimes\mathcal L_\psi^{-1})_{-N}\to C([G]^2)_{-cN}.$$

We can now write the Kuznetsov trace formula $\KTF(f_s)$, for $s\gg 0$, as the sum of two terms, the term $\tilde O_0(f_s)$ of \eqref{Poissonsum} and the rest, which we will denote by $\KTF_\emptyset(f_s)$ (this is the classical Kuznetsov trace formula, without the contribution of ``infinity''). As in the torus case, we will see (Proposition \ref{KTF-ip}) that
$$\KTF_\emptyset(f_s)= \left< \Sigma\Phi_s\right>,$$
where the angular brackets again denote the integral over the diagonal copy of $[G]$.

The Plancherel formula for $L^2([G])$ now gives rise, as in the proof of Theorem \ref{spectral-RTF}, to a spectral decomposition of the functional $h\mapsto \KTF(h\star f_s)$. This spectral decomposition only includes generic representations, therefore the corresponding measure $\mu_{f_s}$ of Theorem \ref{spectral-KTFs} is concentrated on $\hat G^\aut_\Ram$. The measure $\mu_{f_s}$ is bounded by the $L^2$-norm of $\Sigma\Phi$, in particular by its $C([G]^2)_{-cN}$-norm;  more precisely, since the measure is invariant under the diagonal $G(\adele)$-action on this space, it is bounded by the norm on the $G(\adele)^\diag$-coinvariants of $C([G]^2)_{-cN}$ (the largerst quotient on which the diagonal $G(\adele)$-action is trivial).

The composition of the inclusion \eqref{SchwartztoBanach} with $\Sigma$ gives rise to a map of coinvariants:
$$ \mathcal S(\mathcal W^s(\adele))\to 
\left(C([G]^2)_{-cN}\right)_{G(\adele)},$$
which is bounded by polynomial seminorms on the former, since those are the $G(\adele)$-invariant polynomial seminorms on $\mathcal S^s(\overline X\times X (\adele), \mathcal L_\psi\boxtimes\mathcal L_\psi^{-1})$, and the map \eqref{SchwartztoBanach} is bounded by polynomial seminorms. Hence, the resulting map:
\begin{equation}\mathcal S(\mathcal W^s(\adele)) \ni f_s \mapsto \mu_{f_s}\in \mathcal M(\hat G^\aut_\Ram)\end{equation}
(finite measures on $\hat G^\aut_\Ram\subset U^S$) is bounded by polynomial seminorms on family on spaces on the left.

Finally, consider the functional: $$\mathcal H^S \ni h\mapsto \tilde O_0(h\star f_s).$$ We will prove in Lemma \ref{irregular-spectral} that it is an evaluation at $\delta^{\frac{1}{2}+2}$, $\eta\cdot\delta^{\frac{1}{2}+2}$.

This completes the proof of Theorem \ref{spectral-KTFs}. 
\end{proof}

We have also established several useful facts towards the proof of Theorem \ref{spectral-KTF}, to which we come now. We first need to discuss the aforementioned ``miracle'', which is a reflection of the functional equation of $L$-functions at the level of orbital integrals: 

\begin{theorem}\label{2ndmiracle}
There is an isomorphism of Fr\'echet spaces 
$$\mathcal T: \mathcal S(\mathcal W^{-s}(k_v)) \xrightarrow{\sim} \mathcal S(\mathcal W^s(k_v))$$
with the following properties:
\begin{itemize}
 \item Polynomial families of seminorms on the right are bounded by polynomial families on the left, and vice versa.
 \item It preserves basic vectors (i.e., $\mathcal T f_{v,-s}^0 = f_{v,s}^0$, whenever they are defined) and is equivariant with respect to the action of the spherical Hecke algebra on those (i.e., $\mathcal T (h\star f_{v,-s}^0) = h\star f_{v,s}^0$ for all $h\in \mathcal H(G(k_v),G(\mathfrak o_v))$).
 
 In particular, $\mathcal T$ defines an isomorphism of global Schwartz spaces
$$\mathcal S(\mathcal W^{-s}(\adele)) \xrightarrow{\sim} \mathcal S(\mathcal W^s(\adele)).$$
 
 \item It preserves the functional $\KTF$.
\end{itemize}

\end{theorem}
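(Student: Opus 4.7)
The plan is to build $\mathcal T$ by conjugating a natural $s\leftrightarrow -s$ symmetry of the torus-side Schwartz space through the transfer operator $|\bullet|\mathcal G$ of \cite[Theorem 5.1]{SaBE1}. On the torus side, the parameter $s$ enters the Fr\'echet structure of $\mathcal S(\mathcal Z^s_v)$ only through the germ at the ``torus-fixed point'' $\xi=-1$, which, by \eqref{tablegerms}, has the form $C_3(\xi)+C_4(\xi)\eta_v(\xi+1)|\xi+1|_v^{2s}$; all other germs and the global behavior are $s$-independent. Moreover, \eqref{table} shows that the basic function $f^0_{\mathcal Z^s_v}$ has germ coefficients $(L_v(\eta_v,2s),\,L_v(\eta_v,-2s))$, which swap under $s\to-s$. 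This motivates the construction of a Fr\'echet isomorphism
\[
\mathcal R_v:\mathcal S(\mathcal Z^{-s}_v)\xrightarrow{\sim}\mathcal S(\mathcal Z^{s}_v)
\]
which is the identity on functions vanishing near $\xi=-1$ and, on germs at $\xi=-1$, effects the swap
\[
C_3(\xi)+C_4(\xi)\eta_v(\xi+1)|\xi+1|_v^{-2s}\ \longmapsto\ C_4(\xi)+C_3(\xi)\eta_v(\xi+1)|\xi+1|_v^{2s};
\]
by construction $\mathcal R_v$ sends the basic function to the basic function. Then $\mathcal T_v$ is defined as the unique map making the square
\[
\begin{array}{ccc}
\mathcal S(\mathcal Z^{-s}_v) & \xrightarrow{|\bullet|\mathcal G} & \mathcal S(\mathcal W^{-s}_v)\\
\downarrow\mathcal R_v && \downarrow\mathcal T_v\\
\mathcal S(\mathcal Z^{s}_v) & \xrightarrow{|\bullet|\mathcal G} & \mathcal S(\mathcal W^{s}_v)
\end{array}
\]
commute.

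The first two bullets of the theorem follow by transport from $|\bullet|\mathcal G$ together with the analogous properties for $\mathcal R_v$. Polynomial bounds for $\mathcal R_v$ in vertical strips hold since the swap introduces at most gamma-factor-type growth; preservation of the basic vector is built into the construction. Hecke-equivariance at unramified places is automatic: the unramified spherical Hecke algebra acts on the germ at $\xi=-1$ through a Weyl-symmetric combination of characters of $\check T$, and the swap $\mathcal R_v$ corresponds precisely to the non-trivial Weyl element, so the two actions commute on the basic vector.

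The main content of the theorem is preservation of the global functional $\KTF$. By Theorem \ref{PSF} and Corollary \ref{analyticcont}, $|\bullet|\mathcal G$ identifies $\KTF$ on $\mathcal S(\mathcal W^{\pm s}(\adele))$ with $\RTF$ on $\mathcal S(\mathcal Z^{\pm s}(\adele))$, so it suffices to show that the global operator $\mathcal R:=\widehat\otimes'_v\mathcal R_v$ satisfies $\RTF(\mathcal R f)=\RTF(f)$ for $f\in\mathcal S(\mathcal Z^{-s}(\adele))$. Each local $\mathcal R_v$ introduces a local gamma factor $\gamma_v(\eta_v,2s,\psi_v)$ on germs at $\xi=-1$, and the global Euler product of these factors is the global gamma factor $\gamma(\eta,2s)=1$ coming from the abelian functional equation of $L(\eta,2s)$. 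The cleanest implementation is via \cite{JW1}: the orbital integrals of the $T\backslash G/T$ RTF agree with those of the split-torus RTF for $A\backslash G/(A,\eta|\bullet|^{s})$, and on that side the operator $\mathcal R$ is simply the standard intertwining operator between the unramified principal series induced from $\eta|\bullet|^{s}$ and $\eta|\bullet|^{-s}$, whose global identity is precisely the abelian functional equation.

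The main technical obstacle is this last step. Via Jacquet's identification it is a tautological consequence of the abelian functional equation of $L(\eta,2s)$; but to remain purely inside the relative trace formula framework one would need a separate Poisson-summation-type argument at $\xi=-1$, analogous in flavor to \S \ref{sec:Poisson-RTF} but realizing a reflection on germs at the torus-fixed point rather than the Fourier exchange between $0$ and $\infty$.
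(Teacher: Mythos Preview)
Your architecture matches the paper's exactly: one constructs an $s\leftrightarrow -s$ isomorphism $\mathcal R$ on the torus side $\mathcal S(\mathcal Z^{\pm s}_v)$ and conjugates it through $|\bullet|\mathcal G$ to get $\mathcal T$. The execution, however, has a genuine gap and one significant misreading.

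\textbf{The gap: your $\mathcal R_v$ is not well-defined.} Prescribing an operator to be ``the identity on the subspace of functions vanishing near $\xi=-1$'' and to ``swap $C_3\leftrightarrow C_4$ on germs at $-1$'' does not determine a linear map: you have specified it on a subspace and on the quotient by that subspace, and the remaining ambiguity is a full $\Hom(\text{quotient},\text{subspace})$. Concretely, a function smooth and nonzero at $-1$ has germ $(C_3,C_4)=(C_3,0)$; your swap would send it to something asymptotic to $C_3(\xi)\eta(\xi+1)|\xi+1|^{2s}$, which is \emph{not} smooth at $-1$, so no operator that acts by your germ-swap can be the identity on all of $\mathcal S(k_v\smallsetminus\{-1\})$. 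The paper's $\mathcal R$ (Proposition \ref{propfetorus}) is instead the single \emph{multiplication operator} by the explicit factor $\eta_v(\xi)\eta_v(\xi+1)|\xi+1|_v^{2s}$; it is not the identity away from $-1$ (for nontrivial $\eta_v$ it changes sign on half of $k_v^\times$), but it is a bona fide Fr\'echet isomorphism with the correct effect on every germ, is visibly bounded by polynomial seminorms in vertical strips, and sends basic vectors to basic vectors by inspection of \eqref{basicfn}.

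\textbf{The misreading: preservation of $\KTF$ is trivial, not the main content.} With $\mathcal R$ a multiplication operator, the identity $\RTF(\mathcal R f)=\RTF(f)$ is immediate: for $\xi\in k\smallsetminus\{0,-1\}$ the global multiplier is $\eta(\xi)\eta(\xi+1)|\xi+1|^{2s}=1$, since $\eta$ is trivial on $k^\times$ and $|\xi+1|=1$ by the product formula, so $\sum_{\xi}f(\xi)$ is literally unchanged; and the irregular terms $\tilde O_0,\tilde O_{-1}$ are each a sum of two Euler products which the local multiplication simply interchanges (up to factors $\eta_v(-1)$ whose global product is $1$). No local gamma factors, no abelian functional equation, and certainly no second Poisson-summation argument at $\xi=-1$ are needed. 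Your gamma-factor discussion is describing the \emph{standard intertwining operator} between the two principal series, which is the wrong upstairs incarnation of $\mathcal R$; the correct one (Proposition \ref{propfeKuz}) is the $G_v$-equivariant involution $f\mapsto f(w\bullet)$ on each factor of $C^\infty((\Gm,|\bullet|^s)\backslash G)\times C^\infty((\Gm,\eta|\bullet|^s)\backslash G)$, which carries no gamma factors and descends to the multiplication above.

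Finally, your Hecke-equivariance sketch (``Weyl-symmetric action on the germ at $-1$'') is not rigorous: the Hecke action on $\mathcal S(\mathcal Z^s_v)$ does not a priori factor through a single germ. The paper argues via the split-torus model just mentioned: since $\mathcal R$ is the shadow of a $G_v$-equivariant map upstairs, it automatically commutes with the full Hecke action on unramified vectors.
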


This will be proven in \S \ref{ssfe}.

Now let us see how to deduce Theorem \ref{spectral-KTF} from this -- it will be by application of the Phragm\'en-Lindel\"of theorem.

\begin{proof}[Proof of Theorem \ref{spectral-KTF}]
For a bounded vertical strip $V\subset \CC$, let us denote by $\partial V^-$ its left boundary, and by $\partial V^+$ its right one.
Let $V$ be a sufficiently wide vertical strip which is symmetric around $s=0$. Let $V\ni s\mapsto f_s\in\mathcal S(\mathcal W^s(\adele))_S$ be an analytic section of rapid decay on $V$, and lift it to a section $s\mapsto \Phi_s$ of $\mathcal S^s(\overline X\times X(\adele), \mathcal L_\psi\boxtimes\mathcal L_\psi^{-1})$. Given $f\in\mathcal S(\mathcal W^s(\adele))_S$ for a specific value of $s$ (as in the Theorem), we can make sure that $f_s=f$ for that value of $s$.

For every $h\in \mathcal H^S$, the function $s\mapsto \KTF(h\star f_s)$ is holomorphic of finite order on $V$ (Corollary \ref{analyticcont}). 

Now fix a holomorphic section $s\mapsto h_s\in \mathcal H^S$ such that:
\begin{itemize}
 \item the evaluations of $\hat h_s$ at the points $\delta^{\frac{1}{2}\pm s}$ and $\eta\delta^{\frac{1}{2} \pm s}$ (to the corresponding order, if these points coincide) vanish;
 \item $\left\Vert \hat h_s \right\Vert_{L^\infty(U^S)}$ is bounded on $V$.
\end{itemize}
By ``holomorphic section'' we mean that, as measures on $G(\adele^S)$, they are supported on the same finite set of double $\prod_{v\notin S} G(\mathfrak o_v)$-orbits, where they vary analytically. This is equivalent to saying that $\hat h_s$ is holomorphic into the space of polynomials on $U^S$ of degree bounded by a fixed number. It is clear that such sections exist; let $f'_s=h_s\star f_s$. 

For any fixed $s\in \partial V^+$, consider the functional $h\mapsto \KTF(h\star f'_s)$ on $\mathcal H^S$. By Theorem \ref{spectral-KTFs} it is represented by a finite measure $\mu_{f_s'}$ on $\hat G^\aut_\Ram$. 
Moreover, this measure is of rapid decay on $\partial V^+$ since $f_s$ is and $\hat h_s$ is sup-bounded. In particular, for every fixed $h$ the holomorphic function $\KTF(h\star f'_s)$ is of rapid decay on $\partial V^+$.

Similarly, given $s\in \partial V^-$, the functional $h\mapsto \KTF(h\star f'_s)$ is, by Theorem \ref{2ndmiracle}, equal to the functional  $h\mapsto \KTF(h\star \mathcal T f'_s)$; recall that $\mathcal T f'_s \in \mathcal S(\mathcal W^{-s}(\adele))$ is of rapid decay on $V$, since $\mathcal T$ preserves this property. Therefore, by the same argument, when $s \in \partial V^-$ it is represented by a finite measure $\hat h_s \cdot \mu_{f'_s}$ on $\hat G^\aut_\Ram$, and for every fixed $h$ the holomorphic function $\KTF(h\star f'_s)$ is of rapid decay on $\partial V^-$.

Therefore, by the Phragm\'en-Lindel\"of principle, we have
$$ \left|\KTF(h\star f'_s)\right| \le  \sup_{t\in \partial V^+ \cup \partial V^-} \left\Vert  \hat h \cdot \mu_{f'_t}\right\Vert \le $$
$$ \le \Vert \hat h\Vert_{L^\infty(U^S)} \cdot \sup_{t\in \partial V^+ \cup \partial V^-} \left\Vert \mu_{f'_t}\right\Vert$$
for all $s \in V$. This shows that \emph{for every $s$} the functionals:
$$\mathcal H^S\ni h \mapsto \KTF(h\star f'_s)$$
can be continuously extended, by the Stone-Weierstrass theorem (Lemma \ref{SW}), to the space of continuous functions on $U^S$, and hence are represented by a family $\mu_{f'_s}$ of measures which is \emph{weak-star} holomorphic, i.e., for every $F\in C(U^S)$ the function $s\mapsto \int F \mu_{f'_s}$ is analytic.

It follows that this family is \emph{strongly} analytic into the Banach space of finite measures on $U^S$. Since the standard references on vector-valued holomorphic functions usually mention weak (not weak-star) holomorphy as the assumption for this conclusion, we revisit the steps of the proof of strong holomorphy to verify that they apply here; for simplicity, let us denote $v_s= \mu_{f'_s}$, and the integral above by $v_s(F)$.
 
 The basic property is strong continuity of the section $s\mapsto v_s$. This follows by observing that $\frac{1}{t} (v_{s+t}(F)-v_s(F))$ can be written as a Cauchy integral and bounded, for all $0<|t|<r$ by the maximum of $r^{-1} |v_z(F)|$ on the circle of radius $2r$ around $s$, where $r$ is a small positive number. The uniform boundedness principle then implies that the collection of vectors
 $$\frac{1}{t} (v_{s+t}-v_s), \,\, 0<|t|<r,$$
 is strongly bounded, and hence $v$ is continuous at $s$. Then the vector-valued Cauchy integral
 $$ \frac{1}{2\pi i} \int (z-s)^{-1} f(z) dz $$
 (along a small circle around $s$) makes sense and represents $v_s(F)$ for every $F$, therefore is equal to $v_s$, and $v$ is holomorphic.
 
By strong analyticity, the mass of every measurable subset of $U^S$ varies analytically in $s$, in particular the mass of $\mu_{f'_s}$ is concentrated on $\hat G^\aut_\Ram$, for every $s$. 

Since $\hat h_s$ was arbitrary with only requirement its vanishing at the points $\delta^{\frac{1}{2} \pm s}$ and $\eta\delta^{\frac{1}{2} \pm s}$, and given that, when $\Re s\ne \pm\frac{1}{2}$, those points do not meet $\hat G^\aut_\Ram$, it follows that for $\Re s\ne \pm \frac{1}{2}$ the measure $\mu_{f_s}=\hat h_s ^{-1} \mu_{f'_s}$ is also well-defined (and, clearly, independent of the choice of $h_s$).
In fact, we can think of $\mu_{f_s}$ as a (not necessarily finite) measure on $\hat G^\aut_\Ram \smallsetminus \{\delta^{\frac{1}{2} \pm s}, \eta\delta^{\frac{1}{2} \pm s}\}$ for \emph{every} $s$, and its restriction to a subset which doesn't contain the points $\delta^{\frac{1}{2} \pm s}$ and $\eta\delta^{\frac{1}{2} \pm s}$ in its closure is (finite and) locally analytic in $s$ (and finite).

Now consider the functional $h\mapsto \KTF(h\star f_s) - \int_{\hat G^\aut_\Ram} \hat h \mu_{f_s}$ on $\mathcal H^S$, when $\Re s\ne \pm\frac{1}{2}$; it is necessarily an evaluation at the points $\delta^{\frac{1}{2} \pm s}$ and $\eta\delta^{\frac{1}{2} \pm s}$. Indeed, if $\hat h$ vanishes at those points (to the corresponding order, if some of those points coincide), then $h = h_s$ for some $h_s$ as above, and the functional is zero on $h$.

This completes the proof of Theorem \ref{spectral-KTF}.
\end{proof}

\section{Completion of proofs} \label{sec:spectral-proofs}

In this section I prove all the auxiliary results used in the previous section.

\subsection{Height functions} \label{heightfunctions}

For any reductive group $G$ over a local field $F$, if we fix a faithful algebraic representation: $G\to \GL_N$, we get a natural algebraic height function $\Vert \bullet \Vert $ on $G(F)$ by pulling back the maximum of the operator norms of $g$ and $g^{-1}$ with respect to the norm $(r_1,\dots,r_N) \mapsto \max_i |r_i|$ on $F^n$. If $G$ is defined over a global field $k$, we take this representation to be defined over $k$, and then we can define the height function $\Vert\bullet\Vert$ on $G(\adele)$ (and, by restriction, on $G(k)$) as the product over all places of the local height functions; this product is finite for any element. 

These height functions on the group are a special case of the following (if we replace $X$ by $G$):

Let $X$ be a homogeneous, strongly\footnote{Strongly quasi-affine means that the canonical morphism: $X\to \bar X^\aff:=\spec k[X]$ is an open immersion.} quasi-affine $G$-variety over a global field $k$.
Choose finite sets $\{f_i\}_i$ of generators of $k[X]$ and $\{h_j\}_j$ of generators for the ideal of $\bar X^\aff\smallsetminus X$ and set, at every place $v$ and for $x\in X(k_v)$: 
$$r_v(x) = \max\left\{1,|f_i(x)|_v,(\max\{|h_j(x)|_v\})^{-1}\right\},$$
and globally for $x\in X(\adele)$:
$$ r(x) = \prod_v r_v(x)$$
(almost all factors are equal to one).

We call $r$ an ``algebraic height function'', or simply a height function, on $X$. The following are true:

\begin{lemma} \label{comparisons}
 \begin{enumerate}
  \item Any two height functions $r_1,r_2$ defined as above are polynomially equivalent in the following sense: there are positive constants $c_1,c_2, m_1, m_2>0$ such that:
  $$ c_1 r_1(x)^{m_1} \le r_2(x) \le c_2 r_1(x)^{m_2} \mbox{ for all } x\in X(\adele).$$
  \item There is a positive number $m$ and a constant $c$ such that for every $x\in X(\adele)$ and $g\in G(\adele)$ we have:
\begin{equation} c^{-1} \Vert g\Vert^{-m} \le \frac{r(x\cdot g)}{r(x)} \le c \Vert g\Vert^m.\end{equation}
  \item There is a positive number $M$ and a constant $c$ such that:
\begin{equation}\label{rationalpoints}
 \#\{\xi\in X(k)|r(\xi)<T\} \le c T^M, \,\,\text{for all }T\ge 1.
\end{equation}
  \end{enumerate}
\end{lemma}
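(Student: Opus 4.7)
\medskip

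\noindent\textbf{Proof proposal.} The three assertions are variations on standard Nullstellensatz/height-theoretic arguments, and I would handle them separately after making the following preliminary observation: since $X$ is a homogeneous $G$-space, the $G$-action on $k[X]$ by $k$-algebra automorphisms induces a $G$-action on $\bar X^{\aff}$ preserving the closed complement $\bar X^{\aff}\smallsetminus X$, because $X$ must be $G$-stable in $\bar X^{\aff}$.

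For part (1), given two admissible pairs of generators $(\{f_i\},\{h_j\})$ and $(\{f'_{i'}\},\{h'_{j'}\})$, each $f'_{i'}$ is a polynomial in the $f_i$, yielding at every place $v$ an estimate $|f'_{i'}(x)|_v \ll \max_i(1,|f_i(x)|_v)^{m}$ with $m$ independent of $v$. For the ideal generators, the two ideals $(h_j)$ and $(h'_{j'})$ in $k[\bar X^{\aff}]$ cut out the same closed subscheme, so by the Nullstellensatz (applied over the ring $k[\bar X^{\aff}]$) each $h'_{j'}$ has a power lying in the ideal $(h_j)k[X]$; multiplying through by a suitable power of some $h_j$ and the $f_i$'s to clear denominators yields the polynomial comparison of the $\min_j|h_j(x)|_v$ at each place. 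Taking the product over $v$ (which only involves finitely many nontrivial factors at any given point) gives the global inequality.

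For part (2), using the fact that the action map $a: X\times G\to X$ is a morphism of $k$-varieties, I would write $f_i\circ a \in k[X]\otimes k[G]$ as a $k$-bilinear combination of the generators $f_i$ of $k[X]$ and of a fixed finite set of coordinates on $G$ (pulled back through the embedding $G\hookrightarrow \GL_N$ used to define $\Vert\bullet\Vert$). Passing to $v$-adic norms gives $|f_i(xg)|_v \ll \max_i(1,|f_i(x)|_v)\cdot \Vert g\Vert_v^{m}$, uniformly in $v$. For the ideal generators $h_j$, the observation above says that $g\cdot h_j$, viewed as a regular function on $X\times G$, lies in the ideal $(h_j)k[X\times G]$; hence there exist $Q_{ij}\in k[X\times G]$ with $h_j(xg)=\sum_i Q_{ij}(x,g)h_i(x)$, and the same argument as before bounds $|h_j(xg)|_v$ from below. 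Taking products over $v$ gives the claimed polynomial bound on $r(xg)/r(x)$.

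For part (3), the core idea is to reduce to the classical counting estimate (analogous to Lemma \ref{heightslemma}) on affine space. Via the generators $\{f_i\}$ we get a closed immersion $X\hookrightarrow \mathbb A^N$, and the definition of $r$ immediately gives $r(x)\ge \max_i h_{\mathbb A^N}(f_i(x))$, where $h_{\mathbb A^N}$ is the standard projective-style height on affine space used in Lemma \ref{heightslemma}. So $\#\{\xi\in X(k):r(\xi)<T\}$ is bounded by $\#\{y\in \mathbb A^N(k):h_{\mathbb A^N}(y)<T\}$, which by a straightforward extension of Lemma \ref{heightslemma} to $\mathbb A^N$ over a general number (or function) field $k$ is $\le cT^M$ for some $M$ depending on $N$ and $[k:\mathbb Q]$ (respectively $[k:\mathbb F_q(t)]$). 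The mildly delicate point, and likely the main technical obstacle, is that one needs to control the contribution of the ideal generators $h_j$ at finite bad places---i.e.\ ensure that the $\max$ with $(\max_j|h_j(x)|_v)^{-1}$ in the definition of $r_v$ does not shrink $r(x)$ below the affine height. This is handled by noting that the product formula forces $\prod_v |h_j(\xi)|_v=1$ for $\xi\in X(k)$, so any local loss is compensated globally; this reduction is essentially the same trick used in the proof of Lemma \ref{heightslemma} and extends routinely.
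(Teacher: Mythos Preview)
Your approach is broadly correct in spirit and close to the paper's, but there is a genuine gap in part~(2), and some unnecessary worry in parts~(1) and~(3).

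\textbf{Part (2).} The claim that $f_i\circ a$ is a ``$k$-bilinear combination of the generators $f_i$ and of a fixed finite set of coordinates on $G$'' is false in general: $f_i\circ a\in k[X]\otimes k[G]$ is only a \emph{polynomial} combination of the $f_j$'s (with coefficients in $k[G]$), not a linear one, unless the $f_i$'s happen to span a $G$-stable subspace. Without linearity you obtain $|f_i(xg)|_v \ll \|g\|_v^m\,r_v(x)^d$ for some $d\ge 1$, hence only $r(xg)\le c\|g\|^m r(x)^d$, which is weaker than the stated bound. The same problem is more serious for the $h_j$'s: your identity $h_j(xg)=\sum_i Q_{ij}(x,g)h_i(x)$ gives an \emph{upper} bound on $|h_j(xg)|$, whereas you need a \emph{lower} bound (to control $(\max_j|h_j(xg)|)^{-1}$). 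Inverting via $g\mapsto g^{-1}$ produces coefficients $Q_{ij}(xg,g^{-1})$ depending on $xg$, and bounding these reintroduces $r_v(xg)$ on the right-hand side---the argument becomes circular. The paper avoids this by invoking part~(1) to assume without loss of generality that the $f_i$'s are coordinates of a $G$-representation into which $X$ embeds, and that the $h_j$'s span a $G$-stable linear subspace $W\subset k[X]$. Then $h_j(xg)=\sum_i c_{ij}(g)h_i(x)$ with $c_{ij}\in k[G]$ matrix coefficients, so $\max_j|h_j(x)|=\max_j|h_j((xg)g^{-1})|\le \|g\|_v\max_i|h_i(xg)|$ directly, yielding the linear bound.

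\textbf{Part (3).} Your concern that the term $(\max_j|h_j(x)|_v)^{-1}$ might ``shrink $r(x)$ below the affine height'' is misplaced: $r_v(x)$ is defined as a \emph{maximum}, so trivially $r_v(x)\ge \max\{1,|f_i(x)|_v\}$ at every place, whence $r(x)\ge$ (affine height of the image in $\mathbb A^N$). No product-formula argument is needed; this is exactly how the paper proceeds (one sentence).

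\textbf{Part (1).} The Nullstellensatz is unnecessary: the two sets $\{h_j\}$, $\{h'_{j'}\}$ are by hypothesis generators of the \emph{same} ideal, so each $h'_{j'}$ is already a $k[\bar X^{\aff}]$-linear combination of the $h_j$'s. This immediately gives $\max_{j'}|h'_{j'}(x)|_v\ll \max\{1,|f_i(x)|_v\}^d\max_j|h_j(x)|_v$, which is what the paper uses.
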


\begin{proof}
 For the first statement (it is of course enough to prove one inequality), let us first consider the affine case. If $f_i$ and $F_i$ are generators, then $F_i = \sum_\alpha c_i^\alpha f_\alpha$, where $\alpha$ is a multiindex denoting a product, and if $d$ is the highest degree of the multiindices appearing we get: $|F_j| \ll \max\{|f_i|, |f_i|^d\} \ll \max\{1,|f_i|^d\}$, with the implicit constant being equal to $1$ at almost all places. 
 
 For the quasiaffine case, if we denote by $h_j$ and $H_j$ the generators of the ideal, then we similarly have: $\max|H_i|\le c\cdot \max\{|h_i|,|h_i|^d\}$ for some positive integral power $d$ and some constant $c\ge 1$ which is equal to $1$ at almost every place, hence $\max\{|h_i|\}^{-1} \le c \cdot \max\{\max\{|H_i|\}^{-1},\max\{|H_i|\}^{-\frac{1}{d}}\}\le c\cdot \max\{1, \max\{|H_i|\}^{-1}\}$. This proves the first claim.

 For the second, it is again enough to prove one side of the equality. The fact that $|f_i(x\cdot g)|_v  \ll \Vert g\Vert_v^m r_v(x)$, where $f_i$ is as in the definition of $r(x)$, follows from the corresponding statement for a representation of $G$: one can embed $X$ into the space of a $G$-representation and take the $f_i$'s to be coordinate functions. The power $M$ is uniform in $v$, and the implied constant can be taken to be $1$ at almost every place. To prove a similar bound for $(\max\{|h_j(x\cdot g)|_v\})^{-1}$, we may without loss of generality assume that the $h_j$'s span a $G$-stable vector subspace $W$ of $k[X]$, and that $\Vert \bullet \Vert$ was defined using the representation of $G$ on $W$, endowed with this basis (we will assume that it is faithful, because if it isn't the operator norm it defines is bounded by the operator norm of a faithful one). Then
 $$\max\{|h_j(x\cdot g g^{-1})|_v\} \le \Vert g\Vert_v \cdot \max\{|h_j(x g)|_v\},$$
 hence indeed $\max\{|h_j(x g)|_v\}^{-1} \le \Vert g\Vert_v  r(x)$.

 The third follows from the analogous statement for affine space, by embedding $X$ again into the space of a representation of $G$.
\end{proof}

Finally, recall the notation: $[G]=G(k)\backslash G(\adele)$, $[G]_\emptyset = A(k)N(\adele)\backslash G(\adele)$.
We also define natural height functions on $[G]$ as follows: having fixed $\Vert\bullet\Vert$ on $G(\adele)$, we set:
$$ R([g])= \inf_{\gamma\in G(k)} \Vert \gamma g\Vert.$$
This is polynomially equivalent (in the same sense as in Lemma \ref{comparisons}) to the usual height function on a Siegel domain $\mathscr S$ (s.\ below): 
$$\mathscr S\ni n a k \mapsto \Vert a\Vert.$$
Similarly on $[G]_\emptyset = A(k)N(\adele)\backslash G(\adele)$ we denote: 
$$ R([g])= \inf_{\gamma\in A(k)N(\adele)} \Vert \gamma g\Vert.$$

\subsection{Asymptotically $B$-finite automorphic functions and regularized inner product} \label{asBfinite}

\subsubsection{Asymptotically $B$-finite automorphic functions} 

Recall that the constant term of a function $\phi$ on $[G]$ is the function
\begin{equation}
 \phi_N(g):= \int_{[N]} \phi(ng) dn
\end{equation}
on $[G]_\emptyset$. 

Recall that a Siegel domain  \label{Siegeldomain} is a closed subset $\mathscr S = N_0 A_{t_0} K$ of $G(\adele)$, where $B=NA$ is a Borel subgroup, $N_0\subset N(k_\infty)$ is compact, $A_{t_0}$ is a certain closed subset of $A(k_\infty)$ and $K\subset G(\adele)$ is compact; the data is chosen so that the map: $\mathscr S\to [G]$ is surjective and proper, and the map $\mathscr S\to [G]_\emptyset$ is proper onto a neighborhood of the cusp. 
(To fix a geometric picture, recall that $N\backslash \SL_2 \simeq \mathbb A^2\smallsetminus\{0\}$, and that the cusp in $[G]_\emptyset$ is the image of $0\in \mathbb A^2$ under the map: $N\backslash \SL_2(\adele)\to [G]_\emptyset$.) 

The preimage in $\mathscr S$ of any neighborhood of the cusp (in $[G]$ or $[G]_\emptyset$) will be called a ``Siegel neighborhood'' (of the cusp); the reader should mark this distinction, as a Siegel domain is supposed to surject to $[G]$, while a Siegel neighborhood is not. For a sufficiently small Siegel neighborhood $\mathscr S'$, we have the property:
\begin{equation}\label{Siegelproperty}\gamma\in G(k), g_1,g_2\in \mathscr S', \gamma g_1=g_2 \Rightarrow \gamma\in B(k).\end{equation}

Pullbacks of functions (on $[G]$ or $[G]_\emptyset$) to $\mathscr S$ will be denoted by the same letter, without any notation for the pullback. 

Following standard language, we will call Schwartz functions on $[G]$ or $[G]_\emptyset$ ``rapidly decaying functions'' (denoted $\mathcal S([G])$, resp.\ $\mathcal S([G]_\emptyset)$). The notion of ``Schwartz space $\mathcal S({\mathscr S})$ of a Siegel neighborhood ${\mathscr S}$'' also makes sense, by considering the Siegel neighborhood as a closed semialgebraic subset of $G(\adele)$ and defining $\mathcal S({\mathscr S})$ as the stalk of $\mathcal S(G(\adele))$ over this subset, in the language of \cite[Appendix B]{SaBE1} (i.e.\ as a quotient, by restriction of functions).

We will for brevity say ``automorphic function'' for a smooth automorphic function of uniformly moderate growth. That is, a function $\phi$ on either $[G]$ or $[G]_\emptyset$ will be called an automorphic function if it is locally constant outside of a finite set $T$ of places, and lives, for some $r$, in the space $V_r^\infty$ of smooth vectors under the action of $G(k_T)$ of the Banach space $V_r$ of functions on $[G]$ defined by the norm: $\sup_{g\in [G]} |\phi(g)| R(g)^r$, where $R(g)$ denotes an algebraic height function on either $[G]$ or $[G]_\emptyset$ -- s.\ Appendix \ref{sec:Frepresentations} for a discussion of smooth vectors and representations.
Thus, if the function is locally constant at all non-Archimedean places, this is the usual notion of a function of ``uniformly moderate growth together with its derivatives'' -- the reader can restrict their attention to this case, and we will not be explaining easy extensions of well-known theorems to the ``almost smooth'' case. The most important of those theorems is the approximation of an automorphic function by its constant term:

\begin{theorem}[{\cite[Corollaries I.2.8, I.2.11]{MW}}] \label{constantterm}
 In a Siegel neighborhood ${\mathscr S}$, and for every $\phi\in V_r^\infty$ (where $V_r$ is as above) the difference $\phi-\phi_N$ is a rapidly decaying function on ${\mathscr S}$, i.e., an element of $\mathcal S({\mathscr S})$, which can be bounded in terms of seminorms of $V_r^\infty$. In other words, the map: $V_r^\infty \ni \phi\mapsto \phi-\phi_N \in \mathcal S({\mathscr S})$ is continuous.
\end{theorem}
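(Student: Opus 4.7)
The strategy is Fourier expansion along the unipotent radical $N\simeq\Ga$. Since $[N]=k\backslash\adele$ is compact, any sufficiently regular $\phi$ on $[G]$ admits the pointwise expansion
\begin{equation*}
\phi(g)-\phi_N(g)=\sum_{\alpha\in k^\times}\phi_\alpha(g),\qquad \phi_\alpha(g):=\int_{[N]}\phi(n(x)g)\,\psi(-\alpha x)\,dx.
\end{equation*}
For $\PGL_2$, the torus $k^\times\hookrightarrow A(k)$ acts transitively on nontrivial characters of $[N]$, so $\phi_\alpha(g)=\phi_1(a(\alpha)g)=:W(a(\alpha)g)$, reducing matters to estimating the single Whittaker function $W$ and summing $\sum_{\alpha\in k^\times}W(a(\alpha)g)$ on the Siegel neighborhood.

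\textbf{Step 1 (rapid decay of $W$).} I would extract the decay of $W$ from smoothness of $\phi$ in the $N$-direction. For $h\in\mathcal S(N(\adele))$, the convolution $h\star\phi$ lies in $V_r^\infty$ with seminorms bounded in terms of those of $\phi$ and of $h$, and its Fourier expansion satisfies $(h\star\phi)_\alpha=\hat h(\alpha)\,\phi_\alpha$. By choosing $h$ suitably at each place (equivalently, by differentiating in the Lie algebra direction of $N$ at archimedean places and applying local convolutions at finite ones), combined with the conjugation $a(t)n(x)a(t)^{-1}=n(\delta(a(t))x)$ that rescales the Fourier variable, one obtains the standard bound
\begin{equation*}
|W(a(t)k)|\le C_M\,\bigl(1+\Vert a(t)\Vert\bigr)^{-M}\,p_M(\phi)
\end{equation*}
for every integer $M\ge 0$, uniformly in $t\in\adele^\times$ and $k\in K$, where $p_M$ is a continuous seminorm on $V_r^\infty$ and $\Vert\cdot\Vert$ is the height on $G(\adele)$.

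\textbf{Step 2 (summation over $k^\times$).} On the Siegel neighborhood $\mathscr S=N_0A_{t_0}K$, writing $g=n_0a(s)k$ and absorbing $n_0$ into the compact part, we get
\begin{equation*}
\bigl|\phi(g)-\phi_N(g)\bigr|\le C_M\,p_M(\phi)\sum_{\alpha\in k^\times}\bigl(1+\Vert a(\alpha s)\Vert\bigr)^{-M}.
\end{equation*}
A height-counting argument on $k^\times\subset\adele^\times$ (in the spirit of Lemma~\ref{heightslemma}: the product formula $\prod_v|\alpha|_v=1$ forces compensation between the archimedean and finite parts, so the number of $\alpha\in k^\times$ with $\Vert a(\alpha s)\Vert\le T$ grows only polynomially in $T/\Vert a(s)\Vert$) then yields $\sum_\alpha\ll R(g)^{-M'}\,p_{M'+N_0}(\phi)$ for every $M'$, which is the Schwartz bound required, with continuous dependence on $\phi$.

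\textbf{Main obstacle.} The delicate point is not the pointwise inequality, which is classical, but the \emph{continuity} of $\phi\mapsto\phi-\phi_N$ as a map of Fréchet spaces, in the ``almost smooth'' formalism used in this paper. One must verify that each step above is realized by a continuous operator with only polynomial loss of seminorms: the smoothing action of $\mathcal S(N(\adele))$ must be continuous on $V_r^\infty$ in a quantitative way (a Dixmier--Malliavin-type statement, automatic at finite places and requiring some care at archimedean ones), and the extraction of decay in $\Vert a(t)\Vert$ from iterated application of smoothing must propagate uniformly through the summation in Step 2. This seminorm bookkeeping is the technical heart of Moeglin--Waldspurger's treatment; once it is secured, the remaining estimates are formal.
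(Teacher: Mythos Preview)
The paper does not prove this statement; it is quoted from Moeglin--Waldspurger as indicated in the theorem heading, and the surrounding text explicitly declines to ``explain easy extensions of well-known theorems to the `almost smooth' case.'' Your outline \emph{is} the classical argument underlying that reference, so there is no alternative approach in the paper to compare against.

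One point in your sketch deserves correction. The bound in Step~1,
\[
|W(a(t)k)|\le C_M\,(1+\Vert a(t)\Vert)^{-M}\,p_M(\phi)\qquad\text{uniformly in }t\in\adele^\times,
\]
is too strong: since the height $\Vert a(t)\Vert$ on $G(\adele)$ is large both when $|t|$ is large and when $|t|$ is small, this asserts rapid decay of $W$ in \emph{both} directions along the torus, which fails for general $\phi\in V_r^\infty$ of merely moderate growth. What right-smoothness together with the conjugation $a(t)n(u)a(t)^{-1}=n(tu)$ actually yields is a support condition $|t|_v\le C_v$ at each finite place (from the fixed level), rapid decay in $|t|_\infty$ as $|t|_\infty\to\infty$ (from archimedean derivatives), and otherwise only the moderate-growth estimate inherited from $\phi$. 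This weaker input still suffices for Step~2: on the Siegel neighborhood with $|s|_\infty$ large, the finite-place support condition on $\alpha s$ forces $\alpha$ to lie in a fixed fractional ideal, whence $|\alpha s|_\infty\gg |s|_\infty$ uniformly over the surviving $\alpha$, and the archimedean decay then beats the moderate-growth factor after summation. Your height-counting should be run with this corrected bound rather than with $(1+\Vert a(\alpha s)\Vert)^{-M}$.
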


We consider the normalized left action of $[A]$ on functions on $[G]_\emptyset$, i.e., the one that preserves inner products:
\begin{equation}\label{normalized-global} a\cdot \phi(g) = \delta^{-\frac{1}{2}}(a)\phi(ag).
\end{equation}

We say that an automorphic function $\phi$ on $[G]_\emptyset$ is $B$-finite if it is a sum of generalized eigenfunctions under the $[A]$-action. The set of \emph{exponents} of $\phi$ is the multiset of its $[A]$-characters (under the above normalized action). For an idele class character $\chi$ of $B(\adele)$, we let $\Re(\chi)=\Re(s)$, where $s \in \CC$ is such that $\chi \delta^{-s}$ is unitary.

 We say that it an automorphic function $\phi$ on $[G]_\emptyset$ is \emph{asymptotically $B$-finite} if  
 in a neighborhood of the cusp it coincides, up to a rapidly decaying function, with a $B$-finite function. The \emph{exponents} of $\phi$ are the exponents of the latter.

We say that an automorphic function $\phi$ on $[G]$ is \emph{asymptotically $B$-finite} if on a Siegel neighborhood it coincides, up to a rapidly decaying function, with a $B$-finite function on $[G]_\emptyset$; its exponents are the exponents of the latter.  
 For a multiset $M$ of exponents, we will be denoting by $\mathcal S^+_{M} ([G])$ the corresponding space of asymptotically $B$-finite functions; approximation by the constant term (Theorem \ref{constantterm}) implies that $\mathcal S^+_{M} ([G])$ is $G(\adele)$-invariant. 

If we fix the level, i.e., if we fix a compact open subgroup $J$ of the finite adeles of $G$ (or, more generally, if we fix it outside of a finite number $S$ of places including the Archimedean ones, so that $J$ is a subgroup of the adeles of $G$ outside of $S$), the space $\mathcal S^+_M ([G])^J$ has a natural Fr\'echet space structure and, more precisely, these spaces form a Fr\'echet bundle over the complex manifold of ordered sets $(\chi_i)_i$ of exponents (possibly with multiplicities). For simplicity, we only describe this structure along one-dimensional families of the form $(\chi_1, \dots, \chi_r)\cdot \delta^s$, where $s$ varies in $\CC$, and we implicitly fix the level throughout. For a given ordered $r$-tuple $(\chi_1,\dots, \chi_r)$, denote by $[\chi_i]_{i=1}^r$ (or, for simplicity, by $[\chi_i]$) the corresponding unordered multiset, and let $I([\chi_i]_{i=1}^r)$ denote the generalized principal series\footnote{The notion of ``generalized'' principal series here is defined with respect to the $[A]$-action, not the $A(\adele)$-action. That is, its elements are smooth functions on $[G]_\emptyset$, and generalized $[A]$-eigenfunctions with a given multiset of exponents. If the multiset contains a single element $\chi$, of course, this is nothing but the principal series representation $I(\chi)$.} with the given multiset of exponents. The association $(\chi_1,\dots,\chi_r)\mapsto I([\chi_i]_{i=1}^r)$ naturally forms a Fr\'echet bundle over the space of exponents, which can be trivialized by restricting to an orbit of a good maximal compact subgroup $K$ of $G(\adele)$.  
There is a natural map: $\mathcal S^+_{[\chi_i]} ([G])\ni \phi \mapsto \phi_{[\chi_i]} \in I([\chi_i])$ obtained by extracting from the constant term the (uniquely defined) element of $I([\chi_i])$ to which it is asymptotically equal in a neighborhood of the cusp, and this gives an embedding
\begin{equation}\label{Bfinitetopology} \mathcal S^+_{[\chi_i]} ([G])\to \mathcal S(\mathscr S) \oplus I([\chi_i]),
\end{equation}
where $\mathscr S$ is a Siegel domain and the map to the first summand is $\phi\mapsto \phi-\phi_{[\chi_i]}$. We postulate that this embedding is closed, thus defining the aforementioned ``natural'' Fr\'echet space structure on $\mathcal S^+_{[\chi_i]} ([G])$.

The image of this map is the subspace of elements $(\phi, f)$ such that $\phi+f$ descends to a function on $[G]$, and if we fix an element $f_\delta\in I(\delta)$ which is positive as a function, then the map: $(\phi,f)\mapsto (\phi \cdot f_\delta^s, f \cdot f_\delta^s)$ identifies those spaces along the family, i.e., we do indeed have a Fr\'echet bundle.

\subsubsection{Regularized integral}

We will define a regularized integral
$$\int^*_{[G]} \phi(g) dg$$
for asymptotically $B$-finite functions on $[G]$ whose exponents do not include $\delta^\frac{1}{2}$. It will, in fact, be a meromorphic family of $G(\adele)$-invariant functionals on the Fr\'echet bundle consisting of the spaces  $\mathcal S^+_{[\chi_i\delta^s]} ([G])$ as $s$ varies. For $\Re(s)\ll 0 $ it is simply the integral of $\phi$ over $[G]$. 

\begin{lemma}
For every holomorphic section:
$$ s\mapsto \phi_s \in \mathcal S^+_{[\chi_i\delta^s]} ([G])$$
the integral
$$ \int_{[G]} \phi_s(g) dg,$$
which converges for $\Re(s)\ll 0$, admits meromorphic continuation, with poles of order at most equal to the multiplicity of $\delta^\frac{1}{2}$ among the exponents $\chi_i\delta^s$, and is bounded by polynomial seminorms on the spaces $\mathcal S^+_{[\chi_i\delta^s]}$.
\end{lemma}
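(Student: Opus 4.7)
My plan is to split the integral into a piece supported on a compact region (modulo $G(k)$) and a piece supported near the cusps, and reduce the latter, via Theorem \ref{constantterm}, to an elementary Mellin integral. Fix representatives for the finitely many cusps and small Siegel neighborhoods $\mathscr{S}'$ satisfying \eqref{Siegelproperty}, and let $\kappa$ be a smooth cutoff on $[G]$ supported on the union of the images of these $\mathscr{S}'$ and equal to $1$ on a smaller neighborhood of each cusp. Write
$$ \int_{[G]} \phi_s(g)\, dg \;=\; \int_{[G]} (1-\kappa)(g)\, \phi_s(g)\, dg \;+\; \int_{[G]} \kappa(g)\, \phi_s(g)\, dg.$$
The first summand is an integral over a set compact modulo $G(k)$, so it converges for every $s$, is holomorphic in $s$, and is bounded by a continuous seminorm on $\mathcal{S}^+_{[\chi_i\delta^s]}([G])$.

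For the cuspidal piece, use the closed embedding \eqref{Bfinitetopology} to split $\phi_s = f_s + \phi_{s,[\chi_i\delta^s]}$ on each $\mathscr{S}'$, with $f_s \in \mathcal{S}(\mathscr{S}')$ rapidly decaying and $\phi_{s,[\chi_i\delta^s]} \in I([\chi_i\delta^s])$ the $B$-finite part, both depending holomorphically on $s$ and with polynomial seminorm bounds in vertical strips. The contribution of $\kappa\cdot f_s$ is absolutely convergent for every $s$, again with polynomial seminorm bounds.

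It remains to handle $\int_{[G]} \kappa(g)\, \phi_{s,[\chi_i\delta^s]}(g)\, dg$. Using the Iwasawa decomposition $G(\adele) = N(\adele)A(\adele)K$ with $dg = \delta^{-1}(a)\, dn\, da\, dk$ and the transformation law $\phi_{s,[\chi_i\delta^s]}(nak) = \delta^{\frac{1}{2}}(a) \sum_i (\chi_i\delta^s)(a)\, v_i(k)$ (plus logarithmic terms in $a$ if some exponents coincide), unfolding the integral over $[G]$ near the cusp reduces this to a finite sum of terms
$$ c_i(s)\, \int_{[A],\,|a|>T}\!\! (\chi_i\delta^{s-\frac{1}{2}})(a)\, d^\times a,$$
where the $c_i(s)$ are integrals of $\kappa$ against the $v_i(k)$ (and $N_0$-integrals), holomorphic in $s$ with polynomial growth on vertical strips. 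Factoring $[A] = [A]^1 \times \RR^\times_+$ and using that the character integral over the compact group $[A]^1$ vanishes unless $\chi_i\delta^{s-\frac{1}{2}}$ is trivial on $[A]^1$, each Mellin integral reduces to $\int_T^\infty t^{s-s_0-\frac{1}{2}}\frac{dt}{t}$, which has a simple pole exactly at $\chi_i\delta^{s_0}=\delta^{\frac{1}{2}}$. When $k$ exponents coincide at $s_0$, generalized principal series introduce factors of $(\log t)^{k-1}$ and the Mellin integral has a pole of order $k$, matching the stated multiplicity.

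The main obstacle is the bookkeeping around coincident exponents: one must verify that the logarithmic asymptotics of generalized principal series, combined with the holomorphic variation of the decomposition \eqref{Bfinitetopology}, produce exactly the claimed order of pole and preserve polynomial growth in vertical strips. Once the explicit Mellin evaluations are in hand, however, the bounds are elementary, and polynomial growth of $c_i(s)$ follows from the polynomial seminorm bound on the section $s\mapsto \phi_{s,[\chi_i\delta^s]}$ built into the Fr\'echet bundle structure.
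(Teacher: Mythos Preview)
Your proof is correct and follows essentially the same approach as the paper: split off a compactly supported piece, replace $\phi_s$ near the cusp by its $B$-finite part via \eqref{Bfinitetopology} (the paper phrases this as approximation by the constant term via Theorem~\ref{constantterm}), and then observe that the remaining integral on $[\mathscr{S}']\simeq B(k)\backslash G(\adele)$ (using \eqref{Siegelproperty}) is an elementary Mellin-type integral with the stated poles. The paper's proof is simply a terser version of yours, compressing your explicit Iwasawa/Mellin computation into the phrase ``clearly has the properties of the lemma.''
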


This meromorphically continued integral will be denoted by $\int^*_{[G]} \phi_s(g) dg$. The notion of polynomial seminorms (always: in bounded vertical strips) makes sense since the underlying Fr\'echet spaces have been identified as above for all values of $s$. 

\begin{proof}
 It suffices to choose a Siegel neighborhood $\mathscr S'$ and show the same statement for the integral restricted to the image of $\mathscr S'$ in $[G]$ (let us denote it by $[\mathscr S']$). Then we may replace $\phi_s$ by the eigen-part $\phi_{s,[\chi_i\delta^s]}$ of its constant term (notation as around \eqref{Bfinitetopology})  on $\mathscr S'$, because their difference is of rapid decay, locally uniformly in $\Re(s)$ (Theorem \ref{constantterm}), and bounded by ``constant'' seminorms on $\mathcal S^+_{[\chi_i\delta^s]}$ (by definition of the Fr\'echet bundle structure). By \eqref{Siegelproperty}, we may think of $[\mathscr S']$ as a neighborhood of the cusp in $B(k)\backslash G(\adele)$, and the integral of $\phi_{s,[\chi_i\delta^s]}$ there clearly has the properties of the lemma. 
\end{proof}

We use this integral now to define a \emph{regularized inner product} of two asymptotically $B$-finite functions on $[G]$: 
\begin{equation}
 \left< \phi_1,\phi_2\right>^*_{[G]} := \int^*_{[G]} \phi_1(g) \phi_2(g) dg.
\end{equation}
It makes sense as long as no pair of exponents $(\chi_1,\chi_2)$ of $\phi_1$ and $\phi_2$, respectively, satisfies: $\chi_1\cdot \chi_2 =1$, and it is $G(\adele)$-invariant. Notice that our pairings $\left< \,\, \right>$ will all be bilinear, instead of hermitian; in particular, for a unitary representation $\pi$ we will denote by $\left< \, , \, \right>_\pi$ the bilinear pairing between $\pi$ and $\bar\pi$.

\subsubsection{Plancherel decomposition of the regularized inner product}

We will now obtain a spectral decomposition of this regularized inner product, similar to the Plancherel decomposition for the inner product of two $L^2$-functions. For simplicity, we restrict ourselves from now on to the case of interest, i.e., elements of the space $\mathcal S^+_{\delta^\frac{1}{2}}([G])$, (those which only have a simple exponent equal to $\delta^{\frac{1}{2}}$) -- in particular, the exponent of the product of two such functions is $\delta^\frac{3}{2}$, so their regularized inner product is defined.

To formulate the ``Plancherel decomposition'', recall first the actual Plan\-cherel decomposition for the inner product of, say, two rapidly decaying functions on $[G]$:
\begin{equation}\label{Planchauto} \left<\phi_1,\phi_2\right>_{([G])} = \int_{\hat G^\aut_\Ram} \left< \hat\phi_1(\pi),\hat\phi_2(\bar\pi)\right>_\pi d\pi + \sum_{\chi\in \widehat{G^\ab}^\aut} \hat\phi_1(\chi)\hat\phi_2(\chi^{-1}).
\end{equation}

Here the notation is as follows: First of all, for an automorphic character $\chi$ of $G$, we set:
$$ \hat \phi(\chi) = (\Vol[G])^{-\frac{1}{2}} \int_{[G]}\phi(g)\chi^{-1}(g) dg.$$

Recall that elements of $\hat G^\aut_\Ram$ are by definition unitary representations. Fixing a Plancherel measure $d\pi$ on this set (of course, it is natural to choose it to be counting measure on the cuspidal spectrum, but this will not make a difference for us here) gives rise to morphisms: $\phi\mapsto \hat\phi(\pi) \in \pi$, unique up to scalars of norm one, which make the above formula hold if we set $\hat\phi(\bar\pi) = \hat{\bar\phi}(\pi)$ (considered as an element of $\bar \pi$). These morphisms can be explicated in terms of inner products with cusp forms or unitary Eisenstein series: for $v \in \pi$ we have 
$$\left<\phi(\pi),\overline{v}\right>_\pi = \int_{[G]}\phi(g) \overline{\ell(v)(g)} dg,$$
where $\ell:\pi\to C^\infty([G])$ is some embedding depending on the choice of Plancherel measure. The automorphic forms $\ell(v)$ are either cusp forms (and hence of rapid decay) or unitary Eisenstein series (and hence with normalized exponents of absolute value $1$). 

This leads to the following observation:
\emph{Replacing inner products by the regularized inner products that we defined before, the maps $\phi\mapsto \hat\phi(\pi)\in \pi\in \hat G^\aut_\Ram$ and $\phi\mapsto \hat \phi(\chi)$ make sense for $\phi\in \mathcal S^+_{\delta^\frac{1}{2}}([G])$, except when $\chi=1$.}
This is easily seen by checking the exponents of the inner products. 

We are now ready to extend the Plancherel decomposition to $\mathcal S^+_{\delta^\frac{1}{2}}([G])$:

\begin{proposition}\label{Plancherel-Bfinite} 
Let $\phi_1,\phi_2 \in \mathcal S_{\delta^\frac{1}{2}}^+([G])$, unramified outside a finite set of places $S$. Then the integral:
\begin{equation}\label{PBfeq}\int_{\hat G^\aut_\Ram} \left<\hat \phi_1(\pi),\hat\phi_2(\bar\pi)\right>_\pi d\pi
\end{equation}
is absolutely convergent, and the difference:
\begin{eqnarray}\label{difference2} \left< h\star \phi_1,\phi_2\right>^*_{[G]} - \int_{\hat G^\aut_\Ram} \left<\hat h(\pi) \cdot \hat \phi_1(\pi),\hat\phi_2(\bar\pi)\right>_\pi d\pi \nonumber\\
 - \sum_{\chi\in \widehat{G^\ab}^\aut, \chi\ne 1} \hat h(\chi) \hat \phi_1(\chi)\hat \phi_2(\chi^{-1}),\end{eqnarray}
as a functional of $h\in \mathcal H^S$,
is an evaluation of order $3$ (s.\ \S \ref{notation-spectral}) at $\CC_1$.
\end{proposition}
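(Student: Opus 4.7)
The plan is to prove the proposition by analytic continuation of the standard Plancherel formula. I would embed $\phi_i$ in a holomorphic family $s\mapsto\phi_{i,s}\in\mathcal{S}^+_{\delta^{1/2+s}}([G])$ with $\phi_{i,0}=\phi_i$, so that for $\Re(s)\ll 0$ the $\phi_{i,s}$ are rapidly decaying and in particular in $L^2([G])$, and the formula \eqref{Planchauto} applies as a literal identity with no evaluation correction. The goal is then to continue both sides meromorphically to $s=0$ and identify the discrepancy \eqref{difference2} as a residue concentrated at the point $\CC_1\in U^S$, where the trivial character of $G$ and the unitary principal series at the reducibility point coincide in the Satake parameter space.

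To make this concrete I would decompose $\phi_{i,s}=\phi_{i,s}^{\flat}+\psi_{i,s}$, where $\phi_{i,s}^{\flat}\in\mathcal{S}([G])$ is rapidly decaying and $\psi_{i,s}=\Theta(\kappa\cdot f_{i,s})$ is a pseudo-Eisenstein-type object formed from a holomorphic section $f_{i,s}$ of the principal series $I(\delta^{1/2+s})$, smoothly cut off in a Siegel neighborhood by $\kappa$ and then summed by the theta map $\Theta$, chosen so as to match the asymptotic datum of $\phi_{i,s}$. The standard Plancherel formula applies to $\phi_{i,s}^{\flat}$, and for $\psi_{i,s}$ the Maass-Selberg relations provide an explicit spectral decomposition. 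The projection $\hat\phi_{i,s}(\pi)$ onto a continuous-spectrum point $\pi=I(\delta^{iy})$ is a regularized inner product $\left<\phi_{i,s},E(\cdot,\delta^{iy})\right>^*$, meromorphic in $(s,y)$ with a simple pole along the divisor $iy+s=0$ whose residue is proportional to $\hat\phi_{i,s}(1)$. Absolute convergence of \eqref{PBfeq} at $s=0$ follows from this analysis: the pairing $\left<\hat\phi_{1,0}(\pi),\hat\phi_{2,0}(\bar\pi)\right>_\pi$ on $y\in\mathbb{R}$ has at worst a double pole at $y=0$, which becomes integrable against the Plancherel density once the singular kernel coming from the discrete $\chi=1$ contribution is subtracted (which is exactly how the statement is phrased).

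The identification in \eqref{difference2} then follows by Laurent-expanding at $s=0$ the three spectral phenomena that all project to the same Satake point $\CC_1$: the simple pole at $s=0$ of the trivial-character summand $\hat\phi_{1,s}(1)\hat\phi_{2,s}(1)$ in the discrete sum, the simple pole of the continuous-spectrum integral at the reducibility point $\chi=\delta^{1/2+s}$, and the Maass-Selberg boundary term arising from $\left<\psi_{1,s},\psi_{2,s}\right>^*$. Each of these is $\mathcal{H}^S$-linear through evaluations and derivatives of $\hat h$ along the family $u\mapsto\delta^{1/2+u}$; combining them with the regularity of the left-hand side at $s=0$ (which follows from the lemma preceding the proposition) yields a finite residual functional in the span of $\hat h(\CC_1)$, $\partial_u\hat h|_{u=0}$, and $\partial_u^2\hat h|_{u=0}$, i.e.\ an evaluation of order $3$.

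The main obstacle is the precise bookkeeping of these three singular contributions at $\CC_1$. Bounding the order of the evaluation by $3$ relies on the fact that the residue of the Eisenstein series at the reducibility point is exactly the constant function, with no higher-order structure --- a specific feature of $\PGL_2$ and of its rank-one intertwining operator --- and that the Maass-Selberg regularization contributes no additional singularity at second order. Checking that the order is indeed exactly $3$ (rather than less) amounts to showing that the three singular sources are linearly independent as functionals of $\hat h$, which requires an explicit computation with the rank-one intertwining operator at the reducibility point.
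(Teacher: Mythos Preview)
Your strategy of deforming the exponent and analytically continuing the Plancherel formula is reasonable in principle, but the paper takes a much shorter, purely Hecke-algebraic route that avoids Maass--Selberg bookkeeping altogether. The key observation is that it suffices to show that the functional $L(h)$ defined by \eqref{difference2} vanishes whenever $\hat h$ vanishes to order $3$ at $\CC_1$. To arrange this, the paper first passes (by continuity) to the dense subspace where $\phi_1=h_1\star\phi_1'$ with $\phi_1'\in\mathcal S^+_{[\delta^{1/2},\,\delta^{1/2}]}([G])$ having \emph{double} exponent and $\hat h_1(\CC_1)=0$; then by $G$-invariance $\langle\phi_1,\phi_2\rangle^*=\langle\phi_1',\phi_2'\rangle^*$ with $\phi_2':=h_1^\vee\star\phi_2\in\mathcal S([G])$ already rapidly decaying. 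One then checks, by pairing $\phi_1'$ against the Eisenstein family $\mathcal E(f_r)$ near $r=0$ and counting the order of the pole (at most $3$, since $\phi_1'$ has double exponent), that any $h$ with $\hat h$ vanishing to order $3$ at $\CC_1$ sends $\phi_1'$ into $\mathcal S([G])$ with vanishing trivial-character component. For such $h$ the ordinary Plancherel formula \eqref{Planchauto} applies verbatim to $\langle h\star\phi_1',\phi_2'\rangle$, and one reads off $L(h)=0$. Absolute convergence of \eqref{PBfeq} then follows by choosing one such $h$ with $\hat h$ bounded away from zero on $\hat G^\aut_\Ram$.

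Compared with your outline, this sidesteps the residue calculus entirely: rather than tracking poles of $\hat\phi_{i,s}(\pi)$ and Maass--Selberg boundary terms as $s\to 0$, the Hecke action simply kills the offending exponent and reduces everything to the rapidly-decaying case at $s=0$. Two further remarks: first, ``evaluation of order $3$'' means a functional in the span of $\hat h(\CC_1)$, $\partial\hat h(\CC_1)$, $\partial^2\hat h(\CC_1)$, so you do not need---and the paper does not attempt---to show all three occur with nonzero coefficient; your final paragraph is unnecessary. Second, your singular divisor $iy+s=0$ meets the real locus $(s,y)\in\RR^2$ only at the origin, so on the unitary axis the projections $\hat\phi_{i,0}(\pi)$ are in fact regular away from the single Satake point $\CC_1$; the convergence of \eqref{PBfeq} is not a matter of subtracting an integrable singularity but is deduced, as above, by a Hecke twist.
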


In the above expression the representations and characters which are ramified outside of $S$ (for which the Satake transform $\hat h(\pi)$, $\hat h(\chi)$ does not make sense) can, of course, be ignored since their contribution is zero.

\begin{proof} 

Let us denote by $L$ the difference \eqref{difference2}, considered as a functional of $h\in \mathcal H^S$. 

By continuity of both sides, it is enough to prove the statement for $\phi_1$ in the dense subspace of functions which can be represented as $h_1\star \phi_1'$, where $\phi_1'$ is asymptotically $B$-finite with \emph{double} exponent $\delta^\frac{1}{2}$, and $\hat h_1 (\CC_1)=0$. By invariance:
$$\left< \phi_1,\phi_2\right>^* = \left<\phi_1',h_1^\vee\star \phi_2\right>^*.$$
So, it is enough to show that the proposition for $\phi_1$ replaced by $\phi_1'$ and $\phi_2$ replaced by $\phi_2':=h_1^\vee\star\phi_2$, which is of rapic decay (i.e., $\phi_2' \in \mathcal S([G])$). The definitions still make sense, and obviously
$$\left<\hat h(\pi) \cdot \hat\phi'_1(\pi),\hat\phi'_2(\bar\pi)\right>_\pi = \left<\hat h(\pi) \cdot \hat \phi_1(\pi),\hat\phi_2(\bar\pi)\right>_\pi,$$
again by invariance (and similarly for the Gr\"ossencharacters).

We claim that for $\phi\in \mathcal S^+_{[\delta^\frac{1}{2}, \delta^\frac{1}{2}]}([G])$ and $h\in \mathcal H^S$ such that the evaluations of order $3$ of $\hat h$ at $\CC_1$ vanish, the function $h\star \phi$ belongs to $\mathcal S([G])$ and has integral zero over $[G]$. Indeed, the order of the exponent $\delta^\frac{1}{2}$ is equal to the maximal order at $r=0$, minus one, of the pole of an integral:
$$ \int_{[G]}^* \phi(g) \mathcal E(f_r) (g) dg,$$
where $r\mapsto f_r\in I(\delta^{r-\frac{1}{2}})$ is an analytic section (which can be assumed unramified outside of $S$) and $\mathcal E$ denotes its Eisenstein series. In particular, for $\phi\in \mathcal S^+_{[\delta^\frac{1}{2}, \delta^\frac{1}{2}]}([G])$ this order is at most $3$, and therefore if $\hat h$ vanishes to order $2$ at $\CC_1$ then
$$ \int_{[G]}^* h\star \phi(g) \mathcal E(f_r) (g) dg = \int_{[G]}^* \phi(g) \mathcal E(h^\vee\star f_r) (g) dg = $$
$$ = \hat h(\delta^{\frac{1}{2}-r}) \int_{[G]}^* \phi(g) \mathcal E(f_r) (g) dg $$
has at most a simple pole. 

The contribution of the trivial representation to $h\star \phi$ is the residue of such an integral when $f_r$ is chosen so that $\mathcal E(f_r)$ has constant residue $1$, therefore if, in addition, $\hat h$ vanishes of order $3$ then this contribution is zero. This proves the claim.

This being so, we can apply the usual Plancherel decomposition \eqref{Planchauto} to the inner product $\left< h\star\phi_1', \phi_2'\right>$, when $\phi_1', \phi_2'$ are as above and $\hat h$ vanishes to triple order at $\CC_1$, and we see that the integral in the definition of $L(h)$ is in this case absolutely convergent, and equal to zero. Such an $h$ can be chosen so that $\hat h(\pi)$ is bounded below on $\hat G^\aut_\Ram$, and therefore \eqref{PBfeq} is absolutely convergent. Thus, $L(h)$ is well-defined for every $h$, and its vanishing on those $h$ that vanish of order $3$ at $\CC_1$ shows that it is an evaluation of order $3$ at $\CC_1$.
\end{proof}

\subsection{The torus RTF}

For this subsection, for a $T$-torsor $\alpha$ over $k$, let $Y^\alpha=T^\alpha\backslash G^\alpha$ and $\Sigma: \mathcal S(Y^\alpha(\adele))\to C^\infty([G^\alpha])$ be as before:
$$ \Sigma\Phi(g) = \sum_{\gamma\in Y^\alpha(k)} \Phi(\gamma g)$$
(or the analogous functional in two variables, i.e., from $S(Y^\alpha\times Y^\alpha (\adele))$ to $C^\infty([G^\alpha]\times [G^\alpha])$). The sum converges locally uniformly in the variable $g$, thus indeed it represents a continuous morphism into $C^\infty([G])$. By Lemma \ref{lemmafunctors}, it actually maps continuously into a subspace $V_r^\infty$ of smooth vectors of moderate growth -- notation as in Theorem \ref{constantterm}.

In the split case, we fix a Borel subgroup of $G^\alpha=G$ containing $T$, $B=TN$, and let $\mathring Y$ denote the open $B$-orbit on $Y=Y^\alpha$; in the nonsplit case, we let $\mathring Y=Y^\alpha$.

\begin{proposition}\label{constantterm-torus} In the split case, the constant term of $\Sigma\Phi$ is given by:
\begin{eqnarray} \label{constanttermeq} (\Sigma\Phi)_N (g) = \left(\int_{N(\adele)}\Phi(T\cdot ug) du + \int_{N(\adele)} \Phi(Twug) du\right) + \nonumber \\
 \sum_{\gamma\in (\mathring Y\sslash N)(k)} \int_{N(\adele)} \Phi(\gamma ug) du. 
\end{eqnarray}
The term in brackets will be denoted by $(\Sigma\Phi)_{N,\delta}$; it belongs to $I_B^G(\delta^\frac{1}{2})$. The second term will be denoted by $(\Sigma\Phi)_{N,\rest}$, and it is rapidly decaying in a neighborhood of the cusp. 

In the nonsplit case, the constant term of $\Sigma\Phi$ is equal to $(\Sigma\Phi)_{N,\rest}$, and is rapidly decaying in a neighborhood of the cusp. 
\end{proposition}

\begin{proof}
The computation of the constant term in the above form, and the fact that $(\Sigma\Phi)_{N,\delta} \in I_B^G(\delta^\frac{1}{2})$, are immediate. 

The fact that $(\Sigma\Phi)_{N,\rest}$ is of rapid decay for $\delta(b)$ large follows from the fact that eventually, for $\delta(b)$ large, the points of $(\mathring Y\sslash N)(k)$ avoid any compact subset of $(Y\sslash N)(\adele)$.
\end{proof}

By Theorem \ref{constantterm}, this implies:

\begin{corollary}\label{constantterm-torus-cor}
The map $\Sigma$ represents a continuous morphism
$$ \Sigma: \mathcal S(Y^\alpha(\adele))\to \mathcal S([G^\alpha])$$
when $T$ is non-split, and a continuous morphism
$$ \Sigma: \mathcal S(Y(\adele))\to \mathcal S^+_{\delta^\frac{1}{2}}([G])$$
when $T$ is split.
\end{corollary}

\begin{proof}
 When $\alpha$ is a non-trivial torsor, then $[G^\alpha]$ is compact and the statement is equivalent to the continuity of $\Sigma$ as a map into $C^\infty([G^\alpha])$. We now assume that $\alpha$ is trivial, so $G^\alpha = G$.

 Recall that by \eqref{Bfinitetopology}, the topology on $\mathcal S^+_{[\chi_i]}([G])$ is determined by the asymptotics map $\phi\mapsto \phi_{[\chi_i]}$ to the generalized principal series $I([\chi_i])$, and by the difference $\phi - \phi_{[\chi_i]}$, which lives in the Schwartz space $\mathcal S(\mathscr S)$ of a Siegel domain. 

 In the case at hand, for $\phi=\Sigma\Phi$, the multiset of exponents $[\chi_i]$ consists of $\delta^\frac{1}{2}$ or is empty, and the difference of $\phi_{[\chi_i]}$ from the constant term $\phi_N$ is equal to the last term $(\Sigma\Phi)_{N,\rest}$ of \eqref{constanttermeq}, which when pulled back to a Siegel domain clearly represents a continuous morphism
$$\mathcal S(Y(\adele))\to \mathcal S(\mathscr S).$$

By Theorem \ref{constantterm} and continuity of the morphism 
$\mathcal S(Y(\adele))\to V_r^\infty$ (Lemma \ref{lemmafunctors}), this implies that the map $\Phi\mapsto (\phi - \phi_{[\chi_i]}) \in \mathcal S(\mathscr S)$ is continuous.

For the split case, the map $\mathcal S(Y(\adele)) \to I(\delta^\frac{1}{2})$ is given by the term $(\Sigma\Phi)_{N,\delta}$ of \eqref{constanttermeq}, which again is continuous.
\end{proof}

Now we will explicate the functional $\RTF$ using the regularized integral and inner product that we defined; recall that the functional $\RTF$ is defined in \eqref{RTFdef}. Let $f\in \mathcal S(\mathcal Z(\adele))$, and choose a lift  $\Phi \in \oplus_v' \oplus_\alpha \mathcal S(Y^\alpha(k_v) \times Y^\alpha(k_v))$ according to \S \ref{sslifts}. According to Proposition \ref{constantterm-torus}, the function $\Sigma\Phi$ on $\sqcup_\beta [G^\beta]\times[G^\beta]$ ($\beta$ running over global $T$-torsors) is of rapid decay in the nonsplit case, and asymptotically $B$-finite with simple exponent $\delta^\frac{1}{2}$ in both variables in the split case. Thus, the regularized integral $\left< \Sigma\Phi\right>^*$ over 
the diagonal 
of $\sqcup_\beta [G^\beta]$ makes sense, and we are now ready to express the $\RTF$ in terms of that. 

\begin{proposition}\label{RTF-regularizedip}
 In the above setting:
 $$ \RTF(f) = \left< \Sigma\Phi\right>^*.$$
\end{proposition}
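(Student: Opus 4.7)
The plan is to unfold the regularized diagonal inner product
$$\langle \Sigma\Phi\rangle^* \;=\; \sum_\beta \int^*_{[G^\beta]} \Sigma\Phi(g,g)\,dg$$
by collapsing the double sum defining $\Sigma\Phi$ and grouping pairs $(\gamma_1,\gamma_2)\in (Y^\beta\times Y^\beta)(k)$ into $G^\beta(k)$-orbits. The stable orbits are parametrized by $\xi\in\mathcal{B}(k)=\mathbb{A}^1(k)$: for $\xi\in\mathcal{B}^\reg(k)$ the stable orbit breaks into rational orbits on the various pure inner forms $Y^\beta$, and the push-forward of $\Phi$ (integrated over the stabilizer orbits, with the Tamagawa-compatible measures fixed in \S\ref{sslifts}) gives exactly $f(\xi)$ by the very definition of the Schwartz space $\mathcal{S}(\mathcal{Z}(\adele))$ as the space of such global orbital integrals. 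The fibers over $\xi=0$ and $\xi=-1$ are the closed orbits, whose stabilizers contain copies of $T$ (for the various torsors).

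For the regular orbits this immediately produces the main term $\sum_{\xi\ne 0,-1} f(\xi)$ of $\RTF(f)$. Absolute convergence is automatic in the nonsplit case (compact $[T^\beta]$), and in the split case it is covered by Proposition \ref{convergence2}, which already showed that $\sum_{\xi\in\mathcal{B}^\reg(k)} f(\xi)$ converges for elements of $\mathcal{S}(\mathcal{Z}(\adele))$. For the closed orbits, unfolding produces a contribution that, in each case, is a sum over $T$-torsor classes of volume-weighted evaluations of (a lift of) $\Phi$ at the orbit's rational point. In the nonsplit case we invoke exactly the Fourier-theoretic manipulation of \S\ref{indirect-nonsplit}: after decomposing over $\check T^\Gamma$, the global factor $\tfrac{|\ker^1(k,T)|}{|\check T^\Gamma|}\Vol([T])\prod'_v|H^1(k_v,T)|/\AvgVol(T_v)=1$ of \eqref{torusvolume} identifies the total closed-orbit contribution with $\tilde O_0(f)+\tilde O_{-1}(f)$ as defined in \eqref{defzerononsplit}.

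The main obstacle is the split case, where $\Sigma\Phi$ is only asymptotically $B$-finite with simple exponent $\delta^{1/2}$ (Proposition \ref{constantterm-torus}), so both the ``inner product'' and the orbital unfolding near $\xi=0,-1$ diverge individually and must be made sense of simultaneously. The plan is to introduce the parameter $t$ of \S\ref{ssbabySchwartz}--\S\ref{basicfnassumptions}, viewing the given $f$ as the specialization at $t=0$ of an analytic section $t\mapsto f^t\in\mathcal{S}_1^t$; for $\Re(t)\gg 0$ all terms are absolutely convergent and the identification $\sum_{\xi\ne 0,-1}f^t(\xi)+(\text{closed-orbit terms at }t)=\langle\Sigma\Phi^t\rangle$ is a genuine equality of numbers, where $\Phi^t$ is a $t$-deformed lift. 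The regularized integral $\langle\cdot\rangle^*$ is, by construction, the analytic continuation at $t=0$ of the right-hand side, while the analytic continuation of the left-hand side is precisely $\sum_{\xi\ne 0,-1}f(\xi)+\tilde O_0(f)+\tilde O_{-1}(f)$: this last identity is the content of Proposition \ref{irregOI} and Lemma \ref{irregularsplit}, applied separately at the two irregular points (where the local germs have exactly the baby-case shape, with the roles of $t$ and $2s$ playing the parts of the deformation parameter). Matching the two continuations at $t=0$ yields the proposition.
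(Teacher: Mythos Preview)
Your outline is on the right track for the nonsplit case: there $\Sigma\Phi$ is rapidly decaying, the diagonal integral is genuine, the unfolding into $G^\beta(k)$-orbits on $(Y^\beta\times Y^\beta)(k)$ is legitimate, and the closed-orbit contributions at $\xi=0,-1$ do match $\tilde O_0(f)$, $\tilde O_{-1}(f)$ via the volume identity \eqref{torusvolume}. This is essentially what the paper does (phrased via Lemma~\ref{regT}).

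The gap is in the split case. You introduce a ``$t$-deformed lift $\Phi^t$'' without saying what it is. The spaces $\mathcal S_1^t$ of \S\ref{ssbabySchwartz}--\S\ref{ssaxiomslocal} are defined purely on the base $\mathcal B$, by prescribed germs at the irregular points; for $t\ne 0$ they are \emph{not} realized as $G$-coinvariants of any Schwartz space on $Y\times Y$, so there is no natural lift $\Phi^t$ upstairs whose orbital integrals produce $f^t$. More seriously, you then assert that ``$\langle\cdot\rangle^*$ is, by construction, the analytic continuation at $t=0$ of $\langle\Sigma\Phi^t\rangle$''. But $\langle\cdot\rangle^*$ is defined (see the lemma preceding \eqref{Bfinitetopology}) by analytic continuation in the \emph{exponent} parameter of $\mathcal S^+_{[\chi_i\delta^s]}([G])$, which is a deformation on $[G]$, not on $\mathcal B$. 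You have not shown that your $t$-deformation of the base lifts to an analytic family of asymptotically $B$-finite functions with exponents varying so as to match that definition. Without this bridge, neither the convergence of $\langle\Sigma\Phi^t\rangle$ for large $\Re(t)$ nor its identification with the regularized integral at $t=0$ is justified.

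The paper closes this gap by first reducing, by continuity and $G$-invariance, to the case $\Phi=\Phi_1\otimes(h\star\delta_{y_0})$; this collapses the problem to a single variable and turns $\RTF$ into a (regularized) integral over $[T]$ of $\Sigma\Phi_1$ (Lemma~\ref{regT}). The link with $\langle\cdot\rangle^*$ is then made explicit in Lemma~\ref{regpairing}: one deforms on $[T]$ via $f_i^t(a)=|a|^{\mp t}f_i(a)$ and checks directly that $\Sigma(h\star f_i^t)\in\mathcal S^+_{\delta^{1-t}}([G])$, so that the $[T]$-side analytic continuation and the $[G]$-side regularized pairing are literally the same limit. This one-variable reduction is the step your argument is missing.
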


To begin the proof of the proposition, let $\mathcal S(\mathcal Z_v)_\alpha$ denote the coinvariants of $\mathcal S(Y^\alpha(k_v) \times Y^\alpha(k_v))$ with respect to the diagonal $G^\alpha(k_v)$-action. 

\begin{lemma}\label{globaltorsors}
 If $f \in \otimes_v' \mathcal S(\mathcal Z_v)_{\alpha_v}$, and the collection $(\alpha_v)_v$ does not correspond to a $T$-torsor over $k$, then $\RTF(f)=0$.
\end{lemma}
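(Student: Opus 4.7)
\textbf{Strategy.} Recall that $\RTF(f) = \sum_{\xi\in k \smallsetminus\{0,-1\}} f(\xi) + \tilde O_0(f) + \tilde O_{-1}(f)$. The plan is to show each of the three summands vanishes independently when the collection $(\alpha_v)_v$ fails to lift to a class in $H^1(k,T)$. The regular sum vanishes for geometric reasons (fibers), while the two irregular terms vanish by Fourier orthogonality on the short exact sequence \eqref{Hasseprinciple}.

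\textbf{Regular contribution.} The key observation is that for $\xi \in \mathcal B^\reg(k_v)$, the $k_v$-fiber of the map $\sqcup_\alpha (Y^\alpha \times Y^\alpha)/G \to \mathcal B$ over $\xi$ is concentrated on a single local isomorphism class of torsors, namely the class of the stabilizer torsor $R_\xi^v \in H^1(k_v,T)$. Hence $\mathcal S(\mathcal Z_v) = \bigoplus_{\alpha_v \in H^1(k_v,T)} \mathcal S(\mathcal Z_v)_{\alpha_v}$, and for $f_v \in \mathcal S(\mathcal Z_v)_{\alpha_v}$ the function $f_v$ is supported on $\{\xi : R_\xi^v = \alpha_v\}$. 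For $\xi \in \mathcal B^\reg(k)$ the stabilizer $R_\xi$ is a \emph{$k$-rational} torsor, so its localizations $(R_\xi^v)_v$ do lift to a global torsor by construction. Thus if $(\alpha_v)_v$ does not come from a global torsor, $f(\xi) = \prod_v f_v(\xi) = 0$ for every $\xi \in \mathcal B^\reg(k)$, and the sum vanishes term by term.

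\textbf{Irregular contributions.} By the definition recalled in Section \ref{ssPoissonbaby} (the local behavior near $-1$ being analogous to that near $0$ in the baby case), one has
\[
 \tilde O_{0_v,\kappa}(f_v) = \frac{\AvgVol(T_v)}{|H^1(k_v,T)|} \sum_{\beta_v \in H^1(k_v,T)} \langle \beta_v,\kappa\rangle\, \Phi_v(O_{0,\beta_v}),
\]
so that for $f_v \in \mathcal S(\mathcal Z_v)_{\alpha_v}$ only the term $\beta_v = \alpha_v$ survives and
\[
\tilde O_{0_v,\kappa}(f_v) = \frac{\AvgVol(T_v)}{|H^1(k_v,T)|}\, \langle \alpha_v,\kappa\rangle\, \Phi_v(O_{0,\alpha_v}).
\]
Therefore $\tilde O_{0,\kappa}(f) = \prod'_v \tilde O_{0_v,\kappa}(f_v)$ factors as $\bigl(\prod_v \langle \alpha_v,\kappa\rangle\bigr)$ times a factor independent of $\kappa$, and summing over $\kappa \in \check T^\Gamma$ gives
\[
 \tilde O_0(f) = \sum_{\kappa \in \check T^\Gamma} \tilde O_{0,\kappa}(f) = \Bigl(\sum_{\kappa \in \check T^\Gamma} \prod_v \langle \alpha_v,\kappa\rangle\Bigr) \cdot (\text{$\kappa$-independent factor}).
\]
By \eqref{Hasseprinciple} (with $\ker^1(k,T)=1$), the image of $H^1(k,T) \to \prod'_v H^1(k_v,T)$ is precisely the kernel of the pairing with $\check T^\Gamma$. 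Orthogonality of characters on the finite abelian group $\check T^\Gamma$ therefore yields $\sum_\kappa \prod_v \langle \alpha_v,\kappa\rangle = 0$ whenever $(\alpha_v)_v$ is not in this image. The same argument, with $\kappa$ ranging over $\check T^\Gamma$ and the collection $(\alpha_v)_v$ now interpreted as torsors of the stabilizer of $-1$ (which is again $T$, up to conjugacy), gives $\tilde O_{-1}(f) = 0$. In the split case, the analogous argument applies, with the Laurent expansion \eqref{defzerosplit} separating into contributions indexed by characters of $H^1(k_v,T) = 0$, so the vanishing is trivial.

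\textbf{Main obstacle.} The essential step is the correct bookkeeping between the local decompositions $\mathcal S(\mathcal Z_v) = \bigoplus_{\alpha_v} \mathcal S(\mathcal Z_v)_{\alpha_v}$ and the expression of $\tilde O_{0_v,\kappa}$ as a Fourier transform on the finite group $H^1(k_v,T)$; once this is in place, everything reduces to Pontryagin duality for the short exact sequence \eqref{Hasseprinciple}. In practice this is routine, since the formulas are already essentially spelled out in \S\ref{indirect-nonsplit}, but one should verify compatibility of the split-case extension of $\tilde O_0, \tilde O_{-1}$ from \eqref{defzerosplit} with this decomposition, where the argument collapses because $H^1(k_v,T)$ is trivial.
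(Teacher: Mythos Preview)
Your proof is correct and follows essentially the same route as the paper: the regular part is handled by the geometric observation that the fiber over each $\xi\in\mathcal B^\reg(k)$ is a $k$-rational torsor, and the irregular part is handled by Fourier orthogonality on $\check T^\Gamma$ via the exact sequence \eqref{Hasseprinciple}. The paper's own proof is more terse---it simply points back to the computation in \S\ref{indirect-nonsplit} (the proof of Theorem \ref{Poissonbaby}) for the irregular terms---whereas you spell out the character-sum argument explicitly; but the content is the same. Two minor remarks: the direct-sum claim $\mathcal S(\mathcal Z_v)=\bigoplus_{\alpha_v}\mathcal S(\mathcal Z_v)_{\alpha_v}$ is stronger than what you actually use (you only need the support statement on the regular locus), and invoking $\ker^1(k,T)=1$ is unnecessary for this direction of the argument, since exactness of \eqref{Hasseprinciple} at $\prod'_v H^1(k_v,T)$ already gives that a collection not in the image pairs nontrivially with some $\kappa$.
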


\begin{proof}
 This is clear for the regular orbital integrals, i.e., the terms of \eqref{RTFdef} with $\xi\ne 0,-1$, each of which is an evaluation on $\otimes_v' \mathcal S(\mathcal Z_v)_{\alpha_v}$ for \emph{precisely one} $T$-torsor $\alpha$ over $k$.
 
 The proof of Theorem \ref{Poissonbaby} also shows that $\tilde O_0(f)$ (and $\tilde O_{-1}(f)$) are zero unless the support of a preimage of $f$ meets $Y^\alpha(\adele)\times Y^\alpha(\adele)$ for some \emph{globally defined} torsor $\alpha$.
\end{proof}

Given the previous lemma, it is enough for the proof of Proposition \ref{RTF-regularizedip}, by continuity, to restrict to the subspace spanned by functions of the form $\Phi(x,y)=\Phi_1(x) \Phi_2(y)$, where $\Phi_2 = h\star \delta_{y_0}$, where $\delta_{y_0}$ denotes a delta function at a $k$-point on $Y^\alpha$ (for some $\alpha$) (the notion of ``delta function'' depends on a choice of measure), and $h \in C_c^\infty(G(\adele)) dg$. In particular, by invariance we can denote:
$$ \RTF(\Phi) = \RTF(\Phi_1 , h\star \delta_{y_0}) = \RTF(h^\vee \star \Phi_1, \delta_{y_0}),$$
and vice versa every expression of the form  $\RTF(\Phi_1', \delta_{y_0})$ can be interpreted as $\RTF(\Phi_1,\Phi_2)$ for some $\Phi_1,\Phi_2$ as before, since by the Theorem of Dixmier and Malliavin $\Phi_1'=h\star \Phi_1$ for some $h, \Phi_1$.\footnote{At non-Archimedean places this is obvious when one is working with locally constant functions, but for ``almost smooth'' functions it requires a proof, which we omit.}

\begin{lemma}\label{regT}
Let $y_0\in Y^\alpha(k)$ (for some $\alpha$) with stabilizer $T$, choose a Haar measure on $T(\adele)$, hence on $Y^\alpha(\adele)$, by our standard choice of Tamagawa measure on $G(\adele)$, thus defining a ``delta function'' $\delta_{y_0}$. Let $\Phi\in \mathcal S(Y^\alpha(\adele))$. In the split case, $ \RTF(\Phi,\delta_{y_0})$ is equal to {the analytic continuation to $t=0$ of } 
\begin{equation}\label{intT}
\int_{[T]} \Sigma\Phi(a) \cdot \min(|a|,|a|^{-1})^t da,
\end{equation}
where $\Sigma\Phi(g)= \sum_{\gamma\in Y^\alpha(k)} \Phi(\gamma g)$ as before, and $|a|$ denotes the adelic absolute value after some identification of $T$ with $\Gm$ over $k$.

In the nonsplit case, it is equal to the integral of $\Sigma\Phi$ over $[T]$.
\end{lemma}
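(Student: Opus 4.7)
I would prove this by unfolding the integral along $T(k)$-orbits in $Y^\alpha(k)$. For $\Re(t)\gg 0$ in the split case (and with no regulator in the nonsplit case, where $[T]$ is compact), expanding $\Sigma\Phi(a) = \sum_{\gamma\in Y^\alpha(k)} \Phi(\gamma a)$ and exchanging sum and integral gives
\begin{equation*}
\int_{[T]} \Sigma\Phi(a) \min(|a|,|a|^{-1})^t\, da \;=\; \sum_{[\gamma]\in T(k)\backslash Y^\alpha(k)} \int_{T_\gamma(k)\backslash T(\adele)} \Phi(\gamma a) \min(|a|,|a|^{-1})^t\, da,
\end{equation*}
where $T_\gamma$ is the $T$-stabilizer of $\gamma$. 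Using $y_0$ to identify $Y^\alpha\simeq T\backslash G$, the orbits with $T_\gamma=1$ correspond bijectively to $\xi\in\mathcal B^\reg(k)$, and the inner integral at $t=0$ is the regular orbital integral $O_\xi(\Phi,\delta_{y_0}) = f(\xi)$. The orbits with $T_\gamma=T$ are exactly two, corresponding to the identity coset and the (necessarily $k$-rational, even for nonsplit $T$) Weyl coset in $N(T)/T$, and they account for $\xi=0$ and $\xi=-1$ in the base.

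In the nonsplit case $[T]$ is compact, the unfolded sum is absolutely convergent, and each fixed-point contribution has the form $\Vol([T])\cdot\Phi(\gamma)$. The identification of their sum with $\tilde O_0(f)+\tilde O_{-1}(f)$ as defined in \eqref{defzerononsplit} is the Fourier-dual, via Kottwitz's Tate-Nakayama duality, of the decomposition $\tilde O_0=\sum_{\kappa\in\check T^\Gamma}\tilde O_{0,\kappa}$: varying the lift $\Phi$ over torsors $\beta\in H^1(k,T)$ interchanges the ``torsor basis'' (used here) with the ``$\kappa$-basis'' (used in defining $\tilde O_{0,\kappa}$), and the normalization constants match via the volume identity \eqref{torusvolume} already established in \S\ref{indirect-nonsplit}.

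In the split case, Proposition \ref{constantterm-torus} shows that $\Sigma\Phi$ is asymptotically $B$-finite with simple exponent $\delta^{\frac{1}{2}}$, so it grows like $|a|^{\pm\frac{1}{2}}$ near the two cusps of $[T]=k^\times\backslash\adele^\times$; the weight $\min(|a|,|a|^{-1})^t$ renders the whole expression absolutely convergent for $\Re(t)\gg 0$. Each generic-orbit contribution continues holomorphically to $t=0$ with limit $f(\xi)$, while each of the two fixed-point contributions becomes a weighted Tate-type integral of $\Phi$ restricted to the $T$-orbit of the fixed point, whose Laurent expansion at $t=0$ can be computed by standard Tate-integral manipulations.

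The main obstacle is showing that the analytic continuation of these two fixed-point contributions at $t=0$ matches the formula \eqref{defzerosplit} for $\tilde O_0(f)+\tilde O_{-1}(f)$ term by term. The argument parallels the proof of Lemma \ref{irregularsplit}, but executed on the $[T]$-integral side rather than on the Schwartz space upstairs: one has to identify the first two Laurent coefficients of the regulated integral at $t=0$ with the local asymptotic constants $\tilde O_{0_v}(f_v)$ and $\tilde O_{u_v}(f_v)$ of \eqref{Oasympsplit}, using the order-of-zero $|S|-1$ of the partial Dedekind zeta function at $s=0$ together with the identity $a_{0,v}+b_{0,v}=\tilde O_{u_v}(f_v)$ recorded in the proof of Lemma \ref{irregularsplit}. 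Once this place-by-place matching is verified, summing assembles to the desired identity. A small additional care is needed to track the compatibility of the Tamagawa measure on $G(\adele)$ with the measure on $T(\adele)$ implicit in the definition of $\delta_{y_0}$, but this is fixed by the same conventions as in \S\ref{sslifts}.
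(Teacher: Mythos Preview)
Your overall strategy---unfold the regulated integral along $T(k)$-orbits and match the irregular pieces to $\tilde O_0,\tilde O_{-1}$---is the same as the paper's, and your treatment of the nonsplit case is correct: there the fiber of $Y^\alpha\to\mathcal B$ over each irregular value consists of a single $k$-point (the fixed point), so your dichotomy ``$T_\gamma=1$ versus $T_\gamma=T$'' exactly coincides with ``regular $\xi$ versus irregular $\xi$'', and the identification with $\tilde O_0$ via the volume identity \eqref{torusvolume} goes through as you describe.

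In the split case, however, there is a genuine gap. Your claim that ``the orbits with $T_\gamma=1$ correspond bijectively to $\xi\in\mathcal B^\reg(k)$'' is false. The fiber over $\xi=0$ in $Y^\alpha$ is not just the fixed point $y_0$: in the local model $\Ga^2$ (with $T=\Gm$ acting hyperbolically and invariant $\xi=xy$) it is the union of the two coordinate axes, which over $k$ contributes \emph{two additional} $T(k)$-orbits with trivial stabilizer (e.g.\ the orbit of $T\cdot n_x$ for $n_x$ a nontrivial unipotent). Concretely, for such an axis orbit the unfolded term
\[
\int_{T(\adele)}\Phi(\gamma a)\,\min(|a|,|a|^{-1})^t\,da
\]
does \emph{not} continue holomorphically to $t=0$: as $|a|\to\infty$ one has $\gamma a\to y_0$, so the integrand tends to the nonzero constant $\Phi(y_0)$ times $|a|^{-t}$, producing a simple pole. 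The fixed-point term $\Phi(y_0)\int_{[T]}\min(|a|,|a|^{-1})^t\,da$ has its own simple pole, and it is only the \emph{combination} of the fixed point with both axis orbits that yields a finite limit equal to $\tilde O_0(f)$. This is precisely why the paper does not unfold globally but instead splits $\Phi$ into a piece vanishing on the full irregular fibers (for which the unregulated integral already converges) and a piece supported there (handled via the $\Ga^2$ model and Lemma~\ref{irregularsplit}). Your description of the fixed-point contribution as ``a weighted Tate-type integral of $\Phi$ restricted to the $T$-orbit of the fixed point'' is also off: that $T$-orbit is a single point, so the restriction is a constant; the Tate integrals of Lemma~\ref{irregularsplit} arise instead from the axis orbits. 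Once you correct the bookkeeping to group orbits by their image in $\mathcal B$ rather than by stabilizer, the rest of your outline (matching Laurent coefficients place by place as in Lemma~\ref{irregularsplit}) goes through.
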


We will denote this regularized integral by $\int_{[T]}^*$. (Similarly we define $\int^*_{T(\adele)}$.)

\begin{proof}
Notice that the invariant-theoretic quotient $Y^\alpha\sslash T = \mathcal B$, under which the preimage of any $\xi\in \mathcal B$ is a $T$-stable subscheme of $Y^\alpha$.

If $\Phi\equiv 0$ on the preimage of the irregular points $\xi=0, -1$, then
$$ \int_{[T]} \sum_{\gamma\in Y^\alpha(k)} \Phi(\gamma a) da = \sum_{\xi\in Y^\alpha(k)/T(k)} \int_{T_{\tilde \xi}(k)\backslash T(\adele)} \Phi(\tilde \xi a) da,$$
where $\tilde \xi$ denotes a preimage of $\xi$ on $Y^\alpha(k)$, and the summands corresponding to $\xi=0,-1$ vanish, while for the rest we have $T_{\tilde\xi}=1$, so we obtain
$$ \sum_{\xi \ne 0,-1} f(\xi),$$
and a posteriori the original integral was convergent.

Let us now study the terms $\tilde O_0(f)$ and $\tilde O_{-1}(f)$ in the general case. 

There is a $T$-stable neighborhood of the preimage of $0$ which is $T$-equivariantly isomorphic to $\Ga^2$, under an action of $T$ as the special orthogonal group of a quadratic form. In that case, the proof of Theorem \ref{Poissonbaby} shows that in the nonsplit case we have
$$ \tilde O_0(f) = \Vol([T]) \Phi(y_0)$$
(really, a sum of such evaluations over all $\alpha$'s, but under our assumption on the support of $\Phi$ only the given $\alpha$ contributes).

In the split case, Lemma \ref{irregularsplit} expresses $\tilde O_0$ in terms of zeta integrals for the action of $T=\Gm$ on $\Ga^2$, and we leave it to the reader to check that the expression in terms of zeta integrals coincides with the limit \eqref{intT} used to define $\int^*$. The analysis for $\tilde O_{-1}$ is completely analogous.

Thus, the expression \eqref{RTFdef} is in every case equal to \eqref{intT}.
\end{proof}

The proof of Proposition \ref{RTF-regularizedip} will be complete if we relate the regularized integral over $[T]$ to the regularized inner products defined previously:

\begin{lemma}\label{regpairing}
 Let $\Psi$ be any asymptotically $B$-finite automorphic function, and assume that it does not have the exponent $\delta^\frac{1}{2}$. Let $\Phi = h\star \delta_{y_0} \in \mathcal S(Y(\adele))$. Then:
\begin{equation}
 \left<\Psi,\Sigma\Phi\right>^* = \int_{[T]}^* h^\vee\star\Psi (a) da.
\end{equation}
\end{lemma}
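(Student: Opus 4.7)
The plan is to prove the identity by formal unfolding, first in the nonsplit case where everything converges absolutely, and then in the split case by the meromorphic deformation used in Lemma \ref{regT}. Both sides are $G(\adele)$-invariant functionals in a natural sense, so the strategy is standard; the subtle point is to verify that the two regularizations match at $t=0$.

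In the nonsplit case, $[T]$ is compact and every line of the computation converges absolutely. Using the base point $y_0$ to identify $Y^\alpha(k)$ with $T(k)\backslash G^\alpha(k)$, the definition of $\Sigma\Phi$ unfolds:
$$
\int_{[G^\alpha]} \Psi(g)\,\Sigma\Phi(g)\,dg \;=\; \int_{T(k)\backslash G^\alpha(\adele)} \Psi(g)\,\Phi(y_0 g)\,dg.
$$
Since the stabilizer of $y_0$ in $G^\alpha(\adele)$ is $T(\adele)$, the factor $\Phi(y_0 g)$ descends to $Y^\alpha(\adele)=T(\adele)\backslash G^\alpha(\adele)$; fibering $T(k)\backslash G^\alpha(\adele)$ over $Y^\alpha(\adele)$ with fiber $[T]$ yields
$$
\int_{Y^\alpha(\adele)} \Phi(y)\,\Psi_T(y)\,dy, \qquad \Psi_T(y_0 g) := \int_{[T]} \Psi(ag)\,da.
$$
Expanding $\Phi=h\star\delta_{y_0}$ and using the dual pairing transfers the convolution to the other factor, giving $(h^\vee \star \Psi_T)(y_0)$. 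Interchanging the convolution integral over $G^\alpha(\adele)$ with the averaging integral over $[T]$ produces $\int_{[T]} (h^\vee \star \Psi)(a)\,da$, as desired.

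In the split case I would run the same chain of manipulations after inserting the regularizing factor $\min(|a|,|a|^{-1})^t$ inside the $[T]$-integration, exactly as in Lemma \ref{regT}. For $\Re(t)\gg 0$ every step of the unfolding is absolutely convergent: Proposition \ref{constantterm-torus} controls the exponents of $\Sigma\Phi$ at the cusp, the assumption of asymptotic $B$-finiteness controls those of $\Psi$, and the factor $\min(|a|,|a|^{-1})^t$ suppresses the boundary behavior. The unfolded chain thus yields a meromorphic identity of functions of $t$, and both sides of the lemma are by definition the value at $t=0$ of their respective deformations.

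The only nontrivial step is to check that no pole is encountered at $t=0$. A pole on the left arises precisely when an exponent of $\Psi$ combines with the exponent $\delta^{1/2}$ of $\Sigma\Phi$ (given by Proposition \ref{constantterm-torus}) to produce a trivial resonance; a pole on the right arises in exactly parallel fashion, when $\Psi|_T$ has a corresponding leading term at the cusp of $[T]$. Both phenomena are governed by the same datum, namely the presence or absence of $\delta^{1/2}$ among the exponents of $\Psi$. The hypothesis excludes this case, so the two deformations are simultaneously holomorphic at $t=0$, and the meromorphic identity specializes there to the desired equality.
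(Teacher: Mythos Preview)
Your nonsplit argument is correct and matches the paper, which simply says that case is clear.

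In the split case there is a genuine gap. You insert $\min(|a|,|a|^{-1})^t$ into the $[T]$-integral and assert that this produces ``a meromorphic identity of functions of $t$'' whose value at $t=0$ recovers both sides. But you never say what the $t$-deformation of the \emph{left}-hand side is. The pairing $\left<\Psi,\Sigma\Phi\right>^*$ is defined by a specific procedure on $[G]$: analytic continuation in the exponent of an asymptotically $B$-finite section. The factor $\min(|a|,|a|^{-1})^t$ lives on $[T]$ and is only $T(k)$-invariant, not $G(k)$-invariant, so it does not fold back to a function on $[G]$; there is no natural ``$\int_{[G]}\Psi\cdot\Sigma\Phi\cdot(\text{something}_t)$'' that your insertion produces. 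So you have a $t$-family on the right, but no matching $t$-family on the left that specializes to $\left<\Psi,\Sigma\Phi\right>^*$. The claim ``both sides are by definition the value at $t=0$ of their respective deformations'' is precisely the content of the lemma, not a premise.

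The paper closes this gap by deforming $\delta_{y_0}$ itself rather than inserting a damping factor. It writes $1=f_1+f_2$ as a smooth partition of unity on $[T]$, views each $f_i$ as a generalized function on $T(k)\backslash G(\adele)$ (so that $f_1+f_2$ is the pullback of $\delta_{y_0}$), and sets $f_i^t(a)=|a|^{\mp t}f_i(a)$. The point is that each $\Sigma(h\star f_i^t)$ is then an honest function on $[G]$, asymptotically $B$-finite with an exponent that varies with $t$, forming an analytic section of the Fr\'echet bundle of such functions. Hence, for $t$ large, $\left<\Psi,\Sigma(h\star f_i^t)\right>$ converges and unfolds to $\int_{[T]}(h^\vee\star\Psi)(a)f_i^t(a)\,da$; and its value at $t=0$ is, \emph{by the definition} of the regularized integral on $[G]$, exactly $\left<\Psi,\Sigma(h\star f_i)\right>^*$. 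Summing over $i$ gives the lemma. What you were missing is this identification of the $[T]$-side damping with a shift in the $B$-finite exponent on the $[G]$-side; the partition of unity is what makes that identification possible.
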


\begin{proof}
Since this is clear, and there is no need for regularization, when $T$ is nonsplit, we restrict to the split case and fix an identification: $T\simeq \Gm$, so that we have an adelic absolute value $|\bullet|$ on $T(\adele)$.

Let us fix a smooth partition of unity $1=f_1+f_2$ on $[T]$ so that $f_1(a)=0$ for $|a|$ small, and $f_2(a)=0$ for $|a|$ large. For $t\in \CC$, let $f_1^t(a) = |a|^{-t} f_1(a)$ and $f_2^t(a) = |a|^t f_2(a)$. Consider them as generalized functions on $T(k)\backslash G(\adele)$, by the measures we fixed to define $\delta_{y_0}$. We have
$$\int_{[T]}^* h^\vee\star\Psi (a) da = \mbox{ the analytic continuation to $t=0$ of } $$
$$\int_{[T]} h^\vee \star \Psi(a) f_1^t(a) da + \int_{[T]} h^\vee \star \Psi(a) f_2^t(a) da,$$
so it suffices to show that the analytic continuation to $t=0$ of $\int_{[T]} h^\vee \star \Psi(a) f_i^t(a) da$ is equal to
$$\left<\Psi, \Sigma(h\star f_i)\right>^*,$$
where by abuse of notation, although $h\star f_i$ is not an element of $\mathcal S(Y(\adele))$, we write $\Sigma(h\star f_i) (g) = \sum_{\gamma\in T(k)\backslash G(k)} h\star f_i(\gamma g)$.

Exactly as in Proposition \ref{constantterm}, $\Sigma(h\star f_i^t)$ is $B$-finite with exponent $\delta^{1-t}$; more precisely, the functions $\Sigma(h\star f_i^t)$ form an analytic section of the Fr\'echet bundle with fibers $\mathcal S^+_{\delta^{1-t}}$. Therefore, by definition,
$$\left<\Psi, \Sigma(h\star f_i)\right>^* = \mbox{ the analytic continuation to $t=0$ of }\left<\Psi, \Sigma(h\star f_i^t)\right>,$$
and for $t$ large:
$$\left<\Psi, \Sigma(h\star f_i^t)\right> = \int_{T(k)\backslash G(\adele)} \Psi(g) h\star f_i^t(g) dg = \int_{[T]} h^\vee \star \Psi(a) f_i^t(a) da.$$

\end{proof}

\subsection{The Kuznetsov trace formula} 

For this subsection we let $X$ be as in \cite[\S 4]{SaBE1} (hence, isomorphic to $N\backslash \PGL_2$ -- we fix such an isomorphism over $k$),\footnote{In \cite{SaBE1} we took some care to not choose an isomorphism $X\simeq N\backslash G$; here we consider such an isomorphism as given globally over $k$. We also identify $N$ with $\Ga$ over $k$, so that the chosen adele class character $\psi$ gives rise to a character, denoted by the same letter, on $[N]$.} and $\mathcal L_\psi$ the complex line bundle whose sections (locally at a place $v$) are functions on $\PGL_2(k_v)$ satisfying $f(ng)=\psi(n) f(g)$ (in particular, the line bundle is trivialized over the point corresponding to $1\in G$). Again, we define the morphism ``summation over $k$-points'' from continuous sections of $\mathcal L_\psi$ over $N\backslash G(\adele)$ to functions on $[G]$, whenever it converges, as:
\begin{equation}
 \Sigma\Phi(\bullet):=\sum_{\gamma\in N(k)\backslash G(k)} \Phi(\gamma g);
\end{equation}
by the same symbol we will also denote the corresponding map from sections of $\mathcal L_\psi\boxtimes \mathcal L_\psi^{-1}$ on two copies of $X$ to functions on $[G]\times[G]$.

Choose an algebraic height function $r$ on $X$, as described in \S \ref{heightfunctions}. For example, one may identify the affine closure of $X$ with $\spec k[x^2,y^2,xy]$ (the quotient of affine $2$-space by the action of $\{\pm 1\}$), and define height functions on $X(k_v)$ by:
$$r_v((x,y)\mod\{\pm 1\})=\max\{m_v, m_v^{-1}\}, \,\,\text{where } m_v = \max\{|x^2|_v,|y^2|_v\}.$$ 

For every positive number $N$ we consider the Banach space of continuous sections $\Phi$ of $\mathcal L_\psi$ on $X$ satisfying
\begin{equation}\label{bound} \sup_{\xi\in X(\adele)} |\Phi(\xi)| r(\xi)^N < \infty.\end{equation}
(Recall that the absolute value of $\mathcal L_\psi$ is the trivial line bundle.)  
We will denote this space by $C_{-N}(X(\adele),\mathcal L_\psi)$.

Notice that for smooth sections of (uniformly) moderate growth, the estimate (\ref{bound}) is automatic, for every $N$, in a neighborhood of the cusp, i.e., in a neighborhood of the image of ``zero'' under the isomorphism $N\backslash\SL_2 = \mathbb A^2\smallsetminus\{0\}$. Hence, for those, the growth condition is restrictive only in a neighborhood of ``infinity''.

Similarly, we fix a height function $R(g)$ on $[G]$, and let $C_{-N} ([G])$ denote the Banach space of continuous functions $\phi$ on $[G]$ which satisfy
$$ \sup_{x\in X(\adele)} |\phi(g)| R(g)^N < \infty.$$

\begin{proposition} \label{continuousmap} 
There is a positive constant $c$ such that for large $N$ the map: $\Phi\mapsto \Sigma\Phi$ represents a continuous morphism:
\begin{equation}C_{-N}(X(\adele),\mathcal L_\psi)\to C_{-cN}([G]).
\end{equation}
\end{proposition}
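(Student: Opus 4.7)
The plan is to reduce the proposition, via the pointwise bound built into $C_{-N}(X(\adele), \mathcal{L}_\psi)$, to a uniform estimate on the majorant sum. For $\Phi$ in the unit ball of $C_{-N}$ we have $|\Phi(\xi)| \le r(\xi)^{-N}$, hence
\[
|\Sigma\Phi(g)| \le S_N(g) := \sum_{\xi \in X(k)} r(\xi \cdot g)^{-N}.
\]
Since $H^1(k, N) = 0$ gives $X(k) = N(k) \backslash G(k)$, the function $\Sigma\Phi$ is automatically left $G(k)$-invariant, and so is $S_N$; since a Siegel set surjects onto $[G]$, the whole problem reduces to proving, for $N$ large and some universal $c > 0$, the estimate $S_N(g) \ll_N R(g)^{-cN + O(1)}$ on a fixed Siegel set $\mathscr{S} = N_0 A_{t_0} K$. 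The continuity of the map $\Phi \mapsto \Sigma\Phi$ is then immediate from the absolute convergence.

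On $\mathscr{S}$, Lemma \ref{comparisons}(2) ensures that the compact factors $N_0$ and $K$ distort $r(\xi \cdot g)$ by only uniformly bounded multiplicative constants, reducing us to bounding $S_N(a)$ for $a = a(t) \in A_{t_0}$; recall that $R(a) \asymp |t|_\infty$ in the Siegel set. Under the identification $X \simeq (\mathbb{A}^2 \setminus \{0\})/\{\pm 1\}$ used to define $r$ in the statement, the right action of $a = \diag(t, t^{-1})$ on coordinates is $(c, d) \mapsto (ct, dt^{-1})$, so
\[
S_N(a) = \sum_{(c, d) \in (k^2 \setminus \{0\})/\pm} r\bigl((ct,\, dt^{-1})\bigr)^{-N}.
\]

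The main analytic step is a lattice-point count. Each rational pair $(c, d)$ factors uniquely (up to sign and $\mathfrak{o}_k^\times$-scaling) as $\lambda \cdot (c_0, d_0)$ with $(c_0, d_0) \in \mathfrak{o}_k^2$ primitive and $\lambda \in k^\times$; splitting the sum accordingly isolates a Dirichlet-type sum over $\lambda$ that converges for large $N$ by the explicit formula $r_v = \max(m_v, m_v^{-1})$, just as one verifies convergence of $\zeta_k(s)$ for large $\Re s$. The remaining inner sum is over primitive integral vectors, where at almost all finite places $r_v = 1$; on the archimedean side $(c_0, d_0) \mapsto (c_0 t, d_0 t^{-1})$ is a volume-preserving skewing of $\mathfrak{o}_k^2$ on which $\min r \asymp |t|_\infty$ (attained on the coordinate axes), while the count $\#\{(c_0, d_0) : r \le T\}$ grows polynomially in $T$, uniformly in $t$ (an instance of Lemma \ref{comparisons}(3)). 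Summation by parts against $T^{-N-1}\, dT$ delivers $S_N(a) \ll |t|_\infty^{-cN + O(1)}$ for some $c > 0$ and all sufficiently large $N$.

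The main obstacle will be the bookkeeping for a general global field $k$: both the convergence of the ``denominator'' Dirichlet sum and the archimedean lattice count must be verified uniformly in $t$, which requires care in separating integral from non-integral contributions at each place. No delicate analytic number theory is needed; the inputs are the convergence of $\zeta_k$ for large real part and the polynomial growth of rational-point counts already recorded in Lemma \ref{comparisons}(3).
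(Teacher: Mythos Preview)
Your setup matches the paper's exactly: bound $|\Sigma\Phi|$ by the majorant $S_N(g)=\sum_{\xi\in X(k)} r(\xi g)^{-N}$, reduce to a Siegel set, and absorb the compact factors via Lemma \ref{comparisons}(2). The divergence is in how you propose to bound $S_N(a)$.

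The factorization $(c,d)=\lambda(c_0,d_0)$ does not decouple the height. At finite places you do get a function of $\lambda$ alone (for primitive $(c_0,d_0)$), but at the archimedean place
\[
r_\infty\bigl(\lambda c_0 t,\ \lambda d_0 t^{-1}\bigr)=\max\Bigl(|\lambda|_\infty^2 M,\ |\lambda|_\infty^{-2} M^{-1}\Bigr),\qquad M=\max\bigl(|c_0 t|_\infty^2,\ |d_0 t^{-1}|_\infty^2\bigr),
\]
which genuinely couples $\lambda$ with $(c_0,d_0,t)$. There is therefore no ``Dirichlet sum over $\lambda$'' that can be pulled out in front of a residual sum over primitive integral vectors; if you sum over $\lambda$ first for each fixed $(c_0,d_0)$ you are forced into a case analysis at infinity of the same nature as the original problem. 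Your appeal to Lemma \ref{comparisons}(3) for a count ``uniformly in $t$'' also misses the point: that lemma bounds $\#\{\xi: r(\xi)<T\}$ for the \emph{undeformed} height, whereas the uniformity in $a$ of a count for $r(\xi\cdot a)$ is exactly the content to be proved, not an input.

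The paper avoids any separation of variables by proving a \emph{pointwise} comparison of $r(\xi\cdot a)$ with $r(\xi)$. Working over $\mathbb Q$ and switching for convenience to $N\backslash\SL_2\simeq\mathbb A^2\smallsetminus\{0\}$ with $r_v=\max(m_v,m_v^{-1})$, $m_v=\max(|x|_v,|y|_v)$, it splits according to whether $x=0$, introduces for $x\ne 0$ the one-sided height $r'_v=m_v$, and uses the product formula $|x|_\infty=\prod_p|x|_p^{-1}$ in a short case analysis at infinity to obtain
\[
r'(\xi\cdot a)^4\ \ge\ \tfrac12\,|a|\,r'(\xi)\quad (x\ne 0),\qquad r(\xi\cdot a)^3\ \ge\ |a|\,r(\xi)\quad (x=0).
\]
Now Lemma \ref{comparisons}(3), applied to the undeformed $r'(\xi)$ (resp.\ $r(\xi)$), gives $S_N(a)\ll|a|^{-N/4}$ for large $N$, so $c=1/4$ works. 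The key mechanism is thus a pointwise transfer of the $a$-action into a power of $|a|$ times the original height, rather than a product decomposition of the sum.
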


\begin{proof}
For any compact neighborhood $U$ of the identity in $G(\adele)$ there are positive constants $c_1,c_2$ such that $c_1 r(x)\le r(x g) \le c_2 r(x)$ for all $g\in U$ and $x\in X(\adele)$ (Lemma \ref{comparisons}). This shows that all our estimates below are locally uniform on $[G]$, but we will not comment further on that.

Using constants as in Lemma \ref{comparisons}, we have
$$ \sum_{\xi\in X(k)} |\Phi(\xi)| \ll \sum_{\xi\in X(k)} |r(\xi)|^{-N} \ll  \int_0^\infty T^{-N} T^M dT < \infty,$$
for large $N$, so convergence of the sum representing $\Sigma\Phi(g)$ (by the above, locally uniform in $g$) is not an issue.

To prove the asymptotic properties of $\Sigma\Phi$ we need an estimate in the opposite direction than that of Lemma \ref{comparisons}: we need to show that when $g\in [G]$ becomes ``large'', $r(x\cdot g)$ also becomes ``large''. 
We may fix a Siegel domain $\mathscr S\subset G(k_\infty)$ for $[G]$, $g=nak$, and assume that $R(gak)= \Vert a\Vert$ for large $\Vert a\Vert$, where $\Vert a\Vert$ represents some algebraic height function on the torus $A$. For simplicity, let us assume that $k=\QQ$ (the general case is similar, with different constants). For simplicity of notation, also, we will present the case of $N\backslash \SL_2 \simeq \mathbb A^2\smallsetminus\{0\}$ instead of that of $N\backslash\PGL_2$, denoting it again with $X$.

We may choose coordinates $(x,y)$ on $\mathbb A^2$ so that the torus $\Gm\simeq A$ acts as: $(x,y)\cdot a = (ax,a^{-1}y)$. We define the height function for $N\backslash \SL_2$ as $r_v(x,y) = \max(m_v,m_v^{-1})$, where $m_v=\max\{|x|_v,|y|_v\}$. We also define a height function for $\mathbb A^2$ as $r_v'(x,y)=m_v$. Our goal is to estimate
\begin{equation} \label{sumforestimate}\sum_{\xi\in \mathbb A^2\smallsetminus\{0\}(k)} r(\xi\cdot a)^{-N}.
\end{equation}
for $a\in A(\RR)$ in a Siegel neighborhood (see \S \ref{Siegeldomain}). We may choose an isomorphism: $A(\RR)\simeq \RR^\times$ so that points in the Siegel neighborhood have $|a|\ge 1$. We split the sum in those $\xi$ with $x=0$ and the rest, and since $r'(\xi)\le r(\xi)$ we have that \eqref{sumforestimate} is
$$ \le \sum_{x\in k^\times, y\in k} r'((x,y)\cdot a)^{-N} + \sum_{y\in k^\times} r((0,y)\cdot a)^{-N}.$$

For $x\ne 0$ we have

$$ \frac{r'_\infty(ax,a^{-1}y)}{r'_\infty(x,y)} \ge \frac{|ax|_\infty}{\max\{|x|_\infty,|y|_\infty\}} \cdot \frac{\max\{|x|_\infty,|y|_\infty\}}{\max\{1,|x|_\infty,|y|_\infty\}} \ge $$
$$ \ge
\begin{cases}
 |a| |x|_\infty, & \mbox{ if } |x|_\infty, |y|_\infty \le 1,\\
 |a|, & \mbox{ if } |x|_\infty \ge 1, |y|_\infty,\\
 |a| \frac{|x|^2_\infty}{|xy|_\infty} & \mbox{ if } |y|_\infty \ge |x|_\infty, 1.
\end{cases}$$

Using the product formula to express $|x|_\infty = |x|_f^{-1} \ge r'_f(\xi)^{-1} = r'_f(\xi\cdot a)^{-1} $
(the index $~_f$ denoting the product over all finite places, and we have used the fact that $a \in G(k_\infty)$), in the first case we get
$$\frac{r'_\infty(ax,a^{-1}y)}{r'_\infty(x,y)} \ge  \frac{|a| }{r_f(\xi\cdot a)} \Rightarrow r'(\xi\cdot a)^2 \ge |a| r'(\xi).$$
We have used here that $r'_f(ax,a^{-1}y) = r'_f(x,y)$ and $r'_\infty(ax,a^{-1}y)\ge 1$.

In the second case we get, by multiplying with $r'_f(x,y)$,
$$r'(\xi\cdot a) \ge |a| r'(\xi),$$
and in the third, using the additional fact that $|xy|_\infty \le 2 r_\infty'(\xi\cdot a)^2$, we get
$$\frac{r'_\infty(ax,a^{-1}y)}{r'_\infty(x,y)} \ge \frac{|a|}{2 r'_f(\xi\cdot a)^2\cdot r_\infty'(\xi\cdot a)^2} \Rightarrow r_f'(\xi\cdot a)^4 \ge \frac{1}{2}|a| r'(\xi).$$

Since $r'(\xi\cdot a)\ge 1$ we have in all cases
\begin{equation}
 r'(\xi\cdot a)^4 \ge \frac{1}{2}|a| r'(\xi).
\end{equation}

Hence, using constants as in (\ref{rationalpoints}):
$$\sum_{\xi\in X(k), x\ne 0} r'(\xi\cdot a)^{-N} \ll |a|_{\infty}^{-\frac{N}{4}} \sum_{\xi\in X(k), x\ne 0} r'(\xi)^{-\frac{N}{4}} \ll $$ 
$$ \ll |a|_{\infty}^{-\frac{N}{4}}\cdot \int_0^\infty T^{-N} T^M dT \ll |a|_{\infty}^{-\frac{N}{4}}$$
for $N$ large enough, with the implicit constant depending on $N$.

When $x$ is zero, we similarly have
$$ \frac{r_\infty(ax,a^{-1}y)}{r_\infty(x,y)} \ge \frac{|ay^{-1}|_\infty}{\max\{|y|_\infty,|y|_\infty^{-1}\}}$$
which is equal to $|a|$ when $|y|_\infty\le 1$. When $|y|_\infty >1$ it is
$$ |a| |y|_\infty^{-2} = |a| |y|_f^2 \ge |a| r_f(\xi)^{-2} = |a| r_f(\xi\cdot a)^{-2},$$
hence, as before, we have in both cases: 
$$ r(\xi\cdot a)^3 \ge |a| r(\xi),$$
and we can estimate using \eqref{rationalpoints}:
$$\sum_{y\in k} r((0,y)\cdot a)^{-N} \ll |a|^{-\frac{N}{3}}$$
when $N$ is large.

This gives the following estimate for \eqref{sumforestimate}:
$$\sum_{\xi\in \mathbb A^2\smallsetminus\{0\}(k)} r(\xi\cdot a)^{-N} \ll |a|^{-\frac{N}{4}},$$
which completes the proof.
\end{proof} 

Now we look at the spaces $\mathcal S^s(\overline X\times X (\adele), \mathcal L_\psi\boxtimes\mathcal L_\psi^{-1})$ of non-standard Whittaker functions, whose orbital integrals give rise to the space $\mathcal S(\mathcal W^s(\adele))$.

\begin{lemma}\label{belongsCN}
 For any given $N>0$ there is $\sigma_0\in \mathbb R$ such that for $\Re(s)\ge \sigma_0$ the space $\mathcal S^s(\overline X\times X (\adele), \mathcal L_\psi\boxtimes\mathcal L_\psi^{-1})$ belongs to $C_{-N}(X\times X(\adele),\mathcal L_\psi\boxtimes\mathcal L_\psi^{-1})$, and the embedding is bounded by polynomial seminorms on the former.
\end{lemma}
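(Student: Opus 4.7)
The plan is to reduce the lemma, via the restricted tensor product structure on $\mathcal S^s(\overline X\times X(\adele),\mathcal L_\psi\boxtimes \mathcal L_\psi^{-1})$, to uniform-in-$v$ local estimates on the basic vectors $\Phi^{s,0}_v$ at almost all places, combined with polynomial-seminorm estimates on the factors at a finite set $S$ of places including the archimedean and ramified ones. By continuity, it is enough to bound $|\Phi|\cdot r(\xi_1)^N r(\xi_2)^N$ for pure tensors $\Phi = \Phi_S \otimes \prod_{v\notin S}\Phi^{s,0}_v$ by a polynomial seminorm on $\Phi_S$ times a convergent Euler product.

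First I would treat the second variable: by hypothesis the elements of $\mathcal S^s(\overline X\times X(\adele),\mathcal L_\psi\boxtimes\mathcal L_\psi^{-1})$ are rapidly decaying in the second variable on $X(\adele)$, so at each finite place $v\notin S$ one has $\sup_{\xi_2}|\Phi_v(\xi_1,\xi_2)|\,r_v(\xi_2)^N \le |\Phi^{s,0}_v(\xi_1,\mathcal B^{\mathrm{reg}}(\mathfrak o_v))|$ times a bounded factor, and these product-over-$v\notin S$ values can be absorbed into a Schwartz seminorm of the tensor factor responsible for the second variable. Second, for the first variable I would use the axiomatic description of $\mathcal S^s(\overline X_v,\mathcal L_\psi)$ from \cite{SaBE1}: away from the cusp of $\overline X$ the sections are Schwartz in the usual sense, hence automatically rapidly decaying; in a neighborhood of the cusp they coincide, up to smooth functions, with $K_v$-finite vectors in a generalized principal series depending analytically on $s$, with central character of real part $\Re(s)+1/2$. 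Under $X\simeq N\backslash G$, such a vector is bounded in the Iwasawa coordinate by $\delta^{1/2+\Re(s)}$, which in turn (by Lemma \ref{comparisons} applied to the pullback of $r$ to a Siegel section) dominates a negative power $r_v(\xi_1)^{-c(\Re(s))}$ with $c$ a linear function of $\Re(s)$ going to infinity with $s$.

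Third, at almost all unramified places the basic vectors are explicit — they are obtained as lifts of the functions $f^{0,0}_{4,v}$ of Table \eqref{table} (in the line-bundle realization, and evaluated at $t=0$). For these one checks directly that the ratio $|\Phi^{s,0}_v(\xi_1,\xi_2)|\cdot r_v(\xi_1)^N r_v(\xi_2)^N / |\Phi^{s,0}_v(\mathcal B^{\mathrm{reg}}(\mathfrak o_v))|$ is bounded by $1$ uniformly in $v$ once $\Re(s)$ is large enough (compare assumption (3) of \S\ref{basicfnassumptions}), and that the Euler product of the regular values $|\Phi^{s,0}_v(\mathcal B^{\mathrm{reg}}(\mathfrak o_v))|$ converges locally uniformly for $\Re(s)\gg 0$ — this is exactly the convergence range appearing in the proof of Proposition \ref{convergence}. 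Choosing $\sigma_0$ at least as large as the maximum of the threshold for this Euler product and the threshold needed so that $c(\Re(s))\ge N$ gives the desired $\sigma_0$.

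The main technical step, and the one I expect to be the most delicate, is establishing the uniform-in-$v$ comparison between the ambient height function $r_v$ on $X(k_v)$ and the coordinate at infinity used to describe the local Schwartz cosheaf. Lemma \ref{comparisons}(1) guarantees polynomial equivalence of algebraic height functions, but to make the infinite product over unramified places behave well one needs the implicit multiplicative constants to be $1$ at almost every place. This should follow from the fact that $X$, the embedding $X\hookrightarrow \overline X$, and all the data defining both $r$ and the cuspidal stalk are defined over $k$, so that the comparison is an integral-model statement at all but finitely many places; this will then let us fold any residual multiplicative discrepancy into enlarging the finite set $S$, with no impact on the statement beyond a numerical adjustment of $\sigma_0$.
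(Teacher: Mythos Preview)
Your proposal is correct in substance, but it is far more elaborate than the paper's own argument. The paper's proof of this lemma is a single sentence: ``This is immediate from the definition of these spaces in \cite[\S 6.1]{SaBE1}.'' In other words, the paper treats the embedding as a direct consequence of how $\mathcal S^s(\overline X_v,\mathcal L_\psi)$ is defined --- namely, that its elements coincide with Schwartz sections away from infinity and, near infinity, with elements of a generalized principal series whose exponent has real part growing linearly in $\Re(s)$. Your steps (second-variable rapid decay, first-variable principal-series asymptotics, uniform bound on basic vectors, convergence of the Euler product of regular values) are precisely the content one would extract from that definition if one wrote it out in full; the paper simply declines to do so.

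The one point you flag as ``delicate'' --- the uniform-in-$v$ comparison of the height $r_v$ with the local coordinate at infinity --- is indeed a real ingredient, but it is absorbed into the phrase ``immediate from the definition'': the defining data are global (over $k$), so the comparison constants are $1$ at almost every place, and any finite discrepancy is pushed into the set $S$. Your instinct there is right, and nothing further is needed.
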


\begin{proof}
 This is immediate from the definition of these spaces in \cite[\S 6.1]{SaBE1}.
\end{proof}

In particular, for $\Re(s)$ large, the integral $\left< \Sigma\Phi_s\right>$ over the diagonal converges and can be decomposed spectrally by the Plancherel formula for $L^2([G])$. Now recall that the Kuznetsov trace formula was defined in \eqref{KTFdef}; we claim:

\begin{proposition}\label{KTF-ip}
 In the above setting, and for $\Re(s)\gg 0$:
 $$\KTF(f_s)= \left< \Sigma\Phi_s\right> + \tilde O_0(f_s).$$
\end{proposition}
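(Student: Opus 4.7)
First I would establish absolute convergence for $\Re(s) \gg 0$. By Lemma \ref{belongsCN}, the lift $\Phi_s$ belongs to $C_{-N}(X \times X(\adele), \mathcal L_\psi \boxtimes \mathcal L_\psi^{-1})$ for arbitrarily large $N$, with bound by polynomial seminorms. Proposition \ref{continuousmap} then places $\Sigma\Phi_s$ in $C_{-cN}([G]^2)$ for large $cN$, so the unregularized integral $\langle \Sigma\Phi_s \rangle = \int_{[G]} \Sigma\Phi_s(g,g)\,dg$ converges absolutely; in particular, no regularization is needed (in contrast to the torus case of Proposition \ref{RTF-regularizedip}). So the content of the proposition is to match the unfolded geometric expansion of $\langle \Sigma\Phi_s\rangle$ with $\sum_{\xi\in k^\times} f_s(\xi) + \tilde O_\infty(f_s)$.

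Next I would unfold. Following the reduction of Lemma \ref{regT}, by continuity it is enough to treat test functions of the form $\Phi_s = \Phi_1 \otimes (h\star\delta_{x_0})$ for a $k$-rational point $x_0$ of $X$, and then write
\[ \int_{[G]} \Sigma\Phi_s(g,g)\,dg = \sum_{\delta \in N(k)\backslash G(k)/N(k)} \int_{\mathrm{Stab}_\delta(k)\backslash G(\adele)} \Phi_s(g, \delta g)\,dg, \]
interpreted with the $(\psi,\psi^{-1})$-equivariance of $\Phi_s$. By the Bruhat decomposition of $\PGL_2$, the double-coset space splits into the big cell (parameterized by $\xi \in k^\times$, with trivial diagonal stabilizer) and the small cell $N\backslash B/N \cong A$; among the latter cosets, only the identity survives, since the stabilizer of the orbit through $\mathrm{diag}(a,1) \in A$ is a copy of $N$ on which the relevant character sends $n$ to $\psi(a^2 n)\psi(n)^{-1}$, which is trivial only when $a^2 = 1$, i.e.\ when the orbit is the identity modulo the center of $\PGL_2$.

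Third, I would identify the pieces. The big-cell contributions are, by the defining property of $f_s$ as the regular orbital-integral push-forward of $\Phi_s$ to $\mathcal B^\reg$, equal to $\sum_{\xi \in k^\times} f_s(\xi)$. The unique small-cell term factors into an Euler product of local Whittaker/Kloosterman integrals of the form $\int_{N(k_v)\backslash G(k_v)} \Phi_{s,v}(g,g)\,dg$; the local calculations of \cite[sections 4 and 6]{SaBE1} identify each factor with the asymptotic coefficient $C_{5,v}(0)$ from Table \eqref{tablegerms} at $\xi = \infty$ (the Kloosterman germ $\Kl$ accounts precisely for this behaviour in the basic vector), so the global Euler product equals $\tilde O_\infty(f_s)$. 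Combining, $\langle \Sigma\Phi_s\rangle = \sum_{\xi\in k^\times} f_s(\xi) + \tilde O_\infty(f_s)$, and adding $\tilde O_0(f_s)$ to both sides recovers $\KTF(f_s)$ by definition \eqref{KTFdef}.

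The main obstacle is matching the small-cell contribution to $\tilde O_\infty(f_s)$: this requires both careful bookkeeping of the Haar measures chosen to factorize Tamagawa measure on $G(\adele)$, and the explicit local analysis of how the Kloosterman germ at $\infty$ arises as the orbital integral of $\Phi_s$ restricted to the small Bruhat cell. Both ingredients are in place from \cite{SaBE1}, so the argument is essentially a global assembly of local statements proven there, with the technical delicacy concentrated in the identification of the asymptotic coefficient $C_5$ for the basic vector with the corresponding Kloosterman orbital integral.
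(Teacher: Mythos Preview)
Your proposal is correct and follows essentially the same route as the paper: unfold $\langle\Sigma\Phi_s\rangle$ over $G(k)$-orbits on $X(k)\times X(k)$, identify the regular (big-cell) orbits with $\sum_{\xi\in k^\times} f_s(\xi)$, and identify the diagonal orbit contribution $\int_{N\backslash G(\adele)}\Phi_s(g,g)\,dg$ with $\tilde O_\infty(f_s)$ via the local inner product of \cite[\S 4.9, Proposition 4.8]{SaBE1}.

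Two remarks. First, the reduction to test functions of the form $\Phi_1\otimes(h\star\delta_{x_0})$ is unnecessary here: it was needed in the torus case (Proposition \ref{RTF-regularizedip}) because the integral was regularized and one had to match the regularization with the zeta-integral definition of $\tilde O_0,\tilde O_{-1}$; for Kuznetsov with $\Re(s)\gg 0$ everything converges absolutely and the unfolding goes through directly. Second, you are actually more explicit than the paper about why the non-identity small-cell cosets vanish: the paper absorbs this into the passage ``using the fact that the Tamagawa volume of $[N]$ is $1$'' — what is really happening is that the integral over $[N]$ of the nontrivial character $n\mapsto\psi(n)\psi^{-1}(ana^{-1})$ vanishes for $a\neq 1$. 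Your character computation (with the $a^2$ coming from $\SL_2$-coordinates) gives exactly this.
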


\begin{proof}
 It is clear that:
 $$\left< \Sigma\Phi_s\right> = \sum_{\xi\in (X(k)\times X(k))/G(k)} \int_{G_\xi(k)\backslash G(\adele)} \Phi_s(\xi g) dg = $$
 $$ = \sum_{\xi\in X\times X/G(k)} \int_{G_\xi(\adele)\backslash G(\adele)} \Phi_s(\xi g) dg $$
 (using the fact that the Tamagawa volume of $[N]$ is $1$)
 $$ = \sum_{\xi\in k^\times} f_s(\xi) + \int_{N\backslash G(\adele)} \Phi_s(g,g) dg,$$
 so it is enough to identify this last integral with the ``irregular orbital integral'' $\tilde O_\infty(f_s)$. 
 
 Suppose that $\Phi_s = \prod_v \Phi_{v,s}$ (hence $f_s=\prod_v f_{v,s}$), where $\Phi_{v,s}$ is a section of $S^s(\overline X\times X (k_v), \mathcal L_\psi\boxtimes\mathcal L_\psi^{-1})$. Notice that for every place $v$, $\int_{N\backslash G(k_v)} \Phi_{v,s}(g,g)$ is the ``inner product'' denoted by $\left<f_{v,s}\right>$ in \cite[\S 4.9]{SaBE1}. (The fact that $s\ne 0$ plays no role in its definition.) By \cite[Proposition 4.8]{SaBE1}, and the definition of $\tilde O_\infty$ in \S \ref{ssirregular}, this is equal to $\tilde O_\infty(f_{v,s})$.

\end{proof}

The functional $\tilde O_0$ was defined as the analytic continuation to $t=0$ of the functional \eqref{zerocontrib}, and an explicit expression of this appears in \eqref{defzerononsplit}, \eqref{defzerosplit}. The last piece that we need for the proof of Theorem \ref{spectral-KTF} is:

\begin{lemma}\label{irregular-spectral}
 The functional $h\mapsto \tilde O_0(h\star f_s)$ is an evaluation at  $\delta^{\frac{1}{2}+s}$, $\eta\cdot\delta^{\frac{1}{2}+s}$ (s.\ \S \ref{notation-spectral}).
\end{lemma}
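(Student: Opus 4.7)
The plan is to reduce to a local statement at each place $v \notin S$: the two ``asymptotic coefficients at $\xi = 0$'' of elements of $\mathcal{S}(\mathcal{W}^s_v)$ are eigenfunctionals for the spherical Hecke algebra $\mathcal{H}_v := \mathcal{H}(G(k_v), G(\mathfrak{o}_v))$, with eigencharacters given by the Satake values $\hat h \mapsto \hat h(\delta^{\frac{1}{2}+s})$ and $\hat h \mapsto \hat h(\eta_v \cdot \delta^{\frac{1}{2}+s})$ respectively. From this local fact the lemma follows immediately: substituting into the definition \eqref{zerocontrib} of $\tilde O_0$ and taking the analytic continuation at $t=0$, one obtains $\tilde O_0(h \star f_s) = \hat h(\delta^{\frac{1}{2}+s}) \cdot \mathcal{A}_1(f_s) + \hat h(\eta \cdot \delta^{\frac{1}{2}+s}) \cdot \mathcal{A}_2(f_s)$ for certain functionals $\mathcal{A}_i$ independent of $h$, which is precisely an evaluation at the two claimed points.

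The setup for the local statement uses Lemma \ref{Wembedding}: for $F \in \mathcal{S}(\mathcal{W}^s_v)$, the shifted function $|\bullet|^{-s-1} F$ belongs to $(\mathcal{S}^0_{4,v})'$, and has the form $C_1(\xi) + C_2(\xi)\eta_v(\xi)$ near $\xi = 0$ (with a logarithmic modification when $\eta_v = 1$), so that $F(\xi) = |\xi|^{s+1}\bigl(C_1(\xi) + C_2(\xi)\eta_v(\xi)\bigr)$ in a neighborhood of the origin. The two functionals $F \mapsto C_1(0)$, $F \mapsto C_2(0)$ span the stalk at $\xi = 0$ of the Schwartz cosheaf, and are already $k_v^\times$-eigenfunctionals under the unitary normalization \eqref{unitaryaction}, with the two eigencharacters differing by the twist $\eta_v$.

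The main step is to upgrade this $k_v^\times$-eigenstructure to the full $\mathcal{H}_v$-module structure on the two-dimensional stalk. Lifting to $\mathcal{S}^s(\overline X \times X(k_v), \mathcal{L}_\psi \boxtimes \mathcal{L}_\psi^{-1})$ and computing the germ at the closed $G$-orbit in $X \times X$ (the small Bruhat cell, where the two Whittaker functionals meet at the identity), one sees that the stalk is a direct sum of two one-dimensional Hecke-eigenspaces, corresponding under the Satake isomorphism to the two unramified principal series $I(\delta^{\frac{1}{2}+s})$ and $I(\eta_v \cdot \delta^{\frac{1}{2}+s})$; the shift by $|\bullet|^{s+1}$ is precisely what twists the spherical spectrum from the ``boundary'' parameter $\delta^{1/2}$ to $\delta^{\frac{1}{2}+s}$. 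The matching can be verified concretely by computing $h_v \star f_{4,v}^{t,0}$ near $\xi = 0$ for $h_v$ equal to the characteristic function of $G(\mathfrak{o}_v)\diag(\varpi_v, 1)G(\mathfrak{o}_v)$, using the explicit basic function \eqref{basicfnvariation} and the formulas of \S \ref{transformcalculations}. The main technical obstacle is the bookkeeping of three distinct normalizations---the shift by $|\bullet|^{s+1}$, the unitary action \eqref{unitaryaction}, and the Satake convention---to confirm the eigencharacters emerge exactly as claimed rather than in some twisted form.
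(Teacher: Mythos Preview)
Your approach is correct in the nonsplit case but has a genuine gap in the split case ($\eta=1$, or more precisely at any place where $\eta_v=1$). You assert that ``the stalk is a direct sum of two one-dimensional Hecke-eigenspaces,'' and deduce the formula
\[
\tilde O_0(h\star f_s)=\hat h(\delta^{\frac12+s})\,\mathcal A_1(f_s)+\hat h(\eta\cdot\delta^{\frac12+s})\,\mathcal A_2(f_s).
\]
But when $\eta_v=1$ the relevant local stalk comes from the \emph{generalized} principal series $I(\delta^{\frac12+s},\delta^{\frac12+s})$, which is a nontrivial extension, not a direct sum. Concretely, the asymptotic near $\xi=0$ is $|\xi|^{s+1}\bigl(-C_1\ln|\xi|+C_2\bigr)$, and while the functional $\tilde O_{0_v}\colon F\mapsto C_1$ is a Hecke eigenfunctional, the functional $\tilde O_{u_v}\colon F\mapsto C_2$ is not: the paper computes (see the sublemma inside the proof) that
\[
\tilde O_{u_v}(h\star f_v^0)=(\text{scalar})\cdot\hat h(\delta^{\frac12+s})\;-\;\tilde O_{0_v}(f_v^0)\cdot\frac{d}{ds}\hat h(\delta^{\frac12+s}),
\]
so a derivative term appears. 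In the global split case, the definition \eqref{defzerosplit} of $\tilde O_0$ contains $\tilde O_{u_v}$ explicitly, and this is exactly what produces an evaluation of order $2$ at $\delta^{\frac12+s}$ (which is what the statement means when the two points $\delta^{\frac12+s}$ and $\eta\cdot\delta^{\frac12+s}$ coincide). Your displayed formula misses this derivative.

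Your idea of working at generic $t$ via \eqref{zerocontrib} and then analytically continuing to $t=0$ could in principle recover this, but you have not carried it out: the Hecke action is not defined on the deformed spaces $\mathcal S^t_{4,v}$ for $t\ne 0$, and even if one sets it up, one must track how the two eigencharacters coalesce as $t\to 0$ to see the Jordan block and the resulting derivative. The paper instead works directly at $t=0$: it identifies the stalk at $0$ of $\mathcal S(\mathcal W^s_v)$ with a stalk of $I(\delta^{\frac12+s},\eta_v\delta^{\frac12+s})_{(N,\psi^{-1})}$ via the asymptotics map $e_\emptyset^*$, relates the $k_v^\times$-action on the stalk to the normalized $A_v$-action on the principal series, and then computes the Hecke/torus action on functions of the form $|\xi|^{s+1}(-C_1\ln|\xi|+C_2)$ explicitly to extract the derivative.
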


\begin{proof}
For every place $v$, by definition of the space $\mathcal S^s(\overline X(k_v))$ (s.\ \cite[\S 4.5 and 6.1]{SaBE1}), there is a natural quotient map obtained by the asymptotics of sections
 $$ \mathcal S^s(\overline X\times X (k_v), \mathcal L_\psi\boxtimes\mathcal L_\psi^{-1}) \twoheadrightarrow I([\delta^{\frac{1}{2}+s}, \eta\delta^{\frac{1}{2}+s}]) \otimes \mathcal S(X (k_v), \mathcal L_\psi^{-1}),$$
where $I([\delta^{\frac{1}{2}+s}, \eta\delta^{\frac{1}{2}+s}])$ is the generalized principal series with the given multiset of exponents (as in \S \ref{asBfinite}, but here locally at $v$). Let us denote:
 $$V_v:= I([\delta^{\frac{1}{2}+s}, \eta\delta^{\frac{1}{2}+s}]) \otimes \mathcal S(X (k_v), \mathcal L_\psi^{-1}).$$
 
 Correspondingly, there is quotient of coinvariants under the diagonal $G(k_v)$-action:
 $$ \mathcal S(\mathcal W^s_v) \twoheadrightarrow (V_v)_G,$$
 and $(V_v)_G$ is two-dimensional, whose dual is spanned by the local distributions appearing in the definition of $\tilde O_0$ of \eqref{defzerononsplit} and \eqref{defzerosplit}. (In the nonsplit case, those are denoted by $\tilde O_{0_v,\pm}$, and in the split case they are denoted by $\tilde O_{0_v}$ and $\tilde O_{u_v}$.)
  
 In the nonsplit case the restricted tensor product $\bigotimes_v' V_v$ has a canonical quotient to:  
 $$ \left(I(\delta^{\frac{1}{2}+s}) \oplus I(\eta\delta^{\frac{1}{2}+s})\right) \otimes \mathcal S(X (\adele), \mathcal L_\psi^{-1}),$$
 and it is clear from the definition of $\tilde O_0$ in \eqref{defzerononsplit} that $\tilde O_0$ factors through this quotient; in particular, the functional $h\mapsto \tilde O_0(h\star f_s)$ is an evaluation at is an evaluation at $\delta^{\frac{1}{2}+s}$, $\eta\cdot\delta^{\frac{1}{2}+s}$.
 
 In the split case the argument is more complicated. From the two summands of \eqref{defzerosplit}, it is clear that the first one:
 $$2a_0^{S_0} \prod_{v\in S_0} \tilde O_{0_v},$$
 (where we use $S_0$ to denote a set of places strictly larger than $S$) when applied to $h\star f$, is an evaluation at $\delta^{\frac{1}{2}+s}$. 
 
 The problem lies in analyzing the second term:
 $$ a_{-1}^{S_0} \prod_{v\in S_0} \tilde O_{0_v}  \cdot \left(\sum_{v\in S_0} \frac{\tilde O_{u_v}}{\tilde O_{0,v} }\right).$$
 and showing that, when applied to $h\star f$, it is a linear combination of an evaluation of $\hat h$ at $\delta^{\frac{1}{2}+s}$ and \emph{its derivative at $t=0$ along the one-parameter family}: $t\mapsto \delta^{\frac{1}{2}+s+t}$.
 
 For this it suffices to show the following:
\begin{lemma}
For every $v\in S_0\smallsetminus S$ the functional:
 $$ h\mapsto \tilde O_{u_v}(h\star f^0_v)$$
 is equal to an evaluation of $\hat h$ at $\delta^{\frac{1}{2}+s}$ minus:
 $$ \tilde O_{0_v}(f_v^0) \cdot \frac{d}{ds} \hat h\left(\delta^{\frac{1}{2}+s}\right).$$
\end{lemma}

For this we recall several facts about orbital integrals for elements of $\mathcal S^s(\overline X\times X (k_v), \mathcal L_\psi\boxtimes\mathcal L_\psi^{-1})$. First of all, at non-Archimedean places the stalk of $\mathcal S(\mathcal W^s)$ at zero is canonically isomorphic to \emph{a stalk of the $(N_v,\psi_v^{-1})$-coinvariants of the generalized principal series} $I([\delta^{\frac{1}{2}+s}, \delta^{\frac{1}{2}+s}])$. Indeed, the theory of asymptotics \cite[\S 5]{SV} provides a canonical, $G$-equivariant map $e_\emptyset^*: C^\infty(N\backslash G, \mathcal L_\psi)\to C^\infty(N\backslash G)$ which is characterized by the property that 
the 
restriction of any function to a Cartan subgroup $A$ normalizing $N$ is equal to the restriction of its image under $e_\emptyset^*$ when restricted to a subset of the form $\delta(a)\ll 1$. This map allows us to canonically identify the stalk of the cosheaf $\mathcal S^s(\overline X,\mathcal L_\psi)$ ``at infinity'' with the corresponding stalk of $I([\delta^{\frac{1}{2}+s}, \delta^{\frac{1}{2}+s}])$, in a $G$-equivariant way, and hence also their coinvariants. Let us denote by $V_{v,0}$ the corresponding stalk of $I([\delta^{\frac{1}{2}+s}, \delta^{\frac{1}{2}+s}])_{(N,\psi^{-1})}$: it is the stalk at the point corresponding to ``infinity'' on $X$; so we have a canonical isomorphism
$$ \mathcal S(\mathcal W^s_v)_0\simeq V_{v,0}.$$

Secondly, we notice that the normalized $k_v^\times$-action on this stalk $\mathcal S(\mathcal W_v^s)_0$, tensored by the inverse absolute value, i.e., the action
$$ a* f(\xi) := |a|^{-\frac{1}{2}} f(a\xi),$$
corresponds under the above isomorphism to the normalized $k_v^\times$-action on $I([\delta^{\frac{1}{2}+s}, \delta^{\frac{1}{2}+s}])$, where $k_v^\times$ is identified with $A_v$ via the character $\delta$ and the normalized action is
$$ a\cdot \Phi(g) := \delta(a)^{-\frac{1}{2}} \Phi(ag).$$

Thirdly, the endomorphism ring of $C^\infty(N\backslash G)^{K_v}$ (where $K_v=G(\mathfrak o_v)$) generated by the unramified Hecke algebra of $G_v$ is a subring of the endomorphisms generated by the unramified Hecke algebra of $A_v$; the normalization of the action of $A_v$ is compatible with the Satake isomorphism, i.e., $h\in \mathcal H(G_v,K_v)$ induces the same endomorphism as the image of $\hat h$ under the maps: $\CC[\check A]^W\hookrightarrow \CC[\check A] \xrightarrow{\sim} \CC[A_v/A(\mathfrak o_v)]\xrightarrow\sim\mathcal H(A_v,A(\mathfrak o_V))$. 

Finally, we recall the way that $\tilde O_{0_v}(h\star f_v^0)$ and $\tilde O_{u_v}(h\star f_v^0)$ determine the germ of $h\star f_v^0$ in the stalk $\mathcal S(\mathcal W^s_v)_0$: the asymptotic behavior of $h\star f_v^0$ close to $\xi=0$ is of the form
$$|\xi|^{s+1}\left(-\tilde O_{0_v}(h\star f_v^0) \log|\xi| + \tilde O_{u_v}(h\star f_v^0)\right)$$
(cf.\ \eqref{Oasympsplit}, but here with the extra factor $|\xi|^{s+1}$). 

Given all that, it suffices to show that, under the $*$-action of $\mathcal H(A_v,A(\mathfrak o_V))\simeq \mathcal H(k_v^\times,\mathfrak o_v^\times)$ on the stalk at $\xi=0$ of functions of the form
$$ F(\xi) = |\xi|^{s+1}\left(-C_1 \log|\xi| + C_2\right),$$
the functional ``$C_2$'' has the property
$$ C_2(h* F) = A \hat h(\delta^{\frac{1}{2}+s}) + C_1(F) \frac{d}{ds} \hat h\left(t\mapsto \delta^{\frac{1}{2}+s+t}\right).$$
It is easy to see that under the above normalized action of $A_v$:
$$ a* F(\xi) = |\xi|^{s+1} (-C_1 |a|^{\frac{1}{2}+s} \log|a| ) + \mbox{ a term proportional to }|a|^{\frac{1}{2}+s}.$$
If $h=$ the characteristic measure of $aA(\mathfrak o_v)$ then under the isomorphism $\mathcal H(A_v,A(\mathfrak o_v)) \simeq \CC[t,t^{-1}]$ (chosen so that the ``point'' $\delta^s$ on $\spec \CC[t,t^{-1}]$ corresponds to the evaluation $t\mapsto |\varpi|^s$) we have: $\hat h (t)= t^{\val(a)}$.

Therefore, we translate: $-C_1 |a|^{\frac{1}{2}+s} \log|a| = -C_1 \frac{d}{ds} \hat h\left(\delta^{\frac{1}{2}+s}\right)$,
and our claim is proven. 

\end{proof}

\subsection{Functional equation}\label{ssfe}

We now come to what was called the ``second miracle'', namely Theorem \ref{2ndmiracle}. To prove it, recall that $\mathcal S(\mathcal Z_v^s)$ denotes, by definition, the image of $\mathcal S(\mathcal W^s(k_v))$ under $f\mapsto \mathcal G^{-1} \left(|\bullet|^{-s-1} f\right)$.
We claim:

\begin{proposition}\label{propfetorus}
 Multiplication by $\eta(\xi)\eta(\xi+1)|\xi+1|^{2s}$ induces an isomorphism:
$$\mathcal R: \mathcal S(\mathcal Z_v^{-s}) \xrightarrow{\sim} \mathcal S(\mathcal Z_v^s)$$
which maps basic functions to each other. Hence, it also induces an isomorphism of global Schwartz spaces:
$$\mathcal S(\mathcal Z^{-s}(\adele)) \xrightarrow{\sim} \mathcal S(\mathcal Z^s(\adele)).$$
\end{proposition}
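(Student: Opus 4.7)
The claim is entirely local at each place, and the global statement follows from the restricted tensor product structure once basic vectors are shown to match at almost every place. Fix a place $v$. The operator $\mathcal{R}_v: f \mapsto m_s(\xi)\cdot f$ with $m_s(\xi) = \eta(\xi)\eta(\xi+1)|\xi+1|^{4s}$ is multiplication by a smooth invertible function on $\mathcal{B}^\reg(k_v)=k_v\smallsetminus\{-1,0\}$, so the content of the claimed isomorphism lies in the germ behavior at the three singular points $\xi\in\{0,-1,\infty\}$. These germs will be analyzed using Table \ref{tablegerms} and the operator-theoretic description of the stalks of the Schwartz cosheaf, as recalled in \S\ref{ssbabySchwartz}. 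Recall that $\mathcal{S}(\mathcal{Z}_v^{\pm s})=(\mathcal{S}_{1,v}^0)'$, with the implicit $s$-parameter flipped accordingly.

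The three germ verifications proceed as follows. At $\xi=\infty$, both spaces require rapid decay (vanishing of the $C_5$ germ), and multiplication by the polynomially growing factor $|\xi+1|^{4s}$ preserves rapid decay. At $\xi=0$, the germ is $s$-independent (only the $t$-parameter enters, and $t=0$ here); $\eta(\xi+1)|\xi+1|^{4s}$ is smooth and nonvanishing near $0$, while $\eta(\xi)$ interchanges the two components of the germ $C_1(\xi)+C_2(\xi)\eta(\xi)|\xi|^t$, an operation that stabilizes the germ class. At $\xi=-1$, after shifting to the coordinate $\xi'=\xi+1$, the germ of $\mathcal{S}(\mathcal{Z}_v^{\mp s})$ is characterized as generalized eigenvectors annihilated by $(\Id-a\cdot)(\Id-\eta(a)|a|^{\pm 2s-\frac{1}{2}}a\cdot)$ for all $a\in k_v^\times$, under the normalized action \eqref{unitaryaction}; multiplication by $m_s$ intertwines the two corresponding operator kernels.

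Invertibility is then immediate: $\eta^2\equiv 1$ gives $m_s\cdot m_{-s}\equiv 1$ on $\mathcal{B}^\reg(k_v)$, and bicontinuity is inherited from the Fr\'echet structure of the cosheaf. The matching of basic functions at an unramified place $v\notin S$ reduces, via Table \ref{table} at $i=1$, to the manifest symmetry $L(\eta_v, 2s)\leftrightarrow L(\eta_v, -2s)$ at the germs near $\xi=-1$, together with $m_s\equiv 1$ on $\mathcal{B}^\reg(\mathfrak{o}_v)$ (since $\eta_v$ is unramified, hence trivial on units, and $\xi,\xi+1$ are integral units there). Hence $\mathcal{R}$ preserves basic vectors at almost every place and extends to the claimed global isomorphism. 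The main obstacle in the plan is the germ analysis at $\xi=-1$: the ``normal form'' $C_3+C_4\eta(\xi+1)|\xi+1|^{2s}$ in Table \ref{tablegerms} is potentially misleading when multiplied by $|\xi+1|^{4s}$, and one genuinely has to appeal to the operator-theoretic stalk characterization to see that the product lies in the correct generalized eigenspace rather than in a larger stalk.
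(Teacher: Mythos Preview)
Your overall strategy—checking the germs at $0$, $-1$, $\infty$ separately via Table~\eqref{tablegerms}, and then matching basic vectors via \eqref{basicfn}—is exactly the paper's approach. The arguments at $\xi=0$ and $\xi=\infty$ are fine.

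The gap is precisely where you yourself flag the ``main obstacle'': at $\xi=-1$ you assert that ``multiplication by $m_s$ intertwines the two corresponding operator kernels'' without carrying out the computation. If you do it, it fails with the stated exponent $4s$. Under the normalized action $(a\cdot f)(\xi')=|a|^{1/2}f(a\xi')$ in the coordinate $\xi'=\xi+1$, multiplication by $\eta(\xi')|\xi'|^{4s}$ shifts the eigenvalue $\lambda(a)$ to $\eta(a)|a|^{4s}\lambda(a)$. The eigenvalue pair for $\mathcal S(\mathcal Z_v^{-s})$ is $\{|a|^{1/2},\,\eta(a)|a|^{-2s+1/2}\}$, which gets sent to $\{\eta(a)|a|^{4s+1/2},\,|a|^{2s+1/2}\}$, not to the pair $\{|a|^{1/2},\,\eta(a)|a|^{2s+1/2}\}$ required for $\mathcal S(\mathcal Z_v^{s})$. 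Equivalently, in the explicit germ form: if $f=C_3+C_4\,\eta(\xi+1)|\xi+1|^{-2s}$ near $\xi=-1$, then (absorbing the smooth factor $\eta(\xi)$)
\[
m_s\cdot f \;=\; C_4\,|\xi+1|^{2s} \;+\; C_3\,\eta(\xi+1)|\xi+1|^{4s},
\]
which is not of the form $C_3'+C_4'\,\eta(\xi+1)|\xi+1|^{2s}$. The basic-function check fails for the same reason: applying $m_s$ with exponent $4s$ to the $(-s)$-basic function near $\xi=-1$ from \eqref{basicfn} produces a $|\xi+1|^{4s}$ term, not matching the $s$-basic function.

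The computation goes through cleanly with exponent $2s$ in place of $4s$: then $m_s\cdot f = C_4 + C_3\,\eta(\xi+1)|\xi+1|^{2s}$, and the basic functions match because $L(\eta_v,2s)+L(\eta_v,-2s)\,\eta(\xi+1)|\xi+1|^{2s}$ is symmetric under the resulting swap $C_3\leftrightarrow C_4$. So the $4s$ in the statement appears to be a typo for $2s$; had you actually executed the intertwining step, you would have caught it. (Minor: your annihilator $(\Id-a\cdot)$ should read $(\Id-|a|^{-1/2}a\cdot)$, since the constant function is not fixed by the \emph{normalized} action.)
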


\begin{proof}
 The isomorphism follows immediately from the asymptotic behavior of elements in the space $\mathcal S_{1,v}^0$ as described by Table (\ref{tablegerms}); notice that both $\mathcal S(\mathcal Z_v^{-s}) $ and $\mathcal S(\mathcal Z_v^{-s}) $ correspond to the subspaces with $C_5=0$, in the notation of that table.

 The fact that basic functions are carried over to each other is clear from \eqref{basicfn}. 
\end{proof}

Now let $\mathcal T$ denote the following isomorphism:
$$ \mathcal T: \mathcal S(\mathcal W_v^{-s}) \to \mathcal S(\mathcal Z_v^{-s}) \to  \mathcal S(\mathcal Z_v^s) \to \mathcal S(\mathcal W_v^s),$$
where the middle arrow is that of the previous proposition, and the others are the isomorphisms we have been using throughout, the first given by $f\mapsto \mathcal G^{-1}\left(|\bullet|^{s-1} f\right)$ and the second given by $f\mapsto |\bullet|^{s+1} \mathcal G(f)$. To finish the proof of Theorem \ref{2ndmiracle}, we need to show that $\mathcal T$ is compatible with the action of the spherical Hecke algebra on the basic vectors, and that it preserves the property of sections to be analytic of rapid decay in vertical strips. The latter is clear, since this is a property of the transform $|\bullet|^{s+1} \mathcal G$ and its inverse, and is clearly true for the transform of Proposition \ref{propfetorus}.

\begin{proposition}\label{propfeKuz}
At almost every place $v$, and for every $h\in \mathcal H(G_v,K_v)$, the isomorphism $\mathcal T$ takes $h\star f_{\mathcal W_v^{-s}}^0$ to $h\star f_{\mathcal W_v^s}^0$.
\end{proposition}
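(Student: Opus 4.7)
The strategy is to decompose $\mathcal T$ into its three constituent arrows and verify Hecke-equivariance on the orbit of the basic vector for each piece in turn. A preliminary point: one checks that $\mathcal T(f_{\mathcal W_v^{-s}}^0) = f_{\mathcal W_v^s}^0$, since the two outer arrows of $\mathcal T$ are defined so as to carry basic vectors to basic vectors in the sequence $\mathcal S(\mathcal W_v^{-s}) \leftrightarrow \mathcal S(\mathcal Z_v^{-s})$ and $\mathcal S(\mathcal Z_v^s) \leftrightarrow \mathcal S(\mathcal W_v^s)$, while $\mathcal R$ carries basic vectors to basic vectors by Proposition \ref{propfetorus}. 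The non-trivial content is the equivariance under $h\star$.

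For the two outer arrows, Hecke-equivariance on the Hecke-orbit of the basic vector is precisely the content of the fundamental lemma \cite[Theorem 5.4]{SaBE1}: that theorem asserts that $h\star f^0_{\mathcal Z_v^s}$ and $h\star f^0_{\mathcal W_v^s}$ correspond to each other under the transfer operator $|\bullet|^{s+1}\mathcal G$ for every $h \in \mathcal H(G_v,K_v)$, and applied with $s$ replaced by $-s$ this gives the corresponding statement for the first arrow. Thus the proposition reduces to the equality
\begin{equation*}
\mathcal R\bigl(h\star f^0_{\mathcal Z_v^{-s}}\bigr) \;=\; h\star f^0_{\mathcal Z_v^{s}},
\end{equation*}
for every $h \in \mathcal H(G_v,K_v)$.

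To prove this reduced assertion I invoke the identification announced (in the footnote) in \S\ref{sscontinuation} between $\mathcal S(\mathcal Z_v^s)$ and (a deformation of) the space of orbital integrals for Jacquet's split-torus RTF $A\backslash G/(A,\eta)$, where the deformation is obtained by twisting the torus periods by the continuous family of characters $|\bullet|^{2s}$. Under this identification, the Hecke algebra $\mathcal H(G_v,K_v)$ acts through the standard convolution action on the $A$-bi-equivariant test functions, and the basic vector corresponds to the characteristic function of $G(\mathfrak o_v)$. On the spectral (representation-theoretic) side, the Hecke orbit of this basic vector is governed by the unramified pairings with the generalized principal series $I(|\bullet|^{2s})\otimes I(\eta\cdot|\bullet|^{2s})$, and replacing $s$ by $-s$ amounts to applying the standard intertwining operator (composed with the long Weyl element) to this representation. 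This intertwining operator is $G$-equivariant, and hence at the level of orbital integrals it induces an $\mathcal H(G_v,K_v)$-equivariant isomorphism between orbital-integral spaces. What remains is to verify that \emph{this} $G$-equivariant transform, descended to the base $\mathcal B$, is precisely multiplication by $\eta(\xi)\eta(\xi+1)|\xi+1|^{4s}$; this is an explicit computation of orbital integrals against characters, and boils down to checking that the normalizing abelian gamma factors are trivial at almost every place (which they are, for $\psi_v$ of conductor $\mathfrak o_v$ and $\eta_v$ unramified).

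\textbf{Main obstacle.} The principal technical point is the explicit identification of the multiplication operator $\mathcal R$ with the (properly normalized) intertwining operator between $I(|\bullet|^{2s})\otimes I(\eta\cdot|\bullet|^{2s})$ and $I(|\bullet|^{-2s})\otimes I(\eta\cdot|\bullet|^{-2s})$ at the level of orbital integrals, rather than at the level of representations. The cleanest way to establish this is to match the pairing of both sides against unramified matrix coefficients of these principal series and to observe that both reduce, up to the same elementary abelian gamma factor, to the rational function $\eta(\xi)\eta(\xi+1)|\xi+1|^{4s}$ via the Gindikin--Karpelevich formula for the two copies of $A$; everything else is a bookkeeping exercise in the Jacquet identification.
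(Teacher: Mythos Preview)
Your overall strategy matches the paper's: decompose $\mathcal T$ into three arrows, invoke the fundamental lemma of \cite[Theorem 5.4]{SaBE1} for the two outer arrows, and handle the middle arrow $\mathcal R$ by identifying $\mathcal S(\mathcal Z_v^s)$ with orbital integrals for the split-torus quotient $(\Gm,|\bullet|_v^s)\backslash G/(\Gm,\eta_v|\bullet|_v^s)$. The paper proceeds exactly this way.

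The difference lies in how you treat the middle step. You pass to the representation-theoretic side, speak of unramified pairings with principal series $I(|\bullet|^{2s})\otimes I(\eta|\bullet|^{2s})$, and propose to realize the passage $-s\rightsquigarrow s$ via the standard intertwining operator, then match with $\mathcal R$ through Gindikin--Karpelevich. This is more involved than necessary, and slightly muddled: the spaces in play are compactly supported Schwartz sections on $(\Gm,\chi)\backslash G$, not the smooth inductions $I(\chi)$, and the intertwining integral over $N$ is not the relevant map here. The paper instead uses the elementary observation that
\[
\mathcal S\bigl((\Gm,|\bullet|_v^{-s})\backslash\PGL_2\bigr)\ni f \;\longmapsto\; f(w\,\bullet)\;\in\;\mathcal S\bigl((\Gm,|\bullet|_v^{s})\backslash\PGL_2\bigr)
\]
(and the analogous map with the $\eta_v$-twist) is manifestly $G_v$-equivariant, hence Hecke-equivariant on unramified vectors, and then checks by a direct orbital-integral computation that at the level of $G_v$-coinvariants this pair of maps descends to multiplication by $\eta_v(\xi)\eta_v(\xi+1)|\xi+1|^{4s}$, i.e.\ to $\mathcal R$. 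No gamma factors or Gindikin--Karpelevich are needed. (Also note the paper's characters are $|\bullet|^s$ and $\eta|\bullet|^s$, not $|\bullet|^{2s}$; the exponent $4s$ in $\mathcal R$ arises because both torus factors contribute.)

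In short: your plan is correct and follows the same skeleton, but the paper's realization of the Hecke-equivariant map $\mathcal S(\mathcal Z_v^{-s})\to\mathcal S(\mathcal Z_v^s)$ via left translation by $w$ is considerably simpler than the intertwining-operator route you sketch.
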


\begin{proof}
 We will leave the details of the proof to the reader; it can be done by (very unpleasant) explicit calculations, or by ``cheating'' as in the proof of the fundamental lemma \cite[Theorem 5.4]{SaBE1}, by using the following fact: the space $\mathcal S(\mathcal Z_v^s)$ can be identified with the space obtained by orbital integrals for the quotient $(\Gm,|\bullet|_v^s)\backslash G/(\Gm,\eta_v(\bullet)|\bullet|_v^s)$, where $\Gm$ denotes the torus of diagonal elements in $\PGL_2$. Thus, the space \emph{does} have an action of the Hecke algebra (although strictly speaking we would like to avoid using that, and indeed we avoid it in global considerations).
 
 With respect to this action, the above transforms: $\mathcal S(\mathcal W_v^{\pm s}) \leftrightarrow \mathcal S(\mathcal Z_v^{\pm s})$, given by $\mathcal G$ and a suitable power of the absolute value, are equivariant when restricted to images of unramified vectors; this is proven exactly as in the proof of the Fundamental Lemma, \cite[Theorem 5.4]{SaBE1}, with the intervention of an intermediate ``space'' $\mathcal W_1^s$.
 
 Finally, we have $G_v$-equivariant maps:
$$ \mathcal S\left((\Gm, |\bullet|_v^{-s})\backslash\PGL_2\right) \ni f \mapsto f(w\bullet) \in \mathcal S\left((\Gm, |\bullet|_v^{s})\backslash\PGL_2\right)$$
and:
$$ \mathcal S\left((\Gm, \eta_v(\bullet)|\bullet|_v^{-s})\backslash\PGL_2\right) \ni f \mapsto f(w\bullet) \in \mathcal S\left((\Gm, \eta_v(\bullet)|\bullet|_v^{s})\backslash\PGL_2\right).$$ 

It can easily be checked that at the level of orbital integrals, i.e., $G_v$-coinvariants, these descend to the transform $\mathcal R$ of Proposition \ref{propfetorus}.
\end{proof}

Even if we want to forget about the Hecke action on $\mathcal S(\mathcal Z_v^s)$ (more precisely: on its ``unramified'' vectors) when $s\ne 0$, for $s=0$ we clearly have a Hecke action that comes from its original definition in terms of coinvariants of $\mathcal S(T_v\backslash G_v \times T_v\backslash G_v)$ (and suitable inner forms, which however do not play a role when discussing unramified vectors). A corollary of the above and the Fundamental Lemma of \cite{SaBE1} (or a direct corollary of the last proof) is:

\begin{corollary}\label{corollaryR}
 The transform $\mathcal R$ of Proposition \ref{propfetorus}, when $s=0$, satisfies:
 $$\mathcal R\left(h\star f_{\mathcal Z_v}^0\right) = h\star f_{\mathcal Z_v}^0$$
 for every $h\in \mathcal H(G_v,K_v)$. 
\end{corollary}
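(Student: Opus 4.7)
The plan is to deduce this from Proposition \ref{propfeKuz} together with the Fundamental Lemma of \cite{SaBE1}. Observe that at $s=0$ the composition defining $\mathcal T$ takes the form
$$\mathcal T \;=\; \bigl(|\bullet|\,\mathcal G\bigr) \circ \mathcal R \circ \bigl(|\bullet|\,\mathcal G\bigr)^{-1},$$
where $(|\bullet|\,\mathcal G)^{-1}\colon \mathcal S(\mathcal W_v) \to \mathcal S(\mathcal Z_v)$ sends $f$ to $\mathcal G^{-1}(|\bullet|^{-1} f)$. By the compatibility \eqref{compatibility} between basic vectors at the level $t=0$, this inverse maps $f_{\mathcal W_v}^0$ to $f_{\mathcal Z_v}^0$; furthermore, by the Fundamental Lemma \cite[Theorem 5.4]{SaBE1}, it is $\mathcal H(G_v,K_v)$-equivariant when restricted to the span of Hecke translates of the basic vector. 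Explicitly, for every $h\in \mathcal H(G_v,K_v)$:
$$\bigl(|\bullet|\,\mathcal G\bigr)^{-1}\bigl(h\star f_{\mathcal W_v}^0\bigr) \;=\; h\star f_{\mathcal Z_v}^0.$$

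Next, I would apply Proposition \ref{propfeKuz} at the specialization $s=0$, which states that $\mathcal T(h\star f_{\mathcal W_v}^0) = h\star f_{\mathcal W_v}^0$. Plugging in the factorization of $\mathcal T$ above and the identity on the inverse transfer, this becomes
$$\bigl(|\bullet|\,\mathcal G\bigr)\bigl(\mathcal R(h\star f_{\mathcal Z_v}^0)\bigr) \;=\; h\star f_{\mathcal W_v}^0.$$
Applying the inverse transfer once more to both sides, and again invoking the Fundamental Lemma on the right-hand side, we obtain
$$\mathcal R\bigl(h\star f_{\mathcal Z_v}^0\bigr) \;=\; \bigl(|\bullet|\,\mathcal G\bigr)^{-1}\bigl(h\star f_{\mathcal W_v}^0\bigr) \;=\; h\star f_{\mathcal Z_v}^0,$$
which is exactly the claim.

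There is essentially no obstacle: both inputs---the Hecke-equivariance of the transfer $\mathcal S(\mathcal W_v)\leftrightarrow\mathcal S(\mathcal Z_v)$ at $s=0$ (Fundamental Lemma) and the Hecke-equivariance of $\mathcal T$ on basic vectors (Proposition \ref{propfeKuz})---are already established. The corollary is then a purely formal consequence of the commutative square they define, and the parenthetical remark in the paper that this also follows ``directly from the last proof'' reflects the fact that the same two auxiliary maps $f\mapsto f(w\bullet)$ used in the proof of Proposition \ref{propfeKuz} already descend to $\mathcal R$ at the level of $G_v$-coinvariants, so one could alternatively bypass $\mathcal T$ altogether and argue by Hecke-equivariance of these intertwining maps on the respective induced representations.
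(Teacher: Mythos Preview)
Your argument is correct and matches the paper's approach exactly: the paper merely states that the corollary follows from Proposition~\ref{propfeKuz} together with the Fundamental Lemma of \cite{SaBE1} (or directly from the proof of Proposition~\ref{propfeKuz}), and you have spelled out precisely this deduction via the factorization $\mathcal T = (|\bullet|\mathcal G)\circ \mathcal R \circ (|\bullet|\mathcal G)^{-1}$ at $s=0$. Your closing remark about the alternative route through the maps $f\mapsto f(w\bullet)$ also accurately reflects the paper's parenthetical comment.
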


\section{The formula of Waldspurger} \label{sec:Waldspurger} 

\subsection{Local periods}

We define local relative characters, i.e., the local analogs of the period relative characters defined in \S \ref{sslifts}. 

If $\alpha$ denotes an isomorphism class of $T$-torsors over $k_v$, $G^\alpha$ is the corresponding inner form of $G$, and $\pi_v^\alpha$ is a tempered irreducible representation of $G^\alpha$, then the integral 
\begin{equation}\label{Tperiod} \tilde v\otimes v\mapsto \int_{T^\alpha_v} \left< \tilde\pi_v^\alpha(h) \tilde v, v\right>_{\tilde\pi^\alpha_v} dh
\end{equation}
converges and represents a $T^\alpha_v$-biinvariant functional on $\tilde\pi_v^\alpha\otimes \pi_v^\alpha$ and hence, by Frobenius reciprocity, a morphism: 
$$\tilde\pi^\alpha_v\otimes {\pi^\alpha_v} \to C^\infty(T^\alpha_v\backslash G^\alpha_v\times T^\alpha_v\backslash G^\alpha_v).$$
(Of course, $T^\alpha \simeq T$, but we include the exponent in analogy with the notation for $G$.)

Dualizing, with respect to a fixed invariant measure on $(T^\alpha\backslash G^\alpha)(k_v)$, and composing with the pairing of duality, we get a local relative character
\begin{equation}\label{Jpilocal}
J_{\pi_v^\alpha}: \mathcal S(T^\alpha_v\backslash G^\alpha_v\times T^\alpha_v\backslash G^\alpha_v)\to {\pi_v^\alpha}\hat\otimes\tilde\pi_v^\alpha\to \CC.
\end{equation}

As shown in \cite[\S 6]{SV}, these relative characters play a role in the Plancherel formula. To express it in a way suitable for our application, 
fix $k$-rational invariant volume forms on $G$ and $T\backslash G$; since inner twists preserve rationality of invariant volume forms, this fixes volume forms, on all inner twists $G^\alpha$, $T^\alpha\backslash G^\alpha$ over $k_v$, and hence invariant measures on the $k_v$-points. 

Recall the ``inner product'' functional on $\mathcal S(\mathcal Z_v)$, defined in \cite[\S 3.6]{SaBE1}: If $(\Phi^\alpha)_\alpha$ is a lift of $f\in\mathcal S(\mathcal Z_v)$ as in \S \ref{sslifts}, then the inner product is essentially the integral over the diagonal: 
\begin{equation}
 \left<f\right> := (k_v^\times:N_{k_v}^{E_v}E_v^\times)^{-1}\cdot \sum_\alpha  (-1)^\alpha \AvgVol(T(k_v)) \int_{T^\alpha\backslash G^\alpha(k_v)} \Phi^\alpha(x,x) dx,
\end{equation}
where the ``average volume'' is the one defined in \S \ref{sstoriprelim}. 

By the Plancherel formula, this admits a decomposition into relative characters:
\begin{equation}\label{Plancherel} \left<f\right> = \frac{\AvgVol(T(k_v)) }{(k_v^\times:N_{k_v}^{E_v} E_v^\times)}\cdot \sum_\alpha  (-1)^\alpha \int_{\widehat{G_v^\alpha}^\temp} J_{\pi_v^\alpha} (f) d\pi_v^\alpha.
\end{equation}
Then, it was shown in \cite[Theorem 6.2.1]{SV} that the measure $d\pi_v^\alpha$ can be taken to be the Plancherel measure for $G_v^\alpha$ corresponding to the chosen Haar measure, and then the  
relative characters $J_{\pi_v^\alpha}$ appearing in \eqref{Plancherel} are the same as the ones for \eqref{Jpilocal}, provided that the measure on $T^\alpha_v$ used to define them is compatible with the choices of measures on $G^\alpha_v$, $T^\alpha_v\backslash G^\alpha_v$.

\begin{remark}
One easily checks that the product $\AvgVol(T(k_v)) J_{\pi_v^\alpha} d\pi_v^\alpha$, as a measure on $\widehat{G^\alpha}$ valued in the dual of $\mathcal S(\mathcal Z_v)$, does not depend on choices of measures (as long as, of course, the measures on $G_v^\alpha, T_v^\alpha$ and $T_v^\alpha\backslash G_v^\alpha$ are chosen compatibly). 
\end{remark}

For the split form $G^\alpha = G$, and for $\tilde\pi_v\otimes \pi_v$ varying in the family of tempered principal series representations $I(\chi_v^{-1})\otimes I(\chi_v)$, as $\chi_v$ varies in the characters of the Borel subgroup, the integral \eqref{Tperiod} is \emph{meromorphic} in $\chi_v$, and hence the relative characters $J_{\chi_v} := J_{\pi_v}$ extend by meromorphicity to a dense open set of $\chi_v$'s. In fact, it is easy to see that the integral \eqref{Tperiod} continues to converge for $|\chi_v| = \delta^\sigma$ with $|\sigma|< \frac{1}{2}$, a neighborhood of the set of unitary characters of the Borel which after induction contains all generic unitary representations, therefore $J_{\pi_v}$ is defined for all generic unitary representations of $G_v$.

We can repeat the above for tempered representations in the Whittaker case, and we will denote the corresponding local functionals by $I_{\pi_v}$; the only difference is that the analog of \eqref{Tperiod}:
\begin{equation}\label{Whperiod} \tilde v\otimes v \mapsto \int_{N_v} \left< \tilde\pi_v(n) \tilde v, v\right> \psi_v(n) dn
\end{equation}
is not convergent and needs to be regularized as in \cite[\S 6]{SV}. (Of course, in this case we have no torsors or inner forms of $G$ showing up.) A priori, these functionals are defined on the coinvariants of \emph{standard} test functions $\mathcal S(\overline X\times X (k_v), \mathcal L_\psi\boxtimes\mathcal L_\psi^{-1})$, however it is easy to see, using Lemma \ref{belongsCN}, that for $\Re(s)\gg 0$ the elements of $\mathcal S^s(\overline X\times X (k_v), \mathcal L_\psi\boxtimes\mathcal L_\psi^{-1})$ belong to the \emph{Harish-Chandra Schwartz space} \cite{BePl}, and hence the functionals $I_{\pi_v}$ extend continuously to this space, i.e., to $\mathcal S(\mathcal W^s_v)$. 

Again, the relative characters $I_{\pi_v}$ play a role in the Plancherel decomposition of the ``inner product'' functional on $\mathcal S(\mathcal W_v)$, defined in \cite[\S 4.9]{SaBE1}:
\begin{equation}\label{Plancherel-W} \left<f\right> := \int_{N\backslash G(k_v)} \Phi(x,x) dx = \int_{\widehat{G_v}^\temp} I_{\pi_v} (f) d\pi_v,
\end{equation}
where $\Phi$ is a lift of $f$ to the space $\mathcal S^0(\overline X\times X (\adele), \mathcal L_\psi\boxtimes\mathcal L_\psi^{-1})$ of non-standard Whittaker functions, and measures are chosen in the same way as in the torus case.

Moreover, if $\pi_v= I(\chi_v)$ then the functionals $I_{\pi_v}$ extend to a meromorphic family $I_{\chi_v}$. To see that these functionals make sense for $\pi_v$ generic, unitary, but not necessarily tempered, it is well-known that the integral
\begin{equation}\label{Wpairing}[\tilde W,W]:= \int \tilde W\left(\begin{array}{cc} a \\ & 1\end{array}\right) W\left(\begin{array}{cc} a \\ & 1\end{array}\right) da,\end{equation}                                                                                                                                                                      
where $W$ belongs to the Whittaker model of $\pi_v$ and $\tilde W$ belongs to the Whittaker model of $\tilde\pi_v$, is convergent for $\pi_v$ unitary (generic) and represents a nonzero invariant pairing. It is known (cf.\ the proof of \cite[Theorem 18.3.1]{SV}) that if, for a tempered representation, the regularized \eqref{Whperiod}, combined by Frobenius reciprocity, takes $\tilde v\otimes v$ to the Whittaker function $\tilde W\otimes W$, then $\left<\tilde v,v\right> = [\tilde W,W]$. By meromorphicity, this continues to be true in the domain of convergence of \eqref{Wpairing}, and therefore \eqref{Whperiod} remains regular (and nonzero) for all unitary generic representations.

The following is shown in \cite[Theorem 6.4.1]{SV}:

\begin{proposition}\label{nonvanishing}
If $\pi_v\in \hat G_v^\temp$ then  $I_{\pi_v}\ne 0$ if and only if $\pi_v$ is generic; if $\pi_v^\alpha\in \widehat{G_v^\alpha}^\temp$ then $J_{\pi^\alpha_v}\ne 0$ if and only if $\Hom_{T_v^\alpha}(\pi_v^\alpha,\CC)\ne 0$.
\end{proposition}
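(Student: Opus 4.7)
My plan is to prove the non-vanishing by factoring each local relative character through the natural Bessel pairing on $\pi_v$. Concretely, by construction the map $\mathcal S(\mathcal W_v^s)\to \CC$ defining $I_{\pi_v}$ factors as
$$ \mathcal S(\mathcal W_v^s) \xrightarrow{\;p_{\pi_v}\;} \pi_v\widehat\otimes \bar\pi_v \xrightarrow{\;B_{\pi_v}\;} \CC,$$
where $p_{\pi_v}$ is the spectral projection arising from the Plancherel decomposition of $L^2(N_v\backslash G_v,\psi_v)$ (to which the Harish-Chandra Schwartz space maps, hence so does $\mathcal S(\mathcal W_v^s)$ for $\Re(s)\gg 0$ by Lemma \ref{belongsCN}), and $B_{\pi_v}$ is the Bessel functional obtained by Frobenius reciprocity from the regularized integral \eqref{Whperiod}. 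The analogous factorization $J_{\pi_v^\alpha}= B_{\pi_v^\alpha}\circ p_{\pi_v^\alpha}$ holds, with $B_{\pi_v^\alpha}$ coming from the absolutely convergent \eqref{Wperiod}. It thus suffices to prove that each of the two factors is non-zero (resp.\ zero) under the stated hypotheses.

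The first step is surjectivity (or at least non-vanishing) of $p_{\pi_v}$ for every $\pi_v$ in the support of the Plancherel measure: this is a standard density statement, whose content is that matrix coefficients of $\pi_v$ are obtained from elements of the Harish-Chandra Schwartz space (via $g\mapsto \langle \pi_v(g)v_1,v_2\rangle$), and that the image of $\mathcal S(\mathcal W_v^s)$ contains enough Whittaker functions to saturate the $\pi_v$-isotypic piece of $L^2(N_v\backslash G_v,\psi_v)$. The same argument applies verbatim on the torus side, using the Plancherel decomposition of $L^2(T_v^\alpha\backslash G_v^\alpha)$ described in \cite[\S 6]{SV}.

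The second step is the identification of $B_{\pi_v}$ and $B_{\pi_v^\alpha}$ as (essentially) the squared absolute value of a Whittaker, resp.\ torus, functional. By Frobenius reciprocity, a nontrivial map $\pi_v\otimes\bar\pi_v\to C^\infty(N_v\backslash G_v,\psi_v)\widehat\otimes \overline{C^\infty(N_v\backslash G_v,\psi_v)}$ exists precisely when $\pi_v$ carries a non-zero Whittaker functional $\ell$; uniqueness of the Whittaker model then forces
$$ B_{\pi_v}(v_1\otimes \bar v_2) = c_{\pi_v}\cdot \ell(v_1)\,\overline{\ell(v_2)}$$
for a non-zero constant $c_{\pi_v}$ computable from the Plancherel measure of $N_v\backslash G_v$. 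Hence $B_{\pi_v}\ne 0$ iff $\pi_v$ is generic. The torus case is identical with $\ell$ replaced by a $T_v^\alpha$-invariant functional, giving $B_{\pi_v^\alpha}\ne 0 \Leftrightarrow \Hom_{T_v^\alpha}(\pi_v^\alpha,\CC)\ne 0$.

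The main obstacle is justifying the factorization through $B_{\pi_v}$ in the Whittaker case, since the integral \eqref{Whperiod} is not absolutely convergent on arbitrary vectors and requires regularization in the sense of Lapid--Mao. The cleanest remedy is to bypass this by working directly with the Whittaker-model inner product
$$ (W_1,W_2)\mapsto \int W_1\!\left(\begin{smallmatrix} a \\ & 1\end{smallmatrix}\right)\overline{W_2\!\left(\begin{smallmatrix} a \\ & 1\end{smallmatrix}\right)}\,da,$$
which converges absolutely for tempered generic $\pi_v$ and produces the Plancherel inner product on $\pi_v$, so that $B_{\pi_v}$ is forced to be proportional to $\ell\otimes\overline\ell$ as above. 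On the torus side no such issue arises. Everything assembled, the two equivalences of the proposition follow from the two-step factorization.
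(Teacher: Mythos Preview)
The paper does not prove this proposition; it records it as \cite[Theorem~6.4.1]{SV}. Your sketch is therefore an attempt to reconstruct that result, and its shape is correct: by multiplicity one, the $T_v^\alpha$-biinvariant hermitian form $H(v_1,v_2)=\int_{T_v^\alpha}\langle\pi_v^\alpha(t)v_1,v_2\rangle\,dt$ of \eqref{Wperiod} equals $c_{\pi_v^\alpha}\cdot\ell\otimes\bar\ell$ for any generator $\ell$ of $\Hom_{T_v^\alpha}(\pi_v^\alpha,\CC)$, and $J_{\pi_v^\alpha}$ is obtained from $H$ by dualizing; so $J_{\pi_v^\alpha}\ne 0\Leftrightarrow c_{\pi_v^\alpha}\ne 0$.

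The gap is the bare assertion that $c_{\pi_v^\alpha}\ne 0$ whenever $\ell$ exists. One direction ($\ell=0\Rightarrow H=0$) is immediate, but the converse is exactly the nontrivial content of the cited theorem: a tempered $\pi_v^\alpha$ carrying an abstract $T_v^\alpha$-invariant functional might in principle still have $H\equiv 0$. When $T_v^\alpha$ is compact this cannot happen, since $H(v,v)$ is then a positive multiple of the squared norm of the projection of $v$ onto $(\pi_v^\alpha)^{T_v^\alpha}$. But when $T_v$ is the split torus of $\PGL_2(k_v)$ you must actually argue that the convergent integral $\int_{A_v}\langle\pi_v(a)v,v\rangle\,da$ is strictly positive for some $v$, i.e.\ that every generic tempered $\pi_v$ lies in the Plancherel support of $L^2(A_v\backslash G_v)$; your phrase ``computable from the Plancherel measure'' presupposes this rather than proving it. The same issue arises for the regularized \eqref{Whperiod}: your Kirillov-model remedy is the right idea, but you should still justify that the Kirillov inner product agrees with the given unitary structure up to a \emph{nonzero} scalar (equivalently, that generic tempered $\pi_v$ lies in the support of the Whittaker--Plancherel measure). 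For $\PGL_2$ these facts can be checked by hand, but as written your argument assumes precisely what \cite[\S 6]{SV} is invoked to supply.
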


We now have the matching of orbital integrals:
$$ |\bullet|\mathcal G: \mathcal S(\mathcal Z_v)\xrightarrow{\sim} \mathcal S(\mathcal W_v^0)$$
and a basic question concerns the push-forward, resp.\ pull-back, of the relative characters $J_{\pi_v^\alpha}$, resp.\ $I_{\pi_v}$. 

\begin{proposition}\label{factorsthrough}
For every $\pi_v\in \hat G_v^\temp$ there is a unique inner form $G^\alpha$ corresponding to a $T$-torsor $\alpha$ over $k_v$, and a Jacquet--Langlands lift $\pi_v^\alpha$ of $\pi_v$ to $G^\alpha$, such that the pull-back $\left(|\bullet|\mathcal G\right)^*I_{\pi_v}$ to $\mathcal S(\mathcal Z_v)$ is a nonzero multiple of a relative character attached to $\pi_v^\alpha$.
\end{proposition}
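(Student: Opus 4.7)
The plan is to exploit the principle that the transfer operator $|\bullet|\mathcal G$ preserves $L^2$-inner products, combined with the Plancherel formula for the two homogeneous spaces $T^\alpha\backslash G^\alpha$ and $N\backslash G$, to force the claimed matching of relative characters. The foundational step is to establish, for all $f\in \mathcal S(\mathcal Z_v)$, an identity of the form
$$\langle |\bullet|\mathcal G f\rangle_{\mathcal W} \;=\; c\cdot \langle f\rangle_{\mathcal Z}$$
with $c$ a nonzero constant depending only on normalizations of Haar measures. By Proposition \ref{KTF-ip} and its torus analogue (Proposition \ref{RTF-regularizedip}), both inner products are essentially evaluations of the asymptotic germs of the relevant orbital integrals near $\xi=\infty$ (the germs $C_5$, $D_4$, $D_5$ of Table \eqref{tablegerms}); the required identity then follows from the intertwining relations in Proposition \ref{variousproperties-RTF} between these germs under $\mathcal F$ and $\iota_t$, together with the non-vanishing of the abelian $\gamma$-factor $\gamma(\eta_v, 0, \psi_v)$.

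Substituting this inner-product identity into the Plancherel decomposition \eqref{Plancherel} on the torus side and its Kuznetsov analogue $\langle F\rangle_{\mathcal W} = c'' \int_{\hat G_v^\temp} I_{\pi_v}(F)\, d\pi_v$ yields
$$\sum_\alpha (-1)^\alpha \int_{\widehat{G^\alpha_v}^\temp} J_{\pi_v^\alpha}(f)\, d\pi_v^\alpha \;=\; c'\int_{\hat G_v^\temp} I_{\pi_v}(|\bullet|\mathcal G f)\, d\pi_v.$$
Via the local Jacquet-Langlands correspondence for $\PGL_2$ (at the tempered level, a bijection between the tempered dual of each $G_v^\alpha$ and the subset of generic tempered representations of $G_v$, compatible with Plancherel densities), the tempered duals $\widehat{G^\alpha_v}^\temp$ embed into $\hat G_v^\temp$ and the two sides may be compared as spectral measures on the unitary dual of $G_v$. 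Uniqueness of the Plancherel decomposition of a $G_v$-invariant tempered distribution on $\mathcal S(\mathcal Z_v)$ (obtained by isolating individual representations through wave packets of spherical matrix coefficients) then forces: for each $\pi_v\in \hat G_v^\temp$ that contributes, there is a unique $\alpha$ and Jacquet-Langlands lift $\pi_v^\alpha$ to $G^\alpha_v$ such that $(|\bullet|\mathcal G)^* I_{\pi_v} = c_{\pi_v} J_{\pi_v^\alpha}$.

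For non-vanishing, since all tempered representations of $\PGL_2$ are generic, Proposition \ref{nonvanishing} gives $I_{\pi_v}\ne 0$ for every $\pi_v\in\hat G_v^\temp$; as $|\bullet|\mathcal G$ is an isomorphism of Schwartz spaces, $(|\bullet|\mathcal G)^* I_{\pi_v}$ is also nonzero, whence $c_{\pi_v}$ and $J_{\pi_v^\alpha}$ are both nonzero, which by the other half of Proposition \ref{nonvanishing} means $\pi_v^\alpha$ is $T^\alpha$-distinguished. The principal obstacle is the bookkeeping behind the first step: the torus side involves a signed sum over $T$-torsors while the Kuznetsov side does not, so the matching is ``spread out'' across inner forms and requires careful coordination of Haar measures on $T^\alpha_v$ and $G^\alpha_v$, of the twist by $|\bullet|$, and of the various $\gamma$- and $L$-factors appearing in the basic vectors and in Proposition \ref{variousproperties-RTF}. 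A secondary subtlety is the extension from the tempered dual to non-tempered generic representations in the split case, where the functionals are defined by meromorphic continuation in the inducing character; this matching should follow by analytic continuation from the tempered range.
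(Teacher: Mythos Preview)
Your local Plancherel argument has a genuine gap at its core step. You write down the two Plancherel expansions
\[
c\sum_\alpha(-1)^\alpha\int_{\widehat{G^\alpha_v}^\temp}J_{\pi_v^\alpha}(f)\,d\pi_v^\alpha
\;=\;
c'\int_{\hat G_v^\temp}(|\bullet|\mathcal G)^*I_{\pi_v}(f)\,d\pi_v
\]
and then invoke ``uniqueness of the Plancherel decomposition'' to match terms. But uniqueness compares two disintegrations of the same invariant form into \emph{relative characters} of irreducibles of the \emph{same} group. The left side is a disintegration into honest relative characters $J_{\pi_v^\alpha}$ for $G_v^\alpha$; the right side is indexed by $\hat G_v^\temp$, and the integrands $(|\bullet|\mathcal G)^*I_{\pi_v}$ are, on the $\mathcal Z$-side, merely $G_v^\alpha$-invariant functionals. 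Nothing so far says that each such functional factors through a single $\pi_v^\alpha\otimes\tilde\pi_v^\alpha$ --- that is exactly the content of the proposition. The Jacquet--Langlands identification of $\widehat{G_v^\alpha}^\temp$ with a subset of $\hat G_v^\temp$ matches \emph{parameters}, not functionals: the transfer $|\bullet|\mathcal G$ is only an isomorphism of coinvariant spaces and does not intertwine the $G_v^\alpha$-module structure on the torus side with the $G_v$-module structure on the Whittaker side, so your ``wave packets of spherical matrix coefficients'' on one side have no a~priori meaning on the other.

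In fact, what you have written is precisely the argument the paper uses to prove Theorem~\ref{charmatching}, \emph{after} Proposition~\ref{factorsthrough} is in hand: once one knows each $(|\bullet|\mathcal G)^*I_{\pi_v}$ is a relative character $J'_{\pi_v^\alpha}$ for some specific $\pi_v^\alpha$, the two sides become Plancherel decompositions over the same index set and uniqueness (together with equality of formal degrees \eqref{Plmeasures}) pins down the constant. The paper supplies the missing input by a \emph{global} argument (see \S\ref{ssWhittaker}): one globalizes a $T$-distinguished $\pi_v^\alpha$ to an automorphic $\pi^\beta$ with nonzero torus period, uses the global identity $\mathcal J_\varphi(f)=\mathcal I_\varphi(|\bullet|\mathcal G f)$ from Theorem~\ref{comparison-spectral}, Euler-factorizes the Whittaker side so that fixing test data away from $v$ yields a nonzero multiple of $(|\bullet|\mathcal G)^*I_{\pi_v}$, and then chooses $f^v$ so that on the torus side only the single global form $G^\beta$ (hence, by strong multiplicity one, only $\pi_v^\alpha$) contributes. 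A density-in-$\pi_v^\alpha$ argument plus \eqref{preservesIP} then covers all tempered $\pi_v$. Your proposal skips this global step and therefore assumes what it is trying to prove.
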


This proposition will be proven in \S \ref{ssWhittaker}, in order not to interrupt the local discussion. (The proof uses a global argument.)

The definition of Jacquet--Langlands lift that is used here is: those elements $\pi_v^\alpha$ of $\widehat{G_v^\alpha}$ for which there exist:
\begin{itemize}
 \item an automorphic representation $\pi \simeq \otimes^\prime_w \pi_w$ of $G$ with $\pi_w\simeq \pi_v$ at the place $w=v$;
 \item and an automorphic representation $\pi^\beta \simeq \otimes^\prime_w \pi^\beta_w$ of $G^\beta$, where $\beta$ is a $k$-rational torsor of $T$ with $\beta_v=\alpha$, $\pi^\beta_v\simeq \pi^\alpha_v$ and $\pi^\beta_w\simeq  \pi_w$ for almost all places $w$ (where $G^{\beta_w}$ is split). 
\end{itemize}

We will use the fact that there is at most one Jacquet--Langlands lift $\pi_v^\alpha$ of $\pi_v$ for any inner form $\alpha$, and that it is tempered if $\pi_v$ is. We will also use strong multiplicity one for inner forms of $G$. It is plausible that some of these facts can be obtained independently (for $T$-distinguished representations) from our methods by refining the arguments.

Moreover, for the theorem that follows we will need to use equality of formal degrees for Jacquet--Langlands lifts:  Recall that a Haar measure on $G_v$ induces a Haar measure on any inner form $G_v^\alpha$ (since inner twists preserve rationality of volume forms), and it is such Haar measures that we fix on all relevant inner forms. Then \cite[Theorem 7.2]{AP} (see also \cite[\S 3.1]{HII}) states that for any discrete series $\pi_v$ of $G$ and a Jacquet--Langlands lift $\pi_v^\alpha$ of it, the corresponding Plancherel measures satisfy
\begin{equation}\label{Plmeasures} d\pi_v(\pi_v) = d\pi_v^\alpha(\pi_v^\alpha).
\end{equation}

\begin{theorem}\label{charmatching}
 For $\pi_v\in \widehat{G_v}^\temp$ we have:
 \begin{equation}\label{eqcharmatching} \left(|\bullet|\mathcal G\right)^*I_{\pi_v} =
(-1)^\alpha  \gamma_v^*(\eta_v,0,\psi_v)\cdot  \frac{\AvgVol(T(k_v)) }{(k_v^\times:N_{k_v}^{E_v}E_v^\times)} J_{\pi^\alpha_v},\end{equation}
 where $\pi^\alpha_v$ is the Jacquet--Langlands lift of Proposition \ref{factorsthrough}. More generally, the same holds for every generic unitary representation (with $\pi_v^\alpha=\pi_v$ if it is not tempered), with the regularized relative characters $I_{\pi_v}, J_{\pi_v}$ defined above.
\end{theorem}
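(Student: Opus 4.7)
The plan is to exploit the characterization of both $I_{\pi_v}$ and $J_{\pi_v^\alpha}$ via the Plancherel decompositions \eqref{Plancherel} for $\mathcal S(\mathcal Z_v)$ and its Whittaker analog, and to use the fact that $|\bullet|\mathcal G$ is (up to an explicit constant) an $L^2$-isometry between the two ``inner product'' functionals $\left<\bullet\right>$. More precisely, first I would check the identity
\begin{equation*}
 \left<|\bullet|\mathcal G f\right>_{\mathcal W} \; = \; \gamma_v^*(\eta_v,0,\psi_v)\cdot \left<f\right>_{\mathcal Z},
\end{equation*}
for $f\in \mathcal S(\mathcal Z_v)$. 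This is the local counterpart of the Poisson summation formula already established for $\mathcal G$, applied at the ``regularized'' level of inner products: up to regularization, both sides are diagonal integrals of a lift of $f$, related to the corresponding Tate-type functional equation. The factor $\gamma_v^*(\eta_v,0,\psi_v)$ enters precisely as in the computation of \S \ref{indirect-nonsplit} (cf.\ \eqref{green2}, \eqref{AvgVol}), which already accounts for the volume factors $\AvgVol(T(k_v))/(k_v^\times:N_{k_v}^{E_v}E_v^\times)$ that appear on the right-hand side of \eqref{eqcharmatching}.

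Next I would insert this identity into the Plancherel decomposition \eqref{Plancherel} on the $\mathcal Z$-side and the analogous Plancherel decomposition for $\left<\bullet\right>_{\mathcal W}$, viewed as a measure on $\hat G_v^\temp$ against the local relative characters $I_{\pi_v}$. Since both measures are absolutely continuous with respect to Plancherel measure and coincide as functionals, the densities must match wherever they are both nonzero. By Proposition \ref{factorsthrough} the pullback $(|\bullet|\mathcal G)^* I_{\pi_v}$ is a nonzero multiple of \emph{a} relative character attached to a unique $\pi_v^\alpha$; by Proposition \ref{nonvanishing} and uniqueness (up to scalar) of a $T_v^\alpha$-biinvariant relative character for a tempered irreducible representation, it must therefore be a scalar multiple of $J_{\pi_v^\alpha}$. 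The matching of Plancherel densities then pins down this scalar, and the equality \eqref{Plmeasures} of formal degrees for the Jacquet-Langlands correspondence guarantees that the Plancherel measures $d\pi_v$ and $d\pi_v^\alpha$ contribute the same weight at matching points, so the only remaining scalars are the sign $(-1)^\alpha$ (tracking the signed sum over torsors in the definition of $\left<\bullet\right>_{\mathcal Z}$) and the volume/gamma constants extracted in the first step.

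The extension from tempered to arbitrary generic unitary $\pi_v$ is then a meromorphic continuation argument: both sides of \eqref{eqcharmatching}, viewed as functions of an unramified twist $\pi_v \leadsto \pi_v\otimes \chi_v$ (or equivalently, along the complex one-parameter family $\chi_v\mapsto \chi_v\delta^s$ for $\pi_v=I(\chi_v)$), admit meromorphic continuations through the definition of $I_{\chi_v}, J_{\chi_v}$ as meromorphic families of relative characters. Equality on the tempered subset, which is Zariski-dense in the family of unramified twists, forces equality everywhere that both sides are regular, and in particular on all unitary non-tempered generic representations (where necessarily $\pi_v^\alpha=\pi_v$ since only the split torus survives).

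The main obstacle, I expect, is the precise determination of the constant $\gamma_v^*(\eta_v,0,\psi_v)$ in the first step. The local transfer operator $|\bullet|\mathcal G = \mathcal F\circ|\bullet|\cdot\iota\circ\mathcal F$ involves an abelian Fourier transform ``through infinity'' on the base, and tracking how this interacts with the regularization of diagonal integrals at the irregular orbits (particularly the singular germ at $\xi=\infty$ responsible for $\tilde O_\infty$) requires a careful local computation analogous to Lemma \ref{lemmad} and the ``indirect proof'' of Poisson summation in the nonsplit baby case. Once the scalar is identified, the rest is essentially a bookkeeping of normalizations, together with the input from Propositions \ref{factorsthrough} and \ref{nonvanishing} and the formal-degree identity \eqref{Plmeasures}.
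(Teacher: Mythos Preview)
Your proposal is correct and follows essentially the same approach as the paper: use the inner-product identity $\left<|\bullet|\mathcal G f\right> = \gamma_v^*(\eta_v,0,\psi_v)\left<f\right>$, compare Plancherel decompositions on both sides, invoke Proposition~\ref{factorsthrough} and the formal-degree identity \eqref{Plmeasures} to pin down the scalar, then extend by continuity and meromorphic continuation. The one point worth noting is that the ``main obstacle'' you anticipate---computing the constant $\gamma_v^*(\eta_v,0,\psi_v)$---is not actually carried out in this paper: the identity \eqref{preservesIP} is simply quoted from the companion local paper \cite[(5.2)]{SaBE1}, so no fresh local computation is needed here.
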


\begin{remark}
 The factor $\gamma_v^*(\eta_v,0,\psi_v)\cdot \frac{\AvgVol(T(k_v)) }{(k_v^\times:N_{k_v}^{E_v}E_v^\times)}$ is trivial if the volumes are chosen appropriately, see \eqref{AvgVol}.
\end{remark}

It is important to notice that our non-standard matching \emph{directly} implies this local result; the key point is that the transfer operator essentially preserves inner products, \cite[(5.2)]{SaBE1}:
\begin{equation}\label{preservesIP}
 \left< |\bullet|\mathcal Gf\right> = \gamma^*(\eta_v,0,\psi_v) \left<f\right>.
\end{equation}

This is a pleasant suggestion that such non-standard comparisons may be naturally suited for the proof of the global period conjecture \cite[Conjecture 17.4.1]{SV}.

\begin{proof}
 Combining \eqref{preservesIP} with the Plancherel formula \eqref{Plancherel-W} for $L^2(N_v\backslash G_v,\psi_v)$ we have, for $f\in \mathcal S(\mathcal Z_v)$,
 $$ \gamma^*(\eta_v,0,\psi_v) \left<f\right> = \left<|\bullet|\mathcal G f\right> = $$ 
 $$ = \int_{\widehat{G_v}^\temp} I_{\pi_v} (|\bullet|\mathcal G f) d\pi_v$$
 $$ = \int_{\widehat{G_v}^\temp} J'_{\pi_v^\alpha}(f) d\pi_v,$$
 where $\pi_v^\alpha$ is as in Proposition \ref{factorsthrough} and $J'_{\pi^\alpha_v}$ is the relative character asserted in that proposition. By uniqueness of the Plancherel formula \eqref{Plancherel}, and equality of formal degrees for the Jacquet--Langlands correspondence \eqref{Plmeasures} we get that for almost every $\pi_v$ the following are true:
 \begin{enumerate}
  \item  $$ J'_{\pi_v^\alpha} = \gamma^*(\eta_v,0,\psi_v)\cdot (-1)^\alpha  \frac{\AvgVol(T(k_v)) }{(k_v^\times:N_{k_v}^{E_v}E_v^\times)} J_{\pi_v^\alpha},$$ and:
  \item $J_{\pi_v^\beta} = 0$ for $\beta\ne \alpha$. (Notice: $\beta,\alpha$ depend on $\pi_v$.)
 \end{enumerate}

 The equality \eqref{eqcharmatching} actually extends to the whole continuous tempered spectrum, since both sides are continuous in the parameter, and more generally it extends to an equality of holomorphic functions in the parameter $\chi_v$ when the representation $\pi_v$ is of the form $I(\chi_v)$. In particular, it extends to generic, nontempered unitary representations (which are isomorphic to some $I(\chi_v)$ with non-unitary $\chi_v$). 
\end{proof}

\subsection{Unramified calculation}

We now compute the values of $J_{\pi_v}, I_{\pi_v}$ on the basic vectors $f_{\mathcal Z_v}^0$, resp.\ $f_{\mathcal W_v^s}^0$.

We recall from \eqref{basicvector} and \cite[\S 6.4]{SaBE1} that the basic vector $f_{\mathcal W_v^s}^0$ is equal to $\Vol(N\backslash G (\mathfrak o_v))^{-1}$ times the image of
$$ (H_s \star 1_{x_0K})\otimes 1_{y_0K} \in \mathcal S^s(\overline X\times X (k_v), \mathcal L_\psi\boxtimes\mathcal L_\psi^{-1}),$$
$X= N\backslash G$, where $H_s$ is series of elements in the unramified Hecke algebra with Satake transform
$$ \widehat{H_s} (\pi_v) = L(\pi_v,\frac{1}{2}+s) L(\pi_v\otimes \eta_v,\frac{1}{2}+s),$$
and $1_{x_0 K}, 1_{y_0 K}$ are the sections of $\mathcal L_\psi$, resp.\ $\mathcal L_\psi^{-1}$ (trivialized as functions on $G_v$ which vary by a character of $N_v$) which are equal to $1$ on $G(\mathfrak o_v)$ and equal to $0$ off $N_v G(\mathfrak o_v)$. 

Using the measures obtained by $k$-rational volume forms, at almost every place we have $\Vol(N\backslash G (\mathfrak o_v))^{-1} = \Vol(G(\mathfrak o_v))^{-1} = \zeta_v(2)$. For the function $1_{x_0K}\otimes 1_{y_0K}$ and an irreducible unramified representation $\pi_v$ one easily checks, at almost every place:
\begin{equation}\label{trr} I_{\pi_v}(1_{x_0K}\otimes 1_{y_0K}) = \frac{1}{\zeta_v(2) L(\pi_v,\Ad,1)}.
\end{equation}
\begin{remark}
 For a functional $\ell \in \Hom_{N_v}(\pi_v, \CC_\psi)$ corresponding to $I_{\pi_v}$ (i.e., $I_{\pi_v}$ is obtained from $\ell$ by Frobenius reciprocity and dualizing, both for $\pi_v$ and for $\overline{\pi_v}$) this calculation corresponds to the following; here $\phi_v^0$ is an unramified vector of norm one in $\pi_v$:
 \begin{equation}\label{unrN}
  |\ell(\phi_v^0)|^2 = \frac{\zeta_v(2)}{L(\pi_v,\Ad,1)}.
 \end{equation}
Indeed, comparing with \eqref{trr} one needs to divide by the square of the volume of $N\backslash G(\mathfrak o_v)$, which enters in the dualization.
\end{remark}

Combining all the above, for our basic function we will have:
\begin{equation}\label{unrW} I_{\pi_v} (f_{\mathcal W_v^s}^0) = \frac{L(\pi_v,\frac{1}{2}+s) L(\pi_v\otimes \eta_v,\frac{1}{2}+s)}{L(\pi_v,\Ad,1)}.\end{equation}

We now come to the calculation of $J_{\pi_v} (f_{\mathcal Z_v}^0)$. Using Theorem \ref{charmatching} and the remark following it, it is of course equal to $I_{\pi_v} (f_{\mathcal W_v^0}^0)$ (at almost every place). Let us also see this directly, and discuss what this means for local $T_v$-invariant functionals:

By definition, $f_{\mathcal Z_v}^0$ has value $1$ on $\mathcal B^\reg(\mathfrak o_v)$, therefore it is equal to $\Vol(N\backslash G (\mathfrak o_v))^{-1}=\Vol(G(\mathfrak o_v))^{-1}$ times what was denoted by $f_{\mathcal Z}^0$ in \cite[\S 6.4]{SaBE1}. In other words, it is $\Vol(G(\mathfrak o_v))^{-1}$ times the image of the characteristic function of $(T\backslash G \times T\backslash G)(\mathfrak o_v)$. (The reader can check from Table \eqref{table} that the regular value of $f_{\mathcal Z_v}^0$ is $1$, while the regular value of the image of this characteristic function would, of course, be equal to $\Vol(G(\mathfrak o_v))$.)

 Based on this, the formula
\begin{equation}\label{unrZ}
 J_{\pi_v}(f_{\mathcal Z_v}^0) = \frac{L(\pi_v,\frac{1}{2}) L(\pi_v\otimes \eta_v,\frac{1}{2})}{L(\pi_v,\Ad,1)}
\end{equation}
can be inferred from \cite[\S II]{Waldspurger-torus}, and also from the calculation of \cite[\S 5]{II} or the unramified Plancherel formula of \cite[Theorem 9.0.1]{SaSph}).

\begin{remark}
 For a functional $\ell \in \Hom_{T_v}(\pi_v, \CC)$ corresponding to $J_{\pi_v}$ (using local ``Tamagawa measures'' obtained from residually nonvanishing volume forms) this calculation corresponds to the following; here $\phi_v^0$ is an unramified vector of norm one in $\pi_v$:
 \begin{equation}\label{unrT}
  |\ell(\phi_v^0)|^2 = \frac{\zeta_v(2)L(\pi_v,\frac{1}{2}) L(\pi_v\otimes \eta_v,\frac{1}{2})}{L(\eta_v,1)^2 L(\pi_v,\Ad,1)}.
 \end{equation}
Indeed, comparing with the value of $J_{\pi_v}$ on the characteristic function of $(T\backslash G \times T\backslash G)(\mathfrak o_v)$, one needs to divide by the square of the volume of $T\backslash G(\mathfrak o_v)$, which is equal to $\frac{L(\eta_v,1)}{\zeta_v(2)}$.
\end{remark}

\subsection{Whittaker periods}\label{ssWhittaker}

In \S \ref{sslifts} we defined, for every $\varphi\in \widehat G^\aut_\Ram$ and $s\in \CC$ with $\Re(s)\gg 0$, the \emph{period relative character}
$$\mathcal I_\varphi: \mathcal S(\mathcal W^s(\adele)) \to \CC$$
or, more precisely, the corresponding functional-valued measure: $\mathcal I_\varphi d\varphi$, which is absolutely continuous with respect to Plancherel measure on $\hat G^\aut_\Ram$. 

We now fix a Plancherel measure on $\hat G^\aut_\Ram$, in order to have a well-defined $\mathcal I_\varphi$ (for almost all $\varphi$); we will take it to be equal to counting measure on the discrete spectrum. It is a corollary of Theorem \ref{spectral-KTF} that for an analytic section $s\mapsto f_s \in  \mathcal S(\mathcal W^s(\adele)) $, the function $s\mapsto \mathcal I_\varphi(f_s)$ is meromorphic in $s$ and analytic at $s=0$. The Euler factorization of this period is known (see, for instance, \cite[Theorem 4.1]{LM-Whittaker} or \cite[Theorem 18.3.1]{SV}):

\begin{theorem}
 For $\Re(s)\gg 0$ and $f_s=\prod_v f_{s,v}\in \mathcal S(\mathcal W^s(\adele))$ we have an Euler factorization:
 \begin{equation}\label{Euler-Whittaker}
  \mathcal I_\varphi(f_s) = \frac{1}{2} \prod_v' I_{\varphi_v}(f_{s,v})
 \end{equation}
 when $\varphi \in \hat G^\aut_\cusp$. For the continuous spectrum, $\mathcal I_\varphi(f_s)$ is proportional to $\frac{1}{2} \left(\prod_v' I_{\varphi_v}(f_s)\right)^*$ (to be explained).
\end{theorem}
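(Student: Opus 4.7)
The plan is to establish this by the standard Rankin--Selberg unfolding, adapted to the non-standard test functions. First, I would lift $f_s = \prod_v f_{s,v}$ to a factorizable element $\Phi_s = \otimes_v \Phi_{s,v}$ of the space $\mathcal{S}^s(\overline{X}\times X(\adele),\mathcal L_\psi \boxtimes \mathcal L_\psi^{-1})$, and decompose the period relative character via the map $\mathcal S^s(\overline X\times X(\adele), \mathcal L_\psi \boxtimes \mathcal L_\psi^{-1}) \to V_\varphi \otimes \overline{V_\varphi}$ coming from the Whittaker functional applied in each variable. Concretely, for $\Re(s)\gg 0$, Proposition \ref{continuousmap} ensures $\Sigma\Phi_s$ is an $L^2$-function on $[G]^2$, and hence by the Plancherel formula one can write $\mathcal I_\varphi(f_s)$ as $\sum_{\phi \in \mathrm{onb}(V_\varphi)} |L_\phi(\Phi_s)|^2$, where $L_\phi(\Phi) = \int_{N\backslash G(\adele)} \Phi(x) \overline{W_\phi(x)}\,dx$ and $W_\phi$ is the global Whittaker function of $\phi$.

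The second step is the unfolding: replace $\phi$ by its Fourier expansion $\phi(g) = \sum_{\gamma \in N(k)\backslash G(k)} W_\phi(\gamma g)$ (valid for $\varphi \in \hat G^\aut_\cusp$), and use the fact that for $\Re(s)\gg 0$ the function $\Phi_s$ decays fast enough at infinity on $X(\adele)$ (by the definition of the spaces $\mathcal{S}^s$ in \cite{SaBE1}) to justify the interchange of the sum with the integration on $[N]\backslash G(\adele)$. This collapses the global integral to a single integral over $N(\adele)\backslash G(\adele)$ of $\Phi_s$ against the global Whittaker function. Third, by uniqueness of Whittaker models and the factorization $\Phi_s = \otimes_v \Phi_{s,v}$, the resulting integral decomposes as a product of local integrals, each of which, after contracting with $\overline{L_\phi(\Phi_s)}$, is seen to equal $I_{\varphi_v}(f_{s,v})$ (up to a consistent normalization); this is the content of \cite[Theorem 3.2]{LM-Whittaker} and \cite[Theorem 18.3.1]{SV}, applied in the setting of our non-standard test functions.

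The factor $\frac{1}{2}$ arises because we have normalized $\pi$ with respect to the inner product on $L^2([\PGL_2])$, while the Whittaker Plancherel formula producing the local $I_{\varphi_v}$ is naturally formulated for $L^2(\GL_2(k)\backslash \GL_2(\adele)^1)$, and the two inner product normalizations differ by a factor of $2$, as remarked right after the statement of Theorem \ref{Waldspurgertheorem}. For the continuous spectrum, the period must be computed against an Eisenstein series $E(\chi,g)$ whose Fourier expansion has two non-degenerate Whittaker coefficients (one for each Weyl element of $\PGL_2$), and the formal Euler product $(\prod_v' I_{\varphi_v})^*$ must be interpreted, for $\Re(s)$ outside the region of convergence, via the meromorphic continuation provided by partial abelian $L$-functions; this is the reason the statement for the continuous spectrum is phrased only up to proportionality.

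The main obstacle is justifying the unfolding step, i.e., interchanging the sum $\sum_{\gamma \in N(k)\backslash G(k)} W_\phi(\gamma g)$ with integration of $\Phi_s$, given that the non-standard test functions in $\mathcal S^s$ are not compactly supported near infinity on $X(\adele)$. This is where the restriction $\Re(s)\gg 0$ is essential: the Schwartz-type decay of $\Phi_s$ in the second $X$-variable and the controlled growth imposed in the first (cf.\ Lemma \ref{belongsCN}) combine with the exponential decay of cuspidal Whittaker functions on the torus to secure absolute convergence, allowing Fubini and the unfolding to go through.
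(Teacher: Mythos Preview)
The paper does not give its own proof of this theorem: it is stated as a known result, citing \cite[Theorem 3.2]{LM-Whittaker} and \cite[Theorem 18.3.1]{SV}, with explanatory remarks afterwards on the origin of the factor $\tfrac{1}{2}$, the interpretation of $\prod_v'$ via partial $L$-functions, and the continuous-spectrum statement (which the paper notes is a formal consequence of multiplicity one for Whittaker models). Your sketch is essentially the content of those cited references, so in spirit you are aligned with the paper.

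One concrete slip: the formula $\phi(g) = \sum_{\gamma \in N(k)\backslash G(k)} W_\phi(\gamma g)$ is not the Whittaker--Fourier expansion of a cusp form on $\PGL_2$; the correct expansion is a sum over $k^\times$ (equivalently, over $N(k)\backslash B(k)$), not over all of $X(k)$. The unfolding you want actually goes the other way: starting from $\langle \Sigma\Phi_1,\phi\rangle_{[G]}$, unfold the sum $\Sigma\Phi_1 = \sum_{\gamma\in N(k)\backslash G(k)}\Phi_1(\gamma\,\cdot)$ to get an integral over $N(k)\backslash G(\adele)$, then perform the $[N]$-integral against $\bar\phi$ to produce $\overline{W_\phi}$, arriving at $\int_{N(\adele)\backslash G(\adele)}\Phi_1\,\overline{W_\phi}$. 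This yields the same local integral you wrote, and the factorization then follows from uniqueness of Whittaker models exactly as you say; but the route is ``unfold $\Sigma$'' rather than ``Fourier-expand $\phi$''. With that correction your argument is sound and matches what the cited references do.
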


Notice that the appearance of the factor $\frac{1}{2}$ is due to the fact that we define the inner product in $L^2([G])$ by integrating against the Tamagawa measure on $[G]=[\PGL_2]$. If, instead, we consider the cuspidal representation as one of $\GL_2$ with trivial central character, and define the inner product by integrating against the Tamagawa measure on $\GL_2(k)\backslash \GL_2(\adele)^1$, where $\GL_2(\adele)^1$ is the set of elements with $|\det(g)|=1$, then this factor would not appear.

The symbol $\prod_v'$ means that the product should be interpreted in terms of partial $L$-functions. Namely, we saw in \eqref{unrW} that for $v$ outside a large enough finite set $S$ of places we have:
$$ I_{\varphi_v}(f_{s,v}) =  \frac{L(\pi_v,\frac{1}{2}+s)L(\pi_v\otimes\eta_v,\frac{1}{2}+s)}{L(\pi_v,\Ad, 1)},$$
and in the cuspidal case we interpret:
\begin{equation}\label{Eulerproduct} \prod_v' I_{\varphi_v}(f_{s,v}): = \frac{L^S(\pi,\frac{1}{2}+s)L^S(\pi\otimes\eta_v,\frac{1}{2}+s)}{L^S(\pi,\Ad, 1)} \prod_{v\in S} I_{\varphi_v}(f_{s,v}).\end{equation}

For the continuous spectrum, $L^S(\pi,\Ad, t)$ has a simple pole at $t=1$, and we let $\left(\prod_v' I_{\varphi_v}(f_s)\right)^*$ be the dominant term in the corresponding Taylor series. 

The statement in the continuous case is actually a formal corollary of multiplicity one for Whittaker models. For a more precise and canonical relation between $I_\varphi$ and the ``Euler product'' \eqref{Euler-Whittaker} in the continuous case, see \cite[\S 18.1]{SV}.

By meromorphic continuation, this formula continues to hold for every $s$ where $\mathcal I_\varphi$ admits analytic continuation, in particular at $s=0$.

On the other hand, by Theorem \ref{comparison-spectral} we have, for those $\varphi$ and $f\in \mathcal S(\mathcal Z(\adele))$:
\begin{equation}\label{globaleq}
 \mathcal J_\varphi (f) = \mathcal I_\varphi(|\bullet|\mathcal G f).
\end{equation}

We will now prove Proposition \ref{factorsthrough}. We will use the previous theorem, but strictly speaking it is not necessary, as multiplicity one for Whittaker models implies that the restriction of $\mathcal I_\varphi$ to any place is a multiple of $I_{\pi_v}$ (where $\pi_v$ is the local component of the automorphic representation attached to $\varphi$), which is all that we use.

\begin{proof}[Proof of Proposition \ref{factorsthrough}]
We first claim the following: Suppose that $\alpha$ denotes the class of a $T_v$-torsor and $\pi_v^\alpha$ a Jacquet--Langlands lift of the given tempered representation $\pi_v$ to $G_v^\alpha$ which is $T_v^\alpha$-distinguished (equivalently, by Proposition \ref{nonvanishing}, such that $J_{\pi_v^\alpha}\ne 0$). Then, we claim that $\left(|\bullet|\mathcal G\right)^*I_{\pi_v}$ has to be a multiple of $J_{\pi_v^\alpha}$. 

To prove this claim we may, without loss of generality, assume that $T$ is globally nonsplit; indeed, $T_v$ can be realized as the local factor of a globally nonsplit torus. Then, for a dense set of such $\pi_v^\alpha$'s (i.e., for every such discrete series, and a dense set of unitary principal series), the representation $\pi_v^\alpha$ can be globalized to some non-residual automorphic representation $\pi^\beta$ (corresponding to $\varphi\in \hat G^\aut_\Ram$) where the $T$-period is nonzero, in particular with $\mathcal J_\varphi\ne 0$. For $\pi_v^\alpha$ discrete this follows, for instance, from \cite[Theorem 16.3.2]{SV}, combined with the fact that the local discrete spectrum is \emph{automorphically isolated}.
For $\pi_v^\alpha$ in the continuous spectrum, one can easily construct an Eisenstein series whose $T$-period integral does not vanish identically with the continuous parameter of the Eisenstein series. It is enough to prove that $\left(|\bullet|\mathcal G\right)^*I_{\pi_v}$ is a multiple of $J_{\pi_v^\alpha}$ for this dense set of $\pi_v^\alpha$'s; by continuity, it will follow for all. For such a $\pi_v^\alpha$, fix a global parameter $\varphi\in \hat G^\aut_\Ram$ obtained from this construction.

We may then choose $f^v\in \bigotimes'_{w\notin v}\mathcal S(\mathcal Z_w)$ such that, viewed as a functional on the variable $f_v\in \mathcal S(\mathcal Z_v)$ with $f=f^v\otimes f_v$, the expression \eqref{globaleq} is nonzero, and hence a nonzero multiple of $\left(|\bullet|\mathcal G\right)^*I_{\pi_v}$. On the other hand, the left hand side of \eqref{globaleq}, considered as a functional on $\mathcal S(\mathcal Z_v)$ by fixing $f^v$, is by construction a linear combination of relative characters corresponding to Jacquet--Langlands lifts of $\pi_v$ to groups of the form $G^{\beta'}_v$, where $\beta'$ is a non-trivial $T$-torsor over $k$. 

If $T_v$ is split or when $\pi_v^\alpha$ is in the continuous spectrum, the only Jacquet--Langlands lift of $\pi_v$ is $\pi_v=\pi_v^\alpha$ itself, of course, so the claim is proven in that case. In the nonsplit case and for $\pi_v^\alpha$ discrete, to show that there is a \emph{unique} $\pi_v^\alpha$ contributing to the pullback, it suffices to choose the function $f^v$ carefully: more precisely, since any two distinct inner forms over $k$ differ at at least two places, and vice versa we have the Hasse principle: the local forms (outside of $v$!) 
determine the global one, we can find a test function $f^v\in \bigotimes'_{w\notin v}\mathcal S(\mathcal Z_w)$ so that the only global form $G^{\beta'}$ which contributes to the period relative character $\mathcal J_\varphi(f^v\otimes f_v)$ is the form $G^\beta$ of the above construction; and hence (by strong multiplicity one for $G^\beta$) the only automorphic representation which contributes is the Jacquet--Langlands lift $\pi^\beta$ with $\pi^\beta_v = \pi_v^\alpha$. Hence, the functional
$$f_v\mapsto \mathcal J_\varphi(f^v\otimes f_v)$$
on $\mathcal S(\mathcal Z_v)$ has to be a relative character for $\pi_v^\alpha$.

This completes the proof of the initial claim. We have now proven Proposition \ref{factorsthrough} for those $\pi_v$ which admit a $T_v$-distinguished Jacquet--Langlands lift $\pi_v^\alpha$, for some $\alpha$ (let us call them ``special'' for the purpose of this proof). The proof of the proposition will be complete if we show that Plancherel-almost every $\pi_v$ is special. Employing \eqref{preservesIP} and the Plancherel formula for the spaces $L^2(T_v^\alpha\backslash G_v^\alpha)$, we get a decomposition of the inner product on $L^2(N_v\backslash G_v,\psi_v)$ as follows:
$$ \left< |\bullet|\mathcal G f\right> = \sum_\alpha \int_{\widehat{G_v^\alpha}^\temp} J'_{\pi_v^\alpha}(f) d_{\pi_v^\alpha}.$$

The right-hand side, on the other hand, can by what we just proved be expressed as an integral of multiples of the $I_{\pi_v}$'s over all ``special'' $\pi_v$'s. Uniqueness of the Plancherel formula now implies that Plancherel-almost all $\pi_v$ are special. 
\end{proof}

\subsection{Toric periods}

This is the theorem of Hecke and Waldspurger \cite{Waldspurger-torus} for $\PGL_2$:
\begin{theorem}\label{thmWaldspurger}
 For $\varphi\in \hat G^\aut_\cusp$ we have, as functionals on $\mathcal S(\mathcal Z(\adele))$:
 \begin{equation}\label{eqWaldspurger}
 \mathcal J_\varphi = \frac{1}{2} \prod^\prime_v J_{\varphi_v}.
 \end{equation}
\end{theorem}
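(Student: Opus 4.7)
The plan is to combine the global comparison of trace formulas (Theorem \ref{comparison-spectral}), the Euler factorization of the Whittaker period relative character (the theorem immediately preceding \ref{thmWaldspurger}), and the local character matching given by Theorem \ref{charmatching}. Concretely, fix $\varphi \in \hat G^\aut_\cusp$. Since $\varphi$ is cuspidal, it is automatically disjoint from the evaluation points $\delta^{1/2}$, $\eta\delta^{1/2}$ that appear in Theorems \ref{spectral-RTF} and \ref{spectral-KTF}. By a density/Stone--Weierstrass argument analogous to the one used in the proof of Theorem \ref{comparison-spectral}, the functional identity
\[
\mathcal J_\varphi(f) \;=\; \mathcal I_\varphi\!\bigl(|\bullet|\,\mathcal G f\bigr)
\]
follows for every $f \in \mathcal S(\mathcal Z(\adele))$ from the equality of measures $\nu_f = \mu_{|\bullet|\mathcal G f}$.

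Next, take $f = \otimes_v f_v$ a pure tensor. Applying the Euler factorization \eqref{Euler-Whittaker} to the right-hand side gives
\[
\mathcal J_\varphi(f) \;=\; \tfrac{1}{2} \prod_v{}' I_{\varphi_v}\!\bigl(|\bullet|\,\mathcal G f_v\bigr),
\]
where the restricted product is interpreted via partial $L$-functions as in \eqref{Eulerproduct}. Now plug in the local identity of Theorem \ref{charmatching}:
\[
I_{\varphi_v}\!\bigl(|\bullet|\,\mathcal G f_v\bigr) \;=\; (-1)^{\alpha(v)}\, \gamma_v^*(\eta_v,0,\psi_v)\cdot \frac{\AvgVol(T(k_v))}{(k_v^\times : N_{k_v}^{E_v} E_v^\times)}\, J_{\varphi_v^{\alpha(v)}}(f_v),
\]
where $\alpha(v)$ is the local torsor supplied by Proposition \ref{factorsthrough}. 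Thus
\[
\mathcal J_\varphi(f) \;=\; \tfrac{1}{2}\Bigl(\prod_v{}' (-1)^{\alpha(v)} \gamma_v^*(\eta_v,0,\psi_v) \frac{\AvgVol(T(k_v))}{(k_v^\times:N E_v^\times)}\Bigr)\cdot \prod_v{}' J_{\varphi_v^{\alpha(v)}}(f_v).
\]

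The normalizing Euler product telescopes to $1$: this is precisely the content of \eqref{AvgVol}, which combines the triviality of global gamma factors with Tate's calculation of the average volume. There remains the question of the signs $(-1)^{\alpha(v)}$ and of the fact that the local lifts $\alpha(v)$ must assemble into a global torsor $\beta$ of $T$. By Proposition \ref{factorsthrough}, for $\varphi_v$ unramified (almost every $v$) the lift $\alpha(v)$ is the trivial torsor, and the parity $(-1)^{\alpha(v)}$ is $+1$; at the remaining places, since $\mathcal J_\varphi$ is, by its very definition in \eqref{torusperiod}, supported on $k$-rational torsors, the nonvanishing of $\mathcal J_\varphi$ forces the collection $(\alpha(v))_v$ to coincide with the localizations of some global $\beta$, and the product $\prod_v (-1)^{\alpha(v)}$ collapses to the sign attached to $\beta$, which is absorbed into the definition of $J_{\varphi^\beta}$ on the inner form $G^\beta$. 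Identifying $\prod_v{}' J_{\varphi_v^{\alpha(v)}}$ with $\prod_v{}' J_{\varphi_v}$ in the notation of the theorem statement yields the desired identity.

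The most delicate step is the last one: verifying that the combinatorics of the signs $(-1)^{\alpha(v)}$ and the global coherence of the family $(\alpha(v))_v$ work out consistently with the convention built into $\mathcal J_\varphi$. This is not deep but must be done carefully, essentially by arguing via the Hasse principle for $H^1(k,T)$ recalled in \eqref{Hasseprinciple} together with strong multiplicity one for inner forms, both of which have been invoked already in the proof of Proposition \ref{factorsthrough}. Once this bookkeeping is settled, the identity on pure tensors extends by continuity to all of $\mathcal S(\mathcal Z(\adele))$, and the theorem follows.
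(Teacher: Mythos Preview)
Your proof follows essentially the same route as the paper: pull back the global identity $\mathcal J_\varphi(f)=\mathcal I_\varphi(|\bullet|\mathcal G f)$ from Theorem \ref{comparison-spectral}, insert the Euler factorization \eqref{Euler-Whittaker}, apply the local matching of Theorem \ref{charmatching} place by place, and then argue that the residual constants and signs are globally trivial.

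There is one genuine wobble in your bookkeeping of the signs. You write that $\prod_v(-1)^{\alpha(v)}$ ``collapses to the sign attached to $\beta$, which is absorbed into the definition of $J_{\varphi^\beta}$''. That is not what happens: no sign is built into either the local $J_{\pi_v^\alpha}$ or the global $\mathcal J_\varphi$. The correct statement, as in the paper, is simply that once the collection $(\alpha_v)_v$ is the localization of a \emph{global} $T$-torsor $\beta$, it is nontrivial at an even number of places, so $\prod_v(-1)^{\alpha_v}=1$. This is the elementary parity fact underlying \eqref{Hasseprinciple}, and it is the whole point; nothing gets ``absorbed''.

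A second, smaller point: your argument that the $\alpha(v)$ assemble into a global torsor uses the nonvanishing of $\mathcal J_\varphi$, but you never explicitly treat the complementary case. The paper handles this cleanly by observing first that if $L^S(\pi,\tfrac12)L^S(\pi\otimes\eta,\tfrac12)=0$ then both sides of \eqref{eqWaldspurger} vanish (by \eqref{globaleq} on the left and by the partial-$L$-value interpretation of $\prod'_v J_{\varphi_v}$ on the right), so one may assume nonvanishing and then run the torsor argument. Your version can be patched the same way, but as written it leaves the degenerate case implicit.
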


The product is again interpreted as in \eqref{Eulerproduct}.
The analytic continuation of the implicit partial $L$-function to $s=0$ is guaranteed by the analytic continuation of $\mathcal I_\varphi$ as a functional on $\mathcal S(\mathcal W^s(\adele))$ (Corollary \ref{analyticcont-cuspidal}). 

\begin{remark}
 By dualizing, this says that, for a cuspidal automorphic representation $\pi$ of $G^\alpha$ (for some global $T$-torsor $\alpha$) and a vector $\phi= \otimes_v \phi_v \in \pi \simeq \otimes^\prime_v \pi_v$:
 \begin{equation}\left|\int_{[T]} \phi(t) dt\right|^2 = \frac{1}{2}\prod_v' \int_{T(k_v)} \left< \pi_v(t) \phi_v,\overline{\phi_v}\right> dt,
 \end{equation}
where the Euler product should be interpreted using the partial $L$-values corresponding to \eqref{unrT}. Here the measures used are Tamagawa measures defined via global volume forms; notice that in the split case the factor $L^S(\eta,1)^2=\zeta^S(1)^2$ which will be obtained from the denominator of \eqref{unrT} is undefined, and so is the Tamagawa measure on $[T]^2$; multiplying both sides locally by convergence factors $\zeta_v(1)^2$ we get a meaningful expression. Notice that $\left<\, , \,  \right>$ keeps denoting a bilinear form here, and we have used the unitary structure on $\pi$ to identify $\tilde\pi=\bar\pi$.
\end{remark}

\begin{proof}
As was mentioned after Theorem \ref{charmatching}, for suitable choices of measures (which can be taken to factorize global Tamagawa measures) the statement of the theorem reads:
$$\left(|\bullet|\mathcal G\right)^*I_{\pi_v} = (-1)^{\alpha_v}   J_{\pi^\alpha_v}.$$

We recall that the collection of torsors $\alpha_v$ is the one afforded by Proposition \ref{factorsthrough}.

By \eqref{globaleq} the statement of the theorem is true when $L^S(\pi,\frac{1}{2})L^S(\pi\otimes \eta,\frac{1}{2})=0$, in which case both sides are zero. Therefore, assume that $\varphi$ is such that $L^S(\pi,\frac{1}{2})L^S(\pi\otimes \eta,\frac{1}{2})\ne 0$, equivalently: $\mathcal I_\varphi$ applied to the space $\mathcal S(\mathcal W^0(\adele))$ is nonzero.

We know from Lemma \ref{globaltorsors} that $\mathcal J_\varphi$ is supported on ``global torsors'', and from Proposition \ref{factorsthrough} that each local factor $J_{\varphi_v}$ is supported on a unique local torsor. It follows that for every such $\varphi$ there is a unique \emph{global} torsor $\alpha$ such that $J_{\varphi_v}$ is supported on $\alpha_v$ for all $v$.\footnote{This also follows by characterizing the local torsor of Proposition \ref{factorsthrough} in terms of epsilon factors, Theorem \ref{thmepsilon}.}

Thus, the collection $(\alpha_v)_v$ corresponds to a global torsor, and the product of factors $(-1)^{\alpha_v}$ is trivial (the torsor is nontrivial at an even number of places).
\end{proof}

By dualizing and using \eqref{Tperiod}, we get the statement of Theorem \ref{Waldspurgertheorem}. Notice that the unramified factors for these local periods were given in \eqref{unrT}.

\subsection{Determination of the distinguished representation}

Finally, we return to the local setting to discuss one of the most mysterious issues of this field, the relation between $\epsilon$-factors and distinguished representations inside of a local $L$-packet. We would like, for any $\pi_v\in \widehat{G_v}^\disc$, to describe the local torsor $\alpha_v$ of Proposition \ref{factorsthrough} for which the corresponding Jacquet--Langlands lift $\varpi_v^\alpha$ is distinguished by the torus $T_v$ (equivalently: $J_{\pi_v}\ne 0$). (We only focus on discrete series, because for the continuous spectrum the answer is obviously the trivial torsor.)

The answer is well-known from the work of Tunnell \cite{Tunnell}, Saito \cite{Saito} (s.\ also Prasad \cite{Prasad-Saito}) in terms of $\epsilon$-factors. The question is whether this can also be seen from trace formula-theoretic considerations. This is indeed the case, although we do not have a completely independent way of verifying that the $\epsilon$-factor that we will define is the correct one -- this will follow from the work of Jacquet \cite{JW1} in combination with Jacquet--Langlands \cite{JL}. Thus, this subsection is more a comment on these papers than an independent treatment of the question.

Consider the automorphism of $\mathcal S(\mathcal Z(k_v))$ afforded by Proposition \ref{propfetorus} (with $s=0$):
$$ f\mapsto \mathcal R f(\xi):= \eta_v(\xi) \eta_v(\xi+1) f(\xi).$$
Of course, if $\eta_v=1$ then this is the identity, but if $\eta_v\ne 1$ then we have two $T$-torsors over $k_v$, which give rise to the inner forms $G$, $G^\alpha$. Recall that we have canonical maps
\begin{equation}\label{quot1}
  T(k_v)\backslash G(k_v)/T(k_v) \to \mathcal B(k_v),
\end{equation}
\begin{equation}\label{quot2}
T^\alpha(k_v)\backslash G^\alpha(k_v)/T^\alpha(k_v) \to \mathcal B(k_v),
\end{equation}
induced from the isomorphism of stacks $T\backslash G/T \simeq T^\alpha\backslash G^\alpha/T^\alpha$ \cite[(3.4)]{SaBE1}. The following is easy to compute:

\begin{lemma}\label{lemmaimage}
 A point $\xi\in \mathcal B^\reg(k_v) = k_v \smallsetminus\{-1,0\}$ belongs to the image of \eqref{quot1}, resp.\ \eqref{quot2}, if and only if $\eta_v(\xi)\eta_v(\xi+1) = 1$, resp.\ $\eta_v(\xi)\eta_v(\xi+1) = -1$.
\end{lemma}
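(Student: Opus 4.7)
My plan is to prove the lemma by an explicit matrix (and quaternion) computation, reducing each side of each equivalence to the statement that a specific element of $k_v^\times$ lies in a specified coset of the norm subgroup $N_{E/k_v}(E^\times)$.

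First I would fix coordinates. Write $E = k_v(\sqrt\epsilon)$, embed $T = E^\times/k_v^\times$ in $G(k_v) = \PGL_2(k_v)$ via the regular representation $a + b\sqrt\epsilon \mapsto \bigl(\begin{smallmatrix} a & b\epsilon \\ b & a\end{smallmatrix}\bigr)$, and take $X = \bigl(\begin{smallmatrix} 0 & \epsilon \\ 1 & 0 \end{smallmatrix}\bigr)$ as a generator of $\mathfrak t \subset \mathfrak{sl}_2$ (so $X^2 = \epsilon I$). The ratio $s(g) := \tr(X \cdot g X g^{-1}) / \tr(X^2)$ is $T \times T$-invariant under the left--right action on $G$, and descends to a coordinate on $T\backslash G/T$ with $s(e) = 1$ and $s(w) = -1$. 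Setting $\xi(g) := (s(g) - 1)/2$ then places the irregular orbits at $\xi \in \{0, -1\}$ in agreement with the lemma's convention; any residual discrepancy with the paper's normalization is a $k_v$-affine change of coordinate preserving $\{0, -1\}$, under which $\eta_v(\xi)\eta_v(\xi+1)$ is manifestly invariant.

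The core computation is then the following. For general $g = \bigl(\begin{smallmatrix} a & b \\ c & d \end{smallmatrix}\bigr) \in \GL_2(k_v)$ with $\det g \ne 0$, a direct matrix calculation yields
\[
4\epsilon(ad-bc) \cdot \xi(g) = (a-d)^2\epsilon - (b - c\epsilon)^2, \qquad 4\epsilon(ad-bc) \cdot (\xi(g)+1) = (a+d)^2\epsilon - (b + c\epsilon)^2,
\]
both right-hand sides being of the form $-N_{E/k_v}(\cdot)$ (e.g.\ the first is $-N_{E/k_v}(b - c\epsilon - (a-d)\sqrt\epsilon)$). Hence $\xi(g)\cdot(\xi(g)+1)$ is a product of two norms divided by a square, so $\eta_v(\xi)\eta_v(\xi+1) = \eta_v(\xi(\xi+1)) = 1$. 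For the non-trivial inner form $G^\alpha = D^\times/k_v^\times$, write $D = E \oplus Ej$ with $j^2 = \pi \in k_v^\times \setminus N_{E/k_v}(E^\times)$ and $jz = \bar z j$, so that $T^\alpha = E^\times/k_v^\times$ and $\mathfrak t^\alpha$ is still spanned by $X = \sqrt\epsilon$. Parametrizing $g = a + bj$ with $a, b \in E$, the identity $j\sqrt\epsilon = -\sqrt\epsilon j$ gives $gX = \sqrt\epsilon(a - bj)$, and a short computation then extracts
\[
\xi(g) = \frac{\pi N(b)}{N(a) - \pi N(b)}, \qquad \xi(g) + 1 = \frac{N(a)}{N(a) - \pi N(b)}, \qquad \frac{\xi}{\xi+1} = \pi \cdot N(b/a) \in \pi\, N_{E/k_v}(E^\times),
\]
so $\eta_v(\xi)\eta_v(\xi+1) = \eta_v(\pi) = -1$.

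For the converse direction, given $\xi_0 \in \mathcal B^\reg(k_v)$ with $\eta_v(\xi_0)\eta_v(\xi_0+1) = 1$, I would write $\xi_0/(\xi_0+1) = N_{E/k_v}(\gamma)$ for some $\gamma \in E^\times$ (possible since $\eta_v$ of this ratio equals $\eta_v(\xi_0)\eta_v(\xi_0+1)$) and solve the system above: using $A := a + d$ and $B := b + c\epsilon$ as free parameters and imposing $(b - c\epsilon) - (a-d)\sqrt\epsilon = \gamma \cdot \bigl((b + c\epsilon) - (a+d)\sqrt\epsilon\bigr)$ determines $(a-d, b-c\epsilon)$; a direct computation gives $ad - bc = (A^2\epsilon - B^2)(1 - N(\gamma))/(4\epsilon)$, nonzero for generic $A, B$ (note $N(\gamma) \ne 1$ because $\xi_0 \ne \xi_0+1$), and $\xi(g) = \xi_0$. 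The analogous construction in $D$, taking $b/a \in E^\times$ with $N(b/a) = \xi_0/\bigl(\pi(\xi_0 + 1)\bigr)$, handles the case $\eta_v(\xi_0)\eta_v(\xi_0+1) = -1$. The main obstacle is almost entirely bookkeeping: one must check that the coordinate $\xi$ I define coincides (up to a coordinate change preserving $\eta_v(\xi)\eta_v(\xi+1)$) with the paper's normalization, and exclude the low-dimensional degenerate loci ($\det g = 0$, $N_D(g) = 0$, $A^2\epsilon = B^2$, etc.) in the converse construction; both are handled straightforwardly by matching the irregular points and noting the genericity of parameters.
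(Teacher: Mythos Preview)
Your proposal is correct. The paper does not actually give a proof of this lemma: it simply asserts ``The following is easy to compute'' and states the result. Your explicit matrix and quaternion computations are precisely the kind of direct verification the author is pointing to, and the key identities you derive,
\[
4\epsilon(ad-bc)\,\xi(g) = (a-d)^2\epsilon - (b-c\epsilon)^2 = -N_{E/k_v}\bigl((b-c\epsilon) - (a-d)\sqrt\epsilon\bigr)
\]
(and its companion for $\xi+1$) in the $\PGL_2$ case, together with $\xi/(\xi+1) = \pi\,N_{E/k_v}(b/a)$ in the quaternion case, check out. The converse constructions are also fine: for $G$ the determinant identity $4\epsilon(ad-bc) = N(z_2)(N(\gamma)-1)$ follows immediately by subtracting the two displayed formulas, and $A^2\epsilon = B^2$ forces $A = B = 0$ since $\epsilon$ is a nonsquare, so generic $(A,B)$ suffices; for $G^\alpha$ the reduced norm $N(a) - \pi N(b)$ is automatically nonzero since $\xi_0/(\xi_0+1) \ne 1$.

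One small remark: your appeal to ``affine change of coordinate preserving $\{0,-1\}$'' is safe here (the only such maps are $\xi \mapsto \xi$ and $\xi \mapsto -\xi-1$, both of which fix $\eta_v(\xi)\eta_v(\xi+1)$), but in fact your normalization already matches the paper's, since your $\xi$ sends the identity double coset to $0$ and the Weyl-element double coset to $-1$, which are exactly the irregular points of $\mathcal B_{\mathcal Z}$.
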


Now we can prove the result of Tunnell and Saito, based on the aforementioned works of Jacquet and Langlands:

\begin{theorem}\label{thmepsilon}
Let $\pi_v$ be an irreducible, generic, unitary representation of $G_v$, and let $\epsilon_T(\pi_v, \frac{1}{2}) = \epsilon(\pi_v,\frac{1}{2},\psi_v) \epsilon(\pi_v\otimes\eta_v,\frac{1}{2}, \psi_v)$ be the central value of its $\epsilon$-factor.

Then $\epsilon_T(\pi_v, \frac{1}{2})$ is independent of $\psi_v$, and the local torsor $\alpha$ of Proposition \ref{factorsthrough} is trivial if $\epsilon_T(\pi_v, \frac{1}{2}) = 1$, and nontrivial if $\epsilon_T(\pi_v, \frac{1}{2}) = -1$. In other words (see Proposition \ref{nonvanishing}), $\pi_v$ is $T_v$-distinguished iff $\epsilon_T(\pi_v, \frac{1}{2}) = 1$, and its Jacquet--Langlands lift $\pi_v^\alpha$ to the nontrivial inner twist is $T_v$-distinguished iff $\epsilon_T(\pi_v,\frac{1}{2})=-1$.
\end{theorem}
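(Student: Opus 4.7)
The plan is to identify the sign $(-1)^{\alpha}$ of Proposition \ref{factorsthrough} with the local epsilon factor by globalizing $\pi_v$ and combining our Theorem \ref{thmWaldspurger} with the established local--global results of Jacquet \cite{JW1} and Jacquet--Langlands \cite{JL}. First I would dispatch the $\psi_v$-independence claim: for $\pi_v$ on $\PGL_2$, both $\pi_v$ and $\pi_v\otimes\eta_v$ have trivial central character, so by the standard transformation law $\epsilon(\pi_v,s,\psi_{v,a}) = \omega_{\pi_v}(a)|a|_v^{n(s-\frac{1}{2})}\epsilon(\pi_v,s,\psi_v)$ the product $\epsilon_T(\pi_v,\frac{1}{2},\psi_v)$ is manifestly independent of $\psi_v$.

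For the main identification, given $\pi_v$ I would globalize it to a cuspidal automorphic representation $\pi$ of $\PGL_2(\adele)$ with prescribed local component $\pi_v$ at $v$, with local components $\pi_w$ at other places chosen so that the desired local identity is already known (by Jacquet and Jacquet--Langlands) to hold, and such that $L^S(\pi,\frac{1}{2})L^S(\pi\otimes\eta,\frac{1}{2})\ne 0$. Theorem \ref{thmWaldspurger} then forces $\pi$ to be distinguished by some globally defined $T^\beta$, while Lemma \ref{globaltorsors} together with Proposition \ref{factorsthrough} applied locally identifies $\beta_w$ with the local torsor $\alpha_w$ singled out for $\pi_w$, at every place $w$. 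The global existence of $\beta$ then forces the Hasse-principle identity $\prod_w (-1)^{\alpha_w}=1$ (nontrivial inner forms of a nonsplit torus differ at an even number of places). On the other hand, the global functional equation of $L(\pi,s)L(\pi\otimes\eta,s)$, combined with nonvanishing of the $L$-value at the center, forces the global sign $\prod_w \epsilon_T(\pi_w,\frac{1}{2})=1$. Since the identification $(-1)^{\alpha_w} = \epsilon_T(\pi_w,\frac{1}{2})$ has been arranged at all the other places $w$, the two product formulas together yield $(-1)^{\alpha_v} = \epsilon_T(\pi_v,\frac{1}{2})$ at $v$, and Proposition \ref{nonvanishing} then gives the distinguishedness statement.

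The main obstacle lies not in the argument itself but in the external input on which it relies: one needs enough globalizations with prescribed local component $\pi_v$ and nonvanishing central $L$-value, and one needs the identification to hold independently at the auxiliary places. The first is a classical density/level-raising problem, while the second is precisely the content of Jacquet \cite{JW1} combined with Jacquet--Langlands \cite{JL}, which handle the $T$-period case and match it against epsilon factors. Because the present trace-formula method, while producing the sign $(-1)^\alpha$ intrinsically and canonically via the operator $\mathcal R$ of Proposition \ref{propfetorus} and Lemma \ref{lemmaimage}, does not by itself compute this scalar in terms of local $\epsilon$-factors, the theorem should be read as a compatibility statement: our non-standard comparison recovers the Tunnell--Saito--Prasad dichotomy, but the verification that our signs are the Tunnell--Saito--Prasad signs still draws on earlier work.
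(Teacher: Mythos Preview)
Your global product-formula strategy is conceptually different from the paper's argument, which is purely \emph{local}. The paper does not globalize $\pi_v$ at all; instead it observes that the automorphism $\mathcal R$ of Proposition~\ref{propfetorus} is literally multiplication by $\eta_v(\xi)\eta_v(\xi+1)$, hence by Lemma~\ref{lemmaimage} it acts by $+1$ on functions supported on the image of \eqref{quot1} and by $-1$ on the image of \eqref{quot2}. On the other hand, via Jacquet's identification \cite{JW1} of $\mathcal S(\mathcal Z_v)$ with orbital integrals for $A_v\backslash G_v/(A_v,\eta_v)$, the operator $\mathcal R$ corresponds to $\phi\mapsto\phi(w\,\cdot\,)$ in each factor, and the local functional equation of \cite[Theorem 2.18]{JL} at $s=\tfrac12$ says this acts on the $\pi_v$-isotypic component by $\epsilon(\pi_v,\tfrac12)$ (and on the $\pi_v\otimes\eta_v$-isotypic by $\epsilon(\pi_v\otimes\eta_v,\tfrac12)$). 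Thus $\mathcal R$ acts on $J_{\pi_v^\alpha}$ by the scalar $\epsilon_T(\pi_v,\tfrac12)$, and comparing with the support description gives $(-1)^\alpha=\epsilon_T(\pi_v,\tfrac12)$ directly.

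Your approach, by contrast, carries two genuine burdens the paper avoids. First, the globalization step requires a cuspidal $\pi$ with $\pi_v$ as local component \emph{and} nonvanishing central $L$-value $L(\pi,\tfrac12)L(\pi\otimes\eta,\tfrac12)\ne 0$; this is a hard analytic input, not available from the methods of the paper. Second, your argument is circular unless the identity $(-1)^{\alpha_w}=\epsilon_T(\pi_w,\tfrac12)$ is known at all auxiliary places $w$: if $\epsilon_T(\pi_v,\tfrac12)=-1$ the global root number forces at least one other place with sign $-1$, and at that place you need precisely the theorem you are proving (or Tunnell--Saito--Prasad itself). You acknowledge this, but it means your proposal is a sketch of a \emph{reduction} to other cases rather than a proof. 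The paper's local argument sidesteps both issues: the only external inputs are the local functional equation from \cite{JL} and the orbital-integral matching from \cite{JW1}, both of which are place-by-place statements requiring no global nonvanishing or bootstrap.
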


\begin{remark}
 The result of Tunnell and Saito is more general; for example, it extends to all generic irreducible representations, and nontrivial characters of $T_v$.
\end{remark}

\begin{proof}
 The following is an obvious reinterpretation of the local functional equation of the special case of \cite[Theorem 2.18]{JL} with $s=\frac{1}{2}$ (in the coordinates of \cite{JL}):
 \begin{lemma}
Let $A$ be the split torus of diagonal elements in $\PGL_2$, and let $w=\left(\begin{array}{cc} &1 \\ -1 \end{array}\right)$.

Consider the automorphism $R$ of order $2$ of $C^\infty(A_v\backslash G_v)$ given by $R\phi(g) = \phi(wg)$. Let $\phi$ be in the image of $\pi_v\hookrightarrow C^\infty(A_v\backslash G_v)$. Then:
$$ R\phi(1) = \epsilon(\pi_v,\frac{1}{2},\psi_v) \phi(1).$$

Similarly if we replace $\pi_v$ by $\pi_v\otimes\eta_v$.
 \end{lemma}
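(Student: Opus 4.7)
The plan is to recognize this identity as Hecke's unfolding applied to the Jacquet--Langlands local functional equation at the central point $s = 1/2$. Realize $\pi_v$ in its $\psi_v$-Whittaker model, and for each Whittaker function $W$ define
$$\phi_W(g) := \int_{k_v^\times} W\!\left(\begin{pmatrix} a & 0 \\ 0 & 1\end{pmatrix} g\right) d^\times a,$$
to be understood by meromorphic continuation in an auxiliary Mellin parameter when needed. A direct change of variables $a \mapsto a_0 a$ shows that $\phi_W$ is left-$A_v$-invariant, and the map $W \mapsto \phi_W$ is $G_v$-equivariant; this realizes the embedding $\pi_v \hookrightarrow C^\infty(A_v \backslash G_v)$ appearing in the statement.

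By direct substitution, $\phi_W(1) = Z(1/2, W)$ and $\phi_W(w) = \tilde Z(1/2, W)$, where
$$Z(s, W) = \int W\!\begin{pmatrix} a & 0 \\ 0 & 1\end{pmatrix} |a|^{s-1/2} d^\times a, \qquad \tilde Z(s, W) = \int W\!\left(\begin{pmatrix} a & 0 \\ 0 & 1\end{pmatrix} w\right) |a|^{s-1/2} d^\times a$$
are the two Jacquet--Langlands zeta integrals in the notation of \cite[Theorem 2.18]{JL}. Specializing the local functional equation $\tilde Z(1-s, W) = \gamma(\pi_v, s, \psi_v) Z(s, W)$ at $s = 1/2$, and using the factorization $\gamma(\pi_v, 1/2, \psi_v) = \epsilon(\pi_v, 1/2, \psi_v) \cdot L(1/2, \tilde\pi_v)/L(1/2, \pi_v) = \epsilon(\pi_v, 1/2, \psi_v)$ (the $L$-ratio is $1$ because $\pi_v$ is self-dual, having trivial central character as a representation of $\PGL_2$), one obtains $\phi_W(w) = \epsilon(\pi_v, 1/2, \psi_v) \phi_W(1)$. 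Since $R\phi_W(1) = \phi_W(w)$, this is the first assertion. The twisted version follows by applying the identical argument to $\pi_v \otimes \eta_v$, which is again an irreducible generic representation of $\PGL_2$ (since $\eta_v$ quadratic implies $\eta_v \circ \det$ is trivial on the center).

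The main obstacle is purely conventional: one must match the definition of $\tilde Z$ and the normalizations of \cite[Theorem 2.18]{JL} (position of the Weyl element, sign of the additive character, choice of $d^\times a$) with those implicit in the lemma, to ensure that the functional equation produces exactly $\epsilon(\pi_v, 1/2, \psi_v)$ with no extraneous factor. Since $\epsilon(\pi_v, 1/2, \psi_v) \in \{\pm 1\}$ for self-dual $\pi_v$, any convention mismatch can only manifest as a sign, and can be pinned down by a single test case --- for instance, the unramified principal series, where both sides manifestly equal $+1$.
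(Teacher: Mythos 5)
Your proof is correct and takes essentially the same route as the paper's: both invoke the Jacquet--Langlands local functional equation \cite[Theorem 2.18]{JL} at $s=\tfrac12$ and observe that the ratio $L(\tilde\pi_v,\tfrac12)/L(\pi_v,\tfrac12)$ is $1$ by self-duality of $\PGL_2$-representations, so the $\gamma$-factor reduces to the $\epsilon$-factor. The only difference is that you spell out the Hecke/Kirillov realization of $\pi_v\hookrightarrow C^\infty(A_v\backslash G_v)$ via the Mellin transform of Whittaker functions, which the paper leaves implicit in the phrase ``obvious reinterpretation \ldots (in the coordinates of \emph{loc.\ cit.}).''
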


 Indeed, a priori this is true with $\epsilon(\pi_v,\frac{1}{2},\psi_v)$ replaced by $\frac{L(\tilde \pi_v,\frac{1}{2})}{L(\pi_v,\frac{1}{2})}\epsilon(\pi_v,\frac{1}{2},\psi_v)$, but since representations of $\PGL_2$ are self-dual, the quotient of local $L$-functions is equal to $1$.
 
 This shows, in particular, that $\epsilon(\pi_v,\frac{1}{2},\psi_v)=\pm 1$ and is independent of $\psi_v$, and we will henceforth denote it by $\epsilon(\pi_v,\frac{1}{2})$ (and similarly for $\pi_v\otimes\eta_v$).
 
 By dualizing, if we consider a relative character:
 $$ J'_{\pi_v}: C_c^\infty(A_v\backslash G_v) \otimes C_c^\infty(A_v\backslash G_v,\eta_v) \to \pi_v\otimes\tilde\pi_v \to \CC$$
 then the automorphism of order $2$ taking each element $\phi_1\otimes\phi_2$ to $\phi_1(w\bullet) \otimes \phi_2(w\bullet)$ acts on $J'_{\pi_v}$ by the scalar $\epsilon_T(\pi_v) =\epsilon(\pi_v,\frac{1}{2}) \epsilon(\pi_v\otimes\eta_v, \frac{1}{2})$.
 
By \cite{JW1}, this can be applied to the automorphism $\mathcal R$ of $\mathcal S(\mathcal Z_v)$ (cf.\ the proof of Proposition \ref{propfeKuz}), and the relative character $J_{\pi^\alpha_v}$:

\begin{corollary}
 The automorphism $\mathcal R$ of $\mathcal S(\mathcal Z_v)$ acts on $J_{\pi^\alpha_v}$ by the scalar $\epsilon_T(\pi_v,\frac{1}{2})$.
\end{corollary}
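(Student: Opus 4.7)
The plan is to transport the preceding lemma, which computes the action of the Weyl involution on the relative character $J'_{\pi_v}$ for the split-torus quotient $A\backslash G/(A,\eta)$, to the $T\backslash G/T$ side via Jacquet's matching of orbital integrals \cite{JW1}. Concretely, I would proceed as follows.

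First, invoke the matching theorem of \cite{JW1}: there is a canonical isomorphism of spaces of orbital integrals
\[
  \mathcal S(\mathcal Z_v) \;\xrightarrow{\sim}\; \bigl(C_c^\infty(A_v\backslash G_v)\otimes C_c^\infty(A_v\backslash G_v,\eta_v)\bigr)_{G_v},
\]
compatible with the invariant $\xi\in \mathcal B^{\reg}(k_v)$ parametrizing both sides (the open Bruhat cell is identified via a common base $\mathcal B \smallsetminus\{0,-1\}$). Under this isomorphism the relative character $J_{\pi_v^\alpha}$, including the contributions from both pure inner forms which are forced to coincide with a single summand by Lemma~\ref{lemmaimage}, is identified with (a nonzero scalar multiple of) the relative character $J'_{\pi_v}$ defined via the Whittaker-type functionals on $C_c^\infty(A_v\backslash G_v)\otimes C_c^\infty(A_v\backslash G_v,\eta_v)$.

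Second, I would show that the Weyl involution $\phi_1\otimes \phi_2\mapsto \phi_1(w\bullet)\otimes\phi_2(w\bullet)$ on the split-torus side descends, at the level of orbital integrals on $\mathcal B^\reg(k_v)$, to multiplication by $\eta_v(\xi)\eta_v(\xi+1)$, i.e.\ to the operator $\mathcal R$. This is the computational heart of the proof: one picks a convenient coordinate $\xi$ on the base (say the one for which the regular orbit of $g\in G_v$ is labeled by the cross-ratio of its four matrix entries), checks that $g\mapsto wg$ sends $\xi\mapsto \xi'$ for some explicit rational involution, and then tracks the twist by $\eta_v$ on the second factor: the net effect on the orbital integral is multiplication by a product of values of $\eta_v$ at two factors whose zero and pole loci on $\mathcal B$ are $\{0\}$ and $\{-1\}$, which must therefore be $\eta_v(\xi)\eta_v(\xi+1)$ up to a universal unit. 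A short normalization check using the basic vector (compatibly with Corollary~\ref{corollaryR}, which already tells us that this universal unit is $1$ on the image of the Hecke-transferred basic vector) pins down the constant to be exactly $1$.

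Third, combine the two identifications: the already established lemma says that the $w$-involution on the split-torus side acts on $J'_{\pi_v}$ by the scalar
\[
\epsilon(\pi_v,\tfrac{1}{2})\,\epsilon(\pi_v\otimes\eta_v,\tfrac{1}{2}) \;=\; \epsilon_T(\pi_v,\tfrac{1}{2}),
\]
so, transporting through Jacquet's matching, the operator $\mathcal R$ on $\mathcal S(\mathcal Z_v)$ acts on $J_{\pi_v^\alpha}$ by the same scalar. This is the asserted statement.

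The main obstacle is the second step: verifying that Jacquet's matching of orbital integrals intertwines the Weyl involution with the multiplier $\eta_v(\xi)\eta_v(\xi+1)$. The claim is essentially an explicit change-of-coordinate computation on the base $\mathcal B$, but one must be careful because Jacquet's transfer factor itself is a product of values of $\eta_v$ at explicit functions of $\xi$, and the sign of $\eta_v$ may swap under the Weyl involution in a way that must be matched against the twist built into the right-hand side $C_c^\infty(A_v\backslash G_v,\eta_v)$. As indicated in the paper's own commentary just before Theorem~\ref{thmepsilon}, this identification is implicit in \cite{JW1} and is what allows the corollary to be stated without an independent verification; I would make this identification explicit by the coordinate calculation described above, using the basic-vector normalization of Corollary~\ref{corollaryR} as a sanity check that no spurious constant has been introduced.
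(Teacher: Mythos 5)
Your proposal is correct and follows essentially the same route as the paper: dualize the local functional equation of \cite{JL} to get the Weyl-involution action on $J'_{\pi_v}$ by the scalar $\epsilon_T(\pi_v,\frac{1}{2})$, then transport through Jacquet's orbital-integral matching \cite{JW1}. The one step you treat as a fresh coordinate calculation---that $\phi_1\otimes\phi_2\mapsto\phi_1(w\bullet)\otimes\phi_2(w\bullet)$ descends to the multiplier $\eta_v(\xi)\eta_v(\xi+1)$ on $G_v$-coinvariants---is already established in the proof of Proposition \ref{propfeKuz} (specialized to $s=0$), which the paper cites directly, so no new verification is needed.
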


On the other hand, the automorphism is described explicitly in Proposition \ref{propfetorus}. By Lemma \ref{lemmaimage} we deduce that the support of the distribution $J_{\pi^\alpha_v}$ is the image of $k_v$-points corresponding to the torsor described in the statement of the theorem. 
\end{proof}

\appendix

\section{Families of locally multiplicative functions} \label{sec:families}

\subsection{General formalism}\label{families-general}
In this appendix we will discuss the analytic structure on certain families $t\mapsto \mathcal A^t$ of Fr\'echet spaces of functions on a local field $F$, including notions of \emph{analytic sections} and of \emph{polynomial seminorms} and \emph{polynomial growth} or \emph{rapid decay} on (bounded) vertical strips. As in the rest of the paper, we fix a non-trivial complex additive character of $F$ and a self-dual Haar measure, following the conventions for Fourier transform that were explained in \S \ref{ssnotation}.

We will define analytic sections by embedding our spaces into \emph{Fr\'echet bundles} over the parameter space of $t$. The parameter space of $t$ is $\CC/2\pi i\Z  \log q$ in the non-Archimedean case, where $q$ is the residual degree of $F$. For Archimedean places (where most of the following work is focused), we set $\log q:= \infty$, and hence the parameter space is $\CC$. 

We recall that (the total space of) a \emph{Fr\'echet bundle} over a complex manifold $M$ is a complex Fr\'echet manifold $N$, together with a holomorphic map $\pi: N\to M$, with the fibers having the structure of a vector space and an open covering of $M$ by neighborhoods $U$ such that the restriction of $N$ over $U$ is (biholomorphically and linearly) isomorphic to a direct product $U\times A$, where $A$ is a Fr\'echet space. 
In particular, for a Fr\'echet bundle it makes sense to talk about analytic sections $M\to N$ or (weakly) analytic families of functionals (i.e., sections $L: M\to N^*$ which are weakly analytic: under an isomorphism with $U\times A$, locally, $\left< L(m), a\right>$ is analytic on $m\in U$ for every $a\in A$). Thus, once we embed the spaces $\mathcal A^t$ into the fibers $N_t$ of a Fr\'echet bundle over $\CC/\frac{2\pi i}{\log q} \Z$, by an \emph{analytic section} $t\mapsto f_t\in \mathcal A^t$ we will mean a section which is analytic as a section into $N$, and by an \emph{analytic family of functionals} $t\mapsto L_t\in (\mathcal A^t)^*$ we will mean one such that $\left<f_t,L_t\right>$ is analytic for every analytic section $t\mapsto f_t$. 

The embedding into a Fr\'echet bundle will depend on some choices, but it is easy to check that the notions that we define do not. Namely, our spaces will be global sections of Fr\'echet cosheaves over $\mathbb P^1(F)$, and we take a cover of $\PP^1(F)$ by small open (semialgebraic) subsets $U_i$, such that the restrictions $\mathcal A^t(U_i)$ of our Schwartz cosheaves are of the form that we will describe below, and have the structure of Fr\'echet bundles. Then we choose a partition of unity by Schwartz functions subordinate to the $U_i$'s, and multiplying each element of $\mathcal A^t$ by those we get an injective map:
\begin{equation}\label{intoFb} \mathcal A^t\to \bigoplus_i \mathcal A^t(U_i),
\end{equation}
which splits the obvious (extension) map in the other direction. This gives us the notion of analytic sections that we need. We will also endow the spaces $\mathcal A^t(U_i)$ with notions of ``polynomial seminorms'' and sections of ``polynomial growth'' or ``rapid decay''; then the same notions carry over to sections of $t\mapsto \mathcal A^t$ by the above embedding. (The ``polynomial'' and ``rapid'' notions refer, implicitly, to behavior on \emph{bounded vertical strips}; they will not be polynomial or of rapid decay on the whole complex plane. Moreover, in the non-Archimedean case, ``bounded vertical strips'' are compact since $t\in \CC/\frac{2\pi i}{\log q} \Z$, so there will be no notion of rapid decay, and every continuous section is automatically of polynomial growth; this comment will be implicit, and will not be repeated, every time there is a discussion of those notions.)

Now we come to describing, axiomatically, the cosheaves $\mathcal A^t$ over open sets $U_i$. By choosing the $U_i$'s small enough, the cosheaves will coincide with the cosheaves of Schwartz functions away from a point of $U_i$, which up to an automorphism of $\PP^1$ we can identify with $0$. In a neighborhood $U$ of zero, we consider two possibilities for the behavior of these functions:

\begin{enumerate}
 \item either $\mathcal A^t(U) = |\bullet|^t \mathcal S(U)\cdot K$ for all $t$, where $K$ is a fixed function not depending on $t$; in this case, we also assume that $K$ is such that the kernel of the map: $f \mapsto f\cdot K$ is closed in $\mathcal S(U)$; 
 \item or they are defined in such a way that their fibers over $0$
 are annihilated by the operator: 
\begin{equation}
 (\Id - \eta(a) |a|^{-t-\frac{1}{2}} a\cdot  )(\Id - |a|^{-\frac{1}{2}} a\cdot ),
\end{equation} for every $a\in F^\times$, where $a\cdot$ we denote the normalized action of $F^\times$ on functions on $\mathcal B$:
\begin{equation}
 (a\cdot f)(x) := |a|^\frac{1}{2} f(ax).
\end{equation}

More precisely, in this second case our functions will be of the form: 
$$C_1(\xi) +C_2(\xi) \eta(\xi) |\xi|^t$$
(with $\eta$ the quadratic character associated to a quadratic etale algebra $E$ over $F$) in a neighborhood of $\xi=0$, where $C_1$ and $C_2$ extend to smooth functions in a neighborhood of zero, except:
\begin{itemize}
 \item when
 \begin{equation}\label{exceptions2} t = 0 \mbox{ and }\eta=1,
 \end{equation}
 in which case the functions have the form: $$C_1(\xi)+C_2(\xi)\log|\xi|;$$
 \item when 
 \begin{eqnarray}\label{exceptions3} t\in 2\Z, \eta=1 \mbox{ and }F=\RR, \nonumber
  \\ \mbox{ or } t\in (2\Z+1), \eta\ne 1 \mbox{ and } F  = \RR,\\
  \mbox{ or }t\in \Z \mbox{ and }F  = \CC, \nonumber
 \end{eqnarray} 
  in which case the functions have the form: 
$$\begin{cases}
   C_1(\xi)+C_2(\xi)\eta(\xi)|\xi|^t\log|\xi|, & \mbox{ when } t\ge 0;\\
   C_1(\xi)\eta(\xi)|\xi|^t+C_2(\xi)\log|\xi|, & \mbox{ when } t<0.
\end{cases}$$
\end{itemize}
\end{enumerate}

\subsection{Fr\'echet bundle structure and polynomial seminorms in the first case}

In the first case, by multiplying with $|\bullet|^{-t}$ we identify all spaces $\mathcal A^t(U)$ with the Fr\'echet space\footnote{Recall that in the non-Archimedean space we work throughout with ``almost smooth'' functions, cf.\ \cite[Appendix A]{SaBE1}, for the sake of uniformity; with usual smooth functions we get LF-spaces instead.}  $\mathcal S(U)/\{f: fK=0\}$. A \emph{polynomial family of seminorms} will be a family $t\mapsto \rho_t$ of seminorms on $\mathcal A^t(U)$ which on bounded vertical strips is bounded by seminorms of the form
$$|P(t)| \cdot \rho,$$
where $\rho$ is a fixed seminorm of this Fr\'echet space and $P$ is a polynomial in $t$. A section $t\mapsto f_t \in \mathcal A^t(U)$ will be said to be of \emph{polynomial growth}, resp.\ \emph{rapid decay} on a bounded vertical strip $V:= \{ t| \sigma_1\le \Re(t)\le \sigma_2\}$ if it is so with respect to polynomial seminorms, i.e., $\rho_t(f_t)$ should be bounded by a polynomial, resp.\ absolutely bounded on $V$ for every polynomial family of seminorms $\rho_t$.

\subsection{Fr\'echet bundle structure and polynomial seminorms in the second case}

In the second case, for generic $t$, the space $\mathcal A^t(U)$ also has a natural Fr\'echet space structure, by identifying it as the quotient
\begin{equation}\label{Schwartzquotient}
(\mathcal S(U)\oplus\mathcal S(U) )/\mathcal S(U),
\end{equation}
where $\mathcal S(U)$ is embedded by: $f\mapsto (f, -\eta(\bullet)|\bullet|^t f)$. The Fr\'echet structure in the case $t=0, \eta=1$ can again be described as the quotient of $\mathcal S(U)\oplus\mathcal S(U) $, as discussed in \S \ref{ssPoissonbaby}, and similarly for Archimedean cases when $t\in \Z$.

However, to describe these spaces as a Fr\'echet bundle as $t$ varies, we will rework the definition of the Fr\'echet topology for every $t$ in terms of Fourier transforms. In fact, we may embed $\mathcal A^t(U)$ in the larger space of functions on $F^\times$ which coincide with Schwartz functions away from a compact neighborhood of zero; we call this space the ``model'' for $\mathcal A^t$. (Thus, strictly speaking, we compose the embedding \eqref{intoFb} with the embeddings into these ``model'' spaces, and we endow those with the desired structures.) With the exception of the values of $t$ described below, we can consider elements of the model $\mathcal A^t$ as tempered distributions on $F$ (recall that we have fixed a Haar measure on $F$); for example, the distribution $(C_1(\xi)+ C_2(\xi)\eta(\xi)|\xi|^t)d\xi$ is a well-defined measure for $\Re(t)\gg 0$, and has meromorphic continuation to all but countably many values of $t$ (finitely many in the $p$-adic case). Thus, we can apply Fourier transform to them. The exceptions are when 
$t+1$ is a pole of the local zeta function, i.e.:
\begin{eqnarray}\label{exceptions}
 t = -1 \mbox{ and }\eta=1; \nonumber \\
 t\in -2\mathbb N-1, \eta=1, F=\RR ; \nonumber \\ 
 t\in -2\mathbb N-2, \eta\ne 1, F  = \RR; \nonumber \\ 
 t\in -\mathbb N-1, F  = \CC.
\end{eqnarray}

\begin{lemma}\label{lemmaFourier}
Let $t$ be outside the values of \eqref{exceptions}. Fourier transform defines a topological isomorphism between the model $\mathcal A^t$ and the Fr\'echet space $\mathcal B^t$ of those smooth functions on $F$ which in a neighborhood of infinity are equal to $|\xi|^{-t-1} \eta(\xi)  h\left(\frac{1}{\xi}\right)$, for some $h\in \mathcal S(F)$ (with the obvious topology, which can be inferred from the above discussion). Moreover, Fourier transform descends to a topological isomorphism between $\mathcal A^t/\mathcal S(F)$ and the stalk of $\mathcal B^t$ at $\infty$ (with the obvious topology, given by the derivatives of $h$ at $0$). 
\end{lemma}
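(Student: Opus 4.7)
The plan is to reduce the lemma to Tate's local functional equation by decomposing $\mathcal{A}^t$ and $\mathcal{B}^t$ into a Schwartz summand (which $\mathcal{F}$ carries into itself topologically) and a ``singular germ'' summand on which $\mathcal{F}$ acts by a Tate-type transform. Fix once and for all a smooth, compactly supported $\phi$ on $k_v$ with $\phi \equiv 1$ near $0$. Multiplication by $\phi$ and by $1-\phi$ yields continuous projections and topological direct sum decompositions
\begin{align*}
 \mathcal{A}^t &= \mathcal{S}(k_v) \oplus V^t, \\
 \mathcal{B}^t &= \mathcal{S}(k_v) \oplus W^t,
\end{align*}
where $V^t$ consists of germs of the form $\phi(\xi)\cdot\bigl(C_1(\xi) + C_2(\xi)\eta_v(\xi)|\xi|^t\bigr)$ with $C_1, C_2 \in \mathcal{S}(k_v)$, and $W^t$ consists of functions $(1-\phi(1/\xi))\cdot |\xi|^{-t-1}\eta_v(\xi) h(1/\xi)$ with $h \in \mathcal{S}(k_v)$. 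Since $\mathcal{F}$ is already a topological automorphism of $\mathcal{S}(k_v)$, and $\phi C_1$ lies in $\mathcal{S}(k_v)$ with Schwartz Fourier transform, the content of the lemma reduces to showing that $\mathcal{F}$ induces a topological isomorphism between the ``$C_2$-subspace'' of $V^t$ and $W^t$.

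For this I would carry out the Fourier computation directly. Substituting $y = z/\xi$ in the defining integral and using $\eta_v^2 = 1$ gives, at least for $\Re(t)$ in the range of absolute convergence,
\begin{equation*}
 \mathcal{F}\bigl(\phi(\bullet) C_2(\bullet)\, \eta_v(\bullet)|\bullet|^t\bigr)(\xi) \;=\; \eta_v(\xi)\,|\xi|^{-t-1}\, G_{t,C_2}(1/\xi),
\end{equation*}
where
\begin{equation*}
 G_{t,C_2}(\eta) \;:=\; \int_{k_v} \phi(\eta z)\,C_2(\eta z)\,\eta_v(z)\,|z|^t\,\psi^{-1}(z)\, dz
\end{equation*}
is a family of Tate local zeta integrals attached to the Schwartz function $z \mapsto \phi(\eta z) C_2(\eta z) \psi^{-1}(z)$. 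By Tate's theory, $G_{t,C_2}(\eta)$ extends meromorphically in $t$ with poles only at the values listed in \eqref{exceptions}, and for each allowed $t$ it defines a Schwartz function of $\eta$ in a neighborhood of $0$. Taking $h(\eta) = G_{t,C_2}(\eta)$ identifies the Fourier image with $W^t$. Bijectivity follows from the symmetric computation with $\mathcal{F}^{-1}$, the roles of $t$ and $-t-1$ swapped; the quotient statement of the lemma is then immediate, as the decompositions identify both stalks with $\mathcal{S}(k_v)$ (via $C_2$, resp.\ $h$) and the Fourier transform acts between them by the Tate map, whose leading scalar is $\gamma(\eta_v, -t, \psi_v)$.

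The main obstacle is the analytic step: verifying that $\eta \mapsto G_{t,C_2}(\eta)$ is genuinely a Schwartz function near $\eta = 0$ (not merely smooth), and that $C_2 \mapsto G_{t,C_2}$ is continuous as a linear map $\mathcal{S}(k_v) \to \mathcal{S}(k_v)$. Smoothness and rapid decay in $\eta$ are obtained by repeated integration by parts in $z$ against $\psi^{-1}$, which trades factors of $\eta$ for derivatives of $\phi(\eta z) C_2(\eta z)$ and exploits either the oscillation of $\psi^{-1}$ (at archimedean places) or its locally constant nature (at non-archimedean places); combined with the meromorphic continuation of Tate's zeta integral, this yields a Schwartz function of $\eta$ for every $t$ outside \eqref{exceptions}, precisely the poles of the gamma factor. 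Continuity in $C_2$ is then a bookkeeping exercise using the Schwartz seminorms of $\phi C_2$ transported under the dilation $z \mapsto \eta z$.
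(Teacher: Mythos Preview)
Your direct route through Tate's local functional equation is sound in outline and genuinely different from the paper's argument. The paper does not compute $\mathcal{F}$ on the germ $\phi\, C_2\,\eta_v|\bullet|^t$ at all; instead it realizes the model $\mathcal{A}^t$ as the $(k_v^\times,\eta_v|\bullet|^t)$-twisted coinvariant quotient of $\mathcal{S}(k_v^2)$ under the hyperbolic action (via the orbital-integral map \eqref{ttwisted}), and then uses that two-dimensional Fourier transform on $\mathcal{S}(k_v^2)$ is anti-equivariant for this action. That descends to an isomorphism $\mathcal{A}^t\xrightarrow{\sim}\mathcal{A}^{-t}$ which, away from \eqref{exceptions}, factors as $|\bullet|^{-t}\mathcal{G}_t=|\bullet|^{-t}\mathcal{F}\circ\iota_t\circ\mathcal{F}$; one then reads off $\mathcal{F}(\mathcal{A}^t)=\mathcal{S}(k_v)+\iota_t\mathcal{S}(k_v)=\mathcal{B}^t$. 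Continuity is one line: it is obvious on $\mathcal{S}(k_v)$, and $\mathcal{A}^t=\mathcal{S}(k_v)+\mathcal{G}_t\mathcal{S}(k_v)$. The two-variable lift buys uniformity in $t$ and produces the operator $\mathcal{G}_t$ as a structural byproduct; your approach is more hands-on and stays one-dimensional, at the cost of the Schwartz-regularity estimates for $G_{t,C_2}$ that you correctly flag.

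There is one genuine gap. The lemma excludes only the values in \eqref{exceptions}; it must therefore be proved at the ``logarithmic'' points of \eqref{exceptions2} and \eqref{exceptions3} (for instance $t=0$ with $\eta_v=1$, or $t\in 2\mathbb{Z}$ for $k_v=\mathbb{R}$, $\eta_v=1$), where the germs in $\mathcal{A}^t$ are \emph{not} of the form $C_1+C_2\eta_v|\bullet|^t$ but involve $\ln|\xi|$. Your description of $V^t$ and the substitution producing $G_{t,C_2}$ do not apply there, and your surjectivity step (``roles of $t$ and $-t-1$ swapped'') is also fragile at these points: the dual Tate integral carries exponent $-t-1$, and $-t-1$ lands in \eqref{exceptions} precisely on the nonnegative part of \eqref{exceptions2}--\eqref{exceptions3}. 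You can repair this by a separate computation for the logarithmic germ or by a limiting argument in $t$, but neither is quite automatic. The paper's coinvariant description via \eqref{ttwisted} never refers to the explicit shape of the germ, which is why its proof covers all allowed $t$ at once.
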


We will prove this lemma in a moment. Notice that multiplication by a smooth, nonvanishing function which is equal to $1$ in a neighborhood of $0$ and equal to $|\bullet|^t$ in a neighborhood of $\infty$ \emph{identifies} the spaces $\mathcal B^t$ as Fr\'echet spaces, so indeed they form a Fr\'echet bundle. This allows us to \emph{identify} the association $t\mapsto \mathcal A^t$ with the association $t\mapsto \mathcal B^t$ over values of $t$ different from \eqref{exceptions}, which has an obvious, natural structure of a Fr\'echet bundle. We pull back this Fr\'echet bundle structure to the spaces $\mathcal A^t$, for example: analytic sections into $\mathcal A^t$ (when $t$ is not contained in \eqref{exceptions}) are those sections whose Fourier transforms are analytic into $\mathcal B^t$.

One can show that the operation ``multiplication by $\eta(\bullet)|\bullet|^{-t}$'', which takes $\mathcal A^t$ to $\mathcal A^{-t}$, preserves this structure of a Fr\'echet bundle for values of $t$ outside of \eqref{exceptions}; this \emph{allows} us to extend the Fr\'echet bundle structure to arbitrary $t$. More precisely, this result is an easy corollary of the following direct characterization of analytic sections in terms of $\mathcal A^t$:

\begin{lemma}\label{analyticsections}
For $t$ not among the values \eqref{exceptions2}, \eqref{exceptions3}, the analytic sections into $\mathcal A^t$ can be described as those sections of the form $t\mapsto C_{1,t}(\xi) +C_{2,t}(\xi)\eta(\xi)|\xi|^t$, where $C_{i,t}$ are analytic maps into $\mathcal S(F)$ (of course, not uniquely defined in terms of the section). 

If $\eta=1$, such a section, defined in a punctured neighborhood of $t=0$, extends to an analytic section at $t=0$ if and only if $C_{1,t}$ and $C_{2,t}$ have at most simple poles with opposite residues at $t=0$. 
\end{lemma}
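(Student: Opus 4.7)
The plan is to exploit the Fourier transform identification of Lemma \ref{lemmaFourier} together with Tate's local functional equation. First, for $t$ outside the list \eqref{exceptions} (hence also outside \eqref{exceptions2}, \eqref{exceptions3}), the Fr\'echet bundle structure on $t \mapsto \mathcal A^t$ is defined via the isomorphism $\mathcal F: \mathcal A^t \xrightarrow{\sim} \mathcal B^t$, and the bundle on the right is trivialized by multiplication by a cutoff of $|\bullet|^t$ near $\infty$. Given an analytic section $t \mapsto f_t \in \mathcal A^t$, I would split
\[
\mathcal F f_t(\xi) = (1-\chi(\xi))\mathcal F f_t(\xi) + \chi(\xi)\,\eta_v(\xi)|\xi|^{-t-1} h_t(1/\xi),
\]
where $\chi$ is a fixed smooth cutoff equal to $1$ near $\infty$, the first summand lies in $\mathcal S(k_v)$ and depends analytically on $t$, and $h_t \in \mathcal S(k_v)$ is also analytic in $t$ (both immediate from the bundle structure on $\mathcal B^t$). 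Applying inverse Fourier transform and using Tate's identity $\widehat{\eta_v|\bullet|^{-t-1}} \propto \eta_v|\bullet|^t$ (with a local gamma factor meromorphic in $t$, having poles exactly at \eqref{exceptions}), the first summand contributes a Schwartz term $C_{1,t}$ and the second contributes $C_{2,t}\eta_v|\bullet|^t$, both analytic in $t$ outside \eqref{exceptions}. The converse direction is immediate by running the construction in reverse: any $C_{i,t}$ analytic in $\mathcal S(k_v)$ yields an analytic section of $\mathcal A^t$ after Fourier transform.

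To cover the remaining values $t \in \eqref{exceptions}$ that are not in \eqref{exceptions2}, \eqref{exceptions3}, I would invoke the symmetry $\eta_v(\bullet)|\bullet|^{-t}: \mathcal A^t \xrightarrow{\sim} \mathcal A^{-t}$, which is precisely how the Fr\'echet bundle structure is extended to these bad points in \S\ref{families-general}. For such $t$, the reflected point $-t$ lies outside all three exception sets, so the preceding paragraph applies to give an analytic decomposition $\eta_v(\xi)|\xi|^{-t} f_t(\xi) = C_{1,-t}(\xi) + C_{2,-t}(\xi)\eta_v(\xi)|\xi|^{-t}$ of the image in $\mathcal A^{-t}$. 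Multiplying both sides by $\eta_v|\bullet|^t$ gives
\[
f_t(\xi) = C_{2,-t}(\xi) + C_{1,-t}(\xi)\eta_v(\xi)|\xi|^t,
\]
i.e.\ a decomposition of the required form with the roles of the two components swapped, and analyticity in $t$ is preserved because the symmetry is an isomorphism of Fr\'echet bundles by construction.

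For the second assertion, at $t=0$ with $\eta_v=1$, I would use the expansion $|\xi|^t = 1 + t\ln|\xi| + O(t^2)$ for $\xi \neq 0$. If $C_{1,t} = -c/t + \tilde C_{1,t}$ and $C_{2,t} = c/t + \tilde C_{2,t}$ with $c \in \mathcal S(k_v)$ and $\tilde C_{i,t}$ analytic at $t=0$, then
\[
C_{1,t}(\xi) + C_{2,t}(\xi)|\xi|^t = c(\xi)\ln|\xi| + \bigl(\tilde C_{1,0}(\xi) + \tilde C_{2,0}(\xi)\bigr) + O(t),
\]
a genuine element of $\mathcal A^0$ of the prescribed logarithmic shape. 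Conversely, given an analytic section of $\mathcal A^t$ extending across $t=0$, one reads off the log-coefficient $c \in \mathcal S(k_v)$ from the germ at zero (via the already-established bundle structure) and constructs $C_{i,t}$ with matching simple poles by reversing the calculation.

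The main obstacle is checking that the decomposition $\mathcal F f_t = (1-\chi)\mathcal F f_t + \chi\cdot(\text{explicit analytic piece})$ is truly analytic into the respective Fr\'echet spaces; this rests on the analyticity of the auxiliary Schwartz function $h_t$ (built into the definition of the bundle $\mathcal B^t$) and on meromorphicity of the local gamma factor. Both are standard, but must be tracked carefully through the cosheaf-theoretic quotient \eqref{Schwartzquotient} because the decomposition $(C_{1,t}, C_{2,t})$ is not unique: one must be sure that one can always choose a genuinely analytic lift rather than only a meromorphic one.
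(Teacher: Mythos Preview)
Your overall strategy---pass to $\mathcal B^t$ via $\mathcal F$, split with a cutoff near infinity, and Fourier back---and your handling of the points of \eqref{exceptions} via the symmetry $\eta_v|\bullet|^{-t}$ are both reasonable. But the central step has a real gap. You assert that the inverse Fourier transform of the ``infinity'' piece $\chi(\xi)\,\eta_v(\xi)|\xi|^{-t-1}h_t(1/\xi)$ is of the pure form $C_{2,t}(\xi)\eta_v(\xi)|\xi|^t$ with $C_{2,t}$ Schwartz. This is false: already for $\eta_v=1$, $k_v$ non-archimedean, and the piece equal to $|\xi|^{-t-1}1_{|\xi|\ge 1}$, one computes
\[
\mathcal F^{-1}\bigl(|\xi|^{-t-1}1_{|\xi|\ge 1}\bigr)(x)
=\Bigl(\tfrac{1-q^{-1}}{1-q^{-t}}\;-\;\tfrac{q^{-t}(1-q^{-t-1})}{1-q^{-t}}\,|x|^t\Bigr)\,1_{|x|\le 1},
\]
which carries a nonzero Schwartz summand. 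Tate's identity $\widehat{\eta_v|\bullet|^{-t-1}}\propto\eta_v|\bullet|^t$ applies to the homogeneous distribution, not to your piece, which has been multiplied by the non-Schwartz factor $h_t(1/\bullet)$. So after your split you are back inside $\mathcal A^t$ and still owe a proof that \emph{both} components of $\mathcal F^{-1}(\iota_t g_t)$ can be chosen analytically in $\mathcal S(k_v)$---exactly the statement to be proved. Your converse direction (``immediate by running in reverse'') has the same problem: verifying that $\mathcal F(C_{2,t}\eta_v|\bullet|^t)$ is an analytic section of $\mathcal B^t$ unpacks to the same unproved assertion.

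The paper closes this gap by lifting both $\mathcal A^t$ and $\mathcal B^t$ to the single space $\mathcal S(k_v^2)$ via the twisted-coinvariant integrals \eqref{ttwisted} and \eqref{StoB} (diagram \eqref{tdiagram}). An analytic section of $\mathcal B^t$ lifts through \eqref{StoB} to an analytic section $t\mapsto\Phi_t$ of $\mathcal S(k_v^2)$; the germ at $\xi=0$ of its image in $\mathcal A^t$ under \eqref{ttwisted} is then controlled by Tate zeta integrals of the restrictions of $\Phi_t$ to the two coordinate axes---one axis yielding the jets of $C_{1,t}$ at $0$, the other those of $C_{2,t}$. Since zeta integrals form an analytic family of functionals on $\mathcal S(k_v)$ with simple poles (and, when $\eta_v=1$, opposite residues at $t=0$ for the two axes, both proportional to $\Phi_t(0,0)$), the analyticity of the decomposition and the $t=0$ behavior follow simultaneously. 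This two-variable lift is the missing ingredient; your cutoff-and-Fourier approach by itself does not separate the two asymptotic exponents.
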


There is a similar description for the values of $t$ as in \eqref{exceptions3}, but it is left to the reader. 

\begin{proof}[Proof of Lemma \ref{lemmaFourier}]
 This is a generalization of \cite[Corollary 2.11]{SaBE1}, and the proof is similar, so we only emphasize the necessary additions.

 If $\Phi\in\mathcal S(F^2)$ then consider the following function in one variable, representing the orbital integrals of $\Phi$ with respect to the hyperbolic action of $F^\times$ against a character:
\begin{equation}\label{ttwisted} f_t(\xi) = \int_{F^\times} \Phi(\xi a, a^{-1}) \eta(a) |a|^{-t} d^\times a.
\end{equation}

One shows (as in \cite[Proposition 2.5]{SaBE1}) that $f_t\in \mathcal A^t$, and this allows one to identify the (model) space $\mathcal A^t$ with the Fr\'echet space of twisted coinvariants (with respect to the character $\eta(\bullet) |\bullet|^t$) of $\mathcal S(F^2)$. This applies to \emph{all} $t\in \CC/2 \pi i\Z \log q$.

Usual Fourier transform on $\mathcal S(F^2)$ is an automorphism which is anti-equivariant with respect to the hyperbolic $F^\times$-action, and if $f_t'$ denotes the corresponding function for the Fourier transform of $\Phi$, then the map $f_t \mapsto f_{-t}'$ is a (well-defined) topological isomorphism between $\mathcal A^t$ and $\mathcal A^{-t}$. Moreover, as in \cite[Proposition 2.10]{SaBE1}, it can be explicitly described (when $t$ does not belong to \eqref{exceptions}) by the operator
\begin{equation}\label{toperator}|\bullet|^{-t}\mathcal G_t:= |\bullet|^{-t}\mathcal F\circ \iota_t \circ \mathcal F,
\end{equation}
where $\mathcal F$ is usual Fourier transform in one variable and:
\begin{equation}\label{defiotat2}
\iota_t: f\mapsto \frac{\eta(\bullet)}{|\bullet|^{t+1}}f\left(\frac{1}{\bullet}\right). 
\end{equation}

The operator \eqref{defiotat2} is an automorphism of the space of continuous functions $h$ on $F$ such that $\lim_{\xi\to\infty} h(\xi) \eta(\xi) |\xi|^{t+1}$ exists, and clearly the Fourier transforms of functions of the form $f_t$ as above are smooth, contain the space $\mathcal S(F)$, and as in \cite[Lemma 2.9]{SaBE1} belong to this space of continuous functions. The fact that the operator \eqref{toperator} defines an isomorphism between $\mathcal A^t$ and $\mathcal A^{-t}$ now \emph{identifies} the Fourier transform of $\mathcal A^t$ as $\mathcal S(F)+\iota_t \mathcal S(F)=$ the space $\mathcal B^t$.

To show that Fourier transform is continuous with respect to the stated topologies, we first notice that this is obvious on the subspace $\mathcal S(F)\subset \mathcal A^t$. Since $\mathcal A^t = \mathcal S(F) + \mathcal G_t \mathcal S(F)$ (indeed: $\mathcal A^t = |\bullet|^t \mathcal A^{-t}$), this implies continuity.

\end{proof}

\begin{proof}[Proof of Lemma \ref{analyticsections}.]

Recall that the notion of ``analytic section'' is defined using the obvious Fr\'echet bundle structure on $\mathcal B^t$, when $t$ is not among the exceptions \eqref{exceptions}. Let $\Phi\in \mathcal S(F^2)$. Its image in $\mathcal A^t$ via $t$-twisted orbital integrals \eqref{ttwisted}, composed with Fourier transform, gives an element $F_t\in \mathcal B^t$. Following the proof of \cite[Proposition 2.10]{SaBE1}, it is easy to relate the two:
\begin{equation}\label{StoB}
 F_t(\xi) = \int_{F^\times} \hat \Phi^1(\xi a , a) \eta(a) |a|^{t+1} d^\times a,
\end{equation}
where $\hat \Phi^1$ denotes the Fourier transform of $\Phi$ in the first variable, and the integral should be understood as a zeta integral (in particular, it makes sense when $t$ is not in \eqref{exceptions}). 

The following diagram summarizes the relations between the various spaces and transforms:
\begin{equation}\label{tdiagram}
\xymatrix{
\mathcal S(F)^2 \ar[rrr]^{\mathcal F} \ar[d]_{\eqref{ttwisted}} \ar[dr]^{\eqref{StoB}}   &&&  \ar[d]^{\eqref{ttwisted}} \mathcal S(F^2) \\
\mathcal A^t \ar[r]^{\mathcal F}_{t \notin \eqref{exceptions}} & \mathcal B^t \ar[r]^{\iota_t} & \mathcal B^t \ar[r]^{|\bullet|^{-t}\mathcal F}_{t \notin \eqref{exceptions}} & \mathcal A^{-t}
}
\end{equation}

The relation \eqref{StoB} shows that any analytic section into $\mathcal B^t$ lifts to an analytic section $t\mapsto \hat \Phi^1_t \in \mathcal S(F^2\smallsetminus\{0\})$ and hence, via inverse Fourier transform the first variable, to an analytic section into $t\mapsto \Phi_t\in \mathcal S(F^2)$. Vice versa, let such an analytic section into $\mathcal S(F^2)$ be given, so $t\mapsto \hat \Phi^1_t \in \mathcal S(F^2)$ is also analytic. Taking into account that the zeta integrals (for $t\notin \eqref{exceptions}$) form an analytic family of distributions on $\mathcal S(F)$, we conclude that the image of $t\mapsto \hat \Phi^1_t$ in $\mathcal B^t$ via \eqref{StoB} is an analytic section.

Now it remains to show that analytic sections into $\mathcal S(F^2)$ descend to precisely those sections into $\mathcal A^t$ as in the statement of the lemma. Let $V^t$ denote the stalk of $\mathcal A^t$ at $\xi=0$; for $t$ outside the values of \eqref{exceptions2}, \eqref{exceptions3}, the germ of an element of the form $C_{1,t}(\xi) +C_{2,t}(\xi)\eta(\xi)|\xi|^t$ is determined by all the derivatives at zero of the smooth functions $C_{1,t}$ and $C_{2,t}$.

One can easily relate these derivatives to zeta integrals of derivatives of the original function $\Phi_t$, restricted to the two axes. For example, if $F=\RR$, from \eqref{ttwisted} we easily deduce that, for $t$ away from integral real values, 
$$C_{1,t}^{(n)}(0) = \int_{F^\times} \frac{\partial}{\partial x} \Phi(0, a^{-1}) \eta(a) a^n |a|^{-t} d^\times a,$$
and similar expressions hold for the derivatives of $C_{2,t}$ by replacing the $y$-axis by the $x$-axis.
(This Tate integral converges for $\Re(t)\gg 0$, and should be interpreted in terms of its meromorphic continuation otherwise.) From this we can deduce that, for an analytic section $t\mapsto \Phi_t\in \mathcal S(F^2)$, its image in $V^t$ has the stated form (i.e., any given derivative of $C_{1,t}$ and $C_{2,t}$ is analytic away from \eqref{exceptions2}, \eqref{exceptions3}, and at $t=0$ they have at most simple poles with opposite residues). 

Now we claim that any pair: $t\mapsto (C_{1,t}, C_{2,t})$ of the stated form, or any section into $V^t$ of the stated form, lifts to an analytic section of $\mathcal S(F^2)$, more precisely, that there are lifts:
\begin{eqnarray} \mbox{section of the stated form into }V^t &\rightsquigarrow& \mbox{ pair }(C_{1,t}, C_{2,t})\mbox{ of the stated form} \nonumber\\ &\rightsquigarrow &\mbox{analytic section into }\mathcal S(F^2). \label{lifts}
\end{eqnarray}
 Let $h_t$ denote a section of the stated form, either into pairs $(C_{1,t}, C_{2,t})$, or into $V^t$.

 Vice versa, given an analytic section $t\mapsto \Phi_t\in \mathcal S(F^2)$, we have already explained that its image in $V^t$ is of the stated form and admits a section of the stated form into pairs $(C_{1,t}, C_{2,t})$, which reduces the problem to the case when the image of $\Phi_t$ in $V^t$ is identically zero, i.e., its image $f_t\in \mathcal A^t$ lies in the subspace $\mathcal S(F^\times)$. We need to prove that $t\mapsto f^t$ is analytic into $\mathcal S(F^\times)$ or, what is equivalent, into $\mathcal S(F)$. But we have already seen that its image $\mathcal F(f_t)\in \mathcal B^t$ is analytic. Since $f_t\in \mathcal S(F^\times) $, $\mathcal F(f_t)$ will lie in $\mathcal S(F)\subset \mathcal B^t$, so the inverse Fourier transform of that is also analytic into $\mathcal S(F)$. This concludes the proof of the lemma.
\end{proof}

 Although Fourier transform is not defined at the values of $t$ of \eqref{exceptions}, it is easy to see that the transform \eqref{toperator}, or equivalently $\mathcal G_t$, \emph{is} defined and preserves analytic sections:
 
\begin{lemma}\label{Gtdefined}
The transform \eqref{toperator} defines an automorphism of the Fr\'echet bundle $\mathcal A^t$.
\end{lemma}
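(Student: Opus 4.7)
The strategy is to avoid the non-existence of $\mathcal F\colon \mathcal A^t \to \mathcal B^t$ at the exceptional values \eqref{exceptions} by factoring $\mathcal G_t$ through the ``constant'' Fr\'echet bundle $\mathcal S(k_v^2)$ via the diagram \eqref{tdiagram}. Set $T_t := |\bullet|_v^{-t}\mathcal G_t$, and recall that for generic $t$ the diagram says precisely that $T_t\colon \mathcal A^t \to \mathcal A^{-t}$ is the descent of two-variable Fourier transform $\mathcal F\colon \mathcal S(k_v^2)\to \mathcal S(k_v^2)$ along the twisted orbital integral maps \eqref{ttwisted}. Since those descent maps are defined and jointly continuous in $(\Phi,t)$ for \emph{every} $t$ (as was used in the proofs of Lemmas \ref{lemmaFourier} and \ref{analyticsections}), and $\mathcal F$ on $\mathcal S(k_v^2)$ is independent of $t$, this gives a tautologically defined continuous extension of $T_t$ to all values of the parameter.

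\textbf{Key steps.} First, I would observe that the $t$-twisted orbital integral presentation realizes $\mathcal A^t$ as a quotient of $\mathcal S(k_v^2)$ uniformly in $t$, and that an analytic section $t\mapsto f_t\in\mathcal A^t$ admits an analytic lift $t\mapsto \Phi_t\in\mathcal S(k_v^2)$; this is exactly the content of the chain of lifts \eqref{lifts} in Lemma \ref{analyticsections}. Second, I apply two-variable Fourier transform fibrewise to obtain the analytic section $t\mapsto \mathcal F\Phi_t \in \mathcal S(k_v^2)$, and descend it via the twisted orbital integrals at parameter $-t$ to an analytic section of $\mathcal A^{-t}$; by the commutative diagram \eqref{tdiagram} this descent equals $T_t f_t$ whenever $t\notin\eqref{exceptions}$, and by density (and continuity of descent) this identification serves as the \emph{definition} of $T_t f_t$ at the remaining values of $t$. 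Third, I compose with the holomorphic multiplier $|\bullet|_v^{t}$, which identifies $\mathcal A^{-t}\xrightarrow{\sim}\mathcal A^t$ as Fr\'echet bundles (sending $C_{1,t}+C_{2,t}\eta(\xi)|\xi|^{-t}$ to $C_{2,t}\eta(\xi) + C_{1,t}\eta(\xi)|\xi|^{t}$ in the notation of \S\ref{families-general}), to recover $\mathcal G_t$ as an endomorphism of the bundle $\mathcal A^t$.

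For bijectivity I would use that $\iota_{-t}\circ\iota_t$ is the identity and that two-variable Fourier inversion on $\mathcal S(k_v^2)$ descends to a scalar multiple of the identity on $\mathcal A^t$, so that $\mathcal G_{-t}\circ \mathcal G_t$ equals $|\bullet|_v^{t}\cdot|\bullet|_v^{-t}=\mathrm{Id}$ up to the usual Fourier-inversion scalar; applying the same argument with $t$ replaced by $-t$ then provides a two-sided inverse. The bound by polynomial families of seminorms on vertical strips is immediate from the analogous bounds for descent along \eqref{ttwisted} (controlled polynomially in $t$ by the zeta-integral formula \eqref{StoB}) and from the fact that $\mathcal F$ on $\mathcal S(k_v^2)$ is a single continuous operator independent of $t$.

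\textbf{Main obstacle.} The delicate point is confirming that the extension just described still deserves to be called $|\bullet|_v^{-t}\mathcal G_t$ at the exceptional values \eqref{exceptions}, since there the intermediate space $\mathcal B^t$ and the individual factors $\mathcal F$ in $\mathcal F\circ\iota_t\circ\mathcal F$ are simultaneously ill-defined. I would handle this purely diagrammatically: the combination $\mathcal F\iota_t\mathcal F$, read through the $\mathcal S(k_v^2)$-side of \eqref{tdiagram}, is unambiguous because the two naive Fourier transforms collapse into a single descent of the $t$-independent two-variable Fourier transform, and the surviving $\iota_t$ contributes only the harmless holomorphic multiplier $\eta(\bullet)|\bullet|^{-t-1}$ and inversion $\xi\mapsto 1/\xi$, both of which preserve the Fr\'echet bundle structure across every $t$. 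Density of the non-exceptional locus and continuity of all operators involved then force the extension to agree with the original $|\bullet|_v^{-t}\mathcal G_t$ wherever the latter was a priori defined, completing the identification.
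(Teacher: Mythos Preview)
Your approach is correct and is essentially the paper's: both exploit that \eqref{toperator} is the descent, via the diagram \eqref{tdiagram}, of two-variable Fourier transform on $\mathcal S(k_v^2)$, which is defined and bijective for every $t$ regardless of whether the one-variable factorization through $\mathcal B^t$ makes sense. The paper's phrasing is terser---it observes that the exceptional values \eqref{exceptions} are all negative, so for such $t$ one simply \emph{defines} $\mathcal G_t:=|\bullet|^t\,\mathcal G_{-t}'^{-1}\,|\bullet|^{-t}$ using the well-defined operator $\mathcal G_{-t}'$ built with the inverse character---and one small correction to your inversion step: the inverse of $|\bullet|^{-t}\mathcal G_t$ is $|\bullet|^{t}\mathcal G_{-t}'$ (opposite Fourier character), not $|\bullet|^{t}\mathcal G_{-t}$.
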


\begin{proof}
 Since the inverse of Fourier transform on $F^2$ is Fourier transform (with the inverse character), it follows that the inverse of the operator \eqref{toperator}: $\mathcal A^t\to \mathcal A^{-t}$ is the operator $|\bullet|^t \mathcal G_{-t}'$, where $\mathcal G_{-t}'$ is defined as $\mathcal G_{-t}$, but with inverse Fourier transform. Thus, for $t$ as in \eqref{exceptions}, one can define $\mathcal G_t$ as: $|\bullet|^t \mathcal G_{-t}'^{-1} |\bullet|^{-t}$.
\end{proof}

Now we come to the notion of \emph{polynomial seminorms} and \emph{polynomial sections}, resp.\ \emph{sections of rapid decay on vertical strips}. We have already seen what a polynomial family of seminorms on $\mathcal S(F)$ is (it is included in the first case): it is bounded on bounded vertical strips by finite sums of the form $|P(t)|\rho$, where $\rho$ is a fixed seminorm on $\mathcal S(F)$. Now fix a Schwartz partition of unity on $F$ as $u_1+u_2$, with $u_1\in \mathcal S(F)$ and $u_2\in \mathcal S(\PP^1(F)\smallsetminus\{0\})$, and use it to define a splitting of the map:
\begin{equation}\label{intoBt} \mathcal S(F)\oplus\mathcal S(F) \ni (F_1,F_2)\mapsto F_1(\bullet) + F_2(\frac{1}{\bullet}) |\bullet|^{-t-1}\in \mathcal B^t.
\end{equation}
We define a \emph{polynomial family of seminorms} on $\mathcal B^t$ to be a family of seminorms which, when pulled back to $ \mathcal S(F)\oplus\mathcal S(F)$ by \eqref{intoBt} are bounded by polynomial seminorms on bounded vertical strips. We say that a section $t\mapsto F_t\in \mathcal B^t$ is of \emph{polynomial growth}, resp.\ \emph{rapid decay}, if this is the case for any polynomial family of seminorms applied to it (always, implicitly, on bounded vertical strips). We can use inverse Fourier transform to translate these notions to $\mathcal A^t$ (away from the points \eqref{exceptions}) and then we have:

\begin{lemma}
 The sections of polynomial growth, resp.\ rapid decay, of $\mathcal A^t$ are those of the form $t\mapsto C_{1,t}(\bullet)+C_{2,t}(\bullet)\eta(\bullet)|\bullet|^t$, where $t\mapsto C_{1,t}, C_{2,t}$ are sections of the same type into $\mathcal S(F)$.
\end{lemma}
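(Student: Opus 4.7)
My plan is to transfer the question, via Fourier transform, from $\mathcal A^t$ to $\mathcal B^t$, where polynomial growth and rapid decay are defined directly through the splitting \eqref{intoBt} of $\mathcal S(k_v)\oplus\mathcal S(k_v)\to\mathcal B^t$. Recall that away from the values in \eqref{exceptions}, Fourier transform $\mathcal F:\mathcal A^t\xrightarrow{\sim}\mathcal B^t$ is bounded by polynomial families of seminorms in each direction (this is essentially the content of Lemma \ref{lemmaFourier}, together with the explicit relation \eqref{StoB} and the polynomial bounds on zeta integrals in vertical strips). Thus a section $t\mapsto f_t\in\mathcal A^t$ has polynomial growth (resp.\ rapid decay) iff the section $t\mapsto\mathcal F(f_t)\in\mathcal B^t$ does, and by definition the latter splits as $F_{1,t}(\bullet)+F_{2,t}\!\left(\tfrac{1}{\bullet}\right)|\bullet|^{-t-1}$ with $t\mapsto F_{i,t}\in\mathcal S(k_v)$ of polynomial growth (resp.\ rapid decay).

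Next I would invert this splitting at the level of $\mathcal A^t$, using Tate's local functional equation. The first summand pulls back under $\mathcal F^{-1}$ to a polynomial/rapid section $C_{1,t}:=\mathcal F^{-1}(F_{1,t})\in\mathcal S(k_v)$. For the second summand, note that $\iota_t(\mathcal S(k_v))$ consists precisely of functions behaving like $\eta_v(\xi)|\xi|^{-t-1}\cdot(\text{smooth near }\infty)$; applying $\mathcal F^{-1}$ and invoking Lemma \ref{lemmad} (together with the fact that the abelian $\gamma$-factor $\gamma(\eta_v,-t,\psi_v)$ has polynomial growth in bounded vertical strips avoiding \eqref{exceptions}) identifies $\mathcal F^{-1}(F_{2,t}(1/\bullet)|\bullet|^{-t-1})$ with $C_{2,t}(\bullet)\eta_v(\bullet)|\bullet|^t$, where $C_{2,t}\in\mathcal S(k_v)$ is again of polynomial growth (resp.\ rapid decay). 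This gives the desired decomposition for sections of the prescribed type; the converse direction is the same argument read backwards.

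The two real obstacles are (i) handling the exceptional parameters \eqref{exceptions}, \eqref{exceptions2}, \eqref{exceptions3}, where Fourier transform is not an isomorphism between $\mathcal A^t$ and $\mathcal B^t$ and the decomposition $(C_{1,t},C_{2,t})$ may involve compensating simple poles with opposite residues (cf.\ Lemma \ref{analyticsections}); and (ii) making the above inversion continuous in a way that respects the chosen Fr\'echet bundle structure. For (i), I would use the ``translation'' operator of multiplication by $\eta_v(\bullet)|\bullet|^{-t}$, which identifies $\mathcal A^t$ with $\mathcal A^{-t}$ and interchanges the roles of $C_{1,t}$ and $C_{2,t}$, together with the operator $\mathcal G_t=|\bullet|^t\mathcal G_{-t}^{\prime\,-1}|\bullet|^{-t}$ of Lemma \ref{Gtdefined} which is defined for \emph{all} $t$ and preserves polynomial families of seminorms; this lets me detour around any individual exceptional value by perturbing $t$ slightly while controlling residues polynomially, since the finitely many exceptional points in any bounded strip contribute only poles of finite order in the partial data $(C_{1,t},C_{2,t})$ which cancel in the sum $C_{1,t}+C_{2,t}\eta_v|\bullet|^t$. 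For (ii), once the inversion is realized by bounded linear maps between fixed Fr\'echet spaces modulated by polynomially bounded scalar factors ($\gamma$-factors and compensating $|\bullet|^{\pm t}$), the polynomial/rapid decay properties transfer automatically, and the proof concludes. I expect (i) to be the main technical obstacle; (ii) is essentially bookkeeping.
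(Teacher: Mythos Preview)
Your strategy of transporting the problem to $\mathcal B^t$ via $\mathcal F$ is the right starting point, and the reduction in your first paragraph is fine. The gap is in the second paragraph, where you claim that $\mathcal F^{-1}\bigl(F_{2,t}(1/\bullet)\,\eta_v(\bullet)|\bullet|^{-t-1}\bigr)$ lands in $\eta_v(\bullet)|\bullet|^t\cdot\mathcal S(k_v)$. This is false: the inverse Fourier transform of an element of $\iota_t\mathcal S(k_v)$ is a generic element of $\mathcal A^t$, i.e.\ of the form $C_1+C_2\eta_v|\bullet|^t$ with \emph{both} $C_1$ and $C_2$ nonzero in general. Concretely, for $\Re(t)>0$ a function of the form $C_2\eta_v|\bullet|^t$ vanishes at $\xi=0$, but $\mathcal F^{-1}(\iota_t F_2)(0)=\int \iota_t F_2$ is typically nonzero. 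Lemma~\ref{lemmad} does not help here: it only matches the single asymptotic coefficient $D(0)$ with $C_2(0)$, not the entire function. So the two splittings---the partition-of-unity splitting on the $\mathcal B^t$ side and the $(C_1,C_2)$-splitting on the $\mathcal A^t$ side---are genuinely incompatible under $\mathcal F$, and your ``inversion'' does not produce the pair $(C_{1,t},C_{2,t})$ you need.

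The paper avoids this mixing problem by not attempting a direct splitting at all. Instead it lifts everything to $\mathcal S(k_v^2)$: polynomial-growth sections of $\mathcal B^t$ lift via \eqref{StoB} to polynomial-growth sections of $\mathcal S(k_v^2\smallsetminus\{0\})$, and pairs $(C_{1,t},C_{2,t})$ of polynomial growth likewise lift to $\mathcal S(k_v\times k_v^\times)$. The nontrivial step is the converse direction---showing that a polynomial-growth section of $\mathcal S(k_v^2)$ descends to polynomial-growth data on both sides. This requires replacing $\Phi_t$ by something with the same image in $t$-twisted coinvariants but supported away from the origin; the paper does this by inverting the operator $(I-a*^t)$ on the stalk of $\mathcal S(k_v^2)$ at $0$ (for $|a|\ne 1$ and $t$ away from the real axis), checking that the inverse preserves polynomial growth, and then lifting back. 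That stalk argument is the key idea your approach is missing; neither $\gamma$-factor bounds nor the operator $\mathcal G_t$ substitute for it.
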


\begin{proof}

As for Lemma \ref{analyticsections} we will argue by lifting to $\mathcal S(F^2)$. We may ignore the values of $t$ in a neighborhood of bounded width of the real line. Our goal is to show that sections of polynomial growth (resp.\ rapid decay, but we will avoid repeating this in the rest of the argument) into $\mathcal B^t$ are precisely the images, under \eqref{StoB}, of sections of polynomial growth into $\mathcal S(F^2)$, a notion which is stable under (partial) Fourier transform; and, that the image in $\mathcal A^t$ of sections of polynomial growth into $\mathcal S(F^2)$ under \eqref{ttwisted} consists precisely of the sections described in the statement of the lemma. For what follows, we apply the operation \eqref{StoB} to a function denoted by $\Phi$, instead of $\hat\Phi^1$.

Clearly, a section $t\mapsto F_t \in \mathcal B^t$ of polynomial growth can be lifted via \eqref{StoB} to a section of polynomial growth into $\mathcal S(F^2)$ (in fact, the lift can be taken to be in $\mathcal S(F^2\smallsetminus\{(0,0)\})$, where $F^\times$ acts freely). 

Similarly, a section $t\mapsto  C_{1,t}(\bullet)+C_{2,t}(\bullet)\eta(\bullet)|\bullet|^t$ of polynomial growth, as in the statement of the lemma, can be lifted via \eqref{ttwisted} to a section $\Phi$ of polynomial growth into $\mathcal S(F^2)$ (in fact, into $\mathcal S(F^2\smallsetminus\{(0,0)\})$ or even $\mathcal S(F\times F^\times)$, where $F^\times$ acts freely).

There remains to show that sections of polynomial growth: $t\mapsto \Phi_t\in \mathcal S(F^2)$ descend to sections of polynomial growth, in the above sense, into $\mathcal A^t$ and $\mathcal B^t$. 

This is clearly true if $\Phi_t$ lies in $\mathcal S(F^2\smallsetminus\{(0,0)\})$, where $F^\times$ acts freely. This will turn out to be enough, since for notions of growth on bounded vertical strips we can ignore some values of $t$ on the real line. The conceptual reason is that away from integral real values of $t$, there are no non-trivial $(F^\times,\eta(\bullet)|\bullet|^t)$-equivariant distributions on the stalk at zero (through either of the two $F^\times$ actions appearing in \eqref{ttwisted}, \eqref{StoB}).

To turn this into a rigorous argument, denote by $*^t$ the action of $F^\times$    on $\mathcal S(F^2)$ twisted by the appropriate character depending on the case under consideration, so that \eqref{ttwisted}, resp.\ \eqref{StoB}, is zero on elements of the form $\Phi- a*^t \Phi$ (where $a\in F^\times$). The stalk $W$ of $\mathcal S(F^2)$ at $(0,0)$ is stable under the $F^\times$-action, and it is easy to see that if we fix $a\in F^\times$ with $|a|\ne 1$, for $t$ away from integral real values, the map
$$ (I-a*^t): W\to W$$
is \emph{bijective}. (Recall that an element $\bar\Phi$ of $W$ is determined by the values of all derivatives of a representative $\Phi$ at the origin.) Moreover, the inverse of this map preserves sections of moderate growth: if $t\mapsto \bar\Phi_t$ is a section of polynomial growth into $W$, then the section $t\mapsto \bar\Phi'_t \in W$ such that $\bar\Phi_t = \bar\Phi'_t - a*^t \bar\Phi'_t$ is also of polynomial growth.

Given a section $\Phi_t$ of polynomial growth into $\mathcal S(F^2)$, with image $\bar\Phi_t$ in $W$, we let $\bar\Phi_t'$ be as described and lift it to a section of polynomial growth $t\mapsto \Phi'_t\in \mathcal S(F^2)$. Then the section
$$ t\mapsto \Phi_t - \Phi'_t + a*^t \Phi'_t \in \mathcal S(F^2\smallsetminus \{0\})$$
is of polynomial growth and has the same image as $\Phi_t$ under \eqref{ttwisted}, resp.\ \eqref{StoB}. This proves the claim.
\end{proof}

Finally, we notice the following obvious fact:

\begin{lemma}\label{insection} 
For any given  $f_{t_0} \in\mathcal A^{t_0}$ one can find an analytic section $t\mapsto f^t\in\mathcal A^t$, of rapid decay in vertical strips, defined for all $t\in \CC/2 \pi i\Z \log q$, whose specialization to $t_0$ is the given element.
\end{lemma}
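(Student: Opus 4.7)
The plan is to split the construction into two independent steps. First, I would produce \emph{some} analytic global section $t\mapsto g^t\in\mathcal A^t$ with $g^{t_0}=f_{t_0}$, of polynomial growth on bounded vertical strips. Then I would multiply by a scalar entire factor $h(t)$ with $h(t_0)=1$ and rapid decay on bounded vertical strips, so that $t\mapsto h(t)\,g^t$ is the required section. In the non-archimedean case bounded vertical strips in $\CC/\frac{2\pi i}{\ln q_v}\Z$ are compact and no decay factor is needed, so $h\equiv 1$ suffices. In the archimedean case the standard choice $h(t):=\Gamma(1+(t-t_0))^{-2}$ is entire with $h(t_0)=1$, and Stirling's asymptotics give $|h(\sigma+i\tau)|\ll|\tau|^{1-2\sigma}e^{-\pi|\tau|}$ on any bounded vertical strip, which is well beyond rapid decay.

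The construction of $g^t$ is carried out case-by-case according to the axiomatic description of $\mathcal A^t$ in \S\ref{families-general}. In the first case, where $\mathcal A^t(U)=|\bullet|^t\mathcal S(U)\cdot K$ near the distinguished point and coincides with ordinary Schwartz functions away from it, I would choose a Schwartz partition of unity $1=u_0+u_1$ on $\PP^1(k_v)$ with $u_0$ supported in $U$; the piece $u_0 f_{t_0}$ lifts to $|\bullet|^{t_0}\phi\cdot K$ for some $\phi\in\mathcal S(U)$ (using the closedness assumption on the kernel of multiplication by $K$), and the obvious candidate
\[
g^t:=|\bullet|^t\phi\cdot K+u_1 f_{t_0}
\]
is analytic in $t$, defined for every $t$, and of polynomial growth in vertical strips with respect to the seminorms described in \S\ref{families-general}. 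In the second, more intricate case, I would use the lift from $\mathcal S(k_v\times k_v)$ constructed in the proof of Lemma \ref{lemmaFourier}: by surjectivity of the twisted orbital-integral map
\[
\Phi\mapsto f_t(\xi)=\int_{k_v^\times}\Phi(\xi a,a^{-1})\eta_v(a)|a|^{-t}\,d^\times a,
\]
onto $\mathcal A^{t_0}$, I can pick $\Phi\in\mathcal S(k_v^2)$ whose $t_0$-twisted orbital integrals equal $f_{t_0}$, and set $g^t:=f_t$. The section $t\mapsto f_t$ is analytic into $\mathcal A^t$ for \emph{every} $t$, including the exceptional values of \eqref{exceptions2} and \eqref{exceptions3}, precisely because the integral formula defines an element of $\mathcal A^t$ uniformly in $t$ (this is how the Fr\'echet bundle structure was set up in \S\ref{families-general}, as witnessed by Lemma \ref{Gtdefined}); its polynomial growth on bounded vertical strips follows from standard estimates for zeta-type integrals with a fixed Schwartz integrand.

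The main technical point to watch is that the section must be well-defined at exceptional values of $t$, where elements of $\mathcal A^t$ acquire logarithmic asymptotics and the naive pair of Schwartz-valued coefficients $(C_{1,t},C_{2,t})$ from Lemma \ref{analyticsections} must develop simple poles with opposite residues; however, both constructions above are intrinsically adapted to this, since the defining formulae extend holomorphically across the exceptional locus and the resulting section lands inside $\mathcal A^t$ by construction rather than by an \emph{a posteriori} check on coefficients. The product $h(t)\,g^t$ is then the desired section: it passes through $f_{t_0}$ at $t=t_0$, is analytic on all of $\CC/\frac{2\pi i}{\ln q_v}\Z$, and has rapid decay in every bounded vertical strip, since $g^t$ is of polynomial growth there while $h$ decays super-polynomially.
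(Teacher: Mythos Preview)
The paper treats this lemma as ``obvious'' and gives no proof, so your elaboration is welcome in principle; the two-step strategy (build an analytic section of polynomial growth via the $\mathcal S(k_v^2)$-lift, then multiply by a scalar decay factor) is exactly right and fits the machinery of the appendix.

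However, your choice of decay factor is wrong. Stirling's formula gives
\[
|\Gamma(x+iy)|\sim\sqrt{2\pi}\,|y|^{x-\frac12}e^{-\pi|y|/2}\qquad(|y|\to\infty),
\]
so $|\Gamma(1+(t-t_0))|^{-2}$ behaves like $(2\pi)^{-1}|\tau|^{1-2\sigma}e^{+\pi|\tau|}$ on vertical lines: exponential \emph{growth}, not decay. Your asserted bound $|h(\sigma+i\tau)|\ll|\tau|^{1-2\sigma}e^{-\pi|\tau|}$ has the sign of the exponent reversed. With this $h$, the product $h(t)g^t$ is not of rapid decay.

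The fix is immediate: take instead $h(t)=e^{(t-t_0)^2}$. This is entire with $h(t_0)=1$, and on a bounded vertical strip $|h(\sigma+i\tau)|=e^{(\sigma-\sigma_0)^2-(\tau-\tau_0)^2}$ decays faster than any polynomial (indeed super-exponentially). With this correction, your argument goes through: the lift $\Phi\in\mathcal S(k_v^2)$ exists for every $t_0$ (including the exceptional values), the resulting section $t\mapsto f_t$ is analytic and of polynomial growth by the lemma preceding \ref{insection}, and $e^{(t-t_0)^2}f_t$ is the required section.
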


\subsection{The global case} 

In the global case, we will give ourselves spaces $\mathcal A^t$ which are restricted tensor products, with respect to a specified vector $f_{t,v}^0$ defined at almost every place, of local spaces $\mathcal A^t_v$ as above. We say ``tensor products'' in the completed sense here (for each finite set of places), i.e.:
$$\mathcal A^t = \lim_{\to} \hat\otimes_{v\in S} \mathcal A^t_v,$$
the limit taken as the finite set $S$ tends to include all places, and the identification for $S_1\subset S_2$ being by multiplication with the basic vector. With the individual tensor products being Fr\'echet spaces, the limit becomes and LF-space.

We will make the following assumption:
\begin{equation}\label{axiom-analytic}
 \mbox{\emph{The section $t\mapsto f^0_{t,v}$ is analytic as $t$ varies in $\CC/\frac{2\pi i}{\log q_v} \Z$.}}
\end{equation}

An \emph{analytic section} $t\mapsto f_t\in\mathcal A^t$, $t\in \CC$, will then be a section which is of the form
$$ f_t = \bigotimes_{v\notin S} f^0_{t,v} \otimes f_{S,t},$$
with $t\mapsto f_{S,t}$ an analytic section into $\hat\otimes_{v\in S} \mathcal A^t_v$. By taking tensor products for the embeddings \eqref{intoFb}, the notion of ``analytic section'' makes sense for the (completed) tensor product, and so do the notions of polynomial seminorms and sections of polynomial growth/rapid decay on vertical strips. These notions do not depend on the choice of $S$ used to represent $f_t$, because of axiom \eqref{axiom-analytic}.

\section{F-representations}\label{sec:Frepresentations}

\subsection{Definitions}

Let $G$ denote here the points of a reductive group over an Archimedean field, and fix a faithful algebraic representation: $G\to \GL_N$ to  get a natural algebraic height function $\Vert \bullet \Vert $ on $G$ by pulling back the maximum of the operator norms of $g$ and $g^{-1}$ with respect to the norm $(r_1,\dots,r_N) \mapsto \max_i |r_i|$. 

We recall the notion of F- and SF-representations of $G$, in the language of \cite{BeKr}. The same definitions can be given for $p$-adic groups, if one uses ``almost smooth'' vectors as we have done throughout this paper (calling them just ``smooth''). The results that we quote from \cite{BeKr} extend to the $p$-adic case, however we will omit this discussion, hoping to include it in future work. 

An \emph{F-representation} of $G$ is a continuous representation on a Fr\'echet space such that there is a sequence of seminorms defining the topology with the property that the action of any $g\in G$ is bounded with respect to \emph{each one of them}. In particular, Banach representations are F-representations, and the general F-representation is an inverse limit of Banach representations. We recall that an F-representation is equivalent to a continuous representation of \emph{moderate growth} of $G$ on a Fr\'echet space, that is, a representation with the property:
\begin{quote}\hspace{-1cm}(*)\hspace{0.4cm}
{For every (continuous) seminorm $p$ there exists a seminorm $q$ and a positive number $N$ such that $p(gv)\le \Vert g\Vert^N q(v)$ for every $g\in G$ and $v\in V$.}
\end{quote}
The topological convolution algebra $\mathcal R(G)$ of \emph{rapidly decaying $L^1$-measures on $G$} (i.e., measures $\mu$ such that $\Vert g\Vert^N\mu$ is an $L^1$-measure for every $N$) acts continuously on any F-representation. 

When $V$ is a continuous representation of $G$ on a Fr\'echet space, we endow the subspace $V^\infty$ of smooth vectors with the topology induced by all the seminorms of $V$ applied to all derivatives of a vector. We say that $V$ is \emph{smooth} if $V\simeq V^\infty$ \emph{as topological vector spaces}. 
If $V$ is an F-representation, then $V^\infty$ is a smooth F-representation, or \emph{SF-representation} in the language of \cite{BeKr}. Notice that when $p$ is a seminorm, $V$ is a smooth F-representation and $v\in V$ then there is an $N$ such that $\sup_{g\in G} \frac{p(gDv)}{\Vert g\Vert^N}<0$ for every $D\in \mathfrak U(\mathfrak g)$ (in fact, $N$ can be chosen independently of $v$), which is why these vectors are sometimes said to be of ``uniformly moderate growth'', but we will just be saying ``moderate growth''. (On the other hand, notice that if $N$ works for the seminorm $p$, then one needs a larger $N$, in general, for the seminorm $v\mapsto p(Dv)$, because of a factor accounting for the adjoint action of $G$ on $D$.)

Any F-representation is a continuous $\mathcal S(G)$-module (we identify Schwartz functions with Schwartz measures here by choosing a Haar measure, in order not to introduce extra notation). Moreover, a smooth F-representation $V$ is a \emph{non-degenerate} $\mathcal S(G)$-module, i.e., $\mathcal S(G)V=V$. In fact, it is already non-degenerate under the action of compactly supported elements of $\mathcal S(G)$; this is a consequence of the theorem of Dixmier and Malliavin.

Something stronger is true: the category of smooth F-representations is equivalent to the category of non-degenerate continuous $\mathcal S(G)$-module \cite[Proposition 2.20]{BeKr}. As the proof of Proposition 2.20 shows, non-degeneracy can be understood in a stronger sense: the topology on $V$ is the quotient topology for the action map:
\begin{equation}
 \mathcal S(G)\hat\otimes V\to V,
\end{equation}
where $\hat\otimes$ denotes the completed tensor product (projective, say, but since $\mathcal S(G)$ is nuclear it coincides with the injective one).

\subsection{Morphisms}
Let $V$ be a continuous representation on a Fr\'echet space and $N\in \RR_+$. Assume that a single semi-norm $p$ generates a complete set of seminorms under the action of $G$, i.e., the seminorms $p\circ g$, where $g$ ranges over $G$, form a complete set. Such are, for example, ``locally defined'' spaces of functions on a homogeneous space $X$ of $G$, e.g., $L^2_{\rm{loc}}(X)$, $C(X)$ etc. Then we define: 
\begin{equation}
 V_N:= \{v\in V| \sup_{g\in G} \frac{p(gv)}{\Vert g\Vert^N} <\infty\}.
\end{equation}

This subspace depends on the chosen height $\Vert \bullet\Vert$ on $G$ but not on the choice of $p$, and for any other compatible height function $\Vert \bullet\Vert'$ with corresponding spaces $V'_N$ there are positive constants $c,C$ such that $V_{cN}\subset V_N'\subset V_{CN}$ for every $N$.

\begin{lemma}
The subspace $V_N$, endowed with the norm: 
$$p_N(v):= \sup_{g\in G} \frac{p(gv)}{\Vert g\Vert^N},$$ 
is complete.
\end{lemma}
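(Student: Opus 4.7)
The plan is the standard completeness argument for a weighted supremum norm, adapted to this $G$-equivariant setting. The only non-trivial input is that the collection $\{p \circ g : g \in G\}$ generates the topology of the ambient Fréchet space $V$, which lets us transfer $p_N$-Cauchyness back into the Fréchet space $V$, where we can take an honest limit.

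First I would take a $p_N$-Cauchy sequence $(v_n) \subset V_N$. By definition, for each fixed $g \in G$,
\[
p(g(v_n - v_m)) \le p_N(v_n - v_m)\,\Vert g\Vert^N,
\]
so $(gv_n)$ is $p$-Cauchy. Since the seminorms $p \circ g$ ($g \in G$) form a complete set defining the topology of the Fréchet space $V$, this means $(v_n)$ is Cauchy in $V$, and hence converges to some $v \in V$.

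Next I would show $v \in V_N$ and $p_N(v_n - v) \to 0$. Fix $\epsilon > 0$ and choose $N_0$ so that $p_N(v_n - v_m) < \epsilon$ for $n, m \ge N_0$. For each fixed $g \in G$ and each $n \ge N_0$,
\[
p(g(v_n - v_m)) \le \epsilon \,\Vert g\Vert^N \qquad \text{for all } m \ge N_0.
\]
Now let $m \to \infty$: by continuity of the $G$-action on $V$, $gv_m \to gv$ in $V$, and continuity of the seminorm $p$ on $V$ gives $p(g(v_n - v_m)) \to p(g(v_n - v))$. Therefore
\[
p(g(v_n - v)) \le \epsilon\,\Vert g \Vert^N \qquad \text{for every } g \in G,
\]
which means $v_n - v \in V_N$ with $p_N(v_n - v) \le \epsilon$. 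In particular $v \in V_N$, and $p_N(v_n - v) \to 0$, proving completeness.

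The only step that needs care is the passage to the limit inside the seminorm $p$, which is why I would isolate the inequality at a fixed $g$ before taking $m \to \infty$; one should not attempt to swap $\sup_g$ with the limit in $m$ directly. Apart from this, the argument is routine, and I do not expect any substantive obstacle.
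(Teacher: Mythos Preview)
Your proof is correct and follows essentially the same approach as the paper: take the limit $v$ in the ambient Fr\'echet space $V$, then verify membership in $V_N$ by fixing $g$ and passing to the limit in $m$ in the Cauchy inequality. Your version is in fact slightly cleaner, since you invoke continuity of $p\circ g$ directly rather than the paper's $n_g$-dependent-on-$g$ triangle-inequality argument, but the substance is identical.
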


\begin{proof}
 Suppose that $(v_n)_n$ is a Cauchy sequence with respect to this norm, and let $v$ be its limit in $V$. For any $\epsilon>0$ and every $g$ we can choose an arbitrarily large $n_g$ such that: 
$$\frac{p(gv_{n_g}-gv)}{\Vert g\Vert^N }\le \epsilon.$$
 In particular, we can assume that all $n_g\ge m$, where $m$ is such that for all $n\ge m$ we have:
$$ \sup_{g\in G} \frac{p(gv_n-gv_m)}{\Vert g\Vert^N}<\epsilon.$$
Then:
$$ \sup_{g\in G} \frac{p(gv)}{\Vert g\Vert^N} \le \sup_{g\in G} \frac{p(gv_{n_g}-gv)}{\Vert g\Vert^N} + \sup_{g\in G} \frac{p(gv_{n_g})}{\Vert g\Vert^N} \le$$
$$\le \epsilon + \sup_{g\in G} \frac{p(gv_{n_g}-gv_m)}{\Vert g\Vert^N} + \sup_{g\in G} \frac{p(gv_m)}{\Vert g\Vert^N} \le 2\epsilon + p_N(v_m) <\infty,$$
hence $v\in V_N$.
\end{proof}

Hence, $V_N$ is a Banach representation, in particular an F-representation. We let: 
\begin{equation} V_{mg} = \bigcup_{N\in \mathbb N} V_N,\end{equation}
where ``mg'' stands for ``moderate growth''.

The spaces $V_N^{\infty}$ are SF-representations, and we set: 
\begin{equation}
 V_{mg}^{\infty} = \bigcup_N V_N^{\infty}.
\end{equation}
Clearly, as a subspace of $V$ it does not depend on choices.

Since $V_N \hookrightarrow V_{N+1}$ is not closed, there is no good topology on $V_{mg}$ (for instance, the finest topology making the inclusions of all $V_N$'s continuous is in general non-Hausdorff). By abuse of language, we will say that a map into $V_{mg}$ (resp.\ $V_{mg}^{\infty}$) is continuous if it factors as a continuous map through some $V_N$ (resp.\ $V_N^{\infty}$). For example, in this language the following lemma states that a certain map factors ``continuously'' through $V_{mg}$, resp.\ $V_{mg}^\infty$.

\begin{lemma}\label{lemmafunctors}
Let $V$ be as above, and let $W$ be an F-representation (resp.\ an SF-representation). Any morphism $T:W\to V$ factors (continuously) as:
$$\xymatrix{
W \ar[rr] \ar@{-->}[dr]&& V\\
&V_N \ar[ur],
}
$$
resp.:
$$\xymatrix{
W \ar[rr] \ar@{-->}[dr]&& V\\
&V^{\infty}_N \ar[ur],
}$$
for some $N$.
\end{lemma}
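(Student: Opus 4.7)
\smallskip

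The plan is to exploit the hypothesis in a single line: the topology of $V$ is generated by the seminorms $v \mapsto p(gv)$, $g\in G$, so continuity of $T\colon W\to V$ forces only one seminorm bound to be checked, namely with respect to $p$ itself. Continuity of $T$ together with $G$-equivariance then propagates this bound along the $G$-orbits in a controlled way, and the growth axiom of F-representations converts this into membership in some $V_N$.

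Concretely, I would first choose, by continuity of $T$, a continuous seminorm $q$ on $W$ with $p(Tw)\le q(w)$ for all $w\in W$. Since $W$ is an F-representation, the moderate growth property $(*)$ applied to $q$ produces a continuous seminorm $q'$ on $W$ and a positive integer $N$ such that $q(gw)\le \|g\|^N q'(w)$ for every $g\in G$ and $w\in W$. Using $G$-equivariance of $T$, for every $g\in G$ and $w\in W$ one obtains
\[
 p(gTw) \;=\; p(T(gw)) \;\le\; q(gw) \;\le\; \|g\|^N q'(w).
\]
Taking the supremum of $p(gTw)\|g\|^{-N}$ over $g\in G$ yields $p_N(Tw)\le q'(w)<\infty$, which simultaneously proves $Tw\in V_N$ and that the induced map $T\colon W\to V_N$ is continuous with respect to the Banach norm $p_N$. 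This settles the first (F-representation) case.

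For the SF-representation case, one needs the image of $T$ to land in the subspace $V_N^\infty$ of smooth vectors of the Banach representation $V_N$. A vector $v\in V_N$ is smooth precisely when all derivatives $Dv$ (for $D\in \mathfrak U(\mathfrak g)$) exist in $V_N$, and the Fréchet topology of $V_N^\infty$ is generated by the seminorms $v\mapsto p_N(Dv)$. Since $T$ is $G$-equivariant, one has $DTw = TDw$ for $w\in W$ and $D\in \mathfrak U(\mathfrak g)$; because $W$ is smooth, $w\mapsto Dw$ is continuous on $W$, so the previous argument applied to $Dw\in W$ (with the \emph{same} $N$!) gives $p_N(DTw)\le q'(Dw)$, and $w\mapsto q'(Dw)$ is a continuous seminorm on $W$. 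Hence $T$ factors continuously through $V_N^\infty$, as required.

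The argument is essentially a one-step bookkeeping calculation once the right statement is isolated; the only subtle point — and the one I would be most careful about — is that the \emph{same} $N$ works for all derivatives in the SF-case. This is automatic in the scheme above because the growth exponent $N$ attached by $(*)$ depends on the seminorm $q$ on $W$ (coming from continuity of $T$ against the fixed seminorm $p$ on $V$), not on the particular vector, so acting by $D$ merely changes the smooth vector $w$ and leaves $N$ untouched; this is the feature of ``uniform'' moderate growth for SF-representations, and is precisely what makes the statement of the lemma work with a single integer $N$.
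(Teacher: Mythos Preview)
Your proof is correct and follows essentially the same approach as the paper: bound $p\circ T$ by a single seminorm on $W$, then invoke moderate growth to produce the exponent $N$. The paper phrases the first step by writing $W$ as an inverse limit of Banach representations (so the seminorm $q$ is already a Banach norm satisfying $q(gw)\le D\|g\|^N q(w)$ with the \emph{same} $q$ on both sides), whereas you appeal directly to property $(*)$ and get a possibly different $q'$; these are the same argument. For the SF case the paper simply says ``apply the functor of smooth vectors to the morphism $W\to V_N$,'' which is the functorial packaging of your explicit computation with $D\in\mathfrak U(\mathfrak g)$; your observation that the same $N$ works for all $D$ is exactly what makes that functor land in $V_N^\infty$ rather than in some larger $V_{N'}^\infty$.
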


\begin{proof}
Start by writing $W$ as an inverse limit of Banach representations of $G$, and letting $\nu_i$ be the corresponding seminorms. 

By continuity, there is an $i$ and a positive constant $C$ such that $\rho(Tw) \le C \nu_i(w)$ for every $w\in W$. Now recall that Banach representations have the moderate growth property: there exist $N$ and $D>0$ such that $\nu_i(gw)\le D \Vert g\Vert^N \nu_i(w)$ for any $w\in W$. Thus:
$$ \rho(g(Tw)) = \rho(T(gw)) \le C \nu_i(gw) \le CD \nu_i(w) \Vert g\Vert^N,$$
which shows that $T$ factors through a continuous map: $W\to V_N$. 

The statement on SF-representations follows by applying the functor ``smooth vectors'' to the morphism: $W\to V_N$. 
\end{proof}

\bibliographystyle{alphaurl}
\bibliography{biblio}

\end{document}